\newcommand{\be}{\begin{equation}}
\newcommand{\ee}{\end{equation}}
\newcommand{\bea}{\begin{eqnarray*}}
\newcommand{\eea}{\end{eqnarray*}}
\newcommand{\ba}{\begin{array}}
\newcommand{\ea}{\end{array}}
\newcommand{\bi}{\begin{itemize}}
\newcommand{\ei}{\end{itemize}}
\newcommand{\bc}{\begin{center}}
\newcommand{\ec}{\end{center}}
\newcommand{\bfr}{\begin{flushright}}
\newcommand{\efr}{\end{flushright}}
\newcommand{\ds}{\displaystyle}
\numberwithin{equation}{section}
\newtheorem{theorem}{Theorem}[section]
\newtheorem{defi}[theorem]{Definition}
\newtheorem{ex}[theorem]{Example}
\newtheorem{lemma}[theorem]{Lemma}
\newtheorem{prob}{Problem}[section]
\newtheorem{fantoma}[theorem]{\phantom{i}}
 \newtheorem{thm}{Theorem}[section]
 \newtheorem{lem}[thm]{Lemma}
 \newtheorem{defn}[thm]{Definition}
 \newtheorem{rem}[thm]{Remark}
 \numberwithin{equation}{section}
\newcommand{\rd}{\color{red}}
\renewcommand{\rd}{}
\newcommand{\mg}{\color{magenta}}
\renewcommand{\mg}{}
\newcommand{\mn}{\color{magenta}}
\renewcommand{\mn}{}
\newcommand{\comment}[1]{}
\begin{document}

\title[Layer potentials for the Stokes system with $L_{\infty }$ coefficient tensor]
{Variational approach for layer potentials of the Stokes system with $L_{\infty }$ symmetrically elliptic coefficient tensor and applications to Stokes and Navier-Stokes boundary problems}

\author{Mirela Kohr}
\address{Faculty of Mathematics and Computer Science, Babe\c
s-Bolyai University, 1 M. Kog\u al\-niceanu Str., 400084
Cluj-Napoca, Romania}
\email{mkohr@math.ubbcluj.ro}

\author
{Sergey E. Mikhailov}
\address{Department of Mathematics, Brunel University London,
             Uxbridge, UB8 3PH, United Kingdom}
\email{sergey.mikhailov@brunel.ac.uk}

\author{Wolfgang L. Wendland}
\address{Institut f\"ur Angewandte Analysis und Numerische Simulation, Universit\"at Stuttgart,
Pfaffenwaldring, 57, 70569 Stuttgart, Germany}
\email{wendland@mathematik.uni-stuttgart.de}

\subjclass[2000]{Primary {\mg 35J57,  35Q30,  31C}, 46E35; Secondary 76D, 76M}

\keywords{Anisotropic Stokes system with $L_{\infty }$ coefficient tensor, variational problem, Newtonian and layer potentials, $L_2$-based weighted Sobolev spaces, transmission problems, exterior Dirichlet, {Neumann and mixed problems}, well-posedness results.}

\begin{abstract}
The first aim of this paper is to develop a layer potential theory in $L_2$-based weighted Sobolev spaces on Lipschitz bounded  and exterior domains of ${\mathbb R}^n$, $n\geq 3$, for the anisotropic Stokes system with {$L_{\infty }$ viscosity coefficient tensor satisfying {an} ellipticity condition for symmetric matrices}.
To do this, we explore equivalent mixed variational formulations and prove the well-posedness of some transmission problems for the anisotropic Stokes system in Lipschitz domains of ${\mathbb R}^n$, with the given data in $L_2$-based weighted Sobolev spaces.
These results are used to define the Newtonian and layer potentials and to obtain their properties.
Then we analyze well-posedness of the exterior Dirichlet, Neumann and mixed problems for the Stokes system with $L_{\infty }$ symmetrically elliptic coefficient tensor. {Solutions} of some of these problems are also represented in terms of the anisotropic Stokes Newtonian and layer potentials.
Finally, we prove the existence of a weak solution for a transmission problem in complementary Lipschitz domains in ${\mathbb R}^3$ for the anisotropic Navier-Stokes system with general data in $L_2$-based weighted Sobolev spaces. The analysis relies on an existence result for a {Dirichlet} problem for the anisotropic Navier-Stokes system in a family of {bounded} domains, and {on} the Leray-Schauder fixed point theorem.
\end{abstract}


\maketitle


\section{Introduction}
\label{Intro}
\setcounter{equation}{0}
The layer potential methods play a fundamental role in the analysis of elliptic boundary value problems (see, e.g., \cite{Co,Co-We,D-M,H-W,Maz'ya-Rossmann,M-W}). 
Fabes, Kenig and Verchota \cite{Fa-Ke-Ve} obtained mapping properties of layer potential operators for the constant coefficient Stokes system in $L_p$ spaces by using a technique of harmonic analysis. Further extension of these results to $L_p$, Sobolev, Bessel potential, and Besov spaces has been obtained by Mitrea and Wright \cite{M-W}. Moreover, Mitrea and Wright have used layer potential methods to obtain well-posedness results for the main boundary value problems for the standard Stokes system with constant coefficients in
arbitrary
Lipschitz domains in ${\mathbb R}^n$. The authors in \cite{K-L-M-W} obtained mapping properties of the constant-coefficient Stokes and Brinkman layer potential operators in standard and weighted Sobolev spaces in ${\mathbb R}^3$ by exploiting results of singular integral operators. Medkov\'{a} \cite{Med-CVEE-16} has used an integral equation method in the study of bounded solutions of the Dirichlet problem for the Stokes resolvent system in the Euclidean setting.
The authors in \cite{K-L-W} combined a layer potential approach with a fixed point theorem to show an existence result for a nonlinear Neumann-transmission problem for the constant-coefficient Stokes and Brinkman systems in $L_p$, Sobolev, and Besov spaces (see also \cite{K-L-W1}).

Choi and Lee \cite{Choi-Lee} have studied the Dirichlet problem for the stationary Stokes system with irregular coefficients. They have proved the unique solvability of the problem in Sobolev spaces on a Lipschitz domain in ${\mathbb R}^n$, $n\geq 3$, with a small Lipschitz constant, by assuming that the coefficients have vanishing mean oscillations (VMO) with respect to all variables. Existence and pointwise bounds of the fundamental solution for the stationary Stokes system with measurable coefficients in ${\mathbb R}^n$ ($n\geq 3$) have been obtained by Choi and Yang in \cite{Choi-Yang} under the assumption of local H\"{o}lder continuity of weak solutions of the Stokes system. They also discussed the existence and pointwise bounds of the Green function for the Stokes system with measurable coefficients on unbounded domains where the divergence equation is solvable, particularly on the half-space.
The solvability in Sobolev spaces of the conormal derivative problem for the stationary Stokes system with non-smooth coefficients on bounded Reifenberg flat domains have been proved by Choi, Dong and Kim in \cite{Choi-Dong-Kim} {(see also \cite{Choi-Dong-Kim-JMFM})}. 

The methods of layer potential theory play also a significant role in the study of elliptic boundary value problems with variable coefficients.
Mitrea and Taylor \cite{M-T} have obtained well-posedness results for the Dirichlet problem for the smooth coefficient Stokes system in $L_p$-spaces on arbitrary Lipschitz domains in a compact Riemannian manifold, and extended the well-posedness results in \cite{Fa-Ke-Ve} from the Euclidean setting to the compact Riemannian setting. Dindo\u{s} and Mitrea \cite{D-M} have used the mapping properties of Stokes layer potentials in Sobolev and Besov spaces to show well-posedness results for Poisson problems for the smooth coefficient Stokes and Navier-Stokes systems with Dirichlet boundary condition on $C^1$ and Lipschitz domains in compact Riemannian manifolds. 
Well-posedness results for transmission problems for the smooth coefficient Navier-Stokes and Darcy-Forchheimer-Brinkman systems in Lipschitz domains on compact Riemannian manifolds have been obtained in \cite{K-M-W}.

An alternative approach, which reduces various boundary value problems for variable-coefficient elliptic partial differential equations to {\em boundary-domain integral equations} (BDIEs), by means of explicit parametrix-based integral potentials, was explored e.g., in \cite{CMN-1,CMN-2,Ch-Mi-Na,Ch-Mi-Na-2,Ch-Mi-Na-3,Mikh-18}.
Equivalence of BDIEs to the boundary problems and invertibility of BDIE operators in $L_2$ and $L_p$-based Sobolev spaces have been analyzed in these works.
Localized BDIEs based on a harmonic parametrix for divergence-form elliptic PDEs with variable matrix coefficients have been {also developed, see \cite{CMN2017}
and the references therein.}

Amrouche, Girault and Giroire \cite{{AGG1997}} used a variational approach in the analysis of the exterior Dirichlet and Neumann problems for the $n$-dimensional Laplace operator in weighted Sobolev spaces. Mazzucato and Nistor \cite{Ma-Ni} obtained well-posedness and regularity results for the elasticity equations with mixed conditions on polyhedral domains. Hofmann, Mitrea and Morris \cite{H-M-M} considered layer potentials in $L_p$ spaces for elliptic operators of the form $L=-{\rm{div}}(A\nabla u)$ that act in the upper half-space ${\mathbb R}_{+}^{n+1}:=\{(x,t):x\in {\mathbb R}^n,\, t\in {\mathbb R}_+\}$, $n\geq 2$, or in more general Lipschitz graph domains, where $A$ is an $(n+1)\times (n+1)$ type matrix of $L_{\infty }$ complex, $t$-independent coefficients satisfying a uniform ellipticity condition, and solutions of the equation $L u =0$ satisfying De Giorgi-Nash-Moser type interior estimates. They developed a Calder\'{o}n-Zygmund type theory associated with the layer potentials, and obtained well-posedness results for related boundary problems in $L_p$ and endpoint spaces. Brewster et al. in \cite{B-M-M-M} have used a variational approach to obtain well-posedness results for Dirichlet, Neumann and mixed boundary problems for higher order divergence-form elliptic equations with $L_{\infty }$ coefficients in locally $(\epsilon,\delta )$-domains and in Besov and Bessel potential spaces (see also \cite{H-J-K-R}). Barton \cite{Barton} has used the Lax-Milgram Lemma to construct layer potentials for {strongly} elliptic differential operators in Banach spaces, and generalized many properties of layer potentials for the harmonic equation. Barton and Mayboroda \cite{Barton-Mayboroda} developed layer potentials for second order divergence elliptic operators with bounded measurable coefficients that are independent of the $(n+1)$st coordinate, and well-posedness results for related boundary problems with data in Besov spaces.

Girault and Sequeira \cite{Gi-Se} obtained well-posedness of the exterior Dirichlet problem for the constant coefficient Stokes system in weighted Sobolev spaces on exterior Lipschitz domains in ${\mathbb R}^n$ for $n\in\{2,3\}$, by applying a mixed variational formulation.
{Angot \cite{Angot-2} analyzed some Stokes/Brinkman transmission problems with a scalar viscosity coefficient on bounded domains.}
Sayas and Selgas in \cite{Sa-Se} developed a variational approach for the constant-coefficient Stokes layer potentials on Lipschitz boundaries, by using the technique of N\'{e}d\'{e}lec \cite{Ne}.
{The book by Sayas, Brown and Hassell \cite{Sayas-book} gives a comprehensive presentation
of the basic variational theory for elliptic PDEs in Lipschitz domains.}
B\u{a}cu\c{t}\u{a}, Hassell and Hsiao \cite{B-H} developed a variational approach for the constant-coefficient Brinkman single layer potential and used it to analyze the corresponding time dependent exterior Dirichlet problem in ${\mathbb R}^n$, $n\!=\!2,3$. Alliot and Amrouche \cite{Al-Am} have used a variational approach to obtain weak solutions for the exterior Stokes problem in weighted Sobolev spaces (see also \cite{Amrouche-1}).

In \cite{K-M-W-1},  we obtained the well-posedness results for the {\it isotropic} Stokes system with a non-smooth scalar viscosity coefficient $\mu \in L_{\infty }({\mathbb R}^3)$ (see also \cite{K-W,K-W1,K-W2} for the Stokes and Navier-Stokes systems with nonsmooth coefficients in compact Riemannian {manifolds}).
In \cite{K-M-W-2} we analysed transmission problems in weighted Sobolev spaces for {\it anisotropic} Stokes and Navier-Stokes systems with an $L_\infty $ strongly elliptic coefficient tensor, in the pseudostress setting.

In this paper we proceed with the study of transmission and exterior boundary value problems for the anisotropic Stokes system. However, unlike  \cite{K-M-W-2}, we consider the $L_{\infty }$ viscosity coefficient tensor satisfying a strong ellipticity condition only with respect to all {\it symmetric} matrices in ${\mathbb R}^{n\times n}$, see \eqref{mu}.
Our first aim is to develop a layer potential theory in $L_2$-based weighted Sobolev spaces for such Stokes systems.
To do this, we explore equivalent mixed variational formulations and prove the well-posedness of some transmission problems for the anisotropic Stokes system in Lipschitz domains of ${\mathbb R}^n$, with the given data in $L_2$-based weighted Sobolev spaces.
These results are used to define the Newtonian and layer potentials and to obtain their properties.
Then we analyze the well-posedness of exterior Dirichlet, Neumann and mixed problems for the Stokes system with $L_{\infty }$ symmetrically elliptic coefficient tensor. The solutions of some of these problems are also represented in terms of the anisotropic Stokes Newtonian and layer potentials.
Finally, we prove the existence of a weak solution ${\bf u}\in {\mathcal H}^1({\mathbb R}^3)^3$ 
of a transmission problem in complementary Lipschitz domains in ${\mathbb R}^3$ for the anisotropic Navier-Stokes system with general data in $L_2$-based weighted Sobolev spaces (see formula \eqref{weight-1} for the definition of the weighted space ${\mathcal H}^1({\mathbb R}^3)^3$).
The analysis relies on an existence result for a Dirichlet-transmission problem for the anisotropic Navier-Stokes system {in a family of bounded Lipschitz domains in} $\mathbb R^3$ and on the Leray-Schauder fixed point theorem.
We show also the existence of a pressure field $\pi $, which is locally in $L_2({\mathbb R}^3)$, such that the couple $({\bf u},\pi )$ solves the Navier-Stokes transmission problem in the sense of distributions.
Note that the global membership of the function $\pi $ in $L_2({\mathbb R}^3)$
would require more regularity of the velocity field ${\bf u}$, of the viscosity coefficient tensor, and of the geometry of the domains (see, e.g., \cite[Section 2]{A-A}).

{The boundary value problems for the anisotropic Stokes and Navier-Stokes systems with $L_\infty $ coefficients analyzed in this paper can describe physical, engineering, or industrial processes {related to the {flow} of immiscible fluids}, {or the flow of nonhomogeneous fluids with density dependent viscosity} (cf., e.g., \cite{Choi-Dong-Kim}).}

\subsection{The anisotropic Stokes system with $L_\infty $ {symmetrically} elliptic coefficient tensor}

{All along the paper we use the Einstein summation convention for repeated indices from $1$ to $n$, and the standard notation $\partial _\alpha $ for the first order partial derivative $\ds\frac{\partial }{\partial x_\alpha }$, $\alpha =1,\ldots ,n$.}

{Let $\boldsymbol{\mathbb L}$ be a second order differential operator in the divergence form
{in an open set $\Omega\subseteq\mathbb R^n$, $n\ge 3$,}
\begin{equation}
\label{Stokes-0-0}
\begin{array}{lll}
{\boldsymbol{\mathbb L}{\bf u}=\mathrm{div}\left(\mathbb{AE}(\mathbf u)\right)\quad  \Longleftrightarrow\quad }
(\boldsymbol{\mathbb L}{\bf u})_i:=\partial _\alpha\left(a_{ij}^{\alpha \beta }E_{j\beta }({\bf u})\right),\ \ i=1,\ldots ,n,
\end{array}
\end{equation}
where ${\bf u}=(u_1,\ldots ,u_n)^\top$, and
{${\mathbb E}({\bf u})=\left(E_{j\beta }({\bf u})\right)_{1\leq j,\beta\leq n}$
is the symmetric part of the gradient
$\nabla {\bf u}$. 
Therefore, the components of the tensor field ${\mathbb E}({\bf u})$ are defined by 
$E_{j\beta }({\bf u}):=\frac{1}{2}(\partial_j u_\beta +\partial _\beta u_j)$.

The {viscosity coefficient tensor $\mathbb A$ in} the operator ${\mathbb L}$ consists of {$n\times n$ matrix-valued functions $A^{\alpha \beta }=A^{\alpha \beta }(x)$ with {essentially bounded}, real-valued entries}, i.e.,
\begin{align}
\label{Stokes-1}
{\mathbb A}=\left(A^{\alpha \beta }\right)_{1\leq \alpha,\beta \leq n}
=\left(a_{ij}^{\alpha\beta}\right)_{1\leq \alpha,\beta,i,j\leq n};\quad
\ a_{ij}^{\alpha \beta }\in L_{\infty }(\Omega),\ 1\leq \alpha,\beta,i,j \leq n,
\end{align}
satisfying the {symmetry} conditions
\begin{align}
\label{Stokes-sym}
a_{ij}^{\alpha \beta }(x)=a_{\alpha j}^{i\beta }(x)=a_{i\beta }^{\alpha j}(x),\ \ x\in \Omega
\end{align}
(cf. \cite[Eq. (3.2)]{Oleinik}, \cite[Eqs. (6), (7)]{duffy}).
Note that the {symmetry conditions \eqref{Stokes-sym} {\it do not imply} the symmetry
$a_{ij}^{\alpha \beta }(x)=a^{\beta\alpha}_{ji}(x)$, which will be generally not assumed in the paper}.
In addition, we assume that the coefficients satisfy the following {\it ellipticity} condition, which asserts that there exists a constant $c_{\mathbb A} >0$ such that for almost all $x\in\Omega$,
\begin{align}
\label{mu}
{a_{ij}^{\alpha \beta }(x)\xi _{i\alpha }\xi _{j\beta }\geq c_{\mathbb A}^{-1}|\boldsymbol\xi|^2}\,
\ \ \forall\ \boldsymbol\xi =(\xi _{i\alpha })_{i,\alpha =1,\ldots ,n}\in {\mathbb R}^{n\times n}\ \mbox{ with } \boldsymbol\xi=\boldsymbol\xi^\top \mbox{ and }
\sum_{i=1}^n\xi _{ii}=0,
\end{align}
where $|\boldsymbol\xi |^2=\xi _{i\alpha }\xi _{i\alpha }$.
Note that the ellipticity { condition} \eqref{mu} is assumed only for all {\it symmetric} matrices $\boldsymbol\xi =(\xi _{i\alpha })_{i,\alpha =1,\ldots ,n}\in {\mathbb R}^{n\times n}$, cf. \cite[Eqs. (3.1), (3.3)]{Oleinik}, having zero matrix trace, $\sum_{i=1}^n\xi _{ii}=0$.

In view of \eqref{Stokes-1}, ${\mathbb A}$ is endowed with the norm
\begin{align}\label{A-norm}
\|{\mathbb A}\|_{L_\infty (\Omega)}:=\max_{i,j,\alpha ,\beta \in\{1,\ldots, n\}}
\left\{
\|a_{ij}^{\alpha \beta }\|_{L_\infty (\Omega)}
\right\}.
\end{align}

The symmetry conditions \eqref{Stokes-sym} allow us to express the operator $\boldsymbol{\mathbb L}$ in the equivalent forms
\begin{align}
\label{L-oper-global}
&(\boldsymbol{\mathbb L}{\bf u})_i=\partial _\alpha\left(a_{ij}^{\alpha \beta }E_{j\beta }({\bf u})\right)=\partial _\alpha\left(a_{ij}^{\alpha \beta }\partial _\beta u_j\right),\ \ i=1,\ldots ,n,\\
\label{Stokes-0}
&{\boldsymbol{\mathbb L}{\bf u}= 
\partial _\alpha\left(A^{\alpha \beta }\partial _\beta {\bf u}\right)}.
\end{align}

Note that the {first} equality in \eqref{L-oper-global} has not been encountered in \cite{K-M-W-2}, where the coefficients of the forth order tensor ${\mathbb A}$ have been assumed to satisfy the strong ellipticity condition similar to the second condition in \eqref{mu} but for all (not only symmetric) matrices $\boldsymbol\xi =(\xi _{i\alpha })_{i,\alpha =1,\ldots ,n}\in {\mathbb R}^{n\times n}$ (see \cite[Eqs. (2)-(3)]{K-M-W-2}). The more restrictive ellipticity condition in \cite{K-M-W-2}  allowed to explore there the associated non-symmetric pseudostress setting.
In this paper we require the symmetry conditions \eqref{Stokes-sym} and the ellipticity condition \eqref{mu} only for symmetric matrices $\boldsymbol\xi =(\xi _{i\alpha })_{i,\alpha =1,\ldots ,n}\in {\mathbb R}^{n\times n}$, and develop our results in the symmetric stress setting.
{This approach allows us to obtain properties of layer potentials for the Stokes system with $L_\infty $ variable coefficients generalizing well known results for constant coefficients.}

Let ${\bf u}$ be an unknown vector field, $\pi $ be an unknown scalar field, and ${\bf f}$ be a given vector field defined in $\Omega \subseteq {\mathbb R}^n$. 
Then the equations
\begin{equation}
\label{Stokes}
\boldsymbol{\mathcal L}({\bf u},\pi ):=\boldsymbol{\mathbb L}{\bf u}-\nabla \pi={\bf f},\
{\rm{div}}\ {\bf u}=g \mbox{ in } \Omega
\end{equation}
determine the Stokes system which describes viscous compressible fluid flows with variable anisotropic viscosity coefficient tensor
$\mathbb A$
depending on the physical properties of the fluid, such as, e.g., the given fluid temperature (cf. \cite{duffy}, \cite{Ni-Be}). If $g=0$ then the fluid is incompressible.

{According to \eqref{L-oper-global} and \eqref{Stokes-0}, the Stokes operator $\boldsymbol{\mathcal L}$ can be written in any of the equivalent forms}
\begin{align}
\label{Stokes-new}
{\boldsymbol{\mathcal L}({\bf u},\pi )=\partial _\alpha\left(A^{\alpha \beta }\partial _\beta {\bf u}\right)-\nabla \pi ,\ \ \left(\boldsymbol{\mathcal L}({\bf u},\pi )\right)_i=\partial _\alpha\left(a_{ij}^{\alpha \beta }E_{j\beta }({\bf u})\right)-\partial _i\pi ,\ \ i=1,\ldots ,n\,.}
\end{align}

The anisotropic Stokes system \eqref{Stokes} is Agmon-Douglis-Nirenberg elliptic (see, e.g., \cite[Definition 6.2.3]{H-W}, and Lemma \ref{ADN-system}). 

\subsection{{\bf Isotropic case}}
For the {\it isotropic case}, the viscosity tensor ${\mathbb A}$ in
\eqref{Stokes-1}
has the form
{(cf., e.g., Appendix III, Part I, Section 1 in \cite{Temam}),}
\begin{align}
\label{isotropic}
{a_{ij}^{\alpha \beta}{(x)}={\lambda{(x)} \delta _{i\alpha }\delta _{j\beta }}+\mu{(x)} \left(\delta_{\alpha j}\delta _{\beta i}+\delta_{\alpha \beta }\delta _{ij}\right),\ 1\leq i,j, \alpha ,\beta \leq n}
\end{align}
where {$\lambda,\mu\in L_{\infty }(\Omega)$ and}
\begin{align}
\label{mu-0}
c_\mu^{-1}\leq\mu(x) \leq {c_\mu} \mbox{ for a.e. } x\in\Omega
\end{align}
with a constant $c_\mu>0$.
Then
$$
a_{ij}^{\alpha\beta}(x)\xi_{i\alpha}\xi_{j\beta}
=\lambda(x)(\xi_{ii})^2+2\mu(x)\xi_{i\alpha}\xi_{i\alpha}
=2\mu(x)\xi_{i\alpha}\xi_{i\alpha}=2\mu(x)|\boldsymbol\xi|^2\ge c_\mu^{-1}|\boldsymbol\xi|^2\quad
\mbox{for a.e. } x\in \Omega,
$$
for any symmetric matrix
$\boldsymbol\xi =(\xi _{i\alpha })_{1\leq i,\alpha \leq n}\in {\mathbb R}^{n\times n}$ such that
$\xi_{ii}=\sum_{i=1}^n\xi_{ii}=0$.
Therefore, the symmetric ellipticity condition in \eqref{mu} is satisfied as well, and hence our results are also applicable to the {\it Stokes system in the isotropic case}. If $\mu >0$ is a constant and $g=0$, then \eqref{Stokes} reduces to the well known isotropic incompressible Stokes system with constant viscosity $\mu$.

\section{Functional setting and preliminary results}
\label{preliminaries}
\setcounter{equation}{0}

Let $\Omega _{+}$ be a bounded Lipschitz domain in ${\mathbb R}^n$, i.e., an open connected set whose boundary $\partial {\Omega }$ is locally the graph of a Lipschitz function and is connected.
We further assume that $n\geq 3$ unless explicitly stated otherwise.
Sometimes we will write just $\Omega$ instead of $\Omega_+$.
Let $\Omega _{-}:={\mathbb R}^n\setminus \overline{{\Omega }}_{+}$ be the corresponding exterior Lipschitz domain. Let $\mathring E_\pm$ denote the operator of extension {of functions} by zero outside $\Omega_\pm$.

\subsection{\bf $L_2$-based Sobolev spaces}
Given a Banach space ${\mathcal X}$, its topological dual is denoted by ${\mathcal X}'$, and the notation $\langle \cdot ,\cdot \rangle _X$ means the duality pairing of two dual spaces defined on a set $X\subseteq {\mathbb R}^n$.

Let ${\mathcal S}({\mathbb R}^n)$ be the Schwartz space of rapidly decaying smooth functions on ${\mathbb R}^n$, and let ${\mathcal S}'({\mathbb R}^n)$ be the corresponding topological dual space, i.e., the space of tempered distributions. Let ${\mathcal F}$ and ${\mathcal F}^{-1}$ denote the Fourier transform and its inverse defined on ${\mathcal S}({\mathbb R}^n)$ 
and generalized to the space ${\mathcal S}'({\mathbb R}^n)$. Let $H^1({\mathbb R}^n)$ and $H^1({\mathbb R}^n)^n$ denote the $L_2$-based Sobolev (Bessel potential) spaces
\begin{align}
\label{bessel-potential}
&\!H^1({\mathbb R}^n):=\left\{f\in {\mathcal S}'({\mathbb R}^n):\|f\|_{H^1({\mathbb R}^n)}=\big\|{\mathcal F}^{-1}[(1+|\xi |^2)^{\frac{1}{2}}{\mathcal F}f]\big\|_{L_2({\mathbb R}^n)}<\infty \right\}\,,
\\
&H^1({\mathbb R}^n)^n:=\{f=(f_1,\ldots ,f_n):f_j\in H^1({\mathbb R}^n),\ j=1,\ldots ,n\}\,.
\label{bessel-potential2}
\end{align}
The dual of $H^1({\mathbb R}^n)$ is the space $H^{-1}({\mathbb R}^n)$.

Let {$\Omega '$ be either a bounded Lipschitz domain, or the exterior of a bounded Lipschitz domain in ${\mathbb R}^n$.}
Let ${\mathcal D}(\Omega '):=C^{\infty }_{0}(\Omega ')$ denote the space of infinitely differentiable functions with compact support in $\Omega '$, equipped with the inductive limit topology.
Let ${\mathcal D}'(\Omega ')$ denote the corresponding space of distributions on $\Omega '$, i.e., the dual of the space ${\mathcal D}(\Omega ')$. Let $L_2(\Omega ')$ denote the Lebesgue space of (equivalence classes of) measurable, square-integrable functions on $\Omega '$, and $L_{\infty }(\Omega ')$ denotes the space of (equivalence classes of) essentially bounded measurable functions on $\Omega '$.
Let also
\begin{align}
\label{spaces-Sobolev-inverse}
&H^1({\Omega '}):=\{f\in {\mathcal D}'(\Omega '):\exists \, F\in H^1({\mathbb R}^n)
\mbox{ such that } F_{|{\Omega '}}=f\}\,,
\end{align}
where $_{|\Omega'}=r_{_{\Omega'}}$ denotes the {operator of restriction to} $\Omega '$.

The space $\widetilde{H}^1({\Omega '})$ is the closure of ${\mathcal D}(\Omega ')$ in $H^1({\mathbb R}^n)$. It can be also characterized as
\begin{align}\label{2.5}
&\widetilde{H}^1({\Omega '}):=\left\{\widetilde{f}\in H^1({\mathbb R}^n):{\rm{supp}}\,
\widetilde{f}\subseteq \overline{\Omega '}\right\},
\end{align}
where ${\rm{supp}}f:=\overline{\{x\in {\mathbb R}^n:f(x)\neq 0\}}$ (see, e.g., \cite[Theorem 3.33]{Lean}).

Similar to definition \eqref{bessel-potential2}, $H^1({\Omega '})^n$ and $\widetilde{H}^1({\Omega '})^n$ are the spaces of vector-valued functions whose components belong to the scalar spaces $H^1({\Omega '})$ and $\widetilde{H}^1({\Omega '})$, respectively.

The $L_2$-based Sobolev space $W_2^1(\Omega ')$ is defined as $W_2^1(\Omega '):=\left\{f\in L_2(\Omega '):\nabla f\in L_2(\Omega ')^n\right\}$ and formula
\begin{align}
\label{Sobolev}
\|f\|_{W_2^1(\Omega ')}=\|f\|_{L_2(\Omega ')}+\|\nabla f\|_{L_2(\Omega ')^n}
\end{align}
defines a norm on this space.

The above assumption about $\Omega '$ implies that $H^1(\Omega ')=W_2^1(\Omega ')$ with equivalent norms (cf., e.g., \cite[Theorem 3.18]{Lean} and \cite[Theorem 2.4.1]{M-W}).

The space $\widetilde{H}^1({\Omega '})$ can be identified with the space $\mathring{H}^1(\Omega ')$, which is defined as the closure of ${\mathcal D}(\Omega ')$ in ${H}^1({\Omega '})$ (cf., e.g., \cite[Theorem 3.33]{Lean}). In addition, the assumption that {the set $\Omega'$ has a compact Lipschitz boundary} implies that $\mathring{H}^1(\Omega ')$ can be equivalently described as the space of all functions in ${H}^1(\Omega ')$ with null traces on the boundary of $\Omega '$} (cf., e.g., \cite[Proposition 3.3]{Lean}).

On the other hand, since the space ${\mathcal D}(\overline{\Omega '})$ (the restriction of functions from 
${\mathcal D}({\mathbb R}^n)$ onto $\Omega '$) is dense in ${H}^{1}({\Omega '})$, the dual of ${H}^{1}({\Omega '})$, {denoted by $\widetilde{H}^{-1}(\Omega ')$}, is a space of distributions. Moreover, {the assumption that $\Omega '$ is a bounded Lipschitz domain, or the exterior of a bounded Lipschitz domain in ${\mathbb R}^n$} yields that the following spaces can be isomorphically identified (cf., e.g., \cite[(1.9)]{Fa-Me-Mi}, \cite[Theorem 3.14]{Lean})
\begin{equation}
\label{duality-spaces} \left({H}^{1}({\Omega '})\right)'=\widetilde{H}^{-1}({\Omega '}),\ \ {H}^{-1}({\Omega '})=\left(\widetilde{H}^{1}({\Omega '})\right)'.
\end{equation}

The boundary Sobolev space $H^s(\partial \Omega )$, $0<s<1$, can be defined by
\begin{align*}
H^s(\partial \Omega)=\left\{f\in L_2(\partial \Omega): \int _{\partial \Omega}\int_{\partial \Omega }\frac{|f({\bf x})-f({\bf y})|^2}{|{\bf x}-{\bf y}|^{n-1+2s}}d\sigma _{{\bf x}}d\sigma _{{\bf y}}<\infty \right\}\,,
\end{align*}
where $\sigma _{\bf y}$ is the surface measure on $\partial \Omega $ (see, e.g., \cite[Proposition 2.5.1]{M-W}). The dual of $H^s(\partial \Omega )$ is the space $H^{-s}(\partial \Omega )$, 
and we set $H^0(\partial \Omega)\!=\!L_2(\partial \Omega )$.
Let $H^{s}(\partial \Omega)^n$ denote the space of vector-valued functions whose components belong to $H^{s}(\partial \Omega)$. The dual of $H^{s}(\partial \Omega)^n$ is the space $H^{-s}(\partial \Omega)^n$.

All $L_2$-based Sobolev spaces mentioned above are Hilbert spaces.
The following well-known trace theorem holds true
(see \cite{Co}, \cite[Proposition 3.3]{J-K1}, \cite[Lemma 2.6]{Mikh}, \cite[Theorem 2.5.2]{M-W}).
\begin{thm}
\label{trace-operator1} Let ${\Omega }\!:=\!\Omega _{+}$ be a bounded Lipschitz domain of ${\mathbb R}^n$  with connected boundary $\partial \Omega $,
and let $\Omega _{-}\!:=\!{\mathbb R}^n\setminus \overline{\Omega }$ be the corresponding
exterior domain.
Then there exist linear bounded trace operators $\gamma_{\pm }\!:\!H^1({\Omega }_{\pm })\!\to \!H^{\frac{1}{2}}({\partial\Omega })$ such that $\gamma_{\pm }f\!=\!f_{|{{\partial\Omega }}}$ for any $f\!\in \!C^{\infty }(\overline{\Omega }_{\pm })$.
{The operators $\gamma _{\pm }$ are surjective and have $($non-unique$)$ linear and bounded right inverse operators} $\gamma ^{-1}_{\pm }\!:\!H^{\frac{1}{2}}({\partial\Omega })\!\to \!H^1({\Omega }_{\pm }).$ The trace operator $\gamma :{H}^1({\mathbb R}^n)\to H^{\frac{1}{2}}(\partial \Omega )$ can also be considered and is linear and bounded\footnote{The trace operators defined on Sobolev spaces of vector fields on ${\Omega }_{\pm }$ or ${\mathbb R}^n$ are also denoted by $\gamma_{\pm }$ and $\gamma $, respectively.}.
\end{thm}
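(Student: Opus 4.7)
The plan is a standard reduction of the trace theorem on a Lipschitz domain to the model case of the half-space via localization, bi-Lipschitz flattening of $\partial\Omega$, and a density argument; the three claims (boundedness of $\gamma_\pm$, boundedness of $\gamma$, existence of a bounded right inverse) will all follow from the same machinery.

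First I would handle the model case $H^1(\mathbb{R}^n_\pm)\to H^{1/2}(\mathbb{R}^{n-1})$. For a Schwartz function the estimate is obtained by Fourier transform in the tangential variables, starting from the pointwise bound
\begin{equation*}
|\hat f(\xi',0)|^2\le 2\int_0^\infty|\hat f(\xi',x_n)|\,|\partial_n\hat f(\xi',x_n)|\,dx_n\,,
\end{equation*}
followed by Cauchy--Schwarz and integration in $\xi'$ against the weight $(1+|\xi'|^2)^{1/2}$. Surjectivity is given by the explicit right inverse
\begin{equation*}
(Eg)(x',x_n):=\mathcal F^{-1}_{\xi'\mapsto x'}\Bigl(\hat g(\xi')\,e^{-x_n\sqrt{1+|\xi'|^2}}\Bigr),
\end{equation*}
which maps $H^{1/2}(\mathbb{R}^{n-1})$ boundedly into $H^1(\mathbb{R}^n_+)$ and recovers $g$ as its trace on $\{x_n=0\}$; a reflection $x_n\mapsto -x_n$ handles $\mathbb{R}^n_-$.

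Next I would globalize. By compactness of $\partial\Omega$ one selects finitely many open sets $U_k\subset\mathbb{R}^n$ covering $\partial\Omega$ together with bi-Lipschitz charts $\Phi_k:U_k\to B$ (a ball) that flatten $U_k\cap\partial\Omega$ to $B\cap\{x_n=0\}$ and $U_k\cap\Omega_\pm$ to $B\cap\mathbb{R}^n_\pm$; let $\{\chi_k\}$ be a $C^\infty$ partition of unity subordinate to $\{U_k\}$. Using Rademacher's theorem and the chain rule for Lipschitz maps, the push-forward $f\mapsto f\circ\Phi_k^{-1}$ is bounded from $H^1(U_k\cap\Omega_\pm)$ to $H^1(B\cap\mathbb{R}^n_\pm)$, and the bi-Lipschitz change of variables on the surface produces a norm on $H^{1/2}(\partial\Omega)$ equivalent to the sum of pulled-back Slobodeckij norms. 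Combining these with the half-space estimate, for $f\in C^\infty(\overline{\Omega}_\pm)$ one gets $\|f|_{\partial\Omega}\|_{H^{1/2}(\partial\Omega)}\le C\|f\|_{H^1(\Omega_\pm)}$. Since $C^\infty(\overline{\Omega}_\pm)$ is dense in $H^1(\Omega_\pm)=W_2^1(\Omega_\pm)$, the operator $\gamma_\pm$ is defined by continuous extension; the same localization applied to restrictions of $C^\infty_0(\mathbb R^n)$-functions produces the bounded operator $\gamma:H^1(\mathbb R^n)\to H^{1/2}(\partial\Omega)$. A bounded right inverse $\gamma_\pm^{-1}$ is assembled from $E$: given $g\in H^{1/2}(\partial\Omega)$, localize by $\chi_k g$, push to $\mathbb{R}^{n-1}$ via $\Phi_k$, apply $E$, pull back by $\Phi_k^{-1}$, truncate with an interior cut-off supported in $U_k$, and sum over $k$. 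Non-uniqueness of the lifting is visible in the freedom to choose the cut-offs and the particular model extension.

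The main obstacle is the Lipschitz (not $C^1$) regularity of $\partial\Omega$: one must know that bi-Lipschitz maps preserve $H^1$ (via Rademacher and the chain rule for weakly differentiable functions) and preserve the Slobodeckij seminorm on $H^{1/2}$ (via two-sided comparability of surface distances and of the Jacobian factors of the induced surface measure). Both facts are classical and I would quote them from McLean \cite{Lean} and Mitrea--Wright \cite{M-W} rather than reprove them; once they are in hand the rest of the argument is routine bookkeeping with the partition of unity.
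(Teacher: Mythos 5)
Your argument is correct, but note that the paper does not prove this theorem at all: it is stated as a classical result and simply referenced to Costabel, Jerison--Kenig, Mikhailov, and Mitrea--Wright (Theorem 2.5.2 there), so there is no in-paper proof to compare against. What you have written is precisely the standard proof that underlies those references: the half-space estimate via the tangential Fourier transform and the inequality $|\hat f(\xi',0)|^2\le 2\int_0^\infty|\hat f|\,|\partial_n\hat f|\,dx_n$, the explicit lifting $Eg=\mathcal F^{-1}\bigl(\hat g(\xi')e^{-x_n\sqrt{1+|\xi'|^2}}\bigr)$, and then localization with a partition of unity and bi-Lipschitz flattening, quoting the invariance of $H^1$ and of the Slobodeckij $H^{1/2}$ seminorm under bi-Lipschitz changes of variables. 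Two small points of bookkeeping deserve care if you were to write this out in full. First, for the exterior domain the density statement should be phrased via restrictions to $\Omega_-$ of functions in ${\mathcal D}(\mathbb R^n)$ (as the paper does), since a generic element of $C^\infty(\overline\Omega_-)$ need not lie in $H^1(\Omega_-)$; with the paper's definition of $H^1(\Omega_-)$ by restriction from $H^1(\mathbb R^n)$ this also gives the boundedness of $\gamma$ immediately, since $\gamma=\gamma_\pm\circ r_{\Omega_\pm}$. Second, in assembling the right inverse the interior cut-off multiplying the pulled-back extension must equal $1$ on ${\rm supp}\,\chi_k$ (not merely be supported in $U_k$) so that the trace of the $k$-th piece is exactly $\chi_k g$ and the sum recovers $g$. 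With these adjustments your proof is complete and matches the cited literature; it neither gains nor loses generality relative to what the paper invokes.
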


Note that any function $u\in H^1_{{\rm loc}}({\mathbb R}^n)$ has the jump
\begin{equation}
\label{jump-notation}
\left[{\gamma }(u)\right]:=\gamma _+(u)-\gamma _{-}(u)
\end{equation}
equal to zero across $\partial \Omega $. 

Other properties of Sobolev spaces can be found in \cite{Agr-1,J-K1,Lean,M-W,Triebel}.

\subsection{\bf  Weighted Sobolev spaces}
\label{S2.2}
Let $|{\bf x}|=(x_1^2+\cdots +x_n^2)^{\frac{1}{2}}$ denote the Euclidean distance of a point ${\bf x}=(x_1,\ldots ,x_n)\in {\mathbb R}^n$ to the origin of ${\mathbb R}^n$. Let $\rho $ be the weight function
\begin{align}
\label{rho}
\rho ({\bf x})=(1+|{\bf x}|^2)^{\frac{1}{2}}\,.
\end{align}

\subsubsection{\bf Weighted Sobolev spaces on ${\mathbb R}^n$}
\label{2.2.1}

The weighted Lebesgue space $L_2(\rho^{-1};{\mathbb R}^n)$ { defined} by
\begin{align}
\label{Lp-weight}
L_2(\rho^{-1};{\mathbb R}^n):=\left\{f\in {{\mathcal D}'({\mathbb R}^n)}: {\rho }^{-1} f\in L_{2}({\mathbb R}^n)\right\},
\end{align}
is a Hilbert space with respect to the inner product and the associated norm
\begin{align}
\label{Lp-weight2}
&(f,g)_{L_2(\rho^{-1};{\mathbb R}^n)} :=\int_{\mathbb R^n}fg{\rho }^{-2}dx,\ \|f\|^2_{L_2(\rho^{-1} ;{\mathbb R}^n)}:= (f,f)_{L_2(\rho^{-1};{\mathbb R}^n)}.
\end{align}

We also consider the weighted Sobolev space
\begin{align}
\label{weight-1}
{\mathcal H}^1({\mathbb R}^n):=
\left\{f\in {\mathcal D}'({\mathbb R}^n):{\rho }^{-1}f\in L_2({\mathbb R}^n),\ \nabla f\in L_2({\mathbb R}^n)^n\right\},\  n\geq 3
\end{align}
(cf. \cite[Definition 1.1]{Al-Am}, \cite[Theorem I.1]{Ha}), { which} is also a Hilbert space with the norm defined by
\begin{align}
\label{weight-2p}
\|f\|_{{\mathcal H}^1({\mathbb R}^n)}^2:=
\left\|{\rho }^{-1}f\right\|_{L_2({\mathbb R}^n)}^2+\|\nabla f\|_{L_2({\mathbb R}^n)^n}^2\,.
\end{align}
The space ${\mathcal D}({\mathbb R}^n)$ is dense in ${\mathcal H}^1({\mathbb R}^n)$ (cf., e.g., {\rd \cite[p. 727]{Al-Am-1}, and \cite[\mn Th\'{e}or\`{e}me I.1]{Ha} and \cite[Proposition 2.1]{Sa-Se} in the case $n=3$}), and, thus, the dual ${\mathcal H}^{-1}({\mathbb R}^n)$ of ${\mathcal H}^1({\mathbb R}^n)$ is a space of distributions.
Let us consider the semi-norm
\begin{align}
\label{seminorm-R3}
|f|_{{\mathcal H}^1({\mathbb R}^n)}:=\|\nabla f\|_{L_2({\mathbb R}^n)^n}.
\end{align}
This semi-norm is a norm on the space ${\mathcal H}^1({\mathbb R}^n)$
and is equivalent to the norm $\|\cdot \|_{{\mathcal H}^1({\mathbb R}^n)}$, given by \eqref{weight-2p}
(cf., e.g., \cite[Theorem 1.1]{Al-Am-1}).

In view of Lemma 2.5 of Kozono and Sohr \cite{Kozono-Shor}, the divergence operator ${\rm{div}}:{\rd {\mathcal H}^1({\mathbb R}^n)^n}\to L_2({\mathbb R}^n)$
is surjective and has a bounded right inverse.
Moreover, Remark 3.8(i) in \cite{Al-Am-1} and Proposition 2.4(i) in \cite{Kozono-Shor} imply that for $n\geq 3$, the weighted Sobolev space ${\mathcal H}^1({\mathbb R}^n)$ 
can be also characterized as
\begin{align}
\label{weight-Lp}
{\rd {\mathcal H}^1({\mathbb R}^n) 
=\left\{u\in L_{\frac{2n}{n-2}}({\mathbb R}^n):\nabla u\in L_2({\mathbb R}^n)^n\right\},}
\end{align}
with equivalent norms,
and the map ${\rm{div}}:{\mathcal H}^1({\mathbb R}^n{\mn)^n}\to L_2({\mathbb R}^n)$ is surjective and has a bounded right inverse. 
Thus, there is a constant $c=c(n)>0$ such that for any $g\in L_2({\mathbb R}^n)$, there exists {\rd ${\bf u}\in {\mathcal H}^1({\mathbb R}^n)^n$} satisfying
\begin{align}
\label{surj-div-p}
{\rm{div}}\, {\bf u}=g \mbox{ and } \|{\bf u}\|_{{\mathcal H}^1({\mathbb R}^n\mn)^n}\leq c\|g\|_{L_2({\mathbb R}^n)}.
\end{align}

\subsubsection{\bf Weighted Sobolev spaces on exterior Lipschitz domains}
The weighted Sobolev space ${\mathcal H}^1(\Omega _{-})$ can be defined as in \eqref{weight-1} with $\Omega _{-}$ in place of ${\mathbb R}^n$. Therefore,
\begin{align}
\label{ext-p-weight}
{\mathcal H}^1(\Omega _-):=
\left\{v\in {\mathcal D}'(\Omega _{-}):{\rho }^{-1}v\in L_2(\Omega _{-}),\ \nabla v\in L_2(\Omega _{-})^n\right\},\ n\geq 3\,,
\end{align}
and is a Hilbert space with a norm given by \eqref{weight-2p} with $\Omega _-$ in place of ${\mathbb R}^n$ (see, e.g., \cite[Definition 1.1]{Al-Am}). {The space {\rd $\widetilde{\mathcal H}^{-1}(\Omega _{-})$} is the dual of the space ${\mathcal H}^1(\Omega _{-})$.}

Next we mention some useful properties of these spaces. 
First, note that the space ${\mathcal D}(\overline{\Omega }_{-})$ is dense in ${\mathcal H}^1(\Omega _-)$. 
Moreover, the functions of ${\mathcal H}^1(\Omega _-)$ belong to $H^1(D)$ for any bounded domain $D$ contained in $\Omega _-$ (see also \cite{Al-Am}).
Since $H^1(\Omega _-)\subset {\mathcal H}^1(\Omega _-)$, the statement of Theorem \ref{trace-operator1} extends also to the weighted Sobolev space ${\mathcal H}^1({\Omega }_{-})$. Therefore, there exists a bounded linear and surjective {exterior trace operator}
\begin{align}
\label{ext-trace}
\gamma_{-}:{\mathcal H}^1({\Omega }_{-})\to H^{\frac{1}{2}}({\partial\Omega })
\end{align}
which has a (non-unique) bounded linear right inverse $\gamma^{-1}_-:H^{\frac{1}{2}}({\partial\Omega })\!\to \! \mathcal{H}^1({\Omega }_-)$ (see \cite[Lemma 2.2]{K-L-M-W}, \cite[Theorem 2.3, Lemma 2.6]{Mikh}, {\cite[p. 69]{Sa-Se}}).
{The trace operator $\gamma :{\mathcal H}^1({\mathbb R}^n)\to H^{\frac{1}{2}}({\partial\Omega })$ defined by $\gamma (u)=\gamma_{+}(u_+)=\gamma_{-}(u_-)$ for any $u\in {\mathcal H}^1({\mathbb R}^n)$, where $u_\pm :=u|_{_{\Omega _\pm }}$,
is bounded linear and surjective as well (cf., e.g., \cite[Theorem 2.3, Lemma 2.6]{Mikh}, \cite[(2.2)]{B-H}).}

Let us now consider the space $\mathring{\mathcal H}^1(\Omega _{-})$ as the closure of the space ${\mathcal D}({\Omega }_{-})$ with respect to the norm $\|\cdot \|_{{\mathcal H}^1(\Omega _{-})}$
defined as in \eqref{weight-2p}, with $\Omega _{-}$ in place of ${\mathbb R}^n$ (cf., e.g., \cite{AGG1997}, \cite[Definition 1.1]{Al-Am}, \cite[Ch.1, Theorem 2.1]{Giroire1987}). This is a Hilbert space that can be also characterized as
\begin{equation}
\label{property}
\mathring{\mathcal H}^1(\Omega _{-})=\big\{v\in {{\mathcal H}^1}(\Omega _{-}):\gamma_{-}v=0 \mbox{ on } \partial \Omega \big\}
\end{equation}
(see \cite[(1.2)]{AGG1997}, \cite[Theorem 4.2]{B-M-M-M}).
{The space ${\mathcal D}(\Omega _-)$ is dense in $\mathring{\mathcal H}^1(\Omega _{-})$. Hence, the dual of $\mathring{\mathcal H}^1(\Omega _{-})$ denoted by ${\mathcal H}^{-1}(\Omega _{-})$ is a subspace of ${\mathcal D}'(\Omega _-)$.} {In addition, the semi-norm
\begin{align}
\label{seminorm}
|f|_{{\mathcal H}^1(\Omega _{-})}:=\|\nabla f\|_{L_2(\Omega _{-})^n}
\end{align}
is a norm on $\mathring{\mathcal H}^1(\Omega _{-})$ that is equivalent to the full norm $\|\cdot \|_{{\mathcal H}^1(\Omega _{-})}$ given by \eqref{weight-2p} with $\Omega _-$ in place of ${\mathbb R}^n$ (cf., e.g., \cite[Theorem 1.2]{AGG1997}, \cite[Theorem 1.2 (ii)]{Al-Am}).}

We need also the space $\widetilde{\mathcal H}^1(\Omega _{-})\!\subset \!{\mathcal H}^1(\mathbb R^n)$, defined as the closure of ${\mathcal D}({\Omega }_{-})$ in ${\mathcal H}^1(\mathbb R^n)$. This space can be also characterized as (see, e.g., \cite[(2.9)]{B-M-M-M})
\begin{align}
\widetilde{\mathcal H}^1(\Omega _{-})=\left\{u\in {\mathcal H}^1(\mathbb R^n):{\rm{supp}}\, {u}\subseteq \overline{\Omega }_{-}\right\},
\end{align}
and can be identified isomorphically with $\mathring{\mathcal H}^1(\Omega _{-})$ via the operator $\mathring E_{-}$ of extension by zero outside $\Omega _{-}$.

{By ${\mathcal H}^{\pm 1}({\mathbb R}^n)^n$ and ${\mathcal H}^{\pm 1}(\Omega _{-})^n$ we denote the spaces of vector-valued functions or distributions whose components belong to the spaces ${\mathcal H}^1({\mathbb R}^n)$ and ${\mathcal H}^1(\Omega _{-})$, respectively.}

\begin{rem}
\label{bounded-weight}
{The weighted Sobolev space ${\mathcal H}^{1}(\Omega _+)$ can be defined as in formula \eqref{weight-1} with $\Omega _{+}$ in place of ${\mathbb R}^n$. The dual of the space ${\mathcal H}^{1}(\Omega _+)$ is denoted by $\widetilde{\mathcal H}^{-1}(\Omega _+)$.
Let also $\mathring{\mathcal H}^{1}(\Omega _+)$ be the weighted space defined as the closure of the space ${\mathcal D}(\Omega _+)$ in ${\mathcal H}^{1}(\Omega _+)$, and let ${\mathcal H}^{-1}(\Omega _+)$ be its dual.
Since $\Omega _+$ is a bounded Lipschitz domain, we have that ${{\mathcal H}^{1}(\Omega _+)=H^{ 1}(\Omega _+)}$ and
${{\mathcal H}^{-1}(\Omega _+)=H^{-1}(\Omega _+)}$
$($with equivalent norms$)$}.
\end{rem}

\subsubsection{\bf Weighted Sobolev spaces on ${\mathbb R}^n\setminus \partial \Omega $}
We also consider the weighted space
\begin{align}
\label{Omega-pm}
{\mathcal H}^1({\mathbb R}^n\setminus \partial \Omega ):=
\left\{{ f\in L_{2}(\rho^{-1};{\mathbb R}^n) :
\nabla f\in L_2(\Omega_\pm)^n}
\right\},\quad { n\geq 3}\,.
\end{align}
This is a Hilbert space with the norm { defined by}
\begin{align}
\label{standard-weight-p}
\|{f}\|^2_{\mathcal H^1(\mathbb R^n\setminus\partial\Omega)}
=\|\rho ^{-1}{f}\|^2_{L_2({\mathbb R}^n)} +\|\nabla {f}\|^2_{L_2(\Omega_-)^{n}}+\|\nabla {f}\|^2_{L_2(\Omega_+)^{n}}\,,
\end{align}
which is equivalent to the norm $(\|f\|^2_{H^1(\Omega_+)}+\|f\|^2_{\mathcal H^1(\Omega_-)})^{1/2}$ on $\mathcal H^1(\mathbb R^n\setminus\partial\Omega)$.

{Note that \mbox{$f|_{\Omega_+}\in {H}^1(\Omega _+)$} and \mbox{$f|_{\Omega_-}\in {\mathcal H}^1(\Omega _{-})$}, whenever \mbox{$f\in \mathcal H^1(\mathbb R^n\setminus\partial\Omega)$}, and $f$ could have a {\it jump across $\partial \Omega $} denoted by $\left[{\gamma }(f)\right]:={\gamma }_{+}(f)-{\gamma }_{-}(f)={\gamma }_{+}(f_+)-{\gamma }_{-}(f_-)$, where $f_\pm :=f|_{\Omega _\pm }$.
However, if ${\bf f}\in {\mathcal H}^1({\mathbb R}^n\setminus \partial \Omega )$ and $\left[{\gamma }(f)\right]=0$ then ${\bf f}\in {\mathcal H}^1({\mathbb R}^n)$, and conversely, if ${\bf f}\in {\mathcal H}^1({\mathbb R}^n)$, then $\left[{\gamma }f)\right]=0$ (see {Lemma B1} and \cite[Theorem 5.13]{B-M-M-M}).}

We conclude the review of $L_2$-based weighted Sobolev spaces with the
asymptotic behaviour of functions from the spaces ${\mathcal H}^1({\mathbb
R}^n)$ and ${\mathcal H}^1(\Omega _-)$. Let $n\geq 3$ and $S^{n-1}$ be the
unit sphere in ${\mathbb R}^n$. Then any $u$ in ${\mathcal H}^1({\mathbb
R}^n)$ or in ${\mathcal H}^1(\Omega _{-})$ vanishes at infinity in the sense
of Leray, i.e.,
\begin{align}
\lim _{r\to \infty }\int _{S^{n-1}}|u(r{\bf y})|d\sigma _{\bf y}=0,
\end{align}
{\mn which can be deduced, e.g., from \cite[Lemma 1.1(i)]{Al-Am-1} for $n\geq
3$ (cf. also \cite[Lemma 2.1, Remark 2.4]{Amrouche-3} for $n=3$).}

\subsection{The conormal derivative for the Stokes system with $L_{\infty}$ viscosity coefficient tensor}
As above, $\boldsymbol{\mathbb L}$ is the divergence form second-order elliptic differential operator given by \eqref{Stokes-0}, and the coefficients $A^{\alpha \beta }$ of the anisotropic tensor ${\mathbb A}=\left(A^{\alpha \beta }\right)_{1\leq \alpha ,\beta \leq n}$ are $n\times n$ matrix-valued functions in $L_{\infty }({\mathbb R}^n)^{n\times n}$, with bounded measurable, real-valued entries $a_{ij}^{\alpha \beta }$, satisfying the symmetry and ellipticity conditions \eqref{Stokes-sym} and \eqref{mu}. Moreover, {${\boldsymbol{\mathcal L}}$ is the Stokes operator given by \eqref{Stokes-new}}.
Let $\boldsymbol\nu =(\nu _1,\ldots ,\nu _n)^\top$ denote the outward unit normal to $\Omega_{ +}$, which is defined a.e. on $\partial {\Omega }$.

In the special case when $({\bf u},\pi)\!\in \!C^1(\overline \Omega_{\pm})^n\!\times \!C^0(\overline\Omega_{\pm})$,
the {\em classical} interior and exterior conormal derivatives (i.e., the {\it boundary traction fields}) for the Stokes system
\begin{equation}
\label{Stokes-new-1}
\boldsymbol{\mathcal L}({\bf u},\pi )=\boldsymbol{\mathbb L}{\bf u}-\nabla \pi={\bf f},
\ \
{\rm{div}}\, {\bf u}=g\  \mbox{ in } \Omega _\pm ,
\end{equation}
where ${\bf f}\in L_2(\Omega _\pm )^n$, $g\in L_2(\Omega _\pm )$, are defined by the formula
\begin{align}
\label{2.37-}
{{\bf t}^{{\rm{c}}\pm}({\bf u},\pi ):=-{\gamma_\pm}{\pi}\, \boldsymbol\nu +\mathrm{T}^{{\rm{c}}\pm }{\bf u},}
\end{align}
where $\mathrm{T}^{{\rm{c}}\pm }{\bf u}$ are the conormal derivatives of ${\bf u}$ on $\partial \Omega $ associated with the operator ${\mathbb L}$ and defined by
\begin{align}
{\mathrm{T}^{{\rm{c}}\pm }{\bf u}:=\gamma _\pm (A^{\alpha \beta }\partial _\beta {\bf u})\nu _\alpha }
\end{align}
(cf., e.g., \cite{Choi-Dong-Kim-JMFM}). In view of \eqref{Stokes-sym}, we obtain that\footnote{Here and in the sequel, the notation $\pm $ applies
to the conormal derivatives from $\Omega _\pm $, respectively.}
\begin{align}\label{E2.28}
\left(\mathrm{T}^{{\rm{c}}\pm }{\bf u}\right)_i=\gamma _\pm \big(a_{ij}^{\alpha \beta }\partial _\beta u_j\big)\nu _\alpha
=a_{ij}^{\alpha \beta }E_{j\beta }({\bf u})\nu _\alpha \,,
\end{align}
where $E_{j\beta }({\bf u}):=\frac{1}{2}\left(\partial _j u_\beta +\partial _\beta u_j\right)$.%
\footnote{Note that another type of conormal derivative, where $E_{j\beta }({\bf u})$ is replaced by its deviator,
$D_{j\beta }({\bf u})=E_{j\beta }({\bf u})-\frac{1}{n}\delta_{j\beta}E_{mm}({\bf u})$ in the formulas like
\eqref{E2.28} and further on, has been considered in \cite{FP-Mik2019} for the isotropic case.
Both types of conormal derivatives coincide for incompressible fluids.}

{Note that for the isotropic case \eqref{isotropic},
the classical conormal derivatives ${\bf t}^{{\rm{c}}\pm}({\bf u},\pi )$ reduce to the well known formulas in the isotropic compressible case {(cf., e.g., Appendix III, Part I, Section 1 in \cite{Temam}),}
\begin{align}
\label{2.37-partiucular}
\left({\bf t}^{{\rm{c}}\pm}({\bf u},\pi )\right)_i=-{\gamma_\pm}{\pi}\, \nu_i
+\lambda ({\rm{div}}\, {\bf u})\nu_i+2\mu E_{i\alpha }({\bf u})\nu _\alpha ,\ \ i=1,\ldots ,n.
\end{align}

For the classical conormal derivatives defined by \eqref{2.37-}-\eqref{E2.28},} the {\it first Green formula}
\begin{align}
\label{special-v}
{{\pm}\left\langle {\bf t}^{{\rm{c}}\pm }({\bf u},\pi ),
\boldsymbol\varphi \right\rangle _{_{\!\partial\Omega  }}}
&={\left\langle a_{ij}^{\alpha \beta }E_{j\beta }({\bf u}),E_{i\alpha }(\boldsymbol\varphi )\right\rangle _{\Omega_{\pm}}-\langle \pi,{\rm{div}}\, \boldsymbol\varphi \rangle _{\Omega_{\pm}}
+\left\langle {\mathcal L}({\bf u},\pi ),\boldsymbol\varphi\right\rangle _{{\Omega_{\pm}}}}\quad \forall \ \boldsymbol\varphi \in {\mathcal D}({\mathbb R}^n)^n
\end{align}
holds and suggests the following definition of the {\it generalized conormal derivative} for the Stokes system with $L_{\infty}$ {viscosity coefficient tensor} in the setting of weighted Sobolev spaces (cf., e.g., \cite[Lemma 3.2]{Co}, \cite[Lemma 2.9]{K-L-M-W}, 
\cite[Definition 3.1, Theorem 3.2]{Mikh}, \cite[Theorem 10.4.1]{M-W}, see also \cite[Definition 2.4]{K-M-W-2}).

\begin{defn}
\label{conormal-derivative-var-Brinkman}
Let conditions {\eqref{Stokes-1} and \eqref{Stokes-sym}} hold.
For any $({\bf u}_{\pm},\pi_{\pm} ,{\tilde{\bf f}}_{\pm})\in {\mathcal H}^1({\Omega_{\pm}})^n\times L_2({\Omega_{\pm}})\times \widetilde{\mathcal H}^{-1}({\Omega_{\pm}})^n$,
the formal conormal derivatives ${\bf t}^{\pm}({\bf u}_{\pm},\pi_{\pm} ;{\tilde{\bf f}}_{\pm})\in H^{-\frac{1}{2}}(\partial\Omega )^n$ are defined in the weak form by
\begin{align}
\label{conormal-derivative-var-Brinkman-3}
\!\!\!\!\!{\pm}\left\langle {\bf t}^{\pm}({\bf u}_{\pm},\pi_{\pm} ;{\tilde{\bf f}}_{\pm}),{\boldsymbol\Phi }\right\rangle _{_{\!\partial\Omega  }}\!\!
:=\!\left\langle a_{ij}^{\alpha \beta }E_{j\beta }({\bf u_\pm}),
E_{i\alpha }(\gamma _\pm^{-1}\boldsymbol\Phi)\right\rangle _{\Omega_{\pm}}\!
-\!\left\langle {\pi_{\pm}},{\rm{div}}(\gamma^{-1}_{\pm}{\boldsymbol\Phi})\right\rangle _{\Omega_{\pm}}\!
+\!\left\langle {\tilde{\bf f}}_{\pm},\gamma^{-1}_{\pm}{\boldsymbol\Phi}\right\rangle _{{\Omega_{\pm}}},
\end{align}
for any $\boldsymbol\Phi\!\in\!H^{\frac{1}{2}}(\partial\Omega )^n$, where $\gamma^{-1}_{\pm}:H^{\frac{1}{2}}(\partial\Omega )^n\to {\mathcal H}^{1}({\Omega_{\pm}})^n$ are $($non-unique$)$ bounded right inverses to the trace operators
$\gamma_{\pm}:{\mathcal H}^{1}({\Omega_{\pm}})^n\to H^{\frac{1}{2}}(\partial\Omega )^n$.

Moreover, if $({\bf u}_{\pm},\pi_{\pm} ,{\tilde{\bf f}}_{\pm})\!\in \!{\pmb{\mathcal H}}^1({\Omega_{\pm}},{\boldsymbol{\mathcal L}})$, where
\begin{align}
\label{conormal-derivative-var-Brinkman-1}
{\pmb{\mathcal H}}^1({\Omega_{\pm}},\boldsymbol{\mathcal L})
:=\Big\{&{({\bf v}_{\pm},q_{\pm},{\tilde{\boldsymbol\phi}}_{\pm})}\in {\mathcal H}^1({\Omega_{\pm}})^n\times L_2({\Omega_{\pm}})\times \widetilde{\mathcal H}^{-1}({\Omega_{\pm}})^n: 
\boldsymbol{\mathcal L}({\bf v}_{\pm},q_{\pm})={\tilde{\boldsymbol\phi}}_{\pm}|_{\Omega_{\pm}}
\mbox{ in } {\Omega_{\pm}}\Big\},
\end{align}
then relations \eqref{conormal-derivative-var-Brinkman-3} define the generalized conormal derivatives
${\bf t}^{\pm}({\bf u}_{\pm},\pi_{\pm} ;{\tilde{\bf f}}_{\pm})\in H^{-\frac{1}{2}}(\partial\Omega )^n$.
\end{defn}

{ Some properties of the conormal derivatives are presented in the following assertion (cf. \cite{Co}, \cite[Theorem 5.3]{Mikh-3}, \cite[Lemma 2.9]{K-L-M-W}, \cite[Theorem 10.4.1]{M-W}).}
\begin{lem}
\label{lem-add1}
Let  conditions \eqref{Stokes-1} and \eqref{Stokes-sym} hold.
\begin{itemize}
\item[$(i)$] The formal conormal derivative operators
${\bf t}^{\pm }:{\mathcal H}^1({\Omega_{\pm}})^n\times L_2({\Omega_{\pm}})\times \widetilde{\mathcal H}^{-1}({\Omega_{\pm}})^n\to H^{-\frac{1}{2}}(\partial \Omega )^n$
are linear and bounded.
\item[$(ii)$]
The generalized conormal derivative operators ${\bf t}^{\pm }:\pmb{\mathcal H}^1(\Omega _{\pm },\boldsymbol{\mathcal L})\to H^{-\frac{1}{2}}(\partial \Omega )^n$ {with ${\boldsymbol{\mathcal L}}$ given by \eqref{Stokes}}, are linear and bounded, and {do not depend on the choice of the right inverse operators} $\gamma _{\pm }^{-1}$ in \eqref{conormal-derivative-var-Brinkman-3}.
In addition, for all ${\bf w}_\pm \!\in \!{\mathcal H}^{1}({\Omega_{\pm}})^n$ and $({\bf u}_{\pm},\pi_{\pm},{\tilde{\bf f}}_{\pm})\in {\pmb{\mathcal H}}^1({\Omega_{\pm}},{\boldsymbol{\mathcal L}})$,
the following Green formula holds
\begin{align}
\label{Green-particular-p}
{\pm}\big\langle {\bf t}^{\pm}({\bf u}_{\pm},\pi_{\pm};{\tilde{\bf f}}_{\pm}),\gamma_{\pm}{\bf w}_\pm\big\rangle _{_{\partial\Omega  }}
&={\left\langle a_{ij}^{\alpha \beta }E_{j\beta }({\bf u}_{\pm}),E_{i\alpha }({\bf w}_\pm)\right\rangle _{\Omega_{\pm}}-\langle {\pi_{\pm}},{\rm{div}}\, {\bf w}_\pm \rangle _{\Omega_{\pm}}+\langle {\tilde{\bf f}}_{\pm},{\bf w}_\pm \rangle _{{\Omega_{\pm}}}}\,.
\end{align}
\end{itemize}

\end{lem}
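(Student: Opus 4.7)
For part $(i)$, the plan is straightforward: linearity of ${\bf t}^\pm$ in the triple $({\bf u}_\pm,\pi_\pm,\tilde{\bf f}_\pm)$ is immediate from the trilinear structure of the right-hand side of \eqref{conormal-derivative-var-Brinkman-3}, and I would establish boundedness by estimating each of the three bulk terms separately via Cauchy--Schwarz, the coefficient bound \eqref{A-norm}, the duality between $\widetilde{\mathcal H}^{-1}(\Omega_\pm)^n$ and $\mathcal H^1(\Omega_\pm)^n$, and the boundedness of $\gamma^{-1}_\pm:H^{\frac12}(\partial\Omega)^n\to \mathcal H^1(\Omega_\pm)^n$ from Theorem~\ref{trace-operator1} and \eqref{ext-trace}. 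Taking the supremum over $\boldsymbol{\Phi}$ of unit $H^{\frac12}(\partial\Omega)^n$ norm then yields the operator norm bound.

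For part $(ii)$, my central tool is the following \emph{vanishing lemma} that I would prove first: for any $({\bf u}_\pm,\pi_\pm,\tilde{\bf f}_\pm)\in\pmb{\mathcal H}^1(\Omega_\pm,\boldsymbol{\mathcal L})$ and any $\Psi\in\mathring{\mathcal H}^1(\Omega_\pm)^n$,
\begin{equation*}
\mathcal I(\Psi):=\left\langle a_{ij}^{\alpha\beta}E_{j\beta}({\bf u}_\pm),E_{i\alpha}(\Psi)\right\rangle_{\Omega_\pm}-\left\langle \pi_\pm,{\rm{div}}\,\Psi\right\rangle_{\Omega_\pm}+\left\langle \tilde{\bf f}_\pm,\Psi\right\rangle_{\Omega_\pm}=0.
\end{equation*}
It suffices to check this on test functions $\Psi\in\mathcal D(\Omega_\pm)^n$ and extend by density, since each term of $\mathcal I$ is continuous on $\mathcal H^1(\Omega_\pm)^n$ by the estimates of part~$(i)$. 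For $\Psi\in\mathcal D(\Omega_\pm)^n$, the symmetry relations \eqref{Stokes-sym} allow me to rewrite the bilinear form as $\left\langle a_{ij}^{\alpha\beta}\partial_\beta (u_\pm)_j,\partial_\alpha\Psi_i\right\rangle_{\Omega_\pm}$; integrating by parts (legitimate since $\Psi$ has compact support in $\Omega_\pm$) produces $-\left\langle\boldsymbol{\mathbb L}{\bf u}_\pm,\Psi\right\rangle_{\Omega_\pm}$, and combining this with the distributional identity $\left\langle \pi_\pm,{\rm{div}}\,\Psi\right\rangle_{\Omega_\pm}=-\left\langle\nabla\pi_\pm,\Psi\right\rangle_{\Omega_\pm}$ leaves $-\left\langle\boldsymbol{\mathcal L}({\bf u}_\pm,\pi_\pm)-\tilde{\bf f}_\pm,\Psi\right\rangle_{\Omega_\pm}$, which vanishes in view of the defining identity $\boldsymbol{\mathcal L}({\bf u}_\pm,\pi_\pm)=\tilde{\bf f}_\pm|_{\Omega_\pm}$ of $\pmb{\mathcal H}^1(\Omega_\pm,\boldsymbol{\mathcal L})$.

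The independence of ${\bf t}^\pm({\bf u}_\pm,\pi_\pm;\tilde{\bf f}_\pm)$ on the choice of right inverse is then immediate: applying the vanishing lemma to $\Psi=\bigl(\gamma^{-1}_\pm-(\gamma^{-1}_\pm)'\bigr)\boldsymbol{\Phi}\in\mathring{\mathcal H}^1(\Omega_\pm)^n$ (any two right inverses differ by a trace-zero function) shows that both choices give the same value of $\pm\langle{\bf t}^\pm,\boldsymbol{\Phi}\rangle_{\partial\Omega}$. For the Green formula \eqref{Green-particular-p}, given ${\bf w}_\pm\in\mathcal H^1(\Omega_\pm)^n$ I would decompose ${\bf w}_\pm=\gamma^{-1}_\pm(\gamma_\pm{\bf w}_\pm)+\tilde{\bf w}_\pm$ with $\tilde{\bf w}_\pm\in\mathring{\mathcal H}^1(\Omega_\pm)^n$; substituting into the right-hand side of \eqref{Green-particular-p}, the first summand reproduces the definition \eqref{conormal-derivative-var-Brinkman-3} with $\boldsymbol{\Phi}=\gamma_\pm{\bf w}_\pm$, while the $\tilde{\bf w}_\pm$ contribution equals $\mathcal I(\tilde{\bf w}_\pm)=0$. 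Linearity and boundedness of ${\bf t}^\pm:\pmb{\mathcal H}^1(\Omega_\pm,\boldsymbol{\mathcal L})\to H^{-\frac12}(\partial\Omega)^n$ are inherited from part~$(i)$ since the graph norm on $\pmb{\mathcal H}^1(\Omega_\pm,\boldsymbol{\mathcal L})$ controls the product norm on $\mathcal H^1\times L_2\times\widetilde{\mathcal H}^{-1}$.

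The principal obstacle is the symmetry bookkeeping in the integration-by-parts step: one must verify that the partial symmetries \eqref{Stokes-sym} (which exchange $i\leftrightarrow\alpha$ and $j\leftrightarrow\beta$ separately) are sufficient to identify $a_{ij}^{\alpha\beta}E_{j\beta}({\bf u})E_{i\alpha}(\Psi)$ with $a_{ij}^{\alpha\beta}\partial_\beta u_j\,\partial_\alpha\Psi_i$, without ever invoking the pair-exchange symmetry $a_{ij}^{\alpha\beta}=a_{ji}^{\beta\alpha}$, which is explicitly not assumed here (in contrast to \cite{K-M-W-2}). This relies precisely on the two independent index swaps in \eqref{Stokes-sym} being available simultaneously.
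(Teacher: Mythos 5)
Your proposal is correct and follows essentially the same route as the paper: part $(i)$ is proved by the same duality/boundedness estimates, and your ``vanishing lemma'' is exactly the paper's identity \eqref{add-Green}, established in the same way (density of ${\mathcal D}(\Omega_\pm)^n$ in $\mathring{\mathcal H}^1(\Omega_\pm)^n$ plus the distributional equation $\boldsymbol{\mathcal L}({\bf u}_\pm,\pi_\pm)=\tilde{\bf f}_\pm|_{\Omega_\pm}$), after which the Green formula follows from the same decomposition of ${\bf w}_\pm$ into $\gamma_\pm^{-1}(\gamma_\pm{\bf w}_\pm)$ plus a trace-zero remainder, with the symmetry conditions \eqref{Stokes-sym} used, as in the paper, to pass between the $E$-form and the $\partial$-form of the bilinear term. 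The only (harmless) difference is that you derive the independence of ${\bf t}^\pm$ from the choice of $\gamma_\pm^{-1}$ directly from the vanishing lemma, whereas the paper delegates this step to the argument of \cite[Theorem 3.2]{Mikh}.
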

\begin{proof}
We use similar arguments to those { in} \cite[Lemma 2.2]{K-L-W1} (see also
\cite[Definition 3.1, Theorem 3.2]{Mikh}, \cite{Mikh-3}, 
\cite[Theorem 10.4.1]{M-W}).
First, we note that for $({\bf u}_{\pm},\pi_{\pm} ;{\tilde{\bf f}}_{\pm})\in {\mathcal H}^1({\Omega_{\pm}})^n\times L_2({\Omega_{\pm}})\times \widetilde{\mathcal H}^{-1}({\Omega_{\pm}})^n$, the right hand side in \eqref{conormal-derivative-var-Brinkman-3} defines a bounded linear functional acting on $\boldsymbol\Phi\in H^{\frac{1}{2}}(\partial\Omega )^n$, and, hence, the left hand side determines the formal conormal derivatives ${\bf t}^{\pm}({\bf u}_{\pm},\pi_{\pm} ;{\tilde{\bf f}}_{\pm})$ in $H^{-\frac{1}{2}}(\partial\Omega )^n$
and the formal conormal derivative operators ${\bf t}^{\pm }:{\mathcal H}^1({\Omega_{\pm}})^n\times L_2({\Omega_{\pm}})\times
\widetilde{\mathcal H}^{-1}({\Omega_{\pm}})^n \to H^{-\frac{1}{2}}(\partial\Omega )^n$
given by \eqref{conormal-derivative-var-Brinkman-3} are bounded. Therefore, the generalized conormal derivative operators ${\bf t}^{\pm }:\pmb{\mathcal H}^1(\Omega _{\pm },\boldsymbol{\mathcal L}) \to H^{-\frac{1}{2}}(\partial\Omega )^n$
are bounded as well.

Further, the property that the generalized conormal derivative operators
${\bf t}^{\pm }:\pmb{\mathcal H}^1(\Omega _{\pm },\boldsymbol{\mathcal L}) \to H^{-\frac{1}{2}}(\partial\Omega )^n$
defined by \eqref{conormal-derivative-var-Brinkman-3} are invariant with respect to the choice of a right inverse of the trace operator $\gamma _{\pm }:{\mathcal H}^{1}(\Omega _\pm )^n\to H^{\frac{1}{2}}(\partial \Omega )^n$ can be obtained with an argument similar to that for Theorem 3.2 in \cite{Mikh}.

Now let $({\bf u}_\pm ,\pi _\pm ,\tilde{\bf f}_\pm )\in \pmb{\mathcal H}^1(\Omega _{\pm },\boldsymbol{\mathcal L})$. According to formula \eqref{conormal-derivative-var-Brinkman-3}, we deduce the following equality
\begin{align}
\label{2.34}
\pm \left\langle {\bf t}^{\pm}({\bf u}_{\pm},\pi_{\pm} ;{\tilde{\bf f}}_{\pm}),\gamma_{\pm }{\bf w}_\pm \right\rangle _{_{\partial\Omega  }}&=
\left\langle A^{\alpha \beta }\partial _\beta ({\bf u}_{\pm}),\partial _\alpha \left(\gamma^{-1}_\pm (\gamma_{\pm }{\bf w}_\pm )\right)\right\rangle _{\Omega_{\pm}}\\
&\hspace{1em}-\left\langle {\pi_{\pm}},{\rm{div}}\left(\gamma^{-1}_\pm (\gamma_{\pm }{\bf w}_\pm )\right)\right\rangle _{\Omega_{\pm}}+\left\langle {\tilde{\bf f}}_{\pm},\gamma^{-1}_\pm (\gamma_{\pm }{\bf w}_\pm )\right\rangle _{{\Omega_{\pm}}}\nonumber\\
&=\left\langle A^{\alpha \beta }\partial _\beta ({\bf u}_\pm ),\partial _\alpha ({\bf w}_\pm )\right\rangle _{\Omega _\pm }-\big\langle \pi _\pm ,{\rm{div}\,}{\bf w}_\pm \big\rangle _{\Omega _\pm } +\left\langle \tilde{\bf f}_\pm ,{\bf w}_\pm \right\rangle _{\Omega _\pm }\nonumber\\
&\hspace{1em}+\left\langle A^{\alpha \beta }\partial _\beta ({\bf u}_\pm ),\partial _\alpha \left(\gamma^{-1}_\pm (\gamma_{\pm }{\bf w}_\pm )-{\bf w}_\pm \right)\right\rangle _{\Omega _\pm }\nonumber\\
&\hspace{1em}-\big\langle \pi _\pm ,{\rm{div}}\left(\gamma^{-1}_\pm (\gamma_{\pm }{\bf w}_\pm )-{\bf w}_\pm \right)\big\rangle _{\Omega _\pm }+\big\langle\tilde{\bf f}_\pm ,\gamma^{-1}_\pm \left(\gamma_{\pm }{\bf w}_\pm \right)-{\bf w}_\pm \big\rangle _{\Omega _\pm },\nonumber
\end{align}
for all ${\bf w}\in {\mathcal H}^{1}(\Omega _\pm )^n$. According to the property \eqref{property} and the equality $\gamma_\pm \left(\gamma^{-1}_\pm (\gamma_{\pm }{\bf w}_\pm )-{\bf w}_\pm \right)={\bf 0}$ on $\partial \Omega $, as well as the following equivalent description of the space $\mathring{\mathcal H}^{1}(\Omega _\pm )^n$,
\begin{align}
\label{null-trace}
\mathring{\mathcal H}^{1}(\Omega _\pm )^n=\left\{{\bf v}_\pm \in {\mathcal H}^{1}(\Omega _\pm )^n:\gamma_{\pm }{\bf v}_\pm ={\bf 0} \mbox{ on } \partial \Omega \right\}
\end{align}
(cf., e.g., \cite[(1.2)]{A-A}),
we obtain the membership relation
\begin{align}
\label{weight-trace}
\gamma^{-1}_\pm \left(\gamma_{\pm }{\bf w}_\pm \right)-{\bf w}_\pm \in \mathring{\mathcal H}^{1}(\Omega _\pm )^n\,.
\end{align}
Therefore, the Green formula \eqref{Green-particular-p} will follow from formula \eqref{2.34} if we show { that}
\begin{align}
\label{add-Green}
\left\langle A^{\alpha \beta }\partial _\beta ({\bf u}_\pm ),\partial _\alpha ({\bf v}_\pm )\right\rangle _{\partial \Omega }-\left\langle \pi _\pm ,{\rm{div}}\, {\bf v}_\pm \right\rangle _{\Omega _\pm }+\big\langle {\tilde{\bf f}}_\pm ,{\bf v}_\pm \big\rangle _{\Omega _\pm }=0\quad \forall \, {\bf v}_\pm \in \mathring{\mathcal H}^{1}({\Omega }_\pm )^n.
\end{align}
Since the space ${\mathcal D}(\Omega _\pm )^n$ is dense in $\mathring{\mathcal H}^{1}({\Omega }_\pm )^n$, we need to show identity \eqref{add-Green} only for the test functions ${\bf v}_\pm $ in ${\mathcal D}({\Omega }_\pm )^n$. Indeed, the membership of $({\bf u}_\pm ,\pi _\pm ,\tilde{\bf f}_\pm )$ in $\pmb{\mathcal H}^1(\Omega _{\pm },\boldsymbol{\mathcal L})$ implies the equality ${\mathcal L}({\bf u}_\pm ,\pi _\pm )\!=\!\tilde{\bf f}_{\pm }|_{\Omega _\pm }$ in the sense of distributions, and accordingly identity \eqref{add-Green} holds for any ${\bf v}_\pm \!\in \!{\mathcal D}(\Omega _\pm )^n$.

Finally, we note that {conditions \eqref{Stokes-sym} show that the second equality in \eqref{Green-particular-p} holds as well.}
\end{proof}
In the sequel we use the simplified notation ${\bf t}^{\pm}({\bf u}_{\pm},\pi_{\pm})$ for ${\bf t}^{\pm}({\bf u}_{\pm},\pi_{\pm};{\bf 0})$.

Let $\mathring{E}_\pm $ denote the operator of extension by zero outside $\Omega _\pm $. Thus, for a function $v_\pm $ from $\Omega _\pm $ to ${\mathbb{R}}$,
\begin{align}\label{ringE}
\mathring{E}_\pm (v_\pm )(x):=\left\{
\begin{array}{ll}
v_\pm (x)  & {\mathrm{if}}\ x\in \Omega _\pm \,,
\\
0  & {\mathrm{if}}\ x\in {\mathbb{R}}^{n}\setminus\Omega _\pm \,.
\end{array}
\right.
\end{align}
Let $\gamma $ be the trace operator from ${\mathcal H}^1({\mathbb R}^n)^n$ to $H^{\frac{1}{2}}(\partial \Omega )^n$.
For any
$({\bf u}_{\pm},\pi_{\pm},{\tilde{\bf f}}_{\pm})\in {\mathcal H}^1({\Omega_{\pm}})^n\times L_2({\Omega_{\pm}})\times
\widetilde{\mathcal H}^{-1}({\Omega_{\pm}})^n$,
let
\begin{align}
\label{u-pi-f}
&{\bf u}:=\mathring E_+{\bf u}_+ + \mathring E_-{\bf u}_-,\
\pi:=\mathring E_+\pi_+ + \mathring E_-\pi_-,\
\tilde{\bf f}:={\tilde{\bf f}}_+ + {\tilde{\bf f}}_-\,,
\end{align}
and the jump of the corresponding {formal or generalized} conormal derivatives is denoted by
\begin{align}
\label{jt}
&[{\bf t}({\bf u},\pi;\tilde{\bf f})]:=
\!{\bf t}^{+}({\bf u}_+,\pi_+;\tilde{\bf f}_+)\!-\!{\bf t}^{-}({\bf u}_-,\pi_-;\tilde{\bf f}_-).
\end{align}
{\rd Note that the inclusions ${\tilde{\bf f}}_{\pm}\in \widetilde{\mathcal H}^{-1}({\Omega_{\pm}})^n \subset {\mathcal H}^{-1}({\mathbb R}^n)^n$ imply that $\tilde{\bf f}={\tilde{\bf f}}_+ +{\tilde{\bf f}}_-$ belongs to the space ${\mathcal H}^{-1}({\mathbb R}^n)^n$.} 
In the special case $\tilde{\bf f}={\bf 0}$, we use the notation 
\begin{align}
\label{jt0}
&[{\bf t}({\bf u},\pi)]:=[{\bf t}({\bf u},\pi;{\bf 0})]
=\!{\bf t}^{+}({\bf u}_+,\pi_+)\!-\!{\bf t}^{-}({\bf u}_-,\pi_-).
\end{align}
Then Lemma \ref{lem-add1} implies the following assertion.
\begin{lem}
\label{lemma-add-new-1}
Let conditions \eqref{Stokes-1} and \eqref{Stokes-sym} hold.
For $({\bf u}_{\pm},\pi_{\pm},{\tilde{\bf f}}_{\pm})\in {\pmb{\mathcal H}}^1({\Omega_{\pm}},{\boldsymbol{\mathcal L}})$ given, let $({\bf u},\pi ,\tilde{\bf f})$ be defined as in \eqref{u-pi-f}.
Then the following identity holds for {any} ${\bf w}\!\in \!{\mathcal H}^{1}(\mathbb R^n)^n$,
\begin{align}
\label{Green-particular}
\!\!\!\!\!\big\langle [{\bf t}({\bf u},\pi;\tilde{\bf f})],\gamma{\bf w}\big\rangle _{_{\partial\Omega  }}
&\!=\!\left\langle a_{ij}^{\alpha \beta }E_{j\beta }({\bf u}_+),E_{i\alpha }({\bf w})\right\rangle _{\Omega_+}\!
{\mn+\left\langle a_{ij}^{\alpha \beta }E_{j\beta }({\bf u}_-),E_{i\alpha }({\bf w})\right\rangle _{\Omega_-}}\!
-\!\langle {\pi},{\rm{div}}\, {\bf w} \rangle_{\mathbb R^n}\!+\!\langle \tilde{\bf f},{\bf w} \rangle_{\mathbb R^n}.
\end{align}
\end{lem}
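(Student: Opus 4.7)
The proof is a direct consequence of Lemma \ref{lem-add1} applied separately in $\Omega_+$ and $\Omega_-$ and then summed. The plan is as follows.

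First, given $\mathbf{w}\in{\mathcal H}^1(\mathbb R^n)^n$, I set $\mathbf{w}_\pm:=\mathbf{w}|_{\Omega_\pm}$. The restriction operators ${\mathcal H}^1(\mathbb R^n)^n\to{\mathcal H}^1(\Omega_\pm)^n$ are bounded (since $H^1(\Omega_+)={\mathcal H}^1(\Omega_+)$ and ${\mathcal H}^1(\Omega_-)$ is defined by the same seminorms as ${\mathcal H}^1(\mathbb R^n)$), so $\mathbf{w}_\pm\in{\mathcal H}^1(\Omega_\pm)^n$. Moreover, because $\mathbf{w}$ has no jump across $\partial\Omega$, the two one-sided traces coincide with the two-sided trace: $\gamma_+\mathbf{w}_+=\gamma_-\mathbf{w}_-=\gamma\mathbf{w}$.

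Next, I apply the Green formula \eqref{Green-particular-p} of Lemma \ref{lem-add1}$(ii)$ to the triple $(\mathbf{u}_+,\pi_+,\tilde{\mathbf{f}}_+)\in\pmb{\mathcal H}^1(\Omega_+,\boldsymbol{\mathcal L})$ paired with $\mathbf{w}_+$, and analogously in $\Omega_-$ with $(\mathbf{u}_-,\pi_-,\tilde{\mathbf{f}}_-)$ and $\mathbf{w}_-$. This yields the two identities
\begin{align*}
\bigl\langle \mathbf{t}^{+}(\mathbf{u}_+,\pi_+;\tilde{\mathbf{f}}_+),\gamma\mathbf{w}\bigr\rangle_{\partial\Omega}
&=\bigl\langle a_{ij}^{\alpha\beta}E_{j\beta}(\mathbf{u}_+),E_{i\alpha}(\mathbf{w}_+)\bigr\rangle_{\Omega_+}
-\bigl\langle \pi_+,\mathrm{div}\,\mathbf{w}_+\bigr\rangle_{\Omega_+}
+\bigl\langle \tilde{\mathbf{f}}_+,\mathbf{w}_+\bigr\rangle_{\Omega_+},\\
-\bigl\langle \mathbf{t}^{-}(\mathbf{u}_-,\pi_-;\tilde{\mathbf{f}}_-),\gamma\mathbf{w}\bigr\rangle_{\partial\Omega}
&=\bigl\langle a_{ij}^{\alpha\beta}E_{j\beta}(\mathbf{u}_-),E_{i\alpha}(\mathbf{w}_-)\bigr\rangle_{\Omega_-}
-\bigl\langle \pi_-,\mathrm{div}\,\mathbf{w}_-\bigr\rangle_{\Omega_-}
+\bigl\langle \tilde{\mathbf{f}}_-,\mathbf{w}_-\bigr\rangle_{\Omega_-}.
\end{align*}
Adding these and recognizing the left-hand side as $\langle[\mathbf{t}(\mathbf{u},\pi;\tilde{\mathbf{f}})],\gamma\mathbf{w}\rangle_{\partial\Omega}$ by definition \eqref{jt} produces the required boundary pairing.

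Finally, I collect the volume terms. Since $\pi=\mathring E_+\pi_+ + \mathring E_-\pi_-$ and $\Omega_+\cup\Omega_-$ covers $\mathbb R^n$ up to a set of measure zero, the two pressure integrals add up to $\langle\pi,\mathrm{div}\,\mathbf{w}\rangle_{\mathbb R^n}$. For the source terms, the inclusions $\tilde{\mathbf{f}}_\pm\in\widetilde{\mathcal H}^{-1}(\Omega_\pm)^n$ place $\tilde{\mathbf{f}}=\tilde{\mathbf{f}}_++\tilde{\mathbf{f}}_-$ in ${\mathcal H}^{-1}(\mathbb R^n)^n$, and the duality pairings are compatible in the sense that $\langle\tilde{\mathbf{f}}_\pm,\mathbf{w}_\pm\rangle_{\Omega_\pm}=\langle\tilde{\mathbf{f}}_\pm,\mathbf{w}\rangle_{\mathbb R^n}$ (this follows from the identification of $\widetilde{\mathcal H}^{-1}(\Omega_\pm)$ with distributions supported in $\overline{\Omega}_\pm$, together with the density of ${\mathcal D}(\mathbb R^n)$ in ${\mathcal H}^1(\mathbb R^n)$). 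Hence the two source terms add up to $\langle\tilde{\mathbf{f}},\mathbf{w}\rangle_{\mathbb R^n}$, and the two symmetric-gradient terms stay split as written in \eqref{Green-particular}. No step presents any real difficulty; the only point requiring a brief justification is the compatibility of the $\widetilde{\mathcal H}^{-1}$-duality with the ${\mathcal H}^{-1}(\mathbb R^n)$-duality on the extended pair, which is standard.
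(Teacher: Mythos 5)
Your proof is correct and follows essentially the same route as the paper: the paper's own (very brief) argument is precisely to note that $\gamma_+{\bf w}=\gamma_-{\bf w}=\gamma{\bf w}$ for ${\bf w}\in{\mathcal H}^1({\mathbb R}^n)^n$ and then add the two Green formulas \eqref{Green-particular-p} from Lemma \ref{lem-add1}. Your extra remark on identifying the $\widetilde{\mathcal H}^{-1}(\Omega_\pm)$-pairings with the ${\mathcal H}^{-1}({\mathbb R}^n)$-pairing is a correct elaboration of a point the paper leaves implicit.
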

\begin{proof}
Note that $\gamma_+{\bf w}=\gamma_-{\bf w}=\gamma{\bf w}$ for any function ${\bf w}\in {\mathcal H}^{1}(\mathbb R^n)^n$. Then {formula \eqref{Green-particular-p} implies} the desired result.
\end{proof}
The following { assertion is immediately implied by} Lemma \ref{lemma-add-new-1} {\rd and the symmetry conditions \eqref{Stokes-sym}}.
\begin{lem}
\label{lemma-add-new}
Let  conditions \eqref{Stokes-1} and \eqref{Stokes-sym} hold.
Let the pair $({\bf u},\pi )$ in ${\mathcal H}^1({\mathbb R}^n\setminus \partial \Omega )^n\times L_2({\mathbb R}^n)$ be such that {$\boldsymbol{\mathcal L}({\bf u},\pi)\in L_2({\mathbb R}^n\setminus \partial \Omega )^n$} and ${\rm{div}}\, {\bf u}=0$ in ${\mathbb R}^n\setminus \partial \Omega $.
Let 
${\bf u}_\pm:=r_{\Omega_\pm}{\bf u}$,
$\pi_\pm:=r_{\Omega_\pm}\pi$,
$\tilde{\bf f}_\pm :=\mathring{E}_\pm r_{\Omega_{\pm}}\boldsymbol{\mathcal L}({\bf u},\pi)$,
and {$[{\bf t}({\bf u},\pi;{\bf f})]:={\bf t}^{+}({\bf u}_+,\pi_+;\tilde{\bf f}_+)\!-\!{\bf t}^{-}({\bf u}_-,\pi_-;\tilde{\bf f}_-)$.}
Then for all ${\bf w}\in {\mathcal H}^{1}(\mathbb R^n)^n$, the following formula holds
\begin{multline}
\label{jump-conormal-derivative-1}
\big\langle [{\bf t}({\bf u},\pi ;{\bf f})],\gamma{\bf w}\big\rangle _{_{\partial\Omega }}=
{\rd \left\langle A^{\alpha \beta }\partial _\beta{\bf u}_+,\partial _\alpha{\bf w}\right\rangle _{\Omega_+}
+\left\langle A^{\alpha \beta }\partial _\beta{\bf u}_-,\partial_\alpha{\bf w}\right\rangle _{\Omega_-}}
-\langle {\pi},{\rm{div}}\, {\bf w}\rangle_{\mathbb R^n}+\left\langle \boldsymbol{\mathcal L}({\bf u},\pi ),{\bf w}\right\rangle _{{\mathbb R}^n\setminus \partial \Omega}\\
=\left\langle a_{ij}^{\alpha \beta }E_{j\beta }({\bf u}_+),E_{i\alpha }({\bf w})\right\rangle _{\Omega_+}
+\left\langle a_{ij}^{\alpha \beta }E_{j\beta }({\bf u}_-),E_{i\alpha }({\bf w})\right\rangle _{\Omega_-}
\\
-\langle {\pi},{\rm{div}}\, {\bf w}\rangle_{\mathbb R^n}
+\left\langle \boldsymbol{\mathcal L}({\bf u},\pi ),{\bf w}\right\rangle _{{\mathbb R}^n\setminus \partial \Omega}\,.
\end{multline}
\end{lem}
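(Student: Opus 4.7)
The plan is to reduce the statement to Lemma \ref{lemma-add-new-1} by verifying that the triples $({\bf u}_\pm,\pi_\pm,\tilde{\bf f}_\pm)$ constructed from the data actually lie in $\pmb{\mathcal H}^1(\Omega_\pm,\boldsymbol{\mathcal L})$. First, since ${\bf u}\in{\mathcal H}^1({\mathbb R}^n\setminus\partial\Omega)^n$, the restrictions ${\bf u}_\pm=r_{\Omega_\pm}{\bf u}$ belong to ${\mathcal H}^1(\Omega_\pm)^n$, and $\pi_\pm=r_{\Omega_\pm}\pi\in L_2(\Omega_\pm)$. The hypothesis $\boldsymbol{\mathcal L}({\bf u},\pi)\in L_2({\mathbb R}^n\setminus\partial\Omega)^n$ ensures that $r_{\Omega_\pm}\boldsymbol{\mathcal L}({\bf u},\pi)\in L_2(\Omega_\pm)^n$, hence $\tilde{\bf f}_\pm=\mathring E_\pm r_{\Omega_\pm}\boldsymbol{\mathcal L}({\bf u},\pi)$ defines an element of $L_2({\mathbb R}^n)^n$ supported in $\overline{\Omega}_\pm$, which via the standard $L_2$-pairing embeds continuously into $\widetilde{\mathcal H}^{-1}(\Omega_\pm)^n$.

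Second, the distributional identity $\boldsymbol{\mathcal L}({\bf u}_\pm,\pi_\pm)=r_{\Omega_\pm}\boldsymbol{\mathcal L}({\bf u},\pi)=\tilde{\bf f}_\pm|_{\Omega_\pm}$ in $\Omega_\pm$ is immediate from the definitions, so $({\bf u}_\pm,\pi_\pm,\tilde{\bf f}_\pm)\in\pmb{\mathcal H}^1(\Omega_\pm,\boldsymbol{\mathcal L})$. Moreover, since $\partial\Omega$ has Lebesgue measure zero, one has ${\bf u}=\mathring E_+{\bf u}_++\mathring E_-{\bf u}_-$ and $\pi=\mathring E_+\pi_++\mathring E_-\pi_-$ as elements of ${\mathcal H}^1({\mathbb R}^n\setminus\partial\Omega)^n$ and $L_2({\mathbb R}^n)$, respectively; hence ${\bf u}$ and $\pi$ here agree with the quantities constructed in \eqref{u-pi-f}, and $\tilde{\bf f}:=\tilde{\bf f}_++\tilde{\bf f}_-$ coincides almost everywhere with $\boldsymbol{\mathcal L}({\bf u},\pi)$ on ${\mathbb R}^n\setminus\partial\Omega$.

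Third, I would invoke Lemma \ref{lemma-add-new-1} with these triples and with an arbitrary ${\bf w}\in{\mathcal H}^1({\mathbb R}^n)^n$. This produces directly the second equality in \eqref{jump-conormal-derivative-1}, once the term $\langle\tilde{\bf f},{\bf w}\rangle_{{\mathbb R}^n}$ is rewritten as $\langle\boldsymbol{\mathcal L}({\bf u},\pi),{\bf w}\rangle_{{\mathbb R}^n\setminus\partial\Omega}$ using the fact that $\tilde{\bf f}$ is an $L_2$ function agreeing with $\boldsymbol{\mathcal L}({\bf u},\pi)$ off $\partial\Omega$ (so the two pairings yield the same value). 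The first equality in \eqref{jump-conormal-derivative-1} then follows from the symmetry conditions \eqref{Stokes-sym}, which together with the symmetry of ${\mathbb E}({\bf w})$ give $a_{ij}^{\alpha\beta}E_{j\beta}({\bf u})E_{i\alpha}({\bf w})=a_{ij}^{\alpha\beta}\partial_\beta u_j\,\partial_\alpha w_i$ pointwise a.e., i.e., the equivalence of the symmetric-stress and the gradient forms already exploited in \eqref{L-oper-global}.

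The only delicate point is the identification of $\tilde{\bf f}_\pm$ as an element of $\widetilde{\mathcal H}^{-1}(\Omega_\pm)^n$ (so that Lemma \ref{lem-add1} and Lemma \ref{lemma-add-new-1} are applicable), together with the consistency check that the pairing $\langle\tilde{\bf f},{\bf w}\rangle_{{\mathbb R}^n}$ appearing in Lemma \ref{lemma-add-new-1}, where ${\bf w}$ is merely in ${\mathcal H}^1({\mathbb R}^n)^n$ and not in $L_2({\mathbb R}^n)^n$, reduces to the volume integral of $\boldsymbol{\mathcal L}({\bf u},\pi)\cdot{\bf w}$ over ${\mathbb R}^n\setminus\partial\Omega$; but this is covered by the weighted-Sobolev embedding ${\mathcal H}^1({\mathbb R}^n)\subset L_{2n/(n-2)}({\mathbb R}^n)$ recalled in \eqref{weight-Lp}, which makes the pairing of an $L_2$-function of compact-or-global support against an ${\mathcal H}^1$-test function the usual integral, so the remaining computation is bookkeeping.
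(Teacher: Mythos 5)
Your overall route coincides with the paper's: the paper obtains this lemma in one line from Lemma \ref{lemma-add-new-1} (applied with $\tilde{\bf f}_\pm=\mathring E_\pm r_{\Omega_\pm}\boldsymbol{\mathcal L}({\bf u},\pi)$, so that $\tilde{\bf f}$ agrees with $\boldsymbol{\mathcal L}({\bf u},\pi)$ off $\partial\Omega$) together with the symmetry conditions \eqref{Stokes-sym}, and your verification that $\boldsymbol{\mathcal L}({\bf u}_\pm,\pi_\pm)=\tilde{\bf f}_\pm|_{\Omega_\pm}$, the identification of $({\bf u},\pi,\tilde{\bf f})$ with \eqref{u-pi-f}, and the pointwise identity $a_{ij}^{\alpha\beta}E_{j\beta}({\bf u})E_{i\alpha}({\bf w})=a_{ij}^{\alpha\beta}\partial_\beta u_j\,\partial_\alpha w_i$ giving the first line of \eqref{jump-conormal-derivative-1} are all in order.

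The one step that does not hold as you justify it is precisely your ``delicate point'' on the exterior domain. An arbitrary ${\bf g}\in L_2(\Omega_-)^n$ extended by zero does \emph{not} define an element of $\widetilde{\mathcal H}^{-1}(\Omega_-)^n=({\mathcal H}^{1}(\Omega_-)^n)'$: a function $v\in{\mathcal H}^1(\Omega_-)$ satisfies only $\rho^{-1}v\in L_2(\Omega_-)$ (equivalently $v\in L_{2n/(n-2)}$), so the Sobolev embedding you invoke pairs $v$ with $L_{2n/(n+2)}(\Omega_-)$, or with $L_2(\rho;\Omega_-)$, but not with $L_2(\Omega_-)$; for instance $g(x)=|x|^{-n/2-\epsilon}$ with $0<\epsilon<1$ lies in $L_2(\Omega_-)$ while $\int_{\Omega_-}g\,v\,dx$ diverges for suitable $v\in{\mathcal H}^1(\Omega_-)$. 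Hence on the unbounded component the bare hypothesis $\boldsymbol{\mathcal L}({\bf u},\pi)\in L_2({\mathbb R}^n\setminus\partial\Omega)^n$ does not by itself place $\tilde{\bf f}_-$ in $\widetilde{\mathcal H}^{-1}(\Omega_-)^n$, which is what Definition \ref{conormal-derivative-var-Brinkman} and Lemma \ref{lemma-add-new-1} require, and indeed the pairing $\langle\boldsymbol{\mathcal L}({\bf u},\pi),{\bf w}\rangle_{{\mathbb R}^n\setminus\partial\Omega}$ in \eqref{jump-conormal-derivative-1} already presupposes this extra summability. The correct reading is that this membership (e.g. $\rho\,\boldsymbol{\mathcal L}({\bf u},\pi)\in L_2(\Omega_-)^n$, or directly $\tilde{\bf f}_-\in\widetilde{\mathcal H}^{-1}(\Omega_-)^n$) is an implicit hypothesis of the statement --- the paper glosses over the same point, and in every application of this lemma one has $\boldsymbol{\mathcal L}({\bf u},\pi)={\bf 0}$ --- rather than something that follows from the embedding \eqref{weight-Lp}; with that caveat your argument is the paper's.
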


\subsection{Conormal derivative of the adjoint Stokes system}
\label{adj-conormal}
Let $\boldsymbol{\mathbb L}$ be the divergence type elliptic operator given by \eqref{Stokes-0}. Then the formally adjoint $\boldsymbol{\mathbb L}^*$ of {the operator} $\boldsymbol{\mathbb L}$ is defined by
\begin{equation}
\label{Stokes-tr}
\boldsymbol{\mathbb L}^*{\bf u}
=\partial _\alpha\left(A^{*\alpha \beta}\partial_\beta{\bf u}\right)
:=\partial _\alpha\left(\left(A^{\beta \alpha }\right)^\top \partial _\beta {\bf u}\right),
\end{equation}
where ${A^{*\alpha \beta}=}\left(A^{\beta \alpha }\right)^\top$ is the transpose of the matrix $A^{\beta \alpha }$ for all $\alpha ,\beta =1,\ldots ,n$,
i.e.,
\begin{align}\label{2.45}
{A^{*\alpha \beta}
=\left(A^{\beta \alpha }\right)^\top
=\left(a^{*\alpha\beta}_{ij}\right)_{1\leq i,j\leq n}}
=\left(a^{\beta\alpha}_{ji}\right)_{1\leq i,j\leq n}.
\end{align}
Note that the coefficients of the operator $\boldsymbol{\mathbb L}^*$ {belong to $L_\infty (\Omega)^{n\times n}$ (cf. \eqref{Stokes-1})} and satisfy the 
ellipticity condition \eqref{mu} with the same constant $c_{\mathbb A}$.

If a pair $({\bf v},q)\in C^1(\overline \Omega_{\pm})^n\!\times \!C^0(\overline\Omega_{\pm})$ satisfies the formally adjoint Stokes system
\begin{equation}
\label{Stokes-new-adjoint}
\boldsymbol{\mathbb L}^*{\bf v}-\nabla q={\bf f}_*\ \
\mbox{ in } \Omega _\pm ,
\end{equation}
where ${\bf f}_*\in L_2(\Omega _\pm )^n$, then the corresponding classical conormal derivative is defined by
\begin{align}
\label{2.37-adj}
{\bf t}^{{\rm{c}}*\pm}({\bf v},q):=-{\gamma_\pm}q\boldsymbol\nu +\mathrm{T}^{{\rm{c}}*\pm }{\bf v},
\quad \mathrm{T}^{{\rm{c}}*\pm }{\bf v}
:=\gamma _\pm \left(\left(A^{\beta \alpha }\right)^\top \partial _\beta {\bf v}\right)\nu _\alpha .
\end{align}
If $({\bf v}_{\pm},q_{\pm},\tilde{\bf f}_{*\pm})\!\in \!{\mathcal H}^1({\Omega_{\pm}})^n\!\times \! L_2({\Omega_{\pm}})\!\times \!\widetilde{\mathcal H}^{-1}({\Omega_{\pm}})^n$ satisfies the following system (in distributional sense)
\begin{equation}
\label{Stokes-new-adj}
\boldsymbol{\mathcal L}_*({\bf v}_{\pm},q_{\pm})
:=\boldsymbol{\mathbb L}^*{\bf v}_\pm -\nabla q_\pm
={\tilde{\bf f}_{*\pm}}|_{\Omega _\pm}\ \
\mbox{ in } \Omega _\pm ,
\end{equation}
then we define the corresponding {\it {formal} conormal derivative}
${{\bf t}^{*\pm}}({\bf v}_{\pm},q_{\pm} ;\tilde{\bf f}_{*\pm})\!\in \! H^{-\frac{1}{2}}(\partial\Omega )^n$
by setting
\begin{align}
\label{conormal-derivative-var-Brinkman-3-adj}
\!\!\!\!\!{\pm}\left\langle {{\bf t}^{*\pm}}({\bf v}_{\pm},q_{\pm} ;\tilde{\bf f}_{*\pm}),
{\boldsymbol\Phi }\right\rangle _{_{\!\partial\Omega  }}&\!\!
:=\!\!{\left\langle (A^{\beta \alpha })^\top \partial _\alpha {\bf v}_{\pm},\partial _\beta (\gamma^{-1}_{\pm}{\boldsymbol\Phi})\right\rangle _{\Omega_{\pm}}}\!
-\!\left\langle {q_{\pm}},{\rm{div}}(\gamma^{-1}_{\pm}{\boldsymbol\Phi})\right\rangle _{\Omega_{\pm}}\!
+\!\left\langle \tilde{\bf f}_{*\pm},\gamma^{-1}_{\pm}{\boldsymbol\Phi}\right\rangle _{{\Omega_{\pm}}}\nonumber\\
&\!\!
=\!\left\langle A^{\alpha \beta }\partial _\beta (\gamma^{-1}_{\pm}{\boldsymbol\Phi}),
\partial_\alpha{\bf v}_{\pm}\right\rangle _{\Omega_{\pm}}\!
-\!\left\langle {q_{\pm}},{\rm{div}}(\gamma^{-1}_{\pm}{\boldsymbol\Phi})\right\rangle _{\Omega_{\pm}}\!
+\!\left\langle \tilde{\bf f}_{*\pm},\gamma^{-1}_{\pm}{\boldsymbol\Phi}\right\rangle _{{\Omega_{\pm}}},
\end{align}
for any $\boldsymbol\Phi\!\in\!H^{\frac{1}{2}}(\partial\Omega )^n$.
In addition, {if
$({\bf v}_{\pm},q_{\pm},\tilde{\bf f}_{*\pm})\!\in {\pmb{\mathcal H}}^1({\Omega_{\pm}},\boldsymbol{\mathcal L}_*)$
then ${\bf t}^{*\pm}({\bf v}_{\pm},q_{\pm} ;\tilde{\bf f}_{*\pm})$ becomes the generalized conormal derivative and}
an argument similar to that for \eqref{Green-particular-p} along with {relations \eqref{2.45}
imply} the Green formula
\begin{align}
\label{Green-particular-p-adj}
{\pm}\big\langle {{\bf t}^{*\pm}}({\bf v}_{\pm},q_{\pm};\tilde{\bf f}_{*\pm}),\gamma_{\pm}{\bf w}_\pm\big\rangle _{_{\partial\Omega  }}
&={\left\langle a_{ij}^{\alpha \beta }E_{j\beta }({\bf w}_{\pm}),E_{i\alpha }({\bf v}_\pm)\right\rangle _{\Omega_{\pm}}-\langle {q_{\pm}},{\rm{div}}\, {\bf w}_\pm \rangle _{\Omega_{\pm}}+\langle \tilde{\bf f}_{*\pm},{\bf w}_\pm \rangle _{{\Omega_{\pm}}}}\,,
\end{align}
and the following variant of Lemma~\ref{lemma-add-new}.
\begin{lem}
\label{lemma-add-new-adj}
Let  conditions \eqref{Stokes-1} and \eqref{Stokes-sym} hold.
Let the pair $({\bf v},q)$ in ${\mathcal H}^1({\mathbb R}^n\setminus \partial \Omega )^n\times L_2({\mathbb R}^n)$ be such that ${\boldsymbol{\mathcal L}_*({\bf v},q)}\in L_2({\mathbb R}^n\setminus \partial \Omega )^n$
in ${\mathbb R}^n\setminus \partial \Omega $. 
Let {${\bf v}_\pm:=r_{\Omega_\pm}{\bf v}$, $q_\pm:=r_{\Omega_\pm}q$,
$\tilde{\bf f}_{*\pm} :=\mathring{E}_\pm r_{\Omega_\pm}\boldsymbol{\mathcal L}_*({\bf v},q)$},
and
$[{\bf t}^*({\bf v},q;{\bf f}_*)]:={\bf t}^{*+}({\bf v}_+,q_+;\tilde{\bf f}_{*+})\!
-\!{\bf t}^{*-}({\bf v}_-,q_-;\tilde{\bf f}_{*-})$.
Then for any ${\bf w}\in {\mathcal H}^{1}(\mathbb R^n)^n$,
\begin{multline}
\label{jump-conormal-derivative-1-adj}
\big\langle [{\bf t}^*({\bf v},q;{\bf f}_*)],\gamma{\bf w}\big\rangle_{_{\partial\Omega }}
=\!\left\langle a_{ij}^{\alpha \beta }E_{j\beta }({\bf w}),E_{i\alpha }({\bf v}_+)\right\rangle _{\Omega_+}\!\!
+\!\left\langle a_{ij}^{\alpha \beta }E_{j\beta }({\bf w}),E_{i\alpha }({\bf v}_-)\right\rangle _{\Omega_-}\!
\\
-\!\langle {q},{\rm{div}}\, {\bf w}\rangle_{\mathbb R^n}\!
+\!\left\langle {\boldsymbol{\mathcal L}_*({\bf v},q )},{\bf w}\right\rangle_{{\mathbb R}^n\setminus\partial \Omega}\,.  
\end{multline}
\end{lem}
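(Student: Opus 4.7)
The plan is to mimic, in the adjoint setting, the strategy already used in the proof of Lemma \ref{lemma-add-new} (which itself follows from Lemma \ref{lemma-add-new-1} via the trace matching $\gamma_+{\bf w}=\gamma_-{\bf w}=\gamma{\bf w}$). Concretely, I will first verify that the triples $({\bf v}_\pm,q_\pm,\tilde{\bf f}_{*\pm})$ belong to ${\pmb{\mathcal H}}^1(\Omega_\pm,\boldsymbol{\mathcal L}_*)$, then apply the Green formula \eqref{Green-particular-p-adj} separately in $\Omega_+$ and $\Omega_-$, and finally add the two identities, exploiting the matching of the two-sided traces of any ${\bf w}\in{\mathcal H}^1(\mathbb R^n)^n$.

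For step one, the hypothesis ${\bf v}\in{\mathcal H}^1(\mathbb R^n\setminus\partial\Omega)^n$ gives ${\bf v}_\pm={\bf v}|_{\Omega_\pm}\in{\mathcal H}^1(\Omega_\pm)^n$, and similarly $q_\pm\in L_2(\Omega_\pm)$. Since $\boldsymbol{\mathcal L}_*({\bf v},q)\in L_2(\mathbb R^n\setminus\partial\Omega)^n$, the zero extension $\tilde{\bf f}_{*\pm}=\mathring E_\pm r_{\Omega_\pm}\boldsymbol{\mathcal L}_*({\bf v},q)$ lies in $L_2(\mathbb R^n)^n$, is supported in $\overline{\Omega}_\pm$, and hence belongs to $\widetilde{\mathcal H}^{-1}(\Omega_\pm)^n$; by construction it satisfies $\boldsymbol{\mathcal L}_*({\bf v}_\pm,q_\pm)=\tilde{\bf f}_{*\pm}|_{\Omega_\pm}$ in the distributional sense in $\Omega_\pm$. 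Thus $({\bf v}_\pm,q_\pm,\tilde{\bf f}_{*\pm})\in\pmb{\mathcal H}^1(\Omega_\pm,\boldsymbol{\mathcal L}_*)$ and the generalized conormal derivatives ${\bf t}^{*\pm}({\bf v}_\pm,q_\pm;\tilde{\bf f}_{*\pm})$ are well defined elements of $H^{-1/2}(\partial\Omega)^n$.

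Now, given any ${\bf w}\in{\mathcal H}^1(\mathbb R^n)^n$, the restrictions ${\bf w}_\pm:={\bf w}|_{\Omega_\pm}\in{\mathcal H}^1(\Omega_\pm)^n$ satisfy $\gamma_\pm{\bf w}_\pm=\gamma{\bf w}$. Applying the adjoint Green formula \eqref{Green-particular-p-adj} on $\Omega_+$ and on $\Omega_-$ with these test functions yields
\begin{align*}
\pm\big\langle {\bf t}^{*\pm}({\bf v}_\pm,q_\pm;\tilde{\bf f}_{*\pm}),\gamma{\bf w}\big\rangle_{\partial\Omega}
=\left\langle a_{ij}^{\alpha\beta}E_{j\beta}({\bf w}),E_{i\alpha}({\bf v}_\pm)\right\rangle_{\Omega_\pm}
-\langle q_\pm,\mathrm{div}\,{\bf w}\rangle_{\Omega_\pm}
+\langle \tilde{\bf f}_{*\pm},{\bf w}\rangle_{\Omega_\pm}.
\end{align*}
Subtracting the minus identity from the plus one produces the jump $[{\bf t}^*({\bf v},q;{\bf f}_*)]$ on the left-hand side.

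For the right-hand side, I assemble the pieces using $\Omega_+\cup\Omega_-=\mathbb R^n\setminus\partial\Omega$: the two bilinear terms in $a_{ij}^{\alpha\beta}E_{j\beta}({\bf w})E_{i\alpha}({\bf v}_\pm)$ keep their form, the pressure terms combine into $\langle q,\mathrm{div}\,{\bf w}\rangle_{\mathbb R^n}$ since $\partial\Omega$ has Lebesgue measure zero and $q\in L_2(\mathbb R^n)$, and finally $\langle\tilde{\bf f}_{*+},{\bf w}\rangle_{\Omega_+}+\langle\tilde{\bf f}_{*-},{\bf w}\rangle_{\Omega_-}=\langle\boldsymbol{\mathcal L}_*({\bf v},q),{\bf w}\rangle_{\mathbb R^n\setminus\partial\Omega}$ by the very definition of $\tilde{\bf f}_{*\pm}$. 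This yields \eqref{jump-conormal-derivative-1-adj}. The step that requires the most care is verifying that $\tilde{\bf f}_{*\pm}\in\widetilde{\mathcal H}^{-1}(\Omega_\pm)^n$ so that the generalized conormal derivative is actually applicable; but this is immediate from $\boldsymbol{\mathcal L}_*({\bf v},q)\in L_2(\mathbb R^n\setminus\partial\Omega)^n$ together with the zero-extension characterization of $\widetilde{\mathcal H}^{-1}(\Omega_\pm)^n$, so no genuine obstacle arises—the adjoint case is strictly parallel to Lemma \ref{lemma-add-new}.
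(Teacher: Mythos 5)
Your argument is correct and is essentially the paper's own: the paper obtains \eqref{jump-conormal-derivative-1-adj} precisely by applying the adjoint Green formula \eqref{Green-particular-p-adj} in $\Omega_+$ and in $\Omega_-$ and using that $\gamma_+{\bf w}=\gamma_-{\bf w}=\gamma{\bf w}$ for ${\bf w}\in{\mathcal H}^1(\mathbb R^n)^n$, exactly as you do (and, as in the paper, the membership of the zero-extended data $\tilde{\bf f}_{*\pm}$ in $\widetilde{\mathcal H}^{-1}(\Omega_\pm)^n$ is taken as part of the stated hypotheses rather than re-proved). One trivial wording point: since the two identities already carry the signs $\pm\big\langle {\bf t}^{*\pm}(\cdot),\gamma{\bf w}\big\rangle_{\partial\Omega}=\dots$, you add them rather than subtract, but your displayed formulas and the assembled right-hand side are the correct ones.
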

\subsection{Abstract mixed variational formulations}
\label{B-B-theory}
A major role in our analysis of mixed variational formulations is played by the following well-posedness result {\rd by Babu\u{s}ka \cite{Babuska} and Brezzi \cite[Theorem 1.1]{Brezzi}
(see also \cite[Theorem 2.34 {\mn and} Remark 2.35(i)]{Ern-Gu} and \cite{Brezzi-Fortin})}. 
\begin{thm}
\label{B-B}
Let $X$ and ${\mathcal M}$ be two real Hilbert spaces. Let $a(\cdot ,\cdot):X\times X\to {\mathbb R}$ and $b(\cdot ,\cdot):X\times {\mathcal M}\to {\mathbb R}$ be bounded bilinear forms. Let $f\in X'$ and $g\in {\mathcal M}'$. Let $V$ be the subspace of $X$ defined by
\begin{align}
\label{V}
V:=\left\{v\in X: b(v,q)=0\quad \forall \, q\in {\mathcal M}\right\}.
\end{align}
Assume that $a(\cdot ,\cdot ):V\times V\to {\mathbb R}$ is coercive, which means that there exists a constant $C_a>0$ such that
\begin{align}
\label{coercive}
a(w,w)\geq C_a^{-1}\|w\|_X^2\quad \forall \, w\in V,
\end{align}
and that $b(\cdot ,\cdot)\!:\!X\!\times \!{\mathcal M}\!\to \!{\mathbb R}$ satisfies
the {Babu\u{s}ka-Brezzi} condition
\begin{align}
\label{inf-sup-sm}
&\inf _{q\in {\mathcal M}\setminus \{0\}}\sup_{v\in X\setminus \{0\}}\frac{b(v,q)}{\|v\|_X\|q\|_{\mathcal M}}\geq C_b^{-1} \,,
\end{align}
with some constant $C_b>0$. Then the mixed variational {formulation}
\begin{equation}
\label{mixed-variational}
\left\{\begin{array}{ll}
a(u,v)+b(v,p)\hspace{-0.5em}&=f(v) \quad \forall \, v\in X,\\
b(u,q)&=g(q) \quad \forall \, q\in {\mathcal M},
\end{array}
\right.
\end{equation}
has a unique solution $(u,p)\in X\times {\mathcal M}$ and
\begin{align}
\label{mixed-Cu}
&\|u\|_{X}\leq C_a\|f\|_{X'}+C_b(1+\|a\|C_a)\|g\|_{\mathcal M'},\\
\label{mixed-Cp}
&\|p\|_{{\mathcal M}}\leq C_b(1+\|a\|C_a)\|f\|_{X'}+{\|a\|}C_b^{2}(1+\|a\|C_a)\|g\|_{\mathcal M'},
\end{align}
{where $\|a\|$ is the norm of the bilinear form $a(\cdot ,\cdot )$.}
\end{thm}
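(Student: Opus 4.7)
The plan is to follow the classical Babu\u{s}ka-Brezzi construction, reducing the coupled saddle-point system to two decoupled solvability problems on $V$ and its orthogonal complement, and then tracking constants carefully to recover the stated bounds \eqref{mixed-Cu}--\eqref{mixed-Cp}.

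First I would reformulate the inf-sup condition \eqref{inf-sup-sm} in operator terms. Let $B\colon X\to \mathcal M'$ be the bounded linear operator associated with $b$, i.e., $\langle Bv,q\rangle_{\mathcal M}=b(v,q)$. A standard closed-range argument shows that \eqref{inf-sup-sm} is equivalent to the statement that $B$ has closed range and that its restriction $B|_{V^\perp}\colon V^\perp\to \mathcal M'$ is an isomorphism with $\|(B|_{V^\perp})^{-1}\|\le C_b$; dually, $B^\ast\colon\mathcal M\to X'$ is injective with $B^\ast(\mathcal M)=V^\circ$ (the polar of $V$), satisfying $\|q\|_{\mathcal M}\le C_b\|B^\ast q\|_{X'}$. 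This equivalence (a Ne\v{c}as/closed-range-type lemma) is the analytic engine of the whole proof.

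Next, I would construct $u$ and $p$ in three stages. Using $B|_{V^\perp}$, I pick $u_g\in V^\perp$ with $b(u_g,q)=g(q)$ for all $q\in\mathcal M$ and $\|u_g\|_X\le C_b\|g\|_{\mathcal M'}$. The coercivity \eqref{coercive} together with the Lax-Milgram lemma applied on the Hilbert space $V$ yields a unique $u_0\in V$ with
\begin{equation*}
a(u_0,v)=f(v)-a(u_g,v)\quad\forall\,v\in V,
\end{equation*}
and $\|u_0\|_X\le C_a\bigl(\|f\|_{X'}+\|a\|\,\|u_g\|_X\bigr)$. Setting $u:=u_0+u_g$, the second equation of \eqref{mixed-variational} holds by construction, and the functional $\ell(v):=f(v)-a(u,v)$ on $X$ vanishes on $V$, hence belongs to $V^\circ$. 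The dual form of the inf-sup condition then produces a unique $p\in\mathcal M$ with $B^\ast p=\ell$, i.e., $b(v,p)=f(v)-a(u,v)$ for all $v\in X$, which is exactly the first equation of \eqref{mixed-variational}, together with $\|p\|_{\mathcal M}\le C_b\|\ell\|_{X'}\le C_b\bigl(\|f\|_{X'}+\|a\|\,\|u\|_X\bigr)$.

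Uniqueness is then a short argument: the difference $(u_1-u_2,p_1-p_2)$ of two solutions satisfies $u_1-u_2\in V$ by the second equation, so testing the first equation with $v=u_1-u_2\in V$ eliminates the $b$-term and coercivity forces $u_1=u_2$; injectivity of $B^\ast$ then yields $p_1=p_2$. Finally, for the quantitative estimates, I would insert $\|u_g\|_X\le C_b\|g\|_{\mathcal M'}$ into the bound for $\|u_0\|_X$ to obtain \eqref{mixed-Cu}, and then substitute that into $\|p\|_{\mathcal M}\le C_b(\|f\|_{X'}+\|a\|\,\|u\|_X)$ to get \eqref{mixed-Cp}. The main obstacle, and the only nontrivial step, is the operator-theoretic equivalence between \eqref{inf-sup-sm} and the surjectivity of $B|_{V^\perp}$ onto $\mathcal M'$ with quantitative inverse bound $C_b$; all other steps are routine once that is in hand.
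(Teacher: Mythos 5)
Your proposal is correct, and there is in fact no in-paper proof to compare it with: Theorem \ref{B-B} is quoted in the paper as a classical result, with the proof deferred to the cited references of Babu\v{s}ka, Brezzi, Ern--Guermond and Brezzi--Fortin. Your argument is the standard one from those sources: the operator-theoretic reformulation of \eqref{inf-sup-sm} that you identify as the only nontrivial step is precisely the equivalence the paper records separately as Lemma \ref{surj-inj-inf-sup} (with the quotient $X/V$ identified with the orthogonal complement of $V$, which is legitimate in the Hilbert setting), and your constant tracking --- $\|u_g\|_X\leq C_b\|g\|_{\mathcal M'}$, $\|u_0\|_X\leq C_a\left(\|f\|_{X'}+\|a\|\,\|u_g\|_X\right)$, and $\|p\|_{\mathcal M}\leq C_b\left(\|f\|_{X'}+\|a\|\,\|u\|_X\right)$ --- combines to give exactly the bounds \eqref{mixed-Cu} and \eqref{mixed-Cp}. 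The uniqueness argument (testing with $u_1-u_2\in V$ and then invoking injectivity of $B^{*}$) is also complete, so no gaps remain.
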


We need also the following {\mn extension in \cite[Theorem 4.2]{Amrouche-Seloula2013} (cf. also \cite[Lemma A.40]{Ern-Gu})} of the Babu\v{s}ka-Brezzi result.
\begin{lem}
\label{surj-inj-inf-sup}
Let $X$ and ${\mathcal M}$ be reflexive Banach spaces. Let $b(\cdot ,\cdot):X\times {\mathcal M}\to {\mathbb R}$ be a bounded bilinear form.
Let $B:X\to {\mathcal M}'$ and $B^{*}:{\mathcal M}\to X'$ be the linear bounded {operator and its transpose operator} defined by
\begin{align}
\label{B}
&\langle Bv,q\rangle =b(v,q),\ \langle v,B^*q\rangle =\langle Bv,q\rangle \quad \forall \, v\in X,\ \forall \, q\in {\mathcal M},
\end{align}
where $\langle \cdot ,\cdot \rangle :=\!_{X'}\langle \cdot ,\cdot \rangle _X$ denotes the duality pairing between the dual spaces $X'$ and $X$. The duality pairing between the spaces ${\mathcal M}'$ and ${\mathcal M}$ is also denoted by $\langle \cdot ,\cdot \rangle $. Let $V:={\rm{Ker}}\, B$ and $V^\perp:=\{g\in X':\langle g,v\rangle =0\ \forall \, v\in V\}$. Then the following assertions are equivalent:
\begin{itemize}
\item[$(i)$]
There exists a constant $C_b >0$ such that $b(\cdot ,\cdot)$ satisfies the inf-sup condition \eqref{inf-sup-sm}.
\item[$(ii)$]
The operator $B:{X/V}\to {\mathcal M}'$ is an isomorphism and
\begin{equation}
\label{bounded-below-1}
\|Bw\|_{{\mathcal M}'}\geq C_b^{-1} \|w\|_{X/V}\ \ \forall \, w\in X/V.
\end{equation}
\item[$(iii)$]
The operator $B^{*}:{\mathcal M}\to V^\perp$ is an isomorphism and
\begin{equation}
\label{bounded-below}
\|B^{*}q\|_{X'}\geq C_b^{-1} \|q\|_{\mathcal M}\ \ \forall \, q\in {\mathcal M}.
\end{equation}
\end{itemize}
\end{lem}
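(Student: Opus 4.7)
The plan is to link all three assertions through the single identity
\[
\|B^*q\|_{X'}=\sup_{v\in X\setminus\{0\}}\frac{\langle v,B^*q\rangle}{\|v\|_X}=\sup_{v\in X\setminus\{0\}}\frac{b(v,q)}{\|v\|_X},
\]
which follows directly from the definition of $B^*$ and of the dual norm on $X'$. This identity recasts the inf-sup bound in (i) as, verbatim, the quantitative lower bound $\|B^*q\|_{X'}\ge C_b^{-1}\|q\|_{\mathcal M}$ appearing in (iii), so the implication (iii) $\Rightarrow$ (i) is automatic; only the structural half of (iii), namely that $B^*$ is a topological isomorphism from $\mathcal M$ onto $V^\perp$, needs to be extracted from (i).

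For (i) $\Rightarrow$ (iii), I would first observe that the lower bound forces $B^*$ to be injective and to have closed range in $X'$, since any Cauchy sequence $\{B^*q_n\}$ lifts via the estimate to a Cauchy sequence $\{q_n\}$ in $\mathcal M$. The inclusion $\mathrm{Ran}\,B^*\subseteq V^\perp$ is immediate, because $\langle B^*q,v\rangle=\langle Bv,q\rangle=0$ whenever $v\in V=\mathrm{Ker}\,B$. Reflexivity of $\mathcal M$ identifies $B^*$ with the genuine Banach adjoint of $B$, so the closed range theorem yields the reverse inclusion $\mathrm{Ran}\,B^*=(\mathrm{Ker}\,B)^\perp=V^\perp$. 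Thus $B^*$ is a continuous bijection $\mathcal M\to V^\perp$, and the lower bound supplies continuity of its inverse without any further appeal to the open mapping theorem.

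The equivalence (ii) $\Leftrightarrow$ (iii) follows from the same machinery applied on the other side. If (iii) holds, then $B^*$ is injective, so $\mathrm{Ker}\,B^*=\{0\}$; closed range of $B^*$ forces closed range of $B$, and the closed range theorem then gives $\mathrm{Ran}\,B=\mathcal M'$. The canonical factorisation through the quotient produces a continuous bijection $\widetilde B\colon X/V\to\mathcal M'$, and the lower bound $\|Bw\|_{\mathcal M'}\ge C_b^{-1}\|w\|_{X/V}$ is obtained by dualising the bound for $B^*$, through the Hahn--Banach representation $\|w+V\|_{X/V}=\sup\{|\langle g,w\rangle|:g\in V^\perp,\ \|g\|_{X'}\le 1\}$ combined with the identification $g=B^*q$. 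Conversely, (ii) makes $B$ surjective with closed range, whence $\mathrm{Ker}\,B^*=\{0\}$ and $\mathrm{Ran}\,B^*=V^\perp$ once more, while the open mapping theorem applied to $\widetilde B$ transfers the lower bound back to $B^*$.

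The one step I expect to require care is the exact matching of constants across the three formulations: the inf-sup constant for $b$, the lower bound for $B^*$ on $\mathcal M$, and the lower bound for $B$ on $X/V$ all coincide, and verifying this rests on the reflexivity-dependent Hahn--Banach dual representation of the quotient norm used above. Apart from this bookkeeping, the whole argument is a systematic application of the closed range theorem together with the open mapping theorem.
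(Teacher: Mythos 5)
The paper itself gives no proof of this lemma: it is quoted from \cite[Theorem 4.2]{Amrouche-Seloula2013} and \cite[Lemma A.40]{Ern-Gu}, so there is no in-paper argument to compare with. Your proposal is correct and is essentially the standard proof found in those references: the identity $\|B^{*}q\|_{X'}=\sup_{v\neq 0}b(v,q)/\|v\|_X$ makes assertion $(i)$ literally the quantitative bound \eqref{bounded-below}; that bound gives injectivity and closed range of $B^{*}$, reflexivity identifies $B^{*}$ with the Banach adjoint of $B$ so the closed range theorem yields $\mathrm{Ran}\,B^{*}=(\mathrm{Ker}\,B)^{\perp}=V^{\perp}$ (respectively $\mathrm{Ran}\,B=\mathcal{M}'$ when $B^{*}$ is injective), and the isometric duality $(X/V)'\cong V^{\perp}$ transfers the constant between \eqref{bounded-below-1} and \eqref{bounded-below}. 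Two small touch-ups: the quotient-norm representation $\|w\|_{X/V}=\sup\{|\langle g,w\rangle|:g\in V^{\perp},\ \|g\|_{X'}\le 1\}$ is a Hahn--Banach fact that needs no reflexivity, so the ``reflexivity-dependent'' caveat at the end is misplaced (reflexivity enters only through the closed range theorem); and in the step $(ii)\Rightarrow(iii)$ you should not appeal to the open mapping theorem, which would only produce \emph{some} constant --- the explicit bound \eqref{bounded-below-1} already bounds the inverse of the induced map $X/V\to\mathcal{M}'$ by $C_b$, and combining this with the canonical isometric embedding of $\mathcal{M}$ into $\mathcal{M}''$, i.e. $\|q\|_{\mathcal M}=\sup_{\|\xi\|_{\mathcal M'}\le 1}|\langle \xi,q\rangle|$, gives \eqref{bounded-below} with exactly the same constant $C_b^{-1}$.
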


\section{Variational volume and layer potentials for the anisotropic Stokes system with $L_{\infty }$ coefficient tensor in $L_2$-based Sobolev spaces}
\label{N-S-D}

As in the previous sections, $\Omega _+ \subset {\mathbb R}^n$ ($n\geq 3)$ is a bounded Lipschitz domain with connected boundary $\partial \Omega $, and $\Omega _{-}\!:=\!{\mathbb R}^n\setminus \overline{\Omega_+}$. {Recall that $\boldsymbol{\mathcal L}$ is the Stokes operator defined in \eqref{Stokes-new}}. 
In this section we define the Newtonian and layer potentials for the Stokes system \eqref{Stokes} by means of a variational approach. 

\subsection{Bilinear forms and weak solutions for the Stokes system with $L_{\infty }$ coefficients in ${\mathbb R}^n$.}
Let ${\mathbb A}$ {satisfy} conditions \eqref{Stokes-1}-\eqref{mu}
and
$a_{{\mathbb A};{\mathbb R}^n}:{\mathcal H}^1({\mathbb R}^n)^n\times {\mathcal H}^{1}({\mathbb R}^n)^n\!\to \!{\mathbb R}$, $b_{{\mathbb R}^n}:{\mathcal H}^1({\mathbb R}^n)^n\times L_{2}({\mathbb R}^n)\!\to \!{\mathbb R}$ be the bilinear forms given by
\begin{align}
\label{a-v}
&a_{{\mathbb A};{\mathbb R}^n}({\bf u},{\bf v})
:=\left\langle A^{\alpha \beta }\partial_\beta{\bf u},\partial_\alpha{\bf v}\right\rangle _{{\mathbb R}^n}
={\left\langle a_{ij}^{\alpha \beta }E_{j\beta }({\bf u}),E_{i\alpha }({\bf v})\right\rangle _{{\mathbb R}^n}}\quad \forall \ {\bf u}\in {\mathcal H}^1({\mathbb R}^n)^n,\, {\bf v}\in {\mathcal H}^{1}({\mathbb R}^n)^n\,,\\
\label{b-v}
&b_{{\mathbb R}^n}({\bf v},q):=-\langle {\rm{div}}\, {\bf v},q\rangle _{{\mathbb R}^n}\quad \forall \ {\bf v}\in {\mathcal H}^1({\mathbb R}^n)^n\quad \forall \, q\in L_{2}({\mathbb R}^n)\,.
\end{align}

Let us denote
$$
{\mathcal H}_{\rm{div}}^1(\mathbb R^n)^n:=\left\{{\bf w}\in {\mathcal H}^1(\mathbb R^n)^n:{\rm{div}}\, {\bf w}=0 \mbox{ in }{\rd{\mathbb R}^n}\right\}.
$$
The subspace ${\mathcal H}^1_{\rm div}(\mathbb R^n)^n$ of ${\mathcal H}^1(\mathbb R^n)^n$ has also the characterization
\begin{align}\label{E3.4}
{\mathcal H}^1_{\rm div}(\mathbb R^n)^n
=\left\{{\bf w}\in {\mathcal H}^1({\mathbb R}^n)^n: b_{{\mathbb R}^n}({\bf w},q)\!=\!0\quad \forall \, q\in L_2({\mathbb R}^n) \right\}.
\end{align}

{An important} role in the forthcoming analysis is played by the following well-posedness result (see also \cite[Lemma 4.1]{K-M-W-1} for $p=2$, and \cite[Lemma 3.1]{K-M-W-2}).

\begin{lem}
\label{lemma-a47-1-Stokes}
Let conditions \eqref{Stokes-1}-\eqref{mu} hold on $\mathbb R^n$. Let $a_{{\mathbb A};{\mathbb R}^n}$ and $b_{{\mathbb R}^n}$ be the bilinear forms defined in \eqref{a-v} and \eqref{b-v}, respectively.
Then for all given data ${\bf F} \in {\mathcal H}^{-1}({\mathbb R}^n)^n$ and $\eta \in L_2({\mathbb R}^n)$, the mixed variational formulation
\begin{align}
\label{transmission-S-variational-dl-3-equiv-0-2}
\left\{\begin{array}{ll}
a_{{\mathbb A};{\mathbb R}^n}({\bf u},{\bf v})+b_{{\mathbb R}^n}({\bf v},\pi )
=\langle{\bf F},{\bf v}\rangle_{{\mathbb R}^n}\quad \forall \, {\bf v}\in {\mathcal H}^{1}({\mathbb R}^n)^n,\\
b_{{\mathbb R}^n}({\bf u},q)=\langle \eta ,q\rangle _{{\mathbb R}^n}\quad \forall \, q\in L_{2}({\mathbb R}^n)
\end{array}
\right.
\end{align}
is well-posed. Therefore, \eqref{transmission-S-variational-dl-3-equiv-0-2} has a unique solution $({\bf u},\pi )\in {{\mathcal H}^1({\mathbb R}^n)^n}\times L_2({\mathbb R}^n)$ and there exists a constant
$C=C(c_{\mathbb A},n)>0$ such that
\begin{align}
\label{estimate-1-wp-S-2}
\|{\bf u}\|_{{\mathcal H}^1({\mathbb R}^n)^n}+\|\pi \|_{L_2({\mathbb R}^n)}\leq
C\left\{\|{\bf F} \|_{{\mathcal H}^{-1}({\mathbb R}^n)^n}+\|\eta \|_{L_2({\mathbb R}^n)}\right\}.
\end{align}
\end{lem}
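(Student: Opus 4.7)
The natural plan is to apply the abstract Babu\v{s}ka--Brezzi result (Theorem~\ref{B-B}) with $X:={\mathcal H}^1({\mathbb R}^n)^n$, ${\mathcal M}:=L_2({\mathbb R}^n)$, and the bilinear forms $a_{{\mathbb A};{\mathbb R}^n}$ and $b_{{\mathbb R}^n}$ defined in \eqref{a-v}--\eqref{b-v}. Boundedness of $a_{{\mathbb A};{\mathbb R}^n}$ and $b_{{\mathbb R}^n}$ is immediate from the Cauchy--Schwarz inequality, from \eqref{A-norm}, and from the definition of the norm on ${\mathcal H}^1({\mathbb R}^n)^n$. By \eqref{E3.4}, the kernel of $b_{{\mathbb R}^n}(\cdot,q)$ (for all $q\in L_2({\mathbb R}^n)$) is exactly $V={\mathcal H}^1_{\rm div}({\mathbb R}^n)^n$, so the two hypotheses to verify are coercivity of $a_{{\mathbb A};{\mathbb R}^n}$ on $V$ and the inf--sup condition for $b_{{\mathbb R}^n}$.

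The \emph{inf--sup condition} follows from the surjectivity result recalled in \eqref{surj-div-p}. Indeed, given $q\in L_2({\mathbb R}^n)$, there exists ${\bf v}_q\in {\mathcal H}^1({\mathbb R}^n)^n$ with ${\rm div}\,{\bf v}_q=-q$ and $\|{\bf v}_q\|_{{\mathcal H}^1({\mathbb R}^n)^n}\leq c\|q\|_{L_2({\mathbb R}^n)}$. Then
\begin{equation*}
\sup_{{\bf v}\in X\setminus\{0\}}\frac{b_{{\mathbb R}^n}({\bf v},q)}{\|{\bf v}\|_{{\mathcal H}^1({\mathbb R}^n)^n}}
\geq \frac{-\langle {\rm div}\,{\bf v}_q,q\rangle_{{\mathbb R}^n}}{\|{\bf v}_q\|_{{\mathcal H}^1({\mathbb R}^n)^n}}
=\frac{\|q\|_{L_2({\mathbb R}^n)}^2}{\|{\bf v}_q\|_{{\mathcal H}^1({\mathbb R}^n)^n}}
\geq c^{-1}\|q\|_{L_2({\mathbb R}^n)}\,,
\end{equation*}
giving \eqref{inf-sup-sm} with $C_b=c$.

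For \emph{coercivity} on $V$, the key observation is that any ${\bf u}\in V$ has $\operatorname{tr}\mathbb{E}({\bf u})={\rm div}\,{\bf u}=0$, and $\mathbb{E}({\bf u})$ is symmetric, so \eqref{mu} applies pointwise to $\boldsymbol{\xi}=\mathbb{E}({\bf u})(x)$ and yields
\begin{equation*}
a_{{\mathbb A};{\mathbb R}^n}({\bf u},{\bf u})
=\langle a_{ij}^{\alpha\beta}E_{j\beta}({\bf u}),E_{i\alpha}({\bf u})\rangle_{{\mathbb R}^n}
\geq c_{\mathbb A}^{-1}\|\mathbb{E}({\bf u})\|_{L_2({\mathbb R}^n)^{n\times n}}^2\,.
\end{equation*}
Next I invoke the Korn-type identity on the whole space: for ${\bf u}\in {\mathcal D}({\mathbb R}^n)^n$ an integration by parts gives
\begin{equation*}
2\|\mathbb{E}({\bf u})\|_{L_2({\mathbb R}^n)^{n\times n}}^2
=\|\nabla {\bf u}\|_{L_2({\mathbb R}^n)^{n\times n}}^2+\|{\rm div}\,{\bf u}\|_{L_2({\mathbb R}^n)}^2\,,
\end{equation*}
and by the density of ${\mathcal D}({\mathbb R}^n)$ in ${\mathcal H}^1({\mathbb R}^n)$ the identity extends to the whole space ${\mathcal H}^1({\mathbb R}^n)^n$. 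For ${\bf u}\in V$ it reduces to $2\|\mathbb{E}({\bf u})\|_{L_2}^2=|{\bf u}|_{{\mathcal H}^1({\mathbb R}^n)}^2$. Combined with the equivalence of the semi-norm $|\cdot|_{{\mathcal H}^1({\mathbb R}^n)}$ in \eqref{seminorm-R3} with the full norm $\|\cdot\|_{{\mathcal H}^1({\mathbb R}^n)}$ (cf. \cite[Theorem 1.1]{Al-Am-1}), this yields coercivity \eqref{coercive} on $V$ with a constant $C_a$ depending only on $c_{\mathbb A}$ and $n$.

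With both hypotheses verified, Theorem~\ref{B-B} produces a unique solution $({\bf u},\pi)\in {\mathcal H}^1({\mathbb R}^n)^n\times L_2({\mathbb R}^n)$ of \eqref{transmission-S-variational-dl-3-equiv-0-2}, and the estimates \eqref{mixed-Cu}--\eqref{mixed-Cp} (with $\|f\|_{X'}=\|{\bf F}\|_{{\mathcal H}^{-1}({\mathbb R}^n)^n}$ and $\|g\|_{{\mathcal M}'}=\|\eta\|_{L_2({\mathbb R}^n)}$) yield \eqref{estimate-1-wp-S-2}. The only delicate point of the argument is the coercivity step, where one must carefully use that \eqref{mu} only grants ellipticity on symmetric trace-free matrices; this is why it is essential to restrict coercivity to $V$ rather than demanding it on all of $X$.
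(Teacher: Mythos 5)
Your proposal is correct and follows essentially the same route as the paper: the Babu\v{s}ka--Brezzi theorem with $X={\mathcal H}^1({\mathbb R}^n)^n$, ${\mathcal M}=L_2({\mathbb R}^n)$, $V={\mathcal H}^1_{\rm div}({\mathbb R}^n)^n$, coercivity on $V$ from the symmetric ellipticity condition \eqref{mu} together with the Korn inequality and the norm equivalence \eqref{seminorm-R3}, and the inf--sup condition from the bounded right inverse of the divergence operator. The only cosmetic difference is that you derive the Korn-type bound on ${\mathbb R}^n$ from the integration-by-parts identity $2\|\mathbb{E}({\bf u})\|_{L_2}^2=\|\nabla{\bf u}\|_{L_2}^2+\|{\rm div}\,{\bf u}\|_{L_2}^2$ plus density, whereas the paper cites \eqref{Korn3-R3}; this is a valid, self-contained justification of the same inequality.
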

\begin{proof}
We intend to use Theorem \ref{B-B}, which requires the coercivity of the bilinear form $a_{{\mathbb A};{\mathbb R}^n}(\cdot ,\cdot )$ from ${\mathcal H}^{1}({\mathbb R}^n)^n\times {\mathcal H}^{1}({\mathbb R}^n)^n$ to ${\mathbb R}$.
Indeed, the following Korn type inequality for functions in $\mathcal H^1({\mathbb R}^n)^n$ holds,
\begin{align}
\label{Korn3-R3}
\|\nabla {\bf w}\|^2_{L_2({\mathbb R}^n)^{n\times n}}\leq 2\|\mathbb E ({\bf w})\|^2_{L_2({\mathbb R}^n)^{n\times n}}
\end{align}
(cf. \cite[{Eq.}(2.2)]{Sa-Se} for $n=3$, see also 
{\rd \cite[Theorem 10.1]{Lean}}).
Note that if ${\bf w}\!\in \!{\mathcal H}^1_{\rm div}({\mathbb R}^n)^n$, then $\sum_{i=1}^nE_{ii}({\bf w})=0$.
Then the ellipticity condition \eqref{mu}, inequality \eqref{Korn3-R3} and  equivalence of the semi-norm $\|\nabla (\cdot )\|_{L_2({\mathbb R}^n)^{n\times n}}$
to the norm $\|\cdot \|_{\mathcal H^1({\mathbb R}^n)^n}$ in $\mathcal H^1({\mathbb R}^n)^n$ (see {Section~\ref{2.2.1}})
imply that there exists a constant $c_1=c_1(n)>0$ such that
\begin{align}
\label{a-1-v2-S}
\!\!\!\!a_{{\mathbb A};{\mathbb R}^n}({\bf w},{\bf w})
\!\geq c_{\mathbb A}^{-1}\|{\mathbb E}({\bf w})\|_{L_2({\mathbb R}^n)^{n\times n}}^2
\!\geq\frac{1}{2}c_{\mathbb A}^{-1}\|\nabla {\bf w}\|_{L_2({\mathbb R}^n)^{n\times n}}^2
\!\geq\frac{1}{2}c_{\mathbb A}^{-1}c_1\|{\bf w}\|_{{\mathcal H}^1({\mathbb R}^n)^n}^2\
\forall \, {\bf w}\!\in \!\mathcal H^1_{\rm div}({\mathbb R}^n)^n.
\end{align}
Inequality \eqref{a-1-v2-S} shows that the bilinear form
$a_{\mathbb A;\mathbb R^n}(\cdot ,\cdot ):\mathcal H^1_{\rm div}({\mathbb R}^n)^n\times \mathcal H^1_{\rm div}({\mathbb R}^n)^n\to {\mathbb R}$ is coercive.

The continuity of the operator $\nabla :{\mathcal H}^1({\mathbb R}^n)^n\to L_2({\mathbb R}^n)^{n\times n}$
{and} the H\"{o}lder inequality imply that for the constant
$\mathcal C=n^4\|{\mathbb A}\|_{L_\infty ({\mathbb R}^n)}$,
\begin{align}
\label{a-1-v}
|a_{{\mathbb A};{\mathbb R}^n}({\bf u},{\bf v})|
\leq {\mathcal C}\|{\bf u}\|_{{\mathcal H}^1({\mathbb R}^n)^n}\|{\bf v}\|_{{\mathcal H}^{1}({\mathbb R}^n)^n}\quad \forall\, {\bf u},{\bf v}\in \mathcal H^{1}({\mathbb R}^n)^n.
\end{align}
Thus, the bilinear form $a_{{\mathbb A};{\mathbb R}^n}(\cdot ,\cdot ):{\mathcal H}^1({\mathbb R}^n)^n\times {\mathcal H}^{1}({\mathbb R}^n)^n\to {\mathbb R}$ is bounded. Moreover, the boundedness of the divergence operator
${\rm{div}}:{\mathcal H}^1({\mathbb R}^n)^n\to L_2({\mathbb R}^n)$
implies that the bilinear form $b:{\mathcal H}^1({\mathbb R}^n)^n\times L_2({\mathbb R}^n)\to {\mathbb R}$ is bounded as well.

The isomorphism property of the divergence operator
\begin{align}
\label{div-S}
-{\rm{div}}:{\mathcal H}^1({\mathbb R}^n)^n/{\mathcal H}^1_{\rm div}(\mathbb R^n)^n\to L_2(\mathbb R^n)
\end{align}
(cf. \cite[Proposition 2.1]{Al-Am-1}, \cite[Lemma 2.5]{Kozono-Shor}) implies that
there exists a constant $c_0>0$ such that for any $q\in L_2({\mathbb R}^n)$ there exists ${\bf v}\in {\mathcal H}^1({\mathbb R}^n)^n$ satisfying the equation $-{\rm{div}}\, {\bf v}=q$ and the inequality
$\|\mathbf v\|_{{\mathcal H}^1({\mathbb R}^n)^n}\leq {c_0}\|q\|_{L_2({{\mathbb R}^n})}$. Therefore, the following inequality holds {for such $\bf v$,}
\begin{align*}
b_{\mathbb R^n}({\bf v},q)=-\left\langle {\rm{div}}\, {\bf v},q\right\rangle _{{\mathbb R}^n}
=\langle q,q\rangle _{{\mathbb R}^n}=\|q\|_{L_2({\mathbb R}^n)}^2
\geq c_0^{-1}\|{\bf v}\|_{{\mathcal H}^1({\mathbb R}^n)^n}\|q\|_{L_2({\mathbb R}^n)}.
\end{align*}
{This implies} that the bounded bilinear form $b_{{\mathbb R}^n}:{\mathcal H}^1({\mathbb R}^n)^n\times L_2({\mathbb R}^n)\to {\mathbb R}$ satisfies the inf-sup condition
\begin{align}
\label{inf-sup}
\inf _{q\in L_2({\mathbb R}^n)\setminus \{0\}}\sup _{{\bf w}\in {\mathcal H}^1({\mathbb R}^n)^n\setminus \{\bf 0\}}\frac{b_{{\mathbb R}^n}({\bf w},q)}{\|{\bf w}\|_{{\mathcal H}^1({\mathbb R}^n)^n}\|q\|_{L_2({\mathbb R}^n)}}\geq c_0^{-1}
\end{align}
(see also Lemma \ref{surj-inj-inf-sup}(ii), and \cite[Proposition 2.4]{Sa-Se} for $n=2,3$).
{Then Theorem \ref{B-B} with $X\!=\!{\mathcal H}^1({\mathbb R}^n)^n$, ${\mathcal M}\!=\!L_2({\mathbb R}^n)$, and $V\!\!={\mathcal H}^1_{\rm{div}}(\mathbb R^n)^n$ implies} that problem \eqref{transmission-S-variational-dl-3-equiv-0-2} is well-posed, as asserted.
\end{proof}

\subsection{Volume potential operators for the anisotropic Stokes system with $L_{\infty }$ coefficient tensor}
\label{layer-potentials}
Recall that $\boldsymbol{\mathcal L}$ is the anisotropic Stokes operator defined in
{\eqref{Stokes-new}.}

\begin{thm}
\label{Brinkman-problem-p}
Let conditions \eqref{Stokes-1}-\eqref{mu} hold in $\mathbb R^n$.
Then for each ${\bf f} \in {\mathcal H}^{-1}({\mathbb R}^n)^n$ and $g\in L_{2}(\mathbb R^n)$
the anisotropic Stokes system 
\begin{eqnarray}
\label{Newtonian-S-p}
\boldsymbol{\mathcal L}({\bf u},\pi)={\bf f}, \quad
{\rm{div}}\, {\bf u}=g & \mbox{ in } {\mathbb R}^n\,,
\end{eqnarray}
is well-{posed}, which means that \eqref{Newtonian-S-p} has a unique solution
$({\bf u},\pi)\!\in \!{\mathcal H}^1({\mathbb R}^n)^n\!\times \!L_2({\mathbb R}^n)$ and there exists a constant
$C\!=\!C(c_{\mathbb A},n)\!>\!0$ such that
\begin{align}
\label{estimate-1-wp-B1}
\|{\bf u}\|_{{\mathcal H}^1({\mathbb R}^n)^n}+ \|\pi\|_{L_2({\mathbb R}^n)}
\leq C\left(\|{\bf f} \|_{{\mathcal H}^{-1}({\mathbb R}^n)^n}+\|g\|_{L_{2}(\mathbb R^n)}\right)\,.
\end{align}
\end{thm}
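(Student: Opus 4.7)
The plan is to deduce Theorem \ref{Brinkman-problem-p} directly from Lemma \ref{lemma-a47-1-Stokes} by showing that the PDE system \eqref{Newtonian-S-p} is equivalent (in the class ${\mathcal H}^1({\mathbb R}^n)^n \times L_2({\mathbb R}^n)$) to the mixed variational formulation \eqref{transmission-S-variational-dl-3-equiv-0-2} with a suitable identification of data. Since the weighted space ${\mathcal H}^1({\mathbb R}^n)^n$ has no boundary and $\mathcal{D}({\mathbb R}^n)$ is dense in both ${\mathcal H}^1({\mathbb R}^n)$ and $L_2({\mathbb R}^n)$, this equivalence should be completely routine.

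First I would establish the direction from \eqref{Newtonian-S-p} to \eqref{transmission-S-variational-dl-3-equiv-0-2}. Given $({\bf u},\pi)\in{\mathcal H}^1({\mathbb R}^n)^n\times L_2({\mathbb R}^n)$ satisfying \eqref{Newtonian-S-p} in the distributional sense, I pair $\boldsymbol{\mathcal{L}}({\bf u},\pi)={\bf f}$ with an arbitrary $\mathbf v\in\mathcal{D}({\mathbb R}^n)^n$ and integrate by parts, obtaining
\begin{equation*}
a_{{\mathbb A};{\mathbb R}^n}({\bf u},{\bf v})+b_{{\mathbb R}^n}({\bf v},\pi)=\langle -{\bf f},{\bf v}\rangle_{{\mathbb R}^n},
\end{equation*}
and I pair $\operatorname{div}{\bf u}=g$ with $q\in\mathcal{D}({\mathbb R}^n)$ to obtain $b_{{\mathbb R}^n}({\bf u},q)=\langle -g,q\rangle_{{\mathbb R}^n}$. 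Density of $\mathcal{D}({\mathbb R}^n)^n$ in ${\mathcal H}^1({\mathbb R}^n)^n$ and of $\mathcal{D}({\mathbb R}^n)$ in $L_2({\mathbb R}^n)$, together with the continuity of $a_{{\mathbb A};{\mathbb R}^n}$ and $b_{{\mathbb R}^n}$ already established in the proof of Lemma \ref{lemma-a47-1-Stokes}, extends both identities to all admissible test functions, so $({\bf u},\pi)$ solves \eqref{transmission-S-variational-dl-3-equiv-0-2} with ${\bf F}=-{\bf f}\in{\mathcal H}^{-1}({\mathbb R}^n)^n$ and $\eta=-g\in L_2({\mathbb R}^n)$.

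The converse direction is obtained by restricting the test functions in \eqref{transmission-S-variational-dl-3-equiv-0-2} back to $\mathcal{D}({\mathbb R}^n)^n$ and $\mathcal{D}({\mathbb R}^n)$ respectively, and reading the resulting identities in $\mathcal{D}'({\mathbb R}^n)^n$ and $\mathcal{D}'({\mathbb R}^n)$, which recovers \eqref{Newtonian-S-p}. Thus the two problems have exactly the same solution set, and their data are in one-to-one correspondence via $({\bf F},\eta)=(-{\bf f},-g)$.

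Finally, applying Lemma \ref{lemma-a47-1-Stokes} to the variational problem with these data yields existence and uniqueness of $({\bf u},\pi)\in{\mathcal H}^1({\mathbb R}^n)^n\times L_2({\mathbb R}^n)$ together with the estimate
\begin{equation*}
\|{\bf u}\|_{{\mathcal H}^1({\mathbb R}^n)^n}+\|\pi\|_{L_2({\mathbb R}^n)} \leq C\left(\|-{\bf f}\|_{{\mathcal H}^{-1}({\mathbb R}^n)^n}+\|-g\|_{L_2({\mathbb R}^n)}\right),
\end{equation*}
which is precisely \eqref{estimate-1-wp-B1} with the same constant $C=C(c_{\mathbb A},n)$. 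I do not expect any real obstacle here; all the heavy lifting (Korn inequality, coercivity on $\mathcal{H}^1_{\rm div}$, surjectivity of the divergence, and the resulting Babu\v{s}ka--Brezzi inf-sup condition) was already absorbed into Lemma \ref{lemma-a47-1-Stokes}, so the present statement is essentially a translation of that result from the mixed variational language back to the PDE language.
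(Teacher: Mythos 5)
Your proposal is correct and follows essentially the same route as the paper: the system \eqref{Newtonian-S-p} is identified, via density of ${\mathcal D}({\mathbb R}^n)^n$ in ${\mathcal H}^{1}({\mathbb R}^n)^n$, with the mixed variational formulation \eqref{transmission-S-variational-dl-3-equiv-0-2} for ${\bf F}=-{\bf f}$, $\eta=-g$, and then Lemma \ref{lemma-a47-1-Stokes} gives well-posedness and the estimate \eqref{estimate-1-wp-B1}. Your sign bookkeeping and the data correspondence $({\bf F},\eta)=(-{\bf f},-g)$ agree with the paper's argument.
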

\begin{proof}
The dense embedding of the space ${\mathcal D}({\mathbb R}^n)^n$ in ${\mathcal H}^{1}({\mathbb R}^n)^n$ shows that system \eqref{Newtonian-S-p} has the equivalent mixed variational formulation \eqref{transmission-S-variational-dl-3-equiv-0-2}, with ${\bf F} =-{\bf f}$ and $\eta =-g$.
Then the well-posedness of the Stokes system \eqref{Newtonian-S-p} follows from Lemma \ref{lemma-a47-1-Stokes}.
\end{proof}
Theorem \ref{Brinkman-problem-p} {allows us to define
the volume potential operators for the Stokes system with $L_\infty $ coefficients and
obtain their continuity as follows.}
\begin{defn}
\label{Newtonian-B-var-variable-1}
Let conditions \eqref{Stokes-1}-\eqref{mu} hold.
\begin{enumerate}
\item[(i)]
The Newtonian velocity and pressure potential operators
\begin{align}
\label{Newtonian-S-var-2}
\boldsymbol{\mathcal N}_{{{\mathbb R}^n}}:{\mathcal H}^{-1}({\mathbb R}^n)^n\to {\mathcal H}^1({\mathbb R}^n)^n,\
{\mathcal Q}_{{{\mathbb R}^n}}:{\mathcal H}^{-1}({\mathbb R}^n)^n\to L_2({\mathbb R}^n),
\end{align}
are defined as
\begin{align}
\label{Newtonian-S-var}
\boldsymbol{\mathcal N}_{\mathbb R^n}{\bf f}:={\bf u}_{\bf f},\
\mathcal Q_{\mathbb R^n}{\bf f}:=\pi _{\bf f}
\quad \forall\ {\bf f}\in {\mathcal H}^{-1}({\mathbb R}^n)^n,
\end{align}
where $({\bf u}_{{\bf f} },\pi _{{\bf f}})\in {\mathcal H}^1({\mathbb R}^n)^n\times L_2({\mathbb R}^n)$ is the unique solution of problem \eqref{Newtonian-S-p} with ${\bf f}\in {\mathcal H}^{-1}({\mathbb R}^n)^n$ and $g=0$.
\item[(ii)]
The velocity and pressure compressibility  potential operators
\begin{align}
\label{Newtonian-S-var-2c}
\boldsymbol{\mathcal G}_{\mathbb R^n}:L_2({\mathbb R}^n)\to {\mathcal H}^1({\mathbb R}^n)^n,\
{\mathcal G}^0_{\mathbb R^n}:L_2({\mathbb R}^n)\to L_2({\mathbb R}^n),
\end{align}
are defined as
\begin{align}
\label{Newtonian-S-varc}
\boldsymbol{\mathcal G}_{\mathbb R^n}{g}:={\bf u}_{g},\
\mathcal G^0_{\mathbb R^n}{g}:=\pi _{g}
\quad \forall\ g\in L_2(\mathbb R^n),
\end{align}
where $({\bf u}_g,\pi_g)\in {\mathcal H}^1({\mathbb R}^n)^n\times L_2({\mathbb R}^n)$ is the unique solution of problem \eqref{Newtonian-S-p} with $g\in L_2({\mathbb R}^n)$ and ${\bf f}={\bf 0}$.
\end{enumerate}
\end{defn}
\begin{lem}
\label{Newtonian-B-var-1}
Operators \eqref{Newtonian-S-var-2} and \eqref{Newtonian-S-var-2c}
are linear and continuous and for any ${\bf f}\in{\mathcal H}^{-1}(\mathbb R^n)^n$ and $g\in L_2(\mathbb R^n)^n$,
\begin{align*}
&\boldsymbol{\mathcal L}(\boldsymbol{\mathcal N}_{\mathbb R^n}{\bf f},\mathcal Q_{\mathbb R^n}{\bf f})={\bf f},\quad
{\rm{div}}\, \boldsymbol{\mathcal N}_{\mathbb R^n}{\bf f}=0 \quad \mbox{in } {\mathbb R}^n,\\
&\boldsymbol{\mathcal L}(\boldsymbol{\mathcal G}_{\mathbb R^n}g,\mathcal G^0_{\mathbb R^n}g)={\bf 0},\quad
{\rm{div}}\, \boldsymbol{\mathcal G}_{\mathbb R^n}g=g \quad \mbox{in } {\mathbb R}^n.
\end{align*}
\end{lem}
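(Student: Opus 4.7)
The plan is to derive every claim from the well-posedness result Theorem~\ref{Brinkman-problem-p} together with the defining relations \eqref{Newtonian-S-var} and \eqref{Newtonian-S-varc}.

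First I would verify linearity. Take ${\bf f}_1,{\bf f}_2\in{\mathcal H}^{-1}({\mathbb R}^n)^n$ and scalars $\alpha,\beta$, and let $({\bf u}_k,\pi_k)=(\boldsymbol{\mathcal N}_{\mathbb R^n}{\bf f}_k,{\mathcal Q}_{\mathbb R^n}{\bf f}_k)$, $k=1,2$, be the corresponding unique solutions of \eqref{Newtonian-S-p} with right-hand sides $({\bf f}_k,0)$. Because $\boldsymbol{\mathcal L}$ and $\mathrm{div}$ are linear operators, the pair $(\alpha{\bf u}_1+\beta{\bf u}_2,\alpha\pi_1+\beta\pi_2)$ lies in ${\mathcal H}^1({\mathbb R}^n)^n\times L_2({\mathbb R}^n)$ and solves \eqref{Newtonian-S-p} with data $(\alpha{\bf f}_1+\beta{\bf f}_2,0)$. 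The uniqueness part of Theorem~\ref{Brinkman-problem-p} then forces this pair to coincide with $(\boldsymbol{\mathcal N}_{\mathbb R^n}(\alpha{\bf f}_1+\beta{\bf f}_2),{\mathcal Q}_{\mathbb R^n}(\alpha{\bf f}_1+\beta{\bf f}_2))$, establishing linearity of both $\boldsymbol{\mathcal N}_{\mathbb R^n}$ and ${\mathcal Q}_{\mathbb R^n}$. The identical argument applied with data $({\bf 0},g)$ gives linearity of $\boldsymbol{\mathcal G}_{\mathbb R^n}$ and ${\mathcal G}^0_{\mathbb R^n}$.

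Next, continuity is read off directly from the a~priori estimate \eqref{estimate-1-wp-B1}. Specializing that inequality to $g=0$ yields
\begin{equation*}
\|\boldsymbol{\mathcal N}_{{\mathbb R}^n}{\bf f}\|_{{\mathcal H}^1({\mathbb R}^n)^n}+\|{\mathcal Q}_{{\mathbb R}^n}{\bf f}\|_{L_2({\mathbb R}^n)}\le C\,\|{\bf f}\|_{{\mathcal H}^{-1}({\mathbb R}^n)^n}
\end{equation*}
for every ${\bf f}\in{\mathcal H}^{-1}({\mathbb R}^n)^n$, so the two operators in \eqref{Newtonian-S-var-2} are bounded with the same constant $C=C(c_{\mathbb A},n)$ appearing in Theorem~\ref{Brinkman-problem-p}. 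Specializing \eqref{estimate-1-wp-B1} instead to ${\bf f}={\bf 0}$ gives
\begin{equation*}
\|\boldsymbol{\mathcal G}_{{\mathbb R}^n}g\|_{{\mathcal H}^1({\mathbb R}^n)^n}+\|{\mathcal G}^0_{{\mathbb R}^n}g\|_{L_2({\mathbb R}^n)}\le C\,\|g\|_{L_2({\mathbb R}^n)}
\end{equation*}
for every $g\in L_2({\mathbb R}^n)$, which is the continuity of the operators in \eqref{Newtonian-S-var-2c}.

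Finally, the two displayed systems of equations in the lemma are just a rewriting of the fact that, by Definition~\ref{Newtonian-B-var-variable-1}, the pair $(\boldsymbol{\mathcal N}_{{\mathbb R}^n}{\bf f},{\mathcal Q}_{{\mathbb R}^n}{\bf f})$ is, by construction, the solution of \eqref{Newtonian-S-p} with right-hand sides $({\bf f},0)$, and $(\boldsymbol{\mathcal G}_{{\mathbb R}^n}g,{\mathcal G}^0_{{\mathbb R}^n}g)$ is the solution with right-hand sides $({\bf 0},g)$; substituting into \eqref{Newtonian-S-p} yields the stated identities in ${\mathcal H}^{-1}({\mathbb R}^n)^n$ and $L_2({\mathbb R}^n)$, respectively. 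There is no real obstacle here: the whole lemma is essentially a book-keeping consequence of Theorem~\ref{Brinkman-problem-p}, the only mild point being to invoke uniqueness of the variational solution to secure linearity.
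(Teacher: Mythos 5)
Your proposal is correct and follows essentially the same route as the paper, which states this lemma as an immediate consequence of Theorem~\ref{Brinkman-problem-p} and Definition~\ref{Newtonian-B-var-variable-1} without a separate proof: linearity from uniqueness of the solution of \eqref{Newtonian-S-p}, continuity from the a~priori estimate \eqref{estimate-1-wp-B1} specialized to $g=0$ or ${\bf f}={\bf 0}$, and the stated identities directly from the defining relations \eqref{Newtonian-S-var} and \eqref{Newtonian-S-varc}. Nothing is missing.
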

\subsection{The single layer potential operator for the Stokes system with $L_{\infty }$ coefficients}

Recall that $\Omega _{+}\subset {\mathbb R}^n$ $($$n\geq 3$$)$ is a bounded Lipschitz domain with connected boundary $\partial \Omega $, {$\Omega _{-}:={\mathbb R}^n\setminus\overline{\Omega_+}$,
the notation $[ \cdot ]$ is used for jumps}
(see formulas \eqref{jump-notation} and \eqref{jt}{-\eqref{jt0}}),
{and $\boldsymbol{\mathcal L}$ is the anisotropic Stokes operator defined in
\eqref{Stokes-new}.}

The next well-posedness result for the transmission problem \eqref{var-Brinkman-transmission-sl} plays a {major} role in the definition of the single layer potentials for the $L_{\infty }$ coefficient Stokes system in the Sobolev space $H^{-\frac{1}{2}}(\partial \Omega )^n$
(see also {\cite[Theorem 3.5, Definition 3.7, Lemma 3.8]{K-M-W-2} for the Stokes system with strongly elliptic coefficient tensor; \cite[Theorem 4.5]{K-L-W1}, \cite[Section 5]{Sa-Se}, \cite[Section 2]{B-H}  and \cite[Theorem 10.5.3]{M-W} for the isotropic case} \eqref{isotropic} with $\mu =1$ {and $\lambda=0$}).

\begin{thm}
\label{slp-var-apr-1-p}
Let conditions \eqref{Stokes-1}-\eqref{mu} hold in $\mathbb R^n$.
Then for any ${\boldsymbol\psi \in H^{-\frac{1}{2}}(\partial \Omega )^n}$ the transmission problem
\begin{eqnarray}
\label{var-Brinkman-transmission-sl}
\left\{
\begin{array}{ll}
{\boldsymbol{\mathcal L}({\bf u}_{\boldsymbol\psi },\pi _{\boldsymbol\psi })={\bf 0}}\,, \ \  
{\rm{div}}\, {\bf u}_{\boldsymbol\psi }=0 & \mbox{ in } {\mathbb R}^n\setminus \partial \Omega ,\
\\
\left[{\gamma }{\bf u}_{{\boldsymbol\psi}}\right]={\bf 0} & \mbox{ on } \partial \Omega ,\\
\left[{\bf t}({\bf u}_{{\boldsymbol\psi}},\pi _{{\boldsymbol\psi}})\right]
=\boldsymbol\psi & \mbox{ on } \partial \Omega ,
\end{array}\right.
\end{eqnarray}
has a unique solution $({\bf u}_{{\boldsymbol\psi }},\pi _{\boldsymbol\psi })\in {\mathcal H}^1({\mathbb R}^n\setminus \partial \Omega )^n\times L_{2}({\mathbb R}^n)$, and there exists a constant
$C=C(\partial \Omega ,c_{\mathbb A},n)>0$ such that
\begin{align}
\label{estimate-4-var}
\|{\bf u}_{\boldsymbol\psi}\|_{{\mathcal H}^1({\mathbb R}^n)^n}+\|\pi _{\boldsymbol\psi }\|_{L_{2}({\mathbb R}^n)}\leq C\|\boldsymbol\psi\|_{H^{-\frac{1}{2}}(\partial \Omega )^n}.
\end{align}
\end{thm}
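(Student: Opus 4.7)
The plan is to recast the transmission problem \eqref{var-Brinkman-transmission-sl} as a mixed variational problem on all of $\mathbb R^n$ and then invoke Lemma~\ref{lemma-a47-1-Stokes}. The key observation is that the transmission condition $[\gamma{\bf u}_{\boldsymbol\psi}]={\bf 0}$ promotes ${\bf u}_{\boldsymbol\psi}$ from $\mathcal{H}^1(\mathbb R^n\setminus\partial\Omega)^n$ to $\mathcal{H}^1(\mathbb R^n)^n$ (by Lemma~B1 quoted earlier), so the entire problem can be written using bilinear forms on $\mathcal{H}^1(\mathbb R^n)^n\times L_2(\mathbb R^n)$.

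First, I would define a functional ${\bf F}_{\boldsymbol\psi}\in\mathcal{H}^{-1}(\mathbb R^n)^n$ by
\begin{equation*}
\langle {\bf F}_{\boldsymbol\psi},{\bf v}\rangle_{\mathbb R^n}
:=\langle\boldsymbol\psi,\gamma {\bf v}\rangle_{\partial\Omega}
\quad\forall\,{\bf v}\in\mathcal{H}^1(\mathbb R^n)^n,
\end{equation*}
which is bounded because the trace operator $\gamma:\mathcal{H}^1(\mathbb R^n)^n\to H^{1/2}(\partial\Omega)^n$ is bounded. Then by Lemma~\ref{lemma-a47-1-Stokes} applied with this ${\bf F}_{\boldsymbol\psi}$ and $\eta=0$, there exists a unique $({\bf u}_{\boldsymbol\psi},\pi_{\boldsymbol\psi})\in\mathcal{H}^1(\mathbb R^n)^n\times L_2(\mathbb R^n)$ such that
\begin{align*}
a_{{\mathbb A};{\mathbb R}^n}({\bf u}_{\boldsymbol\psi},{\bf v})+b_{{\mathbb R}^n}({\bf v},\pi_{\boldsymbol\psi})
&=\langle\boldsymbol\psi,\gamma{\bf v}\rangle_{\partial\Omega}\quad\forall\,{\bf v}\in\mathcal{H}^1(\mathbb R^n)^n,\\
b_{{\mathbb R}^n}({\bf u}_{\boldsymbol\psi},q)&=0\quad\forall\,q\in L_2(\mathbb R^n),
\end{align*}
together with the norm bound \eqref{estimate-4-var}, which follows directly from \eqref{estimate-1-wp-S-2} and the boundedness of $\gamma$.

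Next I would verify that this variational solution indeed solves the transmission problem. The second equation yields $\mathrm{div}\,{\bf u}_{\boldsymbol\psi}=0$ in $\mathbb R^n$. Testing the first equation against ${\bf v}\in\mathcal{D}(\Omega_\pm)^n$ gives $\boldsymbol{\mathcal L}({\bf u}_{\boldsymbol\psi},\pi_{\boldsymbol\psi})={\bf 0}$ in $\Omega_\pm$ in the distributional sense, so $({\bf u}_{\boldsymbol\psi}|_{\Omega_\pm},\pi_{\boldsymbol\psi}|_{\Omega_\pm},{\bf 0})\in\pmb{\mathcal H}^1(\Omega_\pm,\boldsymbol{\mathcal L})$ and the generalized conormal derivatives are well defined. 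The membership ${\bf u}_{\boldsymbol\psi}\in\mathcal{H}^1(\mathbb R^n)^n$ already gives $[\gamma{\bf u}_{\boldsymbol\psi}]={\bf 0}$. Applying the Green-type identity \eqref{jump-conormal-derivative-1} of Lemma~\ref{lemma-add-new} (with $\tilde{\bf f}_\pm={\bf 0}$) and comparing with the variational identity, the right-hand sides match for every ${\bf w}\in\mathcal{H}^1(\mathbb R^n)^n$; since $\gamma:\mathcal{H}^1(\mathbb R^n)^n\to H^{1/2}(\partial\Omega)^n$ is surjective, this forces $[{\bf t}({\bf u}_{\boldsymbol\psi},\pi_{\boldsymbol\psi})]=\boldsymbol\psi$ in $H^{-1/2}(\partial\Omega)^n$.

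For uniqueness, if $({\bf u},\pi)\in\mathcal{H}^1(\mathbb R^n\setminus\partial\Omega)^n\times L_2(\mathbb R^n)$ solves \eqref{var-Brinkman-transmission-sl} with $\boldsymbol\psi={\bf 0}$, then $[\gamma{\bf u}]={\bf 0}$ together with Lemma~B1 lifts ${\bf u}$ to $\mathcal{H}^1(\mathbb R^n)^n$; the conormal condition and the equations in $\Omega_\pm$, combined via Lemma~\ref{lemma-add-new} again, show that $({\bf u},\pi)$ solves the homogeneous mixed variational system, whose only solution by Lemma~\ref{lemma-a47-1-Stokes} is the trivial one. The main technical point to verify carefully is the equivalence between the variational formulation and the pointwise transmission problem, i.e.\ making sure that the conormal jump condition translates correctly in both directions via \eqref{jump-conormal-derivative-1}; once that bridge is in place, the result is a direct consequence of the abstract Babu\v{s}ka--Brezzi framework already established in Lemma~\ref{lemma-a47-1-Stokes}.
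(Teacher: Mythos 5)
Your proposal is correct and follows essentially the same route as the paper: rewrite the problem as the mixed variational formulation on ${\mathcal H}^1({\mathbb R}^n)^n\times L_2({\mathbb R}^n)$ with right-hand side $\gamma^*\boldsymbol\psi$, invoke Lemma~\ref{lemma-a47-1-Stokes} (Babu\v{s}ka--Brezzi), and translate between the two formulations using the jump Green identity \eqref{jump-conormal-derivative-1}, the trace surjectivity, and the extension lemma for functions with zero trace jump. The paper organizes this as a single equivalence argument between \eqref{var-Brinkman-transmission-sl} and \eqref{single-layer-S-transmission}, which is exactly the content of your existence-verification and uniqueness steps.
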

\begin{proof}
Transmission problem \eqref{var-Brinkman-transmission-sl} has the following equivalent mixed variational formulation.

{\it Find $({\bf u}_{\boldsymbol\psi},\pi _{{\boldsymbol\psi }})\in {\mathcal H}^1({\mathbb R}^n)^n\times L_{2}({\mathbb R}^n)$ such that}
\begin{equation}
\label{single-layer-S-transmission}
\left\{\begin{array}{lll}
{\left\langle a_{ij}^{\alpha \beta }E_{j\beta }({\bf u}),E_{i\alpha }({\bf v})\right\rangle _{{\mathbb R}^n}}-\langle \pi _{\boldsymbol\psi},{\rm{div}}\, {\bf v}\rangle _{{{\mathbb R}^n}}
=\langle \boldsymbol\psi ,\gamma {\bf v}\rangle _{\partial \Omega }\quad \forall \, {\bf v}\in {\mathcal H}^{1}({\mathbb R}^n)^n,\\
\langle {\rm{div}}\, {\bf u}_{{\boldsymbol\psi}},q\rangle _{{\mathbb R}^n}=0\quad \forall \, q\in L_{2}({\mathbb R}^n)\,.
\end{array}\right.
\end{equation}
To show this equivalence, let us first assume that
$({\bf u}_{{\boldsymbol\psi }},\pi _{\boldsymbol\psi })\in
{\mathcal H}^1({\mathbb R}^n\setminus \partial \Omega )^n\times {L_{2}({\mathbb R}^n)}$
satisfy the transmission problem \eqref{var-Brinkman-transmission-sl}. Then the first transmission condition in \eqref{var-Brinkman-transmission-sl} implies the membership of ${\bf u}_{{\boldsymbol\psi }}$ in ${\mathcal H}^1({\mathbb R}^n)^n${, cf. Lemma~\ref{extention}(ii)}. Moreover, formula \eqref{jump-conormal-derivative-1} shows that the the pair $({\bf u}_{{\boldsymbol\psi }},\pi _{\boldsymbol\psi })$ satisfies also the first equation in \eqref{single-layer-S-transmission}. The second equation in \eqref{single-layer-S-transmission} follows from the second equation in \eqref{var-Brinkman-transmission-sl}.

Let us show the converse property. To this end, we assume that the pair $({\bf u}_{{\boldsymbol\psi }},\pi _{\boldsymbol\psi })\in {\mathcal H}^1({\mathbb R}^n)^n\times L_{2}({\mathbb R}^n)$ is a solution of the variational problem \eqref{single-layer-S-transmission}. By using the density of the space ${\mathcal D}({\mathbb R}^n)^n$ in ${\mathcal H}^{1}({\mathbb R}^n)^n$, and by considering in the first equation of \eqref{single-layer-S-transmission} any ${\bf v}\in C^{\infty }({\mathbb R}^n)^n$ with compact support in $\Omega _\pm $, we obtain the following variational equation
\begin{align*}
\left\langle\partial _\alpha\left(a_{ij}^{\alpha \beta }E_{j\beta }({\bf u}_{\boldsymbol\psi })\right)-\partial _i\pi _{\boldsymbol\psi },w_i\right\rangle _{\Omega _\pm}=0\quad \forall \, {\bf w}\in C_0^{\infty }(\Omega _\pm )^n\,,
\end{align*}
which leads to the first equation of the transmission problem \eqref{var-Brinkman-transmission-sl}. The second equation in \eqref{var-Brinkman-transmission-sl} is an immediate consequence of the second equation in \eqref{single-layer-S-transmission}.
On the other hand, the membership of ${\bf u}_{\boldsymbol\psi }$ in ${\mathcal H}^1({\mathbb R}^n)^n$ yields the first transmission condition in \eqref{var-Brinkman-transmission-sl}. In addition,  formula \eqref{jump-conormal-derivative-1} and the first equation in \eqref{single-layer-S-transmission} show that
\begin{align}
\label{j1-0}
\left\langle [{\bf t}({\bf u}_{\boldsymbol\psi },\pi _{\boldsymbol\psi })],\gamma {\bf v}\right\rangle _{\partial \Omega }=\langle \boldsymbol\psi ,\gamma {\bf v}\rangle _{\partial \Omega }\ \forall \, {\bf v}\in {\mathcal H}^{1}({\mathbb R}^n)^n\,.
\end{align}
Since {the trace operator $\gamma :{\mathcal H}^{1}({\mathbb R}^n)^n\to H^{\frac{1}{2}}(\partial \Omega )^n$ is surjective} (see Theorem \ref{trace-operator1}), equation \eqref{j1-0} can be written in the form $\left\langle [{\bf t}({\bf u}_{\boldsymbol\psi },\pi _{\boldsymbol\psi })]-\boldsymbol \psi ,\boldsymbol\Phi\right\rangle _{\partial \Omega }=0$ for any $\boldsymbol\Phi\in H^{\frac{1}{2}}(\partial \Omega )^n$, which implies the second transmission condition in \eqref{var-Brinkman-transmission-sl}. Thus, the transmission problem \eqref{var-Brinkman-transmission-sl} has the equivalent mixed variational formulation \eqref{single-layer-S-transmission}, which can be
written as
\begin{align}
\label{single-layer-transmission-sm-2v}
\left\{\begin{array}{ll}
a_{{\mathbb A};{\mathbb R}^n}({\bf u}_{\boldsymbol\psi },{\bf v})
+b_{{\mathbb R}^n}({\bf v},\pi _{\boldsymbol\psi })
={\langle{\bf F},{\bf v}\rangle_{{\mathbb R}^n}} & \forall \, {\bf v}\in {\mathcal H}^{1}({\mathbb R}^n)^n,\\
b_{{\mathbb R}^n}({\bf u}_{\boldsymbol\psi },q)=0 & \forall \, q\in L_{2}({\mathbb R}^n),
\end{array}
\right.
\end{align}
where $a_{{\mathbb A};{\mathbb R}^n}$ and $b_{{\mathbb R}^n}$ are the bounded bilinear forms given by \eqref{a-v} and \eqref{b-v}, and {${\bf F}\in{\mathcal H}^{-1}({\mathbb R}^n)^n$ is defined as}
\begin{align}
{\langle{\bf F},{\bf v}\rangle_{{\mathbb R}^n}}:=\langle \boldsymbol\psi ,\gamma {\bf v}\rangle _{\partial \Omega }=\langle \gamma ^*\boldsymbol\psi ,{\bf v}\rangle _{{\mathbb R}^n}\quad
\forall \, {\bf v}\in {\mathcal H^{1}}({\mathbb R}^n)^n\,,
\end{align}
where $\gamma ^*:H^{-\frac{1}{2}}(\partial \Omega )^n\to {\mathcal H}^{-1}({\mathbb R}^n)^n$ is the adjoint of the trace operator $\gamma :{\mathcal H}^{1}({\mathbb R}^n)^n\to H^{\frac{1}{2}}(\partial \Omega )^n$.
Then by Lemma \ref{lemma-a47-1-Stokes} 
the variational problem \eqref{single-layer-S-transmission} is well-posed.
Therefore, problem \eqref{var-Brinkman-transmission-sl} has a unique solution $({\bf u}_{{\boldsymbol\psi }},\pi _{\boldsymbol\psi })\in {\mathcal H}^1({\mathbb R}^n)^n\times L_2({\mathbb R}^n)$, which depends continuously on $\boldsymbol\psi$.
\end{proof}
Theorem \ref{slp-var-apr-1-p} allows to define the single-layer potentials for $L_{\infty }$ coefficient Stokes system and allows to obtain their continuity.

\begin{defn}
\label{s-l-S-variational-variable}
Let conditions \eqref{Stokes-1}-\eqref{mu} hold.
The single layer velocity and pressure potentials
\begin{align}
\label{s-l-S-v1-var}
&{\bf V}_{\partial\Omega}:H^{-\frac{1}{2}}(\partial \Omega )^n\to {\mathcal H}^1({\mathbb R}^n)^n{,\quad}
{\mathcal Q}_{\partial\Omega}^s:H^{-\frac{1}{2}}(\partial \Omega )^n\to L_2({\mathbb R}^n),
\end{align}
are defined as
\begin{align}
\label{slp-S-vp-var}
{\bf V}_{\partial\Omega}\boldsymbol\psi:={\bf u}_{{\boldsymbol\psi}},\quad
\mathcal Q^s_{\partial\Omega}\boldsymbol\psi:=\pi _{{\boldsymbol\psi}}
\quad \forall\ {\boldsymbol\psi \in H^{-\frac{1}{2}}(\partial \Omega )^n},
\end{align}
and the {boundary operators}
\begin{align}
\label{s-l-S-v2-var}
&\boldsymbol{\mathcal V}_{{\partial \Omega }}:H^{-\frac{1}{2}}(\partial \Omega )^n
\to H^{\frac{1}{2}}(\partial \Omega )^n{,\quad}
{\boldsymbol{\mathcal K}_{\partial\Omega}}:H^{-\frac{1}{2}}(\partial \Omega )^n\to H^{-\frac{1}{2}}(\partial \Omega )^n.
\end{align}
are defined as
\begin{align}
\label{slp-S-oper-var}
{\boldsymbol{\mathcal V}}_{\partial \Omega }{\boldsymbol\psi}:=\gamma{\bf u}_{\boldsymbol\psi},\quad
{\boldsymbol{\mathcal K}_{\partial\Omega}}\boldsymbol\psi:=
\frac{1}{2}\left(
{\bf t}^+({\bf u}_{\boldsymbol\psi},\pi_{\boldsymbol\psi})
+{\bf t}^-({\bf u}_{\boldsymbol\psi},\pi_{\boldsymbol\psi})\right)
\quad \forall\ \boldsymbol\psi \in H^{-\frac{1}{2}}(\partial \Omega )^n,
\end{align}
where $({\bf u}_{{\boldsymbol\psi}},\pi _{{\boldsymbol\psi}})$ is the unique solution of the transmission problem \eqref{var-Brinkman-transmission-sl} in ${\mathcal H}^1({\mathbb R}^n)^n\times L_2({\mathbb R}^n)$.
\end{defn}

\begin{lem}
\label{continuity-sl-S-h-var}
Operators \eqref{s-l-S-v1-var} and \eqref{s-l-S-v2-var} are linear and continuous and for any
$\boldsymbol\psi \in H^{-\frac{1}{2}}(\partial\Omega)^n$,
$$
\boldsymbol{\mathcal L}({\bf V}_{\partial\Omega}\boldsymbol\psi,
\mathcal Q^s_{\partial\Omega}\boldsymbol\psi)={\bf 0},\quad
{\rm{div}}\,{\bf V}_{\partial\Omega}\boldsymbol\psi=0 \quad \mbox{in } \Omega_\pm.
$$
\end{lem}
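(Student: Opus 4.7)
The plan is to deduce everything directly from Theorem~\ref{slp-var-apr-1-p} (well-posedness of the transmission problem) combined with the boundedness of the trace and generalized conormal derivative operators. Linearity of all four operators follows at once from the linearity of the transmission problem \eqref{var-Brinkman-transmission-sl} in the datum $\boldsymbol\psi$: if $({\bf u}_{\boldsymbol\psi_j},\pi_{\boldsymbol\psi_j})$ solves the problem with datum $\boldsymbol\psi_j$, then $(c_1{\bf u}_{\boldsymbol\psi_1}+c_2{\bf u}_{\boldsymbol\psi_2},c_1\pi_{\boldsymbol\psi_1}+c_2\pi_{\boldsymbol\psi_2})$ solves it with datum $c_1\boldsymbol\psi_1+c_2\boldsymbol\psi_2$, and uniqueness then yields linearity of the assignment $\boldsymbol\psi\mapsto ({\bf u}_{\boldsymbol\psi},\pi_{\boldsymbol\psi})$.

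For the continuity of ${\bf V}_{\partial\Omega}$ and $\mathcal Q^s_{\partial\Omega}$, I would simply invoke estimate \eqref{estimate-4-var}, which gives
\begin{align*}
\|{\bf V}_{\partial\Omega}\boldsymbol\psi\|_{{\mathcal H}^1({\mathbb R}^n)^n}
+\|\mathcal Q^s_{\partial\Omega}\boldsymbol\psi\|_{L_2({\mathbb R}^n)}
\le C\|\boldsymbol\psi\|_{H^{-1/2}(\partial\Omega)^n}.
\end{align*}
Continuity of $\boldsymbol{\mathcal V}_{\partial\Omega}$ then follows by composing with the bounded trace operator $\gamma:{\mathcal H}^1({\mathbb R}^n)^n\to H^{1/2}(\partial\Omega)^n$ from Theorem~\ref{trace-operator1}.

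The equations $\boldsymbol{\mathcal L}({\bf V}_{\partial\Omega}\boldsymbol\psi,\mathcal Q^s_{\partial\Omega}\boldsymbol\psi)={\bf 0}$ and ${\rm{div}}\,{\bf V}_{\partial\Omega}\boldsymbol\psi=0$ in $\Omega_\pm$ are just the first two equations of \eqref{var-Brinkman-transmission-sl} restricted to each side of $\partial\Omega$. In particular, setting $\tilde{\bf f}_\pm:={\bf 0}$ we get $({\bf u}_{\boldsymbol\psi}|_{\Omega_\pm},\pi_{\boldsymbol\psi}|_{\Omega_\pm},{\bf 0})\in\pmb{\mathcal H}^1(\Omega_\pm,\boldsymbol{\mathcal L})$, so the generalized conormal derivatives ${\bf t}^\pm({\bf u}_{\boldsymbol\psi}|_{\Omega_\pm},\pi_{\boldsymbol\psi}|_{\Omega_\pm})$ are well-defined in $H^{-1/2}(\partial\Omega)^n$.

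For continuity of $\boldsymbol{\mathcal K}_{\partial\Omega}$, I would use Lemma~\ref{lem-add1}(ii): the generalized conormal derivative operators ${\bf t}^\pm:\pmb{\mathcal H}^1(\Omega_\pm,\boldsymbol{\mathcal L})\to H^{-1/2}(\partial\Omega)^n$ are bounded. Since $({\bf u}_{\boldsymbol\psi}|_{\Omega_\pm},\pi_{\boldsymbol\psi}|_{\Omega_\pm},{\bf 0})$ belongs to $\pmb{\mathcal H}^1(\Omega_\pm,\boldsymbol{\mathcal L})$ with norm controlled by $\|{\bf V}_{\partial\Omega}\boldsymbol\psi\|_{{\mathcal H}^1(\Omega_\pm)^n}+\|\mathcal Q^s_{\partial\Omega}\boldsymbol\psi\|_{L_2(\Omega_\pm)}$, estimate \eqref{estimate-4-var} yields
\begin{align*}
\|\boldsymbol{\mathcal K}_{\partial\Omega}\boldsymbol\psi\|_{H^{-1/2}(\partial\Omega)^n}
\le\tfrac{1}{2}\sum_\pm\|{\bf t}^\pm({\bf u}_{\boldsymbol\psi}|_{\Omega_\pm},\pi_{\boldsymbol\psi}|_{\Omega_\pm})\|_{H^{-1/2}(\partial\Omega)^n}
\le C'\|\boldsymbol\psi\|_{H^{-1/2}(\partial\Omega)^n}.
\end{align*}
The only mildly subtle point is the verification that the restrictions $({\bf u}_{\boldsymbol\psi}|_{\Omega_\pm},\pi_{\boldsymbol\psi}|_{\Omega_\pm},{\bf 0})$ indeed lie in $\pmb{\mathcal H}^1(\Omega_\pm,\boldsymbol{\mathcal L})$, which amounts to reading off the first equation of \eqref{var-Brinkman-transmission-sl} in the sense of distributions on each $\Omega_\pm$ separately; this is immediate. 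No genuine obstacle arises here, since Theorem~\ref{slp-var-apr-1-p} does the heavy lifting.
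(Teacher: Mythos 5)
Your proposal is correct and follows essentially the same route as the paper, which states Lemma \ref{continuity-sl-S-h-var} as an immediate consequence of the well-posedness estimate \eqref{estimate-4-var} in Theorem \ref{slp-var-apr-1-p}, Definition \ref{s-l-S-variational-variable}, the boundedness of the trace operator, and the boundedness of the generalized conormal derivative operators from Lemma \ref{lem-add1}. All steps you give (linearity via uniqueness, continuity of ${\bf V}_{\partial\Omega}$, ${\mathcal Q}^s_{\partial\Omega}$, $\boldsymbol{\mathcal V}_{\partial\Omega}$, $\boldsymbol{\mathcal K}_{\partial\Omega}$, and the verification that $({\bf u}_{\boldsymbol\psi}|_{\Omega_\pm},\pi_{\boldsymbol\psi}|_{\Omega_\pm},{\bf 0})\in\pmb{\mathcal H}^1(\Omega_\pm,\boldsymbol{\mathcal L})$) are exactly what the paper's implicit argument requires.
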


In addition, the following jump relations,
{that are similar} to the case of the Stokes system with constant coefficients (see also \cite[Lemma 3.8]{K-M-W-2}, \cite{M-W}, \cite[Propositions 5.2, 5.3]{Sa-Se}), {are implied by relations \eqref{slp-S-oper-var} and the transmission conditions in \eqref{var-Brinkman-transmission-sl}.}
\begin{lem}
\label{jump-s-l}
Let conditions \eqref{Stokes-1}-\eqref{mu} hold.
If $\boldsymbol\psi \in H^{-\frac{1}{2}}(\partial \Omega )^n$, then the following formulas hold
on $\partial \Omega $
\begin{align}
\label{sl-identities-var}
&\left[\gamma {\bf V}_{\partial\Omega }{\boldsymbol\psi }\right]={\bf 0},\\
\label{sl-identities-var-1}
&\left[{\bf t}\left({\bf V}_{\partial\Omega }{\boldsymbol\psi },{\mathcal Q}_{\partial\Omega }^s{\boldsymbol\psi }\right)\right]
=\boldsymbol\psi {,\quad}
{\bf t}_{{{\mathbb A} }}^{\pm }\left({\bf V}_{\partial\Omega }{\boldsymbol\psi },{\mathcal Q}_{\partial\Omega }^s{\boldsymbol\psi }\right)
={\pm \frac{1}{2}\boldsymbol\psi +{\boldsymbol{\mathcal K}_{\partial\Omega}}\boldsymbol\psi }.
\end{align}
\end{lem}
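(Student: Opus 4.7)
The proof proposal is very short because all three identities follow essentially by unpacking the definitions, together with the transmission conditions imposed in problem \eqref{var-Brinkman-transmission-sl}. The plan is as follows.

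First, I would recall that, by Definition \ref{s-l-S-variational-variable}, the pair $({\bf V}_{\partial\Omega}\boldsymbol\psi,\mathcal Q^s_{\partial\Omega}\boldsymbol\psi)=({\bf u}_{\boldsymbol\psi},\pi_{\boldsymbol\psi})$ is by construction the unique solution in ${\mathcal H}^1({\mathbb R}^n)^n\times L_2({\mathbb R}^n)$ of the transmission problem \eqref{var-Brinkman-transmission-sl}. Since $(\mathbf u_{\boldsymbol\psi},\pi_{\boldsymbol\psi})$ satisfies the homogeneous Stokes system in $\Omega_\pm$, the generalized conormal derivatives ${\bf t}^\pm({\bf u}_{\boldsymbol\psi},\pi_{\boldsymbol\psi})$ are well defined elements of $H^{-\frac12}(\partial\Omega)^n$ by Definition \ref{conormal-derivative-var-Brinkman} and Lemma \ref{lem-add1}, so the expressions on the right-hand sides of \eqref{sl-identities-var}--\eqref{sl-identities-var-1} make sense.

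Next, reading off the first transmission condition in \eqref{var-Brinkman-transmission-sl} gives immediately
\begin{equation*}
[\gamma{\bf V}_{\partial\Omega}\boldsymbol\psi]=[\gamma{\bf u}_{\boldsymbol\psi}]={\bf 0},
\end{equation*}
which is \eqref{sl-identities-var}. Similarly, the second transmission condition yields
\begin{equation*}
[{\bf t}({\bf V}_{\partial\Omega}\boldsymbol\psi,{\mathcal Q}^s_{\partial\Omega}\boldsymbol\psi)]
={\bf t}^+({\bf u}_{\boldsymbol\psi},\pi_{\boldsymbol\psi})-{\bf t}^-({\bf u}_{\boldsymbol\psi},\pi_{\boldsymbol\psi})=\boldsymbol\psi,
\end{equation*}
which is the first equality in \eqref{sl-identities-var-1}.

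For the last pair of identities, I would combine the jump formula just obtained with the definition of $\boldsymbol{\mathcal K}_{\partial\Omega}$ in \eqref{slp-S-oper-var}. Writing
\begin{equation*}
{\bf t}^+({\bf u}_{\boldsymbol\psi},\pi_{\boldsymbol\psi})-{\bf t}^-({\bf u}_{\boldsymbol\psi},\pi_{\boldsymbol\psi})=\boldsymbol\psi,
\qquad
{\bf t}^+({\bf u}_{\boldsymbol\psi},\pi_{\boldsymbol\psi})+{\bf t}^-({\bf u}_{\boldsymbol\psi},\pi_{\boldsymbol\psi})=2\boldsymbol{\mathcal K}_{\partial\Omega}\boldsymbol\psi,
\end{equation*}
and solving this $2\times 2$ linear system gives ${\bf t}^\pm({\bf V}_{\partial\Omega}\boldsymbol\psi,{\mathcal Q}^s_{\partial\Omega}\boldsymbol\psi)=\pm\tfrac12\boldsymbol\psi+\boldsymbol{\mathcal K}_{\partial\Omega}\boldsymbol\psi$, as required. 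There is no real obstacle here: the only thing worth flagging is that one must be careful that ${\bf t}^\pm$ are well defined in the weak sense (so that both the average and the difference used above belong to $H^{-\frac12}(\partial\Omega)^n$), which is exactly what Lemma \ref{lem-add1} and the membership $({\bf u}_{\boldsymbol\psi}|_{\Omega_\pm},\pi_{\boldsymbol\psi}|_{\Omega_\pm},{\bf 0})\in\pmb{\mathcal H}^1(\Omega_\pm,\boldsymbol{\mathcal L})$ guarantee.
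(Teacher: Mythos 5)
Your proposal is correct and follows exactly the route the paper intends: the text preceding Lemma \ref{jump-s-l} states that the jump relations are implied by the definitions \eqref{slp-S-oper-var} together with the transmission conditions in \eqref{var-Brinkman-transmission-sl}, which is precisely your argument of reading off $[\gamma{\bf u}_{\boldsymbol\psi}]={\bf 0}$ and $[{\bf t}({\bf u}_{\boldsymbol\psi},\pi_{\boldsymbol\psi})]=\boldsymbol\psi$ and then solving the $2\times 2$ system with $2\boldsymbol{\mathcal K}_{\partial\Omega}\boldsymbol\psi={\bf t}^+ +{\bf t}^-$. Your additional remark that Lemma \ref{lem-add1} guarantees the conormal derivatives are well defined in $H^{-\frac{1}{2}}(\partial\Omega)^n$ is a correct and appropriate justification of the same kind the paper relies on.
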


\subsubsection{\bf The single layer potential for the adjoint Stokes system}
{Recall that ${\boldsymbol{\mathbb L}^*}$ is the operator defined in \eqref{Stokes-tr}, and ${\bf t}^*$ is the conormal derivative operator for the adjoint Stokes system (see formula \eqref{Green-particular-p-adj}).}
The next well-posedness result follows with an argument similar to that for Theorem \ref{slp-var-apr-1-p}, and is based on the Green formula \eqref{Green-particular-p-adj}.
\begin{thm}
\label{slp-var-apr-1-p-adj}
Let conditions \eqref{Stokes-1}-\eqref{mu} hold in $\mathbb R^n$.
Then for any ${\boldsymbol\psi_* \in H^{-\frac{1}{2}}(\partial \Omega )^n}$ the transmission problem for the adjoint Stokes system
\begin{eqnarray}
\label{var-Brinkman-transmission-sl-adj}
\left\{
\begin{array}{ll}
{{\boldsymbol{\mathbb L}^*}{\bf v}_{\boldsymbol\psi_* }- \nabla q_{\boldsymbol\psi_* }={\bf 0}}\,, \ \
{\rm{div}}\, {\bf v}_{\boldsymbol\psi_* }=0 & \mbox{ in } {\mathbb R}^n\setminus \partial \Omega ,\
\\
\left[{\gamma }({\bf v}_{{\boldsymbol\psi_*}})\right]={\bf 0}& \mbox{ on } \partial \Omega ,\\
\left[{\bf t}^*({\bf v}_{{\boldsymbol\psi_* }},q_{{\boldsymbol\psi_* }})\right]
=\boldsymbol\psi_* & \mbox{ on } \partial \Omega ,
\end{array}\right.
\end{eqnarray}
has a unique solution
$({\bf v}_{\boldsymbol\psi_*},q_{\boldsymbol\psi_*})\in {\mathcal H}^{1}({\mathbb R}^n)^n\times L_{2}({\mathbb R}^n)$,
and there exists $C_*\!=\!C_*(\partial \Omega ,c_{\mathbb A},n)\!>\!0$ such that
\begin{align}
\label{estimate-4-var}
\|{\bf v}_{\boldsymbol\psi_*}\|_{{\mathcal H}^{1}({\mathbb R}^n)^n}+\|q_{\boldsymbol\psi_* }\|_{L_{2}({\mathbb R}^n)}\leq C_*\|\boldsymbol\psi_*\|_{H^{-\frac{1}{2}}(\partial \Omega )^n}.
\end{align}
\end{thm}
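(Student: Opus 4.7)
The plan is to mirror the proof of Theorem \ref{slp-var-apr-1-p} with the Green formula \eqref{jump-conormal-derivative-1-adj} replacing \eqref{jump-conormal-derivative-1}, and then reduce the resulting mixed variational problem to Lemma \ref{lemma-a47-1-Stokes} applied to the adjoint coefficient tensor $\mathbb{A}^{*}=(A^{*\alpha\beta})_{\alpha,\beta}$ with entries $a_{ij}^{*\alpha\beta}=a_{ji}^{\beta\alpha}$ defined in \eqref{2.45}.

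First I would show that the transmission problem \eqref{var-Brinkman-transmission-sl-adj} is equivalent to the following mixed variational formulation: find $({\bf v}_{\boldsymbol\psi_*},q_{\boldsymbol\psi_*})\in {\mathcal H}^1({\mathbb R}^n)^n\times L_2({\mathbb R}^n)$ such that
\begin{equation*}
\left\{\begin{array}{l}
\left\langle a_{ij}^{\alpha \beta }E_{j\beta }({\bf w}),E_{i\alpha }({\bf v}_{\boldsymbol\psi_*})\right\rangle _{{\mathbb R}^n}-\langle q_{\boldsymbol\psi_*},{\rm{div}}\, {\bf w}\rangle _{{\mathbb R}^n}
=\langle \boldsymbol\psi_* ,\gamma {\bf w}\rangle _{\partial \Omega }\quad \forall \, {\bf w}\in {\mathcal H}^{1}({\mathbb R}^n)^n,\\[2pt]
\langle {\rm{div}}\, {\bf v}_{\boldsymbol\psi_*},q\rangle _{{\mathbb R}^n}=0\quad \forall \, q\in L_{2}({\mathbb R}^n).
\end{array}\right.
\end{equation*}
The argument for equivalence is the same as in the proof of Theorem \ref{slp-var-apr-1-p}: the first transmission condition in \eqref{var-Brinkman-transmission-sl-adj} is encoded by the membership of ${\bf v}_{\boldsymbol\psi_*}$ in ${\mathcal H}^1({\mathbb R}^n)^n$ (via Lemma B1 cited earlier), the PDE and incompressibility in $\Omega_\pm$ are recovered by testing with ${\bf w}\in {\mathcal D}(\Omega_\pm)^n$ and with $q\in L_2({\mathbb R}^n)$, and the jump condition $[{\bf t}^*({\bf v}_{\boldsymbol\psi_*},q_{\boldsymbol\psi_*})]=\boldsymbol\psi_*$ is obtained by inserting \eqref{jump-conormal-derivative-1-adj} and using the surjectivity of $\gamma:{\mathcal H}^1({\mathbb R}^n)^n\to H^{\frac{1}{2}}(\partial\Omega)^n$.

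Next I would identify the bilinear form on the left-hand side with $a_{{\mathbb A}^*;{\mathbb R}^n}({\bf v}_{\boldsymbol\psi_*},{\bf w})$. By relabeling of dummy indices and the definition $a_{ij}^{*\alpha\beta}=a_{ji}^{\beta\alpha}$ in \eqref{2.45}, one verifies the pointwise identity
\begin{equation*}
a_{ij}^{\alpha\beta}E_{j\beta}({\bf w})E_{i\alpha}({\bf v})
=a_{ij}^{*\alpha\beta}E_{j\beta}({\bf v})E_{i\alpha}({\bf w}),
\end{equation*}
so that the variational problem takes precisely the form \eqref{transmission-S-variational-dl-3-equiv-0-2} with $\mathbb{A}$ replaced by $\mathbb{A}^{*}$, data ${\bf F}=\gamma^{*}\boldsymbol\psi_{*}\in {\mathcal H}^{-1}({\mathbb R}^n)^n$, and $\eta=0$. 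Since the paper already observes (right after \eqref{2.45}) that $\mathbb{A}^{*}$ also satisfies conditions \eqref{Stokes-1}, \eqref{Stokes-sym} and the ellipticity condition \eqref{mu} with the same constant $c_{\mathbb A}$, Lemma \ref{lemma-a47-1-Stokes} applies to this formulation and yields the unique solution $({\bf v}_{\boldsymbol\psi_*},q_{\boldsymbol\psi_*})\in {\mathcal H}^1({\mathbb R}^n)^n\times L_2({\mathbb R}^n)$ together with the estimate
\begin{equation*}
\|{\bf v}_{\boldsymbol\psi_*}\|_{{\mathcal H}^1({\mathbb R}^n)^n}+\|q_{\boldsymbol\psi_*}\|_{L_2({\mathbb R}^n)}
\le C\|\gamma^{*}\boldsymbol\psi_{*}\|_{{\mathcal H}^{-1}({\mathbb R}^n)^n}
\le C_{*}\|\boldsymbol\psi_{*}\|_{H^{-\frac{1}{2}}(\partial\Omega)^n},
\end{equation*}
where the last inequality uses the boundedness of $\gamma^{*}:H^{-\frac{1}{2}}(\partial\Omega)^n\to{\mathcal H}^{-1}({\mathbb R}^n)^n$ and depends on $\partial\Omega$, $c_{\mathbb A}$, and $n$.

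The main (and only) technical point is the verification of the index identity relating $a_{ij}^{\alpha\beta}E_{j\beta}({\bf w})E_{i\alpha}({\bf v})$ with the canonical form $a_{{\mathbb A}^*;{\mathbb R}^n}({\bf v},{\bf w})$, which is where the symmetry conditions \eqref{Stokes-sym} play their role in ensuring that the adjoint coefficient tensor inherits symmetry and symmetric ellipticity (and hence the Korn-type coercivity on $\mathcal H^1_{\rm div}(\mathbb R^n)^n$ with the same constant $c_{\mathbb A}$). Once this identification is made, the remainder of the argument is a verbatim transcription of the proof of Theorem \ref{slp-var-apr-1-p}, so no new obstacle arises.
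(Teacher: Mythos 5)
Your proposal is correct and follows essentially the route the paper intends: the paper itself only remarks that Theorem \ref{slp-var-apr-1-p-adj} "follows with an argument similar to that for Theorem \ref{slp-var-apr-1-p}, based on the Green formula \eqref{Green-particular-p-adj}", which is exactly what you carry out, and your identification of the bilinear form with $a_{{\mathbb A}^*;{\mathbb R}^n}$ (using that ${\mathbb A}^*$ inherits \eqref{Stokes-sym} and \eqref{mu} with the same constant $c_{\mathbb A}$, as noted after \eqref{2.45}) correctly reduces the problem to Lemma \ref{lemma-a47-1-Stokes}.
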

\begin{defn}
\label{s-l-S-variational-variable-adj}
Let conditions \eqref{Stokes-1}-\eqref{mu} hold.
{For the adjoint Stokes system
\eqref{Stokes-new-adjoint}, the {\it single layer velocity and pressure potential operators}
\begin{align}
\label{s-l-S-v1-var*}
&{\bf V}_{*\partial \Omega }:H^{-\frac{1}{2}}(\partial \Omega )^n\to {\mathcal H}^1({\mathbb R}^n)^n, \quad
{\mathcal Q}_{*\partial \Omega }^s:H^{-\frac{1}{2}}(\partial \Omega )^n\to L_2({\mathbb R}^n)
\end{align}
are defined as
\begin{align}
\label{slp-S-vp-var-adj}
{\bf V}_{*\partial\Omega}{\boldsymbol\psi_*}:={\bf v}_{\boldsymbol\psi_*},\quad
{\mathcal Q}^{s}_{*\partial\Omega}{\boldsymbol\psi_*}:=q_{\boldsymbol\psi_*}
\quad \forall\ \boldsymbol\psi_* \in H^{-\frac{1}{2}}(\partial \Omega )^n,
\end{align}
and the {boundary} operators
\begin{align}
\label{s-l-S-v2-var*}
&\boldsymbol{\mathcal V}_{*\partial \Omega }:H^{-\frac{1}{2}}(\partial \Omega )^n
\to H^{\frac{1}{2}}(\partial \Omega )^n{,\quad}
{\boldsymbol{\mathcal K}_{*\partial\Omega}}:H^{-\frac{1}{2}}(\partial \Omega )^n\to H^{-\frac{1}{2}}(\partial \Omega )^n.
\end{align}
are defined as}
\begin{align}
\label{slp-S-oper-var-adj}
{{\boldsymbol{\mathcal  V}}_{*\partial\Omega}{\boldsymbol\psi_* }}:=\gamma{\bf v}_{\boldsymbol\psi_* },\
{\boldsymbol{\mathcal K}_{*\partial\Omega}}{\boldsymbol\psi_* }:=
\frac{1}{2}\left(
{\bf t}^{*+}({\bf v}_{\boldsymbol\psi_* },q_{\boldsymbol\psi_* })
+{\bf t}^{*-}({\bf v}_{\boldsymbol\psi_* },q_{\boldsymbol\psi_* })\right)
\quad \forall\ \boldsymbol\psi_* \in H^{-\frac{1}{2}}(\partial \Omega )^n,
\end{align}
where $({\bf v}_{{\boldsymbol\psi_*}},q_{{\boldsymbol\psi_*}})$ is the unique solution of the transmission problem \eqref{var-Brinkman-transmission-sl-adj} in ${\mathcal H}^1({\mathbb R}^n\setminus \partial \Omega )^n\times L_2({\mathbb R}^n)$.
\end{defn}

\begin{lem}
\label{self-adj}
Let conditions \eqref{Stokes-1}-\eqref{mu} hold.
Then
\begin{align}
\label{sl-identities-var-1-adjoint}
&{\left[\gamma {\bf V}_{*\partial\Omega}{\boldsymbol\psi_* }\right]={\bf 0},\
{\bf t}^{*\pm }\left({\bf V}_{*\partial\Omega}{\boldsymbol\psi_*},{\mathcal Q}^{s}_{*\partial\Omega}{\boldsymbol\psi_* }\right)}
=\pm \frac{1}{2}\boldsymbol\psi_*
+{\boldsymbol{\mathcal K}_{*\partial\Omega}}\boldsymbol\psi_* \quad \forall \, \boldsymbol\psi_* \in H^{-\frac{1}{2}}(\partial \Omega )^n,\\
\label{sa-sl}
&{\left\langle \boldsymbol\psi ,{\mathcal V}_{*\partial\Omega}\boldsymbol\psi_* \right\rangle _{\partial \Omega }}=\left\langle \boldsymbol\psi_* ,{\mathcal V}_{\partial\Omega }\boldsymbol\psi \right\rangle _{\partial \Omega } \quad \forall \, \boldsymbol\psi ,\, \boldsymbol\psi_* \in H^{-\frac{1}{2}}(\partial \Omega )^n.
\end{align}
\end{lem}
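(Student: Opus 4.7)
The first assertion in \eqref{sl-identities-var-1-adjoint} is a direct transcription of the transmission conditions defining $({\bf v}_{\boldsymbol\psi_*},q_{\boldsymbol\psi_*})$ in \eqref{var-Brinkman-transmission-sl-adj}. Indeed, the membership ${\bf V}_{*\partial\Omega}\boldsymbol\psi_*={\bf v}_{\boldsymbol\psi_*}\in\mathcal H^1(\mathbb R^n)^n$ yields $[\gamma{\bf V}_{*\partial\Omega}\boldsymbol\psi_*]={\bf 0}$ (cf.\ the analogue of Lemma~\ref{jump-s-l}); the third line of \eqref{var-Brinkman-transmission-sl-adj} gives $[{\bf t}^*({\bf V}_{*\partial\Omega}\boldsymbol\psi_*,{\mathcal Q}^s_{*\partial\Omega}\boldsymbol\psi_*)]=\boldsymbol\psi_*$; and combining this jump relation with the definition \eqref{slp-S-oper-var-adj} of $\boldsymbol{\mathcal K}_{*\partial\Omega}$ as the half-sum of the one-sided tractions immediately produces the stated formula ${\bf t}^{*\pm}=\pm\tfrac12\boldsymbol\psi_*+\boldsymbol{\mathcal K}_{*\partial\Omega}\boldsymbol\psi_*$. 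This step is entirely parallel to Lemma~\ref{jump-s-l} for the primal single layer potential and requires no new ingredient.

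The substantive part is the duality identity \eqref{sa-sl}. My plan is to cross-test the primal and adjoint single-layer solutions using the two Green formulas \eqref{jump-conormal-derivative-1} and \eqref{jump-conormal-derivative-1-adj}. Let $({\bf u}_{\boldsymbol\psi},\pi_{\boldsymbol\psi})\in\mathcal H^1(\mathbb R^n)^n\times L_2(\mathbb R^n)$ solve \eqref{var-Brinkman-transmission-sl} and $({\bf v}_{\boldsymbol\psi_*},q_{\boldsymbol\psi_*})\in\mathcal H^1(\mathbb R^n)^n\times L_2(\mathbb R^n)$ solve \eqref{var-Brinkman-transmission-sl-adj}; both pairs are divergence-free and satisfy $\boldsymbol{\mathcal L}(\cdot,\cdot)={\bf 0}$, respectively $\boldsymbol{\mathcal L}_*(\cdot,\cdot)={\bf 0}$, on $\mathbb R^n\setminus\partial\Omega$. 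Applying Lemma~\ref{lemma-add-new} to $({\bf u}_{\boldsymbol\psi},\pi_{\boldsymbol\psi})$ with the test field ${\bf w}={\bf v}_{\boldsymbol\psi_*}\in\mathcal H^1(\mathbb R^n)^n$, and using $[{\bf t}({\bf u}_{\boldsymbol\psi},\pi_{\boldsymbol\psi})]=\boldsymbol\psi$, $\gamma{\bf v}_{\boldsymbol\psi_*}=\mathcal V_{*\partial\Omega}\boldsymbol\psi_*$, ${\rm div}\,{\bf v}_{\boldsymbol\psi_*}=0$, yields
\begin{align*}
\bigl\langle\boldsymbol\psi,\mathcal V_{*\partial\Omega}\boldsymbol\psi_*\bigr\rangle_{\partial\Omega}
=\bigl\langle a_{ij}^{\alpha\beta}E_{j\beta}({\bf u}_{\boldsymbol\psi}),E_{i\alpha}({\bf v}_{\boldsymbol\psi_*})\bigr\rangle_{\Omega_+}
+\bigl\langle a_{ij}^{\alpha\beta}E_{j\beta}({\bf u}_{\boldsymbol\psi}),E_{i\alpha}({\bf v}_{\boldsymbol\psi_*})\bigr\rangle_{\Omega_-}.
\end{align*}
Applying Lemma~\ref{lemma-add-new-adj} to $({\bf v}_{\boldsymbol\psi_*},q_{\boldsymbol\psi_*})$ with the test field ${\bf w}={\bf u}_{\boldsymbol\psi}\in\mathcal H^1(\mathbb R^n)^n$, and using $[{\bf t}^*({\bf v}_{\boldsymbol\psi_*},q_{\boldsymbol\psi_*})]=\boldsymbol\psi_*$, $\gamma{\bf u}_{\boldsymbol\psi}=\mathcal V_{\partial\Omega}\boldsymbol\psi$, ${\rm div}\,{\bf u}_{\boldsymbol\psi}=0$, yields the same right-hand side paired against $\boldsymbol\psi_*$ and $\mathcal V_{\partial\Omega}\boldsymbol\psi$, namely
\begin{align*}
\bigl\langle\boldsymbol\psi_*,\mathcal V_{\partial\Omega}\boldsymbol\psi\bigr\rangle_{\partial\Omega}
=\bigl\langle a_{ij}^{\alpha\beta}E_{j\beta}({\bf u}_{\boldsymbol\psi}),E_{i\alpha}({\bf v}_{\boldsymbol\psi_*})\bigr\rangle_{\Omega_+}
+\bigl\langle a_{ij}^{\alpha\beta}E_{j\beta}({\bf u}_{\boldsymbol\psi}),E_{i\alpha}({\bf v}_{\boldsymbol\psi_*})\bigr\rangle_{\Omega_-}.
\end{align*}
Equating the two expressions produces the required identity \eqref{sa-sl}.

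The only point needing emphasis is why the two Green formulas deliver identical bilinear expressions. In \eqref{jump-conormal-derivative-1-adj}, the adjoint operator contributes $\langle a^{*\alpha\beta}_{ij}E_{j\beta}({\bf v}_{\boldsymbol\psi_*}),E_{i\alpha}({\bf u}_{\boldsymbol\psi})\rangle_{\Omega_\pm}$ with $a^{*\alpha\beta}_{ij}=a^{\beta\alpha}_{ji}$ by \eqref{2.45}, and the symmetry conditions \eqref{Stokes-sym} (which already allowed the second equality in \eqref{Green-particular-p-adj}) collapse this expression to $\langle a^{\alpha\beta}_{ij}E_{j\beta}({\bf u}_{\boldsymbol\psi}),E_{i\alpha}({\bf v}_{\boldsymbol\psi_*})\rangle_{\Omega_\pm}$ after relabeling summation indices. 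This is the only non-bookkeeping step and is not really an obstacle — the symmetries \eqref{Stokes-sym} are tailored precisely for this kind of cancellation — so the whole argument is essentially a clean double application of the Green identities already established.
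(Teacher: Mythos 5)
Your proposal is correct and follows essentially the same route as the paper: the first identities come from the transmission conditions in \eqref{var-Brinkman-transmission-sl-adj} together with \eqref{slp-S-oper-var-adj}, and the duality formula \eqref{sa-sl} is obtained, as in the paper, by applying the two Green formulas \eqref{jump-conormal-derivative-1} and \eqref{jump-conormal-derivative-1-adj} to the pair of single layer solutions and equating the common bilinear expression $\left\langle a_{ij}^{\alpha\beta}E_{j\beta}({\bf V}_{\partial\Omega}\boldsymbol\psi),E_{i\alpha}({\bf V}_{*\partial\Omega}\boldsymbol\psi_*)\right\rangle_{\mathbb R^n}$, with the index symmetry already built into \eqref{Green-particular-p-adj}.
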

\begin{proof}
First, formulas \eqref{sl-identities-var-1-adjoint}
 are implied by relations \eqref{slp-S-oper-var-adj} and the transmission conditions in \eqref{var-Brinkman-transmission-sl-adj}.

Now let let $\big({\bf V}_{\partial\Omega }{\boldsymbol\psi },{\mathcal Q}_{\partial\Omega }^s{\boldsymbol\psi }\big)$ be the unique solution in ${\mathcal H}^1({\mathbb R}^n)^n\times L_2({\mathbb R}^n)$ of transmission problem \eqref{var-Brinkman-transmission-sl} with the given datum $\boldsymbol\psi \in H^{-\frac{1}{2}}(\partial\Omega )^n$.
Also let {$\big({\bf V}_{*\partial\Omega}{\boldsymbol\psi_* },
{\mathcal Q}_{\partial\Omega }^{s}{*\boldsymbol\psi_* }\big)$ denote the unique solution in ${\mathcal H}^{1}({\mathbb R}^n)^n\!\times \!L_{2}({\mathbb R}^n)$ of transmission problem \eqref{var-Brinkman-transmission-sl-adj}} with the given datum $\boldsymbol\psi_* \!\in \!H^{-\frac{1}{2}}(\partial \Omega )^n$. Then the Green formulas \eqref{jump-conormal-derivative-1} and \eqref{jump-conormal-derivative-1-adj} imply that
\begin{align}
\label{jump-conormal-derivative-1-sl}
\left\langle \left[{\bf t}\left({\bf V}_{\partial\Omega }{\boldsymbol\psi },{\mathcal Q}_{\partial\Omega }^s{\boldsymbol\psi }\right)\right],{\mathcal V}_{*\partial\Omega}{\boldsymbol\psi_* }\right\rangle _{\partial \Omega }&
={\left\langle a_{ij}^{\alpha \beta }E_{j\beta }\left({\bf V}_{\partial\Omega }{\boldsymbol\psi }\right),E_{i\alpha }\left({\bf V}_{*\partial\Omega}{\boldsymbol\psi_* }\right)\right\rangle _{{\mathbb R}^n}}\\
\label{jump-conormal-derivative-1-sl-adj}
\left\langle \left[{\bf t}^*\left({\bf V}_{*\partial\Omega}{\boldsymbol\psi_* },{\mathcal Q}_{*\partial\Omega}^{s}{\boldsymbol\psi_* }\right)\right],{\mathcal V}_{\partial\Omega }{\boldsymbol\psi }\right\rangle _{\partial \Omega }
&={\left\langle a_{ij}^{\alpha \beta }E_{j\beta }\left({\bf V}_{\partial\Omega }{\boldsymbol\psi }\right),E_{i\alpha }\left({\bf V}_{*\partial\Omega}{\boldsymbol\psi_* }\right)\right\rangle _{{\mathbb R}^n}}.
\end{align}
Moreover, by the second formulas in \eqref{sl-identities-var-1} and \eqref{sl-identities-var-1-adjoint},
\begin{align}
\label{jsl}
\big[{\bf t}\big({\bf V}_{\partial\Omega }{\boldsymbol\psi },{\mathcal Q}_{\partial\Omega }^s{\boldsymbol\psi }\big)\big]=\boldsymbol\psi ,\quad
\left[{\bf t}^*\left({\bf V}_{*\partial\Omega}{\boldsymbol\psi_* },{\mathcal Q}_{*\partial\Omega }^{s}{\boldsymbol\psi_* }\right)\right]=\boldsymbol\psi_*.
\end{align}
Then equality \eqref{sa-sl} follows from \eqref{jump-conormal-derivative-1-sl}-\eqref{jsl} (cf. \cite[Proposition 5.4]{Sa-Se} {in the case \eqref{isotropic} with $\mu =1$} {and $\lambda=0$}).
\end{proof}

\begin{rem}
\label{isotropic-case-1}
{$(i)$ Formula \eqref{sa-sl} shows that the adjoint of the single layer operator ${\boldsymbol{\mathcal V}}_{{\partial \Omega }}:H^{-\frac{1}{2}}(\partial \Omega )^n\to H^{\frac{1}{2}}(\partial \Omega )^n$ corresponding to the Stokes system from \eqref{var-Brinkman-transmission-sl} is the operator ${\boldsymbol{\mathcal V}}_{*\partial\Omega}:H^{-\frac{1}{2}}(\partial \Omega )^n\to H^{\frac{1}{2}}(\partial \Omega )^n$ given by formula \eqref{slp-S-oper-var-adj} $($see Definition $\ref{s-l-S-variational-variable-adj}$$)$ and corresponding to the adjoint Stokes system from \eqref{var-Brinkman-transmission-sl-adj}.}

$(ii)$ In the {isotropic case \eqref{isotropic}}, Definition $\ref{s-l-S-variational-variable-adj}$ reduces to Definition $\ref{s-l-S-variational-variable}$, and the single layer operator {${\boldsymbol{\mathcal V}}_{{\partial \Omega }}:H^{-\frac{1}{2}}(\partial \Omega )^n\to H^{\frac{1}{2}}(\partial \Omega )^n$} is self adjoint. Thus, formula \eqref{sa-sl} becomes
\begin{align}
\label{sa-sl-adj}
{\left\langle \boldsymbol\psi ,\boldsymbol{\mathcal V}_{\partial\Omega }\boldsymbol\psi_* \right\rangle _{\partial \Omega }=\left\langle \boldsymbol\psi_* ,\boldsymbol{\mathcal V}_{\partial\Omega }\boldsymbol\psi \right\rangle _{\partial \Omega } \quad \forall \, \boldsymbol\psi ,\, \boldsymbol\psi_* \in H^{-\frac{1}{2}}(\partial \Omega )^n.}
\end{align}
\end{rem}
For a given operator $T:X\to Y$, the set ${\rm{Ker}}\left\{T:X\to Y\right\}:=\left\{x\in X:T(x)=0\right\}$ is the null space of $T$. Let $\boldsymbol \nu $ be the outward unit normal to $\Omega $, which exists a.e. on $\partial \Omega $, and let ${\rm span}\{\boldsymbol\nu\}:=\{c \boldsymbol \nu :c\in {\mathbb R}\}$. Let also
\begin{align}
&\chi _{_{\Omega _{+}}}:=
\left\{\begin{array}{ll}
1 & \mbox{ in }\ \Omega _+\\
0 & \mbox{ in }\ \Omega _-\,,
\end{array}\nonumber
\right.
\nonumber\\
\label{nu}
&H_{\boldsymbol \nu}^{\frac{1}{2}}(\partial \Omega )^n:=\big\{\boldsymbol\Phi \in H^{\frac{1}{2}}(\partial \Omega )^n:\langle \boldsymbol\Phi,\boldsymbol \nu \rangle _{\partial \Omega }=0\big\}.
\end{align}
Next, we mention the main properties of the single layer operator, similar to the ones provided in \cite[Lemma 3.12]{K-M-W-2} in the case of a strongly elliptic {viscosity coefficient tensor} (see also \cite[Lemma 4.9]{K-M-W-1}, \cite[Theorem 10.5.3]{M-W}, and \cite[Proposition 3.3(c)]{B-H} and \cite[Proposition 5.4]{Sa-Se} in the case { \eqref{isotropic} with $\mu =1$} {and $\lambda=0$}).
\begin{lem}
\label{slp-properties}
{Let conditions \eqref{Stokes-1}-\eqref{mu} hold.}
Then
\begin{align}
\label{sl-n}
&{\bf V}_{\partial \Omega }{\boldsymbol\nu }={\bf 0} \mbox{ in } {\mathbb R}^n,\ {\mathcal Q}^s_{\partial \Omega }{\boldsymbol\nu }=-\chi _{_{\Omega _{+}}},\\
\label{sl-n-p}
&{\boldsymbol{\mathcal V}_{\partial\Omega }\boldsymbol \nu ={\bf 0} \mbox{ on } \partial \Omega }\,,
\\
\label{range-sl-v}
&{\boldsymbol{\mathcal V}_{\partial\Omega }\boldsymbol\psi \in H_{\boldsymbol \nu }^{\frac{1}{2}}(\partial \Omega )^n\quad \forall \, \boldsymbol\psi \in H^{-\frac{1}{2}}(\partial \Omega )^n}\,.
\end{align}
\end{lem}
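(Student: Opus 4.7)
The plan is to verify directly that the candidate pair $({\bf u},\pi)=({\bf 0},-\chi_{_{\Omega_+}})$ solves the single-layer transmission problem \eqref{var-Brinkman-transmission-sl} with datum $\boldsymbol\psi=\boldsymbol\nu$, and then invoke the uniqueness statement in Theorem~\ref{slp-var-apr-1-p}. Membership $({\bf 0},-\chi_{_{\Omega_+}})\in{\mathcal H}^1({\mathbb R}^n)^n\times L_2({\mathbb R}^n)$ is immediate since $\Omega_+$ is bounded. The Stokes equations hold in ${\mathbb R}^n\setminus\partial\Omega$, because $\chi_{_{\Omega_+}}$ is locally constant there and its gradient vanishes on each side, while the divergence-free condition and the trace transmission $[\gamma{\bf 0}]={\bf 0}$ are trivial.

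The main computation is the identification $[{\bf t}({\bf 0},-\chi_{_{\Omega_+}})]=\boldsymbol\nu$ in $H^{-\frac{1}{2}}(\partial\Omega)^n$. Using Definition~\ref{conormal-derivative-var-Brinkman} with $\tilde{\bf f}_\pm={\bf 0}$, for any $\boldsymbol\Phi\in H^{\frac{1}{2}}(\partial\Omega)^n$ the strain and source terms drop and
\[
\pm\bigl\langle{\bf t}^{\pm}({\bf 0},-\chi_{_{\Omega_+}}),\boldsymbol\Phi\bigr\rangle_{\partial\Omega}
=\bigl\langle\chi_{_{\Omega_+}},{\rm div}(\gamma_\pm^{-1}\boldsymbol\Phi)\bigr\rangle_{\Omega_\pm}.
\]
For $\pm=-$ the right-hand side vanishes because $\chi_{_{\Omega_+}}\equiv 0$ on $\Omega_-$, so ${\bf t}^-={\bf 0}$. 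For $\pm=+$ the Gauss--Green formula applied to $\gamma_+^{-1}\boldsymbol\Phi\in H^1(\Omega_+)^n$ yields $\int_{\Omega_+}{\rm div}(\gamma_+^{-1}\boldsymbol\Phi)\,dx=\langle\boldsymbol\Phi,\boldsymbol\nu\rangle_{\partial\Omega}$, hence ${\bf t}^+=\boldsymbol\nu$. Therefore $[{\bf t}]={\bf t}^+-{\bf t}^-=\boldsymbol\nu$, and the uniqueness in Theorem~\ref{slp-var-apr-1-p} gives \eqref{sl-n}. Then \eqref{sl-n-p} follows at once from $\boldsymbol{\mathcal V}_{\partial\Omega}\boldsymbol\nu=\gamma{\bf V}_{\partial\Omega}\boldsymbol\nu=\gamma{\bf 0}={\bf 0}$.

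For the range property \eqref{range-sl-v}, I repeat the same candidate-and-uniqueness argument for the adjoint transmission problem \eqref{var-Brinkman-transmission-sl-adj}: the same pair $({\bf 0},-\chi_{_{\Omega_+}})$ solves it with datum $\boldsymbol\nu$, because ${\boldsymbol{\mathbb L}^*}{\bf 0}=0$, the divergence theorem computation for ${\bf t}^{*\pm}$ via \eqref{conormal-derivative-var-Brinkman-3-adj} is identical, and Theorem~\ref{slp-var-apr-1-p-adj} gives uniqueness. Thus ${\bf V}_{*\partial\Omega}\boldsymbol\nu={\bf 0}$ and consequently $\boldsymbol{\mathcal V}_{*\partial\Omega}\boldsymbol\nu={\bf 0}$. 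Substituting $\boldsymbol\psi_*=\boldsymbol\nu$ in the adjointness identity \eqref{sa-sl} yields
\[
\langle\boldsymbol\nu,\boldsymbol{\mathcal V}_{\partial\Omega}\boldsymbol\psi\rangle_{\partial\Omega}
=\langle\boldsymbol\psi,\boldsymbol{\mathcal V}_{*\partial\Omega}\boldsymbol\nu\rangle_{\partial\Omega}=0
\quad\forall\,\boldsymbol\psi\in H^{-\frac{1}{2}}(\partial\Omega)^n,
\]
which is precisely \eqref{range-sl-v}. The only delicate point throughout is the jump computation in the second paragraph; once it is in hand, the remaining items are immediate consequences of uniqueness and the adjointness identity already established in Lemma~\ref{self-adj}.
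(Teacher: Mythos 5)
Your proof is correct and follows essentially the same route as the paper: verify that $({\bf 0},-\chi_{_{\Omega_+}})$ solves the transmission problem \eqref{var-Brinkman-transmission-sl} with datum $\boldsymbol\nu$ via the divergence-theorem computation of the conormal jump, invoke uniqueness from Theorem~\ref{slp-var-apr-1-p}, repeat for the adjoint problem, and deduce \eqref{range-sl-v} from the identity \eqref{sa-sl} of Lemma~\ref{self-adj}. No gaps to report.
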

\begin{proof}
First, we note that the transmission problem \eqref{var-Brinkman-transmission-sl} with the given datum $\boldsymbol\psi =\boldsymbol \nu \in H^{-\frac{1}{2}}(\partial \Omega )^n$ is well-posed. We show that the pair
\begin{align}
\label{solution-nu}
\left({\bf u}_{\boldsymbol \nu },\pi _{\boldsymbol \nu }\right)=\big({\bf 0},-\chi _{_{\Omega _{+}}}\big)\in {\mathcal H}^1({\mathbb R}^n)^n\times L_2({\mathbb R}^n)
\end{align}
is the unique solution of this transmission problem. Indeed, $\left({\bf u}_{\boldsymbol \nu },\pi _{\boldsymbol \nu }\right)$ satisfies the equations and the first transmission condition in \eqref{var-Brinkman-transmission-sl}, and by formulas \eqref{conormal-derivative-var-Brinkman-3}, \eqref{jt0} and \eqref{solution-nu}, and by the divergence theorem we obtain that
\begin{align}
\langle [{\bf t}({\bf u}_{\boldsymbol\nu},\pi_{\boldsymbol\nu})],\boldsymbol\Phi \rangle _{\partial \Omega }
=-\left\langle \pi_{\boldsymbol\nu},{\rm{div}}(\gamma^{-1}_+{\boldsymbol\Phi})\right\rangle _{\Omega_+}
=\langle {\boldsymbol\nu},\boldsymbol\Phi\rangle_{\partial \Omega }\quad \forall \, \boldsymbol\Phi \in H^{\frac{1}{2}}(\partial \Omega )^n,
\end{align}
and hence $[{\bf t}({\bf u}_{\boldsymbol \nu },\pi_{\boldsymbol \nu })]=\boldsymbol \nu $. Consequently, the pair $\left({\bf u}_{\boldsymbol \nu },\pi _{\boldsymbol \nu }\right)$ given by \eqref{solution-nu} is the unique solution of the transmission problem \eqref{var-Brinkman-transmission-sl} with the given datum $\boldsymbol\psi =\boldsymbol \nu \in \!H^{-\frac{1}{2}}(\partial \Omega )^n$. Then relations \eqref{sl-n} and \eqref{sl-n-p} follow from Definition \ref{s-l-S-variational-variable}.
Thus, ${\rm span}\{\boldsymbol\nu\}\subseteq {\rm{Ker}}\big\{{\boldsymbol{\mathcal V}}_{\partial\Omega }:H^{-\frac{1}{2}}(\partial\Omega )^n\to H^{\frac{1}{2}}(\partial\Omega )^n\big\}$. Similarly, we obtain
\begin{align}
\label{sl-n-adj}
{{\bf V}_{*\partial \Omega }{\boldsymbol\nu }}={\bf 0} \mbox{ in } {\mathbb R}^n,\ \
{\boldsymbol{\mathcal V}_{*\partial\Omega}\boldsymbol \nu }={\bf 0} \mbox{ on } \partial \Omega .
\end{align}

Next, {we apply formula \eqref{sa-sl}} for the densities $\boldsymbol\psi \!\in \! H^{-\frac{1}{2}}(\partial\Omega )^n$ and $\boldsymbol\nu \!\in \! H^{-\frac{1}{2}}(\partial\Omega )^n$, and use the second relation in \eqref{sl-n-adj}. Then we obtain that
$\left\langle \boldsymbol{\mathcal V}_{ \partial\Omega }\boldsymbol\psi ,\boldsymbol\nu \right\rangle _{\partial \Omega }
=
\left\langle \boldsymbol\psi ,\boldsymbol{\mathcal V}_{*\partial\Omega}\boldsymbol\nu\right\rangle_{\partial\Omega }=0,$
and hence \eqref{range-sl-v} follows.
\end{proof}

\subsubsection{\bf Isomorphism property of the single layer operator}
Next we show the following invertibility property of the single layer potential operator (cf. \cite[Lemma 3.13]{K-M-W-2}, and \cite[Theorem 10.5.3]{M-W}, \cite[Proposition 3.3(d)]{B-H}, \cite[Proposition 5.5]{Sa-Se} in the case \eqref{isotropic} with $\mu =1$ {and $\lambda=0$}).
\begin{lem}
\label{isom-sl-v}
Let conditions \eqref{Stokes-1}-\eqref{mu} hold in $\mathbb R^n$.
Then
\begin{align}
\label{kernel-sl-v}
{{\rm{Ker}}\left\{\boldsymbol{\mathcal V}_{\partial\Omega }:H^{-\frac{1}{2}}(\partial\Omega )^n\to H^{\frac{1}{2}}(\partial\Omega )^n\right\}={\rm span}\{\boldsymbol\nu\} \,.}
\end{align}
and the following operator is an isomorphism,
\begin{align}
\label{sl-v-isom}
\boldsymbol{\mathcal V}_{\partial\Omega }:H^{-\frac{1}{2}}(\partial\Omega )^n/{\rm span}\{\boldsymbol\nu\} \to H_{\boldsymbol \nu }^{\frac{1}{2}}(\partial\Omega )^n\,.
\end{align}
\end{lem}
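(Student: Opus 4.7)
The plan is to establish \eqref{kernel-sl-v} first by an energy argument, and then to upgrade it to the isomorphism \eqref{sl-v-isom} by packaging it into the inf--sup framework of Lemma \ref{surj-inj-inf-sup}.

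\textbf{Kernel.} The inclusion $\mathrm{span}\{\boldsymbol\nu\}\subseteq\mathrm{Ker}\,\boldsymbol{\mathcal V}_{\partial\Omega}$ is already contained in Lemma \ref{slp-properties}. For the opposite inclusion, suppose $\boldsymbol{\mathcal V}_{\partial\Omega}\boldsymbol\psi=\mathbf 0$ and let $({\bf u},\pi):=({\bf V}_{\partial\Omega}\boldsymbol\psi,\mathcal Q^s_{\partial\Omega}\boldsymbol\psi)$ solve \eqref{var-Brinkman-transmission-sl}. Since $\gamma_\pm{\bf u}_\pm=\mathbf 0$, $\mathrm{div}\,{\bf u}_\pm=0$, and $\boldsymbol{\mathcal L}({\bf u},\pi)=\mathbf 0$ in $\Omega_\pm$, the Green formula \eqref{Green-particular-p} applied with ${\bf w}_\pm={\bf u}_\pm$ and $\tilde{\bf f}_\pm=\mathbf 0$ collapses to $\langle a_{ij}^{\alpha\beta}E_{j\beta}({\bf u}_\pm),E_{i\alpha}({\bf u}_\pm)\rangle_{\Omega_\pm}=0$. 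The ellipticity \eqref{mu} (applicable because $\mathbb E({\bf u}_\pm)$ is symmetric and trace-free thanks to $\mathrm{div}\,{\bf u}_\pm=0$) then forces $\mathbb E({\bf u}_\pm)=\mathbf 0$. Korn's inequality combined with the vanishing trace in $\Omega_+$ and with the Leray-type decay at infinity of $\mathcal H^1(\Omega_-)^n$-functions gives ${\bf u}=\mathbf 0$ in $\mathbb R^n$. Consequently $\nabla\pi=\mathbf 0$ in each connected component, i.e.\ $\pi=c_+\chi_{_{\Omega_+}}+c_-\chi_{_{\Omega_-}}$ for constants $c_\pm$; the $L_2$-integrability on the unbounded $\Omega_-$ forces $c_-=0$. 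Evaluating the jump through the definition \eqref{conormal-derivative-var-Brinkman-3} at $(\mathbf 0,c_+\chi_{_{\Omega_+}})$ yields $\boldsymbol\psi=-c_+\boldsymbol\nu\in\mathrm{span}\{\boldsymbol\nu\}$, finishing \eqref{kernel-sl-v}.

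\textbf{Isomorphism.} To apply Lemma \ref{surj-inj-inf-sup}, take $X=H^{-\frac{1}{2}}(\partial\Omega)^n$ and $\mathcal M=H^{-\frac{1}{2}}(\partial\Omega)^n/\mathrm{span}\{\boldsymbol\nu\}$, so that $\mathcal M'$ identifies with $H_{\boldsymbol\nu}^{\frac{1}{2}}(\partial\Omega)^n$. Define
\begin{equation*}
b(\boldsymbol\psi,[\boldsymbol\phi]):=\langle\boldsymbol\phi,\boldsymbol{\mathcal V}_{\partial\Omega}\boldsymbol\psi\rangle_{\partial\Omega}
\quad\text{on}\quad X\times\mathcal M,
\end{equation*}
which is well-defined on the quotient because $\boldsymbol{\mathcal V}_{\partial\Omega}\boldsymbol\psi\in H_{\boldsymbol\nu}^{\frac{1}{2}}$ by Lemma \ref{slp-properties}. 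Under the natural identifications the induced operators are $B=\boldsymbol{\mathcal V}_{\partial\Omega}$ and, by \eqref{sa-sl}, $B^*=\boldsymbol{\mathcal V}_{*\partial\Omega}$; the latter descends to $\mathcal M$ because the kernel argument above transfers verbatim to the adjoint operator (the adjoint tensor $(A^{\beta\alpha})^\top$ inherits the symmetry and ellipticity conditions, see Subsection \ref{adj-conormal}), so $\boldsymbol{\mathcal V}_{*\partial\Omega}\boldsymbol\nu=\mathbf 0$. To verify the Babu\v{s}ka--Brezzi inf-sup condition, apply Lemma \ref{lemma-add-new-1} with ${\bf w}={\bf u}_{\boldsymbol\psi}\in\mathcal H^1(\mathbb R^n)^n$, which yields the coercivity bound
\begin{equation*}
\langle\boldsymbol\psi,\boldsymbol{\mathcal V}_{\partial\Omega}\boldsymbol\psi\rangle_{\partial\Omega}
=\left\langle a_{ij}^{\alpha\beta}E_{j\beta}({\bf u}_{\boldsymbol\psi}),E_{i\alpha}({\bf u}_{\boldsymbol\psi})\right\rangle_{\mathbb R^n\setminus\partial\Omega}
\ge C\,\|{\bf u}_{\boldsymbol\psi}\|_{\mathcal H^1(\mathbb R^n)^n}^2
\end{equation*}
from the symmetric ellipticity \eqref{mu}, the Korn inequality \eqref{Korn3-R3}, and the norm/semi-norm equivalence on $\mathcal H^1(\mathbb R^n)^n$. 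A Ne\v{c}as-type pressure estimate---standard on the bounded $\Omega_+$ modulo constants and with the $L_2$-integrability fixing the constant to zero on $\Omega_-$---together with the boundedness of the formal conormal derivatives (Lemma \ref{lem-add1}(i)) promotes this into $\|[\boldsymbol\psi]\|_{\mathcal M}\le C\,\|{\bf u}_{\boldsymbol\psi}\|_{\mathcal H^1}$; here one uses that replacing $\boldsymbol\psi$ by $\boldsymbol\psi+c\boldsymbol\nu$ shifts $\pi_{\boldsymbol\psi}$ by $-c\chi_{_{\Omega_+}}$ while leaving ${\bf u}_{\boldsymbol\psi}$ unchanged. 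Testing the inf-sup with the norm-minimizing representative of each class then closes the chain of inequalities, and Lemma \ref{surj-inj-inf-sup}(ii) delivers \eqref{sl-v-isom}.

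\textbf{Main obstacle.} The genuinely technical ingredient is the Ne\v{c}as-type pressure estimate on the \emph{exterior} Lipschitz domain $\Omega_-$ in the weighted $L_2$-based setting: it is what converts coercivity on the velocity ${\bf u}_{\boldsymbol\psi}$ into coercivity on the quotient class $[\boldsymbol\psi]$, and hence drives the inf-sup verification. Every other piece---the ellipticity \eqref{mu}, the Korn inequality \eqref{Korn3-R3}, the Green formulas, and the adjoint identity \eqref{sa-sl}---is already available from earlier sections and slots into the Babu\v{s}ka--Brezzi framework without further surprises.
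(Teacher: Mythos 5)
Your kernel argument is essentially the paper's: energy identity via the Green formula, symmetric ellipticity \eqref{mu} with the trace-free $\mathbb E({\bf u}_{\boldsymbol\psi})$, rigidity plus decay to get ${\bf u}_{\boldsymbol\psi}={\bf 0}$, then $\pi_{\boldsymbol\psi}=c\chi_{_{\Omega_+}}$ and the divergence theorem to read off $\boldsymbol\psi=-c\boldsymbol\nu$; this part is fine.

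For the isomorphism the decisive step is where you diverge, and your proposal rests on a step you do not prove. You convert the velocity coercivity $\langle\boldsymbol\psi,\boldsymbol{\mathcal V}_{\partial\Omega}\boldsymbol\psi\rangle_{\partial\Omega}\ge c\|{\bf u}_{\boldsymbol\psi}\|_{\mathcal H^1(\mathbb R^n)^n}^2$ into control of $\|\left[\!\left[\boldsymbol\psi\right]\!\right]\|_{H^{-1/2}(\partial\Omega)^n/{\rm span}\{\boldsymbol\nu\}}$ by invoking a ``N\'e\v{c}as-type pressure estimate'' on the exterior weighted space, which you flag as the main obstacle and leave as an assertion. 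The paper shows this obstacle is not there at all: one tests the jump relation $[{\bf t}({\bf u}_{\boldsymbol\psi},\pi_{\boldsymbol\psi})]=\boldsymbol\psi$ only against $\boldsymbol\Phi\in H_{\boldsymbol\nu}^{1/2}(\partial\Omega)^n$, lifted by the bounded right inverse of the trace $\gamma:{\mathcal H}^1_{\rm div}({\mathbb R}^n)^n\to H_{\boldsymbol\nu}^{1/2}(\partial\Omega)^n$ (the extension of \cite[Proposition 4.4]{Sa-Se} to $n\ge 3$). Because the lifting is divergence-free, the pressure term in \eqref{conormal-derivative-var-Brinkman-3} vanishes and one gets directly $|\langle\boldsymbol\psi,\boldsymbol\Phi\rangle_{\partial\Omega}|=|a_{{\mathbb A};{\mathbb R}^n}({\bf u}_{\boldsymbol\psi},\gamma^{-1}\boldsymbol\Phi)|\le C\|{\bf u}_{\boldsymbol\psi}\|_{\mathcal H^1}\|\boldsymbol\Phi\|_{H^{1/2}}$; the duality of $H_{\boldsymbol\nu}^{1/2}(\partial\Omega)^n$ with $H^{-1/2}(\partial\Omega)^n/{\rm span}\{\boldsymbol\nu\}$ then yields $\|\left[\!\left[\boldsymbol\psi\right]\!\right]\|\le C\|{\bf u}_{\boldsymbol\psi}\|_{\mathcal H^1}$, giving coercivity \eqref{H-elliptic-sl-v} on the quotient, and Lax--Milgram (no Babu\v{s}ka--Brezzi machinery, no adjoint operator) closes the argument. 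So the genuinely nontrivial ingredient is the divergence-free trace lifting, not a pressure bound. Your route can in principle be completed — the pressure estimate you need follows on $\Omega_-$ by dualizing the Bogovski\u{\i}-type Lemma \ref{Bog} (so $\nabla:L_2(\Omega_-)\to{\mathcal H}^{-1}(\Omega_-)^n$ is bounded below) and on $\Omega_+$ modulo constants with the representative of the class fixed as you indicate — but as written it is heavier, hinges on an unproved estimate, and the inf-sup packaging via Lemma \ref{surj-inj-inf-sup} adds nothing once the coercivity on the quotient is available.
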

\begin{proof}
(i) Let $\boldsymbol \psi _0\in {\rm{Ker}}\left\{{\boldsymbol{\mathcal V}}_{\partial\Omega }:H^{-\frac{1}{2}}(\partial\Omega )^n\to H^{\frac{1}{2}}(\partial\Omega )^n\right\}$ and let
$({\bf u}_{\boldsymbol \psi _0},\pi_{\boldsymbol \psi _0})
=\big({\bf V}_{\partial \Omega }{\boldsymbol\psi _0},{\mathcal Q}^s_{\partial \Omega }{\boldsymbol\psi _0}\big)$ be the unique solution in ${\mathcal H}^1({\mathbb R}^n)^n\times L_2({\mathbb R}^n)$ of the transmission problem \eqref{var-Brinkman-transmission-sl} with the given datum $\boldsymbol \psi _0$.
In view of formula \eqref{jump-conormal-derivative-1} and since $\gamma {\bf u}_{\boldsymbol \psi _0}={\bf 0}$ on $\partial \Omega $, we obtain that
\begin{align}
a_{{\mathbb A};{\mathbb R}^n}\big({\bf u}_{\boldsymbol \psi _0},{\bf u}_{\boldsymbol \psi _0}\big)=\left\langle [{\bf t}({\bf u}_{\boldsymbol\psi_0},\pi_{\boldsymbol\psi_0})],\gamma {\bf u}_{\boldsymbol \psi _0}\right\rangle _{\partial \Omega }=0.
\end{align}
In addition, since ${\rm div}\, {\bf u}_{\boldsymbol\psi_0}=0$, we have $E_{ii}({\bf u}_{\boldsymbol\psi_0})=0$ and due to assumption \eqref{mu},
\begin{align}
a_{{\mathbb A};{\mathbb R}^n}\big({\bf u}_{\boldsymbol \psi _0},{\bf u}_{\boldsymbol \psi _0}\big)
\geq c_{\mathbb A}^{-1}\|{\mathbb E}({\bf u}_{\boldsymbol \psi _0})\|_{L_2({\mathbb R}^n)^n}^2\,,
\end{align}
which implies that ${\mathbb E}({\bf u}_{\boldsymbol \psi _0})=0$ and hence ${\bf u}_{\boldsymbol \psi _0}$ is a rigid body motion field (see, e.g., \cite[Lemma 3.1]{Med-AAM-11}).
Since ${\bf u}_{\boldsymbol \psi _0}$ belongs to the space ${\mathcal H}^1({\mathbb R}^n)^n$, which is embedded in $L_{\frac{2n}{n-2}}({\mathbb R}^n)^n$ (see \eqref{weight-Lp}), it follows that ${\bf u}_{\boldsymbol \psi _0}={\bf 0}$ in ${\mathbb R}^n$.
In addition, ${\bf u}_{\boldsymbol \psi _0}$ and $\pi _{\boldsymbol \psi _0}$ satisfy the Stokes equation in ${\mathbb R}^n\setminus \partial \Omega $ and $\pi _{\boldsymbol \psi _0}$ belongs to $L_2({\mathbb R}^n)$.
Thus, there exists $c_0\in {\mathbb R}$ such that $\pi _{\boldsymbol \psi _0}=c_0\chi _{_{\Omega _{+}}}$ in ${\mathbb R}^n$.
Then formulas \eqref{conormal-derivative-var-Brinkman-3}, \eqref{jt0} and the divergence theorem yield that
\begin{align*}
\langle [{\bf t}({\bf u}_{\boldsymbol\psi_0},\pi_{\boldsymbol\psi_0})],\boldsymbol\Phi \rangle _{\partial \Omega }
=-\left\langle \pi_{\boldsymbol\psi_0},{\rm{div}}(\gamma^{-1}_+{\boldsymbol\Phi})\right\rangle _{\Omega_+}
=-c_0\langle {\boldsymbol\nu},\boldsymbol\Phi\rangle_{\partial \Omega }\quad
\forall \, \boldsymbol\Phi \in H^{\frac{1}{2}}(\partial \Omega )^n,
\end{align*}
and accordingly that $\boldsymbol \psi _0=[{\bf t}({\bf u}_{\boldsymbol \psi _0 },\pi_{\boldsymbol \psi _0})]=-c_0\boldsymbol \nu $.
Taking into account \eqref{sl-n-p}, formula \eqref{kernel-sl-v} follows.

(ii) Next we use the notation {$\left[\!\left[\cdot \right]\!\right]$} for the classes of the space $H^{-\frac{1}{2}}(\partial\Omega )^n/{\rm span}\{\boldsymbol\nu\}$. Thus, $\left[\!\left[\boldsymbol \psi \right]\!\right]\!=\!\boldsymbol \psi \!+\!{\rm span}\{\boldsymbol\nu\}$, with $\boldsymbol \psi \in H^{-\frac{1}{2}}(\partial\Omega )^n$. We show that there exists a constant
$c=c(\partial \Omega ,c_{\mathbb A},n)>0$ such that the single layer potential operator satisfies the coercivity inequality
\begin{align}
\label{H-elliptic-sl-v}
\left\langle \left[\!\left[\boldsymbol \psi \right]\!\right],\boldsymbol{\mathcal V}_{\partial\Omega }\left[\!\left[\boldsymbol \psi \right]\!\right]\right\rangle _{\partial \Omega }\geq c\left\|\left[\!\left[\boldsymbol \psi \right]\!\right]\right\|_{H^{-\frac{1}{2}}(\partial\Omega )^n/{{\rm span}\{\boldsymbol\nu\}}}^2 \quad
\forall \, \left[\!\left[\boldsymbol \psi \right]\!\right]\in {H^{-\frac{1}{2}}(\partial\Omega )^n/{{\rm span}\{\boldsymbol\nu\}}}
\end{align}
(cf. \cite[Lemma 4.10]{K-M-W-1} and \cite[Proposition 5.5]{Sa-Se}).

Let $\left[\!\left[\boldsymbol \psi \right]\!\right]\!\in \!{H^{-\frac{1}{2}}(\partial\Omega )^n/{{\rm span}\{\boldsymbol\nu\}}}$.
In view of formula \eqref{jump-conormal-derivative-1}, Definition \ref{s-l-S-variational-variable}, relations \eqref{range-sl-v}, \eqref{kernel-sl-v}, and the Korn inequality, we obtain (cf. \eqref{a-1-v2-S}),
\begin{align}
\label{onto-div-2}
\left\langle \left[\!\left[\boldsymbol \psi \right]\!\right],\boldsymbol{\mathcal V}_{\partial\Omega }\left[\!\left[\boldsymbol \psi \right]\!\right]\right\rangle _{\partial \Omega }&=\left\langle \boldsymbol \psi ,\boldsymbol{\mathcal V}_{\partial\Omega }\boldsymbol \psi \right\rangle _{\partial \Omega }=\langle [{\bf t}({\bf u}_{\boldsymbol \psi},\pi_{\boldsymbol \psi})],\gamma {\bf u}_{\boldsymbol \psi}\rangle _{\partial \Omega }\nonumber\\
&=a_{{\mathbb A};{\mathbb R}^n}({\bf u}_{\boldsymbol \psi },{\bf u}_{\boldsymbol \psi })
\geq {c_{\mathbb A}^{-1}\|{\mathbb E}({\bf u}_{\boldsymbol \psi })\|_{L_2({\mathbb R}^n)^{n\times n}}^2
\geq 2^{-1}c_{\mathbb A}^{-1}c_1\|{\bf u}_{\boldsymbol \psi }\|_{\mathcal H^1({\mathbb R}^n)^n}^2},
\end{align}
where ${\bf u}_{\boldsymbol \psi }\!=\!{\bf V}_{\partial\Omega }\boldsymbol \psi $ and $\pi_{\boldsymbol\psi}\!=\!{\mathcal Q}^s_{{\partial \Omega }}{\boldsymbol\psi}$.
Moreover, the trace operator
$\gamma :{\mathcal H}_{{\rm div}}^1({\mathbb R}^n)^n\to H_{\boldsymbol \nu }^{\frac{1}{2}}(\partial\Omega )^n$
is surjective having a bounded right inverse $\gamma ^{-1}:H_{\boldsymbol \nu }^{\frac{1}{2}}(\partial\Omega )^n\to {\mathcal H}_{{\rm div}}^1({\mathbb R}^n)^n$ {\rd (cf., e.g., \cite[Proposition 4.4]{Sa-Se} in the case $n=3$.  
{\mn Arguments similar to} those for Proposition 4.4 of Sayas and Selgas \cite{Sa-Se} imply that the result remains valid also in the case $n\geq 3$)}.
{\rd Moreover, there} exists $c'\!=\!c'(\partial \Omega ,n)>0$ such that
\begin{align}
\label{estimate-norm}
|\langle \left[\!\left[\boldsymbol \psi \right]\!\right],\boldsymbol\Phi \rangle _{\partial \Omega }|&=|\langle \boldsymbol \psi ,\boldsymbol\Phi \rangle _{\partial \Omega }|
=|\langle [{\bf t}({\bf u}_{\boldsymbol \psi },\pi_{\boldsymbol\psi})],\boldsymbol\Phi\rangle_{\partial\Omega}|
=|a_{{\mathbb A};{\mathbb R}^n}({\bf u}_{\boldsymbol \psi },\gamma ^{-1}\boldsymbol\Phi)|\nonumber\\
&\leq {\|{\mathbb A}\|_{L_\infty ({\mathbb R}^n)}}c'\|{\bf u}_{\boldsymbol \psi }\|_{{\mathcal H}^1({\mathbb R}^n)^n}
\|\boldsymbol\Phi \|_{H^{\frac{1}{2}}(\partial\Omega )^n}\quad
\forall\ \boldsymbol\Phi \!\in \!H_{\boldsymbol \nu }^{\frac{1}{2}}(\partial\Omega )^n.
\end{align}
Inequality \eqref{estimate-norm} and the duality of the spaces $H_{\boldsymbol \nu }^{\frac{1}{2}}(\partial\Omega )^n$ and $H^{-\frac{1}{2}}(\partial\Omega )^n/{{\rm span}\{\boldsymbol\nu\}}$ show that
\begin{align}
\label{onto-div-1}
\|\left[\!\left[\boldsymbol \psi \right]\!\right]\|_{H^{-\frac{1}{2}}(\partial\Omega )^n/{\rm span}\{\boldsymbol\nu\}}
\leq {\|{\mathbb A}\|_{L_\infty ({\mathbb R}^n)}c'}\|{\bf u}_{\boldsymbol \psi }\|_{{\mathcal H}^{1}({\mathbb R}^n)^n}.
\end{align}
Then the coercivity inequality \eqref{H-elliptic-sl-v} follows from inequalities \eqref{onto-div-2} and \eqref{onto-div-1}, and the Lax-Milgram lemma yields that the single layer potential operator \eqref{sl-v-isom} is an isomorphism, as asserted.
\end{proof}

\subsection{The double layer potential operator for the Stokes system with $L_{\infty }$ viscosity coefficient tensor}
Note that if $\mathbf u\in L_{2,\rm loc}(\mathbb R^n)^n$ is such that $\mathbf u|_{\Omega_+}\in {H}^1(\Omega _+)^n$, $\mathbf u|_{\Omega_-}\in {\mathcal H}^1(\Omega _{-})^n$, then due to definition \eqref{Omega-pm}
$\mathbf u\in\mathcal H^1(\mathbb R^n\setminus\partial\Omega)^n$ and can be endowed with the norm
$\|{\bf u}\|^2_{\mathcal H^1(\mathbb R^n\setminus\partial\Omega)^n}:=\|{\bf u}\|^2_{H^1(\Omega_+)^n}+\|{\bf u}\|^2_{\mathcal H^1(\Omega_-)^n}$ that is equivalent to the norm \eqref{standard-weight-p}.

By following a similar approach to that used to define the Stokes single layer potentials, we now show the well-posedness of a transmission problem that allows us to define the $L_{\infty }$-coefficient Stokes double layer potentials with the densities in the space $H^{\frac{1}{2}}(\partial \Omega )^n$, $n\geq 3$ (cf. \cite[Theorem 3.14]{K-M-W-2} for the Stokes system with strongly elliptic coefficient tensor and \cite[Propositions 6.1, 7.1]{Sa-Se} for the isotropic case \eqref{isotropic} with $\mu =1$,  {$\lambda=0$}, and $n=2,3$).

\begin{thm}
\label{dlp-var-apr-var-p}
Let conditions \eqref{Stokes-1}-\eqref{mu} hold on $\mathbb R^n$.
Then for any $\boldsymbol\varphi\in H^{\frac{1}{2}}(\partial \Omega )^n$ the transmission problem
\begin{equation}
\label{transmission-B-dl-var}
\left\{
\begin{array}{lll}
{\boldsymbol{\mathcal L}({\bf u}_{\boldsymbol\varphi },\pi _{\boldsymbol\varphi })={\bf 0}}\,, \ \  
{\rm{div}}\, {\bf u}_{\boldsymbol\varphi }=0 & \mbox{ in } {\mathbb R}^n\setminus \partial \Omega \,,
\\
\left[\gamma {\bf u}_{\boldsymbol\varphi}\right]={-\boldsymbol\varphi } & \mbox{ on } \partial \Omega \,, \\ 
\left[{\bf t}({\bf u}_{\boldsymbol\varphi},\pi _{\boldsymbol\varphi})\right]
={{\bf 0}} & \mbox{ on } \partial \Omega \,,
\end{array}\right.
\end{equation}
has a unique solution $({\bf u}_{\boldsymbol\varphi},\pi _{{\boldsymbol\varphi}})\!\in \!{\mathcal H}^1({\mathbb R}^n\setminus \partial \Omega )^n\times L_2({\mathbb R}^n)$, and there exists
$C\!=\!C(\partial \Omega ,c_{\mathbb A},n)>0$ such that
\begin{align}
\label{estimate-4-D-var-p}
\|{\bf u}_{{\boldsymbol\varphi}}\|_{{\mathcal H}^1({\mathbb R}^n\setminus \partial \Omega )^n}+\|\pi _{\boldsymbol\varphi}\|_{L_2({\mathbb R}^n)}\leq C\|{\boldsymbol\varphi}\|_{H^{\frac{1}{2}}(\partial \Omega )^n}.
\end{align}
\end{thm}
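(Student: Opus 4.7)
The plan is to adapt the variational method used for Theorem \ref{slp-var-apr-1-p}, reducing \eqref{transmission-B-dl-var} to a mixed variational problem on $\mathcal H^1(\mathbb R^n)^n\times L_2(\mathbb R^n)$ whose well-posedness is guaranteed by Lemma \ref{lemma-a47-1-Stokes}. The novelty compared with the single-layer transmission problem \eqref{var-Brinkman-transmission-sl} is that the trace jump is now non-homogeneous, so the sought velocity does not belong to $\mathcal H^1(\mathbb R^n)^n$; this will be treated by absorbing the jump into a suitable lifting.

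First, I would fix a bounded right inverse $\gamma_-^{-1}:H^{\frac{1}{2}}(\partial\Omega)^n\to\mathcal H^1(\Omega_-)^n$ of the exterior trace operator (see \eqref{ext-trace}), set $\mathbf u_0:=\gamma_-^{-1}\boldsymbol\varphi\in\mathcal H^1(\Omega_-)^n$, and define
\begin{equation*}
\mathbf u_\varphi :=\mathring{E}_-\mathbf u_0\in\mathcal H^1(\mathbb R^n\setminus\partial\Omega)^n,
\end{equation*}
so that $\gamma_+\mathbf u_\varphi=\mathbf 0$, $\gamma_-\mathbf u_\varphi=\boldsymbol\varphi$, and hence $[\gamma\mathbf u_\varphi]=-\boldsymbol\varphi$ on $\partial\Omega$. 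Seeking the solution in the form $\mathbf u_{\boldsymbol\varphi}=\widetilde{\mathbf u}+\mathbf u_\varphi$ with $\widetilde{\mathbf u}\in\mathcal H^1(\mathbb R^n)^n$ makes the first transmission condition automatic, while the divergence-free condition in $\mathbb R^n\setminus\partial\Omega$ becomes $\mathrm{div}\,\widetilde{\mathbf u}=-\mathring{E}_-\mathrm{div}\,\mathbf u_0\in L_2(\mathbb R^n)$, which is a pointwise equality because $\widetilde{\mathbf u}$ has vanishing trace jump.

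Next, using Lemma \ref{lemma-add-new} applied to $(\mathbf u_{\boldsymbol\varphi},\pi_{\boldsymbol\varphi})$ with $\boldsymbol{\mathcal L}(\mathbf u_{\boldsymbol\varphi},\pi_{\boldsymbol\varphi})=\mathbf 0$, the homogeneous conormal jump, and the decomposition above (noting that $\mathbf u_\varphi$ vanishes in $\Omega_+$), I would rewrite the transmission problem as the mixed variational formulation: find $(\widetilde{\mathbf u},\pi_{\boldsymbol\varphi})\in\mathcal H^1(\mathbb R^n)^n\times L_2(\mathbb R^n)$ such that
\begin{equation*}
\left\{
\begin{array}{ll}
a_{\mathbb A;\mathbb R^n}(\widetilde{\mathbf u},\mathbf v)+b_{\mathbb R^n}(\mathbf v,\pi_{\boldsymbol\varphi})
=-a_{\mathbb A;\Omega_-}(\mathbf u_0,\mathbf v|_{\Omega_-}) & \forall\,\mathbf v\in\mathcal H^1(\mathbb R^n)^n,\\[0.3ex]
b_{\mathbb R^n}(\widetilde{\mathbf u},q)=\langle\mathrm{div}\,\mathbf u_0,q|_{\Omega_-}\rangle_{\Omega_-} & \forall\,q\in L_2(\mathbb R^n).
\end{array}
\right.
\end{equation*}
The right-hand sides define bounded linear functionals on $\mathcal H^1(\mathbb R^n)^n$ and $L_2(\mathbb R^n)$ with norms controlled by $\|\boldsymbol\varphi\|_{H^{\frac{1}{2}}(\partial\Omega)^n}$ via the continuity of $\gamma_-^{-1}$, so Lemma \ref{lemma-a47-1-Stokes} would yield a unique pair $(\widetilde{\mathbf u},\pi_{\boldsymbol\varphi})$ together with the estimate \eqref{estimate-4-D-var-p}.

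The main technical step is to establish the equivalence between \eqref{transmission-B-dl-var} and the reduced mixed variational problem in both directions. Going from the transmission problem to the variational one is a direct application of Lemma \ref{lemma-add-new}. For the converse, once $(\widetilde{\mathbf u},\pi_{\boldsymbol\varphi})$ is produced by Lemma \ref{lemma-a47-1-Stokes}, testing first with $\mathbf v\in\mathcal D(\Omega_\pm)^n$ extended by zero recovers the Stokes equations pointwise in each $\Omega_\pm$, and testing with arbitrary $\mathbf v\in\mathcal H^1(\mathbb R^n)^n$ combined with the surjectivity of $\gamma:\mathcal H^1(\mathbb R^n)^n\to H^{\frac{1}{2}}(\partial\Omega)^n$ and one more application of Lemma \ref{lemma-add-new} extracts the vanishing conormal jump. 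Uniqueness follows, since the difference of two solutions of \eqref{transmission-B-dl-var} produces, after the same lifting with $\boldsymbol\varphi=\mathbf 0$ and hence $\mathbf u_\varphi=\mathbf 0$, a solution of the homogeneous variational problem, which must vanish by Lemma \ref{lemma-a47-1-Stokes}.
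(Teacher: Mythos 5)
Your proof is correct and follows essentially the same route as the paper: reduce the transmission problem \eqref{transmission-B-dl-var} to the whole-space mixed variational formulation via the Green identity of Lemma \ref{lemma-add-new}, lift the prescribed trace jump by a bounded right inverse of the trace operator so the remainder lies in ${\mathcal H}^1({\mathbb R}^n)^n$, and invoke Lemma \ref{lemma-a47-1-Stokes} for existence, uniqueness and the estimate. The only cosmetic difference is that you place the lifting in the exterior domain $\Omega_-$ via $\gamma_-^{-1}\boldsymbol\varphi$, whereas the paper takes ${\bf w}_{\boldsymbol\varphi}=-\gamma_+^{-1}\boldsymbol\varphi$ in $\Omega_+$ and ${\bf 0}$ in $\Omega_-$; both choices produce the same jump $-\boldsymbol\varphi$ and work equally well.
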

\begin{proof}
First we show the uniqueness.
Let $({\bf u}_0,\pi _0)\in {\mathcal H}^1({\mathbb R}^n\setminus \partial \Omega )^n\times L_2({\mathbb R}^n)$ be a solution of the homogeneous version of problem \eqref{transmission-B-dl-var}.
Therefore, the couple $({\bf u}_0,\pi _0)$ is a solution of the homogeneous version of the transmission problem \eqref{var-Brinkman-transmission-sl}, which, in view of Theorem \ref{slp-var-apr-1-p}, has only the trivial solution. 

Next, we show that the transmission problem \eqref{transmission-B-dl-var} has the following equivalent variational formulation.

{\it Find $({\bf u}_{\boldsymbol\varphi},\pi _{{\boldsymbol\varphi}})\in {\mathcal H}^1({\mathbb R}^n\setminus \partial \Omega )^n\times L_2({\mathbb R}^n)$ such that}
\begin{equation}
\label{transmission-B-dl-var-equiv}
\hspace{-0.5em}\left\{\begin{array}{lll}
\left \langle a_{ij}^{\alpha \beta }E_{j\beta }({\bf u}_{\boldsymbol\varphi }),E_{i\alpha }({\bf v})\right \rangle _{{\Omega _+}}+\left \langle a_{ij}^{\alpha \beta }E_{j\beta }({\bf u}_{\boldsymbol\varphi }),E_{i\alpha }({\bf v})\right \rangle _{{{\Omega }_{-}}}
-\langle \pi _{{\boldsymbol\varphi}},{\rm{div}}\, {\bf v}\rangle _{{\mathbb R}^n}
=0 \quad \forall \, {\bf v}\in {\mathcal H}^{1}({\mathbb R}^n)^n,\\
\left\langle {\rm{div}}\, {\bf u}_{{\boldsymbol\varphi}},q\right\rangle _{{\mathbb R}^n\setminus \partial \Omega }=0 \quad \forall \, q\in L_{2}({\mathbb R}^n),\\
\left[\gamma {\bf u}_{{\boldsymbol\varphi}}\right]={-\boldsymbol\varphi } \mbox{ on } \partial \Omega .
\end{array}
\right.
\end{equation}

Indeed, if $({\bf u}_{{\boldsymbol\varphi}},\pi _{\boldsymbol\varphi})\in {{\mathcal H}^1({\mathbb R}^n\setminus \partial \Omega )^n}\times L_2({\mathbb R}^n)$ satisfies transmission problem \eqref{transmission-B-dl-var} then the Green formula \eqref{jump-conormal-derivative-1} yields the first equation of problem \eqref{transmission-B-dl-var-equiv}. The second equation of \eqref{transmission-B-dl-var-equiv} is the distributional form of the second equation of \eqref{transmission-B-dl-var}.
Conversely, assume that $({\bf u}_{{\boldsymbol\varphi}},\pi _{\boldsymbol\varphi})\in {{\mathcal H}^1({\mathbb R}^n\setminus \partial \Omega )^n}\times L_2({\mathbb R}^n)$ satisfies the variational problem \eqref{transmission-B-dl-var-equiv}. Then from the first equation of \eqref{transmission-B-dl-var-equiv} we deduce that
\begin{align}
\label{j1new}
\left\langle \left(\partial _\alpha\left(a_{ij}^{\alpha \beta }E_{j\beta }({\bf u}_{\boldsymbol\varphi })\right)-\partial _i\pi _{{\boldsymbol\varphi }}\right)\big|_{\Omega _\pm },v_i\right\rangle _{{\Omega _\pm }}=0 \quad \forall \, {\bf v}=(v_1,\ldots ,v_n)\in {\mathcal D}(\Omega _\pm )^n,
\end{align}
which is the distributional form of the first equation in \eqref{transmission-B-dl-var}.
The second equation of \eqref{transmission-B-dl-var} follows from the second equation of \eqref{transmission-B-dl-var-equiv}.
In addition,  the first equation of \eqref{transmission-B-dl-var-equiv} and the Green formula \eqref{jump-conormal-derivative-1} applied to the pair $({\bf u}_{{\boldsymbol\varphi}},\pi _{{\boldsymbol\varphi}})$ yield that
\begin{align}
\label{useful-3}
\left\langle [{\bf t}({\bf u}_{{\boldsymbol\varphi}},\pi _{{\boldsymbol\varphi}})],\gamma {\bf v}\right\rangle _{\partial \Omega}=0 \quad \forall \, {\bf v}\in {\mathcal H}^{1}({\mathbb R}^n)^n.
\end{align}
Moreover, the surjectivity property of the trace operator $\gamma :{\mathcal H}^{1}({\mathbb R}^n)^n\to H^{\frac{1}{2}}(\partial \Omega )^n$ shows that equation \eqref{useful-3} can be written in the equivalent form
\begin{align}
\label{useful-3-new}
\left\langle [{\bf t}({\bf u}_{{\boldsymbol\varphi}},\pi _{{\boldsymbol\varphi}})],\boldsymbol \Psi \right\rangle _{\partial \Omega}=0 \quad \forall \, \boldsymbol \Psi \in H^{\frac{1}{2}}(\partial \Omega )^n,
\end{align}
which yields the second transmission condition of \eqref{transmission-B-dl-var}. The first transmission condition in \eqref{transmission-B-dl-var} follows from the transmission condition in \eqref{transmission-B-dl-var-equiv}. Therefore, problems \eqref{transmission-B-dl-var} and \eqref{transmission-B-dl-var-equiv} are equivalent.

By using again the existence of a right inverse $\gamma^{-1}_\pm: H^{\frac{1}{2}}({\partial\Omega })\to\mathcal H^1(\Omega_\pm)$ of the trace operator $\gamma_\pm \!:\!{\mathcal H}^1(\Omega_\pm)\!\to \! H^{\frac{1}{2}}(\partial\Omega)$ (see Theorem \ref{trace-operator1}), we deduce that for $\boldsymbol\varphi\in H^{\frac{1}{2}}(\partial \Omega )^n$ given,
there exists ${\bf w}_{\boldsymbol\varphi}\in {\mathcal H}^1({\mathbb R}^n\setminus \partial \Omega )^n$ continuously depending on $\boldsymbol\varphi$ such that
$\left[\gamma {\bf w}_{\boldsymbol\varphi}\right]=-\boldsymbol\varphi \mbox{ on } \partial \Omega$.
For example, we can take ${\bf w}_{\boldsymbol\varphi}=0$ in $\Omega_-$ and
${\bf w}_{\boldsymbol\varphi}=-\gamma_+^{-1}\boldsymbol\varphi$ in $\Omega_+$.

Therefore, ${\bf v}_{{\boldsymbol\varphi}}:={\bf u}_{{\boldsymbol\varphi}}-{\bf w}_{{\boldsymbol\varphi}}$ satisfies the condition $[\gamma {\bf v}_{\boldsymbol\varphi}]={\bf 0}$, and hence by Lemma \ref{extention} can be extended to ${\bf v}_{{\boldsymbol\varphi}}\in {\mathcal H}^1({\mathbb R}^n)^n$.
In addition, \eqref{transmission-B-dl-var-equiv} reduces to the following variational problem
\begin{align}
\label{transmission-B-variational-dl-3-var}
\left\{\begin{array}{ll}
a_{{\mathbb A};{\mathbb R}^n}({\bf v}_{\boldsymbol\varphi},{\bf v})+b_{{\mathbb R}^n}({\bf v},\pi _{\boldsymbol\varphi})=\boldsymbol\xi _{{\boldsymbol\varphi}}({\bf v}) \quad \forall \, {\bf v}\in {\mathcal H}^{1}({\mathbb R}^n)^n\,,\\
b_{{\mathbb R}^n}({\bf v}_{\boldsymbol\varphi},q)=\zeta _{\boldsymbol\varphi} (q) \quad \forall \, q\in L_{2}({\mathbb R}^n)\,,
\end{array}
\right.
\end{align}
with the unknown $({\bf v}_{{\boldsymbol\varphi}},\pi _{{\boldsymbol\varphi}})\!\in \!{\mathcal H}^1({\mathbb R}^n)^n\!\times \!L_2({\mathbb R}^n)$, where $a_{{\mathbb A};{\mathbb R}^n}(\cdot ,\cdot ):{\mathcal H}^1({\mathbb R}^n)^n\times {\mathcal H}^{1}({\mathbb R}^n)^n\to {\mathbb R}$ and $b_{{\mathbb R}^n}:{\mathcal H}^1({\mathbb R}^n)^n\times L_{2}({\mathbb R}^n)\to {\mathbb R}$ are the bounded bilinear forms given by \eqref{a-v} and \eqref{b-v}, respectively. In addition, {conditions \eqref{Stokes-1}}
and the H\"{o}lder inequality 
show the boundedness of the linear forms
\begin{align}
&\boldsymbol\xi _{{\boldsymbol\varphi}}:{\mathcal H}^{1}({\mathbb R}^n)^n\to {\mathbb R},\ \boldsymbol\xi _{{\boldsymbol\varphi}}({\bf v})
:=-\left \langle a_{ij}^{\alpha \beta }E_{j\beta }({\bf w}_{\boldsymbol\varphi }),E_{i\alpha }({\bf v})\right \rangle _{\Omega _+}-\left \langle a_{ij}^{\alpha \beta }E_{j\beta }({\bf w}_{\boldsymbol\varphi }),E_{i\alpha }({\bf v})\right \rangle _{\Omega _-},\\
&\zeta _{\boldsymbol\varphi}:L_{2}({\mathbb R}^n)^n\to {\mathbb R},\ \zeta _{\boldsymbol\varphi}(q):=-\left(\langle {\rm{div}}\, {\bf w}_{{{\boldsymbol\varphi}}},q\rangle _{\Omega _+}+\langle {\rm{div}}\, {\bf w}_{{{\boldsymbol\varphi}}},q\rangle _{\Omega _{-}}\right) \quad \forall \, q\in L_{2}({\mathbb R}^n)\,.
\end{align}
Then Lemma \ref{lemma-a47-1-Stokes} implies that the variational problem \eqref{transmission-B-variational-dl-3-var} has a unique solution $({\bf v}_{{\boldsymbol\varphi}},\pi _{{\boldsymbol\varphi}})\in {\mathcal H}^1({\mathbb R}^n)^n\times L_{2}({\mathbb R}^n)$.
Hence, the pair
$({\bf u}_{{\boldsymbol\varphi}},\pi _{{\boldsymbol\varphi}})=({\bf w}_{{\boldsymbol\varphi}}+{\bf v}_{{\boldsymbol\varphi}},\pi _{{\boldsymbol\varphi}})$
is a solution of the variational problem \eqref{transmission-B-dl-var-equiv} in the space ${\mathcal H}^1({\mathbb R}^n\setminus \partial \Omega )^n\times L_2({\mathbb R}^n)$, and depends continuously on $\boldsymbol\varphi $.
The equivalence between problems \eqref{transmission-B-dl-var} and \eqref{transmission-B-dl-var-equiv} show that $({\bf u}_{{\boldsymbol\varphi}},\pi _{{\boldsymbol\varphi}})$ is the unique solution of the transmission problem \eqref{transmission-B-dl-var}. 
\end{proof}

Theorem \ref{dlp-var-apr-var-p}
suggests the following definition of the double layer potential operator for the anisotropic Stokes system \eqref{Stokes} in the case $n\geq 3$ (cf. \cite[p. 77]{Sa-Se} for the constant-coefficient Stokes system in ${\mathbb R}^3$, \cite[formula (4.5) and Lemma 4.6]{Barton} for general strongly elliptic differential operators, and \cite[Definition 3.15]{K-M-W-2} for the Stokes system with $L_\infty $ strongly elliptic viscosity coefficient).

\begin{defn}
\label{d-l-variational-var}
Let conditions \eqref{Stokes-1}-\eqref{mu} hold.
The double layer velocity and pressure potential operators
\begin{align}
\label{dl-var-mu-alpha}
&{\bf W}_{\partial\Omega }:H^{\frac{1}{2}}(\partial \Omega )^n\to {\mathcal H}^1({\mathbb R}^n\setminus \partial \Omega )^n,\quad
{\mathcal Q}_{\partial\Omega }^d:H^{\frac{1}{2}}(\partial \Omega )^n\to {L_2({\mathbb R}^n)}
\end{align}
are defined as
\begin{align}
\label{dlp-vp-var}
{\bf W}_{\partial\Omega }\boldsymbol\varphi:={\bf u}_{\boldsymbol\varphi },\quad
\mathcal Q^d_{\partial\Omega}\boldsymbol\varphi:=\pi _{\boldsymbol\varphi } \quad \forall\ \boldsymbol\varphi \in H^{\frac{1}{2}}(\partial \Omega )^n,
\end{align}
and the boundary operators
\begin{align}
\label{dl-var-mu-alpha-1}
&{\bf K}_{\partial\Omega }:H^{\frac{1}{2}}(\partial \Omega )^n\to H^{\frac{1}{2}}(\partial \Omega )^n,\quad
{\bf D}_{\partial\Omega }:H^{\frac{1}{2}}(\partial \Omega )^n\to H^{-\frac{1}{2}}(\partial \Omega )^n
\end{align}
are defined as
\begin{align}
\label{dlp-oper-var}
&{{\bf K}_{\partial \Omega }{\boldsymbol\varphi }:=\frac{1}{2}\left(\gamma _{+}{\bf u}_{\boldsymbol\varphi }+\gamma _{-}{\bf u}_{\boldsymbol\varphi }\right)}
\quad \forall\ \boldsymbol\varphi \in H^{\frac{1}{2}}(\partial \Omega )^n,
\\
\label{jump-dl-v-var-alpha-cd}
&{\bf D}_{\partial\Omega }\boldsymbol\varphi
:={\bf t}^{+}\left({\bf W}_{\partial\Omega }\boldsymbol\varphi ,{\mathcal Q}^d_{\partial\Omega }\boldsymbol\varphi \right)
={\bf t}^{-}\left({\bf W}_{\partial\Omega }\boldsymbol\varphi ,{\mathcal Q}^d_{\partial\Omega }\boldsymbol\varphi \right)
\quad \forall\ \boldsymbol\varphi \in H^{\frac{1}{2}}(\partial \Omega )^n,
\end{align}
where $({\bf u}_{\boldsymbol\varphi },\pi _{\boldsymbol\varphi })$ is the unique solution of the transmission problem \eqref{transmission-B-dl-var} in ${\mathcal H}^1({\mathbb R}^n\setminus \partial \Omega )^n\times L_2({\mathbb R}^n)$.
\end{defn}
Moreover, the well-posedness of the transmission problem \eqref{transmission-B-dl-var} and Definition \ref{d-l-variational-var} lead to the next result (see also \cite[(10.81), (10.82)]{M-W} and \cite[Propositions 6.2, 6.3]{Sa-Se} for the constant-coefficient Stokes system in ${\mathbb R}^3$, and {\cite[Lemma 5.8]{Barton} for strongly elliptic operators}).

\begin{lem}
\label{continuity-dl-h-var}
Let conditions \eqref{Stokes-1}-\eqref{mu} are satisfied.
Then the following assertions hold.
\begin{itemize}
\item[$(i)$]
Operators \eqref{dl-var-mu-alpha} and \eqref{dl-var-mu-alpha-1} are linear and continuous and for any
$\boldsymbol\varphi \in H^{-\frac{1}{2}}(\partial\Omega)^n$,
$$
\boldsymbol{\mathcal L}({\bf W}_{\partial\Omega}\boldsymbol\varphi,
\mathcal Q^d_{\partial\Omega}\boldsymbol\varphi)={\bf 0},\quad
{\rm{div}}\,{\bf W}_{\partial\Omega}\boldsymbol\varphi=0 \quad \mbox{in } \Omega_\pm.
$$
\item[$(ii)$]
For any $\boldsymbol\varphi \in H^{\frac{1}{2}}(\partial \Omega )^n$, the following jump formulas hold on $\partial \Omega $
\begin{align}
\label{jump-dl-v-var-alpha}
&\gamma _{\pm }\left({\bf W}_{\partial\Omega }\boldsymbol\varphi \right)={\mp }\frac{1}{2}\boldsymbol\varphi +{\bf K}_{\partial\Omega }\boldsymbol\varphi ,\quad
{\bf t}^{\pm }\left({\bf W}_{\partial\Omega }\boldsymbol\varphi ,{\mathcal Q}^d_{\partial\Omega }\boldsymbol\varphi \right)={\bf D}_{\partial\Omega }\boldsymbol\varphi \,.
\end{align}
\item[$(iii)$]
The operator $\boldsymbol{\mathcal K}_{*\partial \Omega }:H^{-\frac{1}{2}}(\partial \Omega )^n\to H^{-\frac{1}{2}}(\partial \Omega )^n$ defined in \eqref{slp-S-oper-var-adj}
is the transpose of the double layer operator ${\bf K}_{\partial \Omega }:H^{\frac{1}{2}}(\partial \Omega )^n\to H^{\frac{1}{2}}(\partial \Omega )^n$ defined in \eqref{dlp-oper-var}, i.e.,
\begin{align}
\label{transpose-dl-var-sm}
\left\langle \boldsymbol\psi_* ,{\bf K}_{\partial\Omega }\boldsymbol\varphi \right\rangle _{\partial \Omega}=\left\langle {{\boldsymbol{\mathcal K}_{*\partial\Omega}}\boldsymbol\psi_* },\boldsymbol\varphi \right\rangle _{\partial \Omega} \quad \forall \, \boldsymbol\varphi\in H^{\frac{1}{2}}(\partial \Omega )^n,\, {\boldsymbol\psi_* }\in H^{-\frac{1}{2}}(\partial \Omega )^n\,.
\end{align}
\end{itemize}
\end{lem}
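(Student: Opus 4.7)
For part (i), the plan is as follows: linearity of ${\bf W}_{\partial\Omega}$ and ${\mathcal Q}^d_{\partial\Omega}$ follows from the linearity of the transmission problem \eqref{transmission-B-dl-var} in the datum $\boldsymbol\varphi$ combined with uniqueness in Theorem \ref{dlp-var-apr-var-p}. Continuity of the mappings in \eqref{dl-var-mu-alpha} is precisely the estimate \eqref{estimate-4-D-var-p}. Continuity of the operators in \eqref{dl-var-mu-alpha-1} then follows by composition: ${\bf K}_{\partial\Omega}$ is the average of $\gamma_\pm$ applied to ${\bf W}_{\partial\Omega}\boldsymbol\varphi|_{\Omega_\pm}$ (bounded by Theorem \ref{trace-operator1} and its extension to ${\mathcal H}^1(\Omega_-)^n$), while ${\bf D}_{\partial\Omega}$ is the generalized conormal derivative applied to $({\bf W}_{\partial\Omega}\boldsymbol\varphi|_{\Omega_\pm},{\mathcal Q}^d_{\partial\Omega}\boldsymbol\varphi|_{\Omega_\pm},{\bf 0})$, bounded by Lemma \ref{lem-add1}(ii). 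The Stokes equation and divergence-free condition in $\Omega_\pm$ are nothing but the first two lines of \eqref{transmission-B-dl-var}.

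Part (ii) reduces to linear algebra from the transmission conditions in \eqref{transmission-B-dl-var} and definitions \eqref{dlp-oper-var} and \eqref{jump-dl-v-var-alpha-cd}: solving the two-by-two system $\gamma_+{\bf u}_{\boldsymbol\varphi}-\gamma_-{\bf u}_{\boldsymbol\varphi}=-\boldsymbol\varphi$, $\tfrac{1}{2}(\gamma_+{\bf u}_{\boldsymbol\varphi}+\gamma_-{\bf u}_{\boldsymbol\varphi})={\bf K}_{\partial\Omega}\boldsymbol\varphi$ yields $\gamma_\pm{\bf u}_{\boldsymbol\varphi}=\mp\tfrac{1}{2}\boldsymbol\varphi+{\bf K}_{\partial\Omega}\boldsymbol\varphi$. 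Likewise, the vanishing traction jump $[{\bf t}({\bf u}_{\boldsymbol\varphi},\pi_{\boldsymbol\varphi})]={\bf 0}$ together with \eqref{jump-dl-v-var-alpha-cd} gives ${\bf t}^\pm({\bf W}_{\partial\Omega}\boldsymbol\varphi,{\mathcal Q}^d_{\partial\Omega}\boldsymbol\varphi)={\bf D}_{\partial\Omega}\boldsymbol\varphi$.

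Part (iii) is the substantive step, and the plan is to equate the two Green formulas \eqref{Green-particular-p} and \eqref{Green-particular-p-adj} subdomain by subdomain. Set ${\bf u}:={\bf W}_{\partial\Omega}\boldsymbol\varphi$, $\pi:={\mathcal Q}^d_{\partial\Omega}\boldsymbol\varphi$, ${\bf v}_*:={\bf V}_{*\partial\Omega}\boldsymbol\psi_*$, $q_*:={\mathcal Q}^s_{*\partial\Omega}\boldsymbol\psi_*$, with ${\bf u}_\pm$, ${\bf v}_{*\pm}$ denoting their restrictions to $\Omega_\pm$. I apply \eqref{Green-particular-p} on $\Omega_\pm$ with test field ${\bf v}_{*\pm}\in{\mathcal H}^1(\Omega_\pm)^n$ and \eqref{Green-particular-p-adj} on $\Omega_\pm$ with test field ${\bf u}_\pm\in{\mathcal H}^1(\Omega_\pm)^n$; since ${\rm div}\,{\bf u}_\pm={\rm div}\,{\bf v}_{*\pm}=0$ and the bulk terms $\langle a_{ij}^{\alpha\beta}E_{j\beta}({\bf u}_\pm),E_{i\alpha}({\bf v}_{*\pm})\rangle_{\Omega_\pm}$ on the right-hand sides coincide, one gets, for each choice of sign,
\begin{equation*}
\langle{\bf t}^{*\pm}({\bf v}_{*\pm},q_{*\pm}),\gamma_\pm{\bf u}_\pm\rangle_{\partial\Omega}
=\langle{\bf t}^\pm({\bf u}_\pm,\pi_\pm),\gamma_\pm{\bf v}_{*\pm}\rangle_{\partial\Omega}.
\end{equation*}

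Subtracting the equation for $-$ from the one for $+$, the right-hand side collapses to $\langle[{\bf t}({\bf u},\pi)],\gamma{\bf v}_*\rangle_{\partial\Omega}=0$ since $[{\bf t}({\bf u},\pi)]={\bf 0}$ and $[\gamma{\bf v}_*]={\bf 0}$. On the left, I substitute ${\bf t}^{*\pm}({\bf v}_{*\pm},q_{*\pm})=\pm\tfrac{1}{2}\boldsymbol\psi_*+\boldsymbol{\mathcal K}_{*\partial\Omega}\boldsymbol\psi_*$ from \eqref{sl-identities-var-1-adjoint} and $\gamma_\pm{\bf u}_\pm=\mp\tfrac{1}{2}\boldsymbol\varphi+{\bf K}_{\partial\Omega}\boldsymbol\varphi$ from the already-proved part (ii). In the difference the $\boldsymbol{\mathcal K}_{*\partial\Omega}\boldsymbol\psi_*\cdot{\bf K}_{\partial\Omega}\boldsymbol\varphi$ cross-terms and the $\tfrac{1}{4}\boldsymbol\psi_*\cdot\boldsymbol\varphi$ diagonal terms cancel, leaving exactly $\langle\boldsymbol\psi_*,{\bf K}_{\partial\Omega}\boldsymbol\varphi\rangle_{\partial\Omega}-\langle\boldsymbol{\mathcal K}_{*\partial\Omega}\boldsymbol\psi_*,\boldsymbol\varphi\rangle_{\partial\Omega}$, which therefore vanishes; this is \eqref{transpose-dl-var-sm}. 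The principal obstacle I anticipate is bookkeeping of the $\pm$ signs in the subtraction, but the calculation is entirely parallel to the adjointness argument already used to establish \eqref{sa-sl} in Lemma \ref{self-adj}.
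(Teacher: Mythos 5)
Your proposal is correct and follows essentially the same route as the paper: parts (i)--(ii) are read off the well-posedness of \eqref{transmission-B-dl-var} together with Definition \ref{d-l-variational-var}, and part (iii) combines the direct and adjoint Green formulas with the jump relations \eqref{sl-identities-var-1-adjoint} and \eqref{jump-dl-v-var-alpha}, ending in exactly the cancellation the paper performs in \eqref{sl-dl-var-new2}. The only cosmetic difference is bookkeeping: the paper first shows that $\left\langle a_{ij}^{\alpha \beta }E_{j\beta }\left({\bf W}_{\partial \Omega }\boldsymbol\varphi \right),E_{i\alpha }\left({\bf V}_{*\partial \Omega }\boldsymbol\psi_* \right)\right\rangle _{{\mathbb R}^n\setminus \partial \Omega }=0$ via \eqref{jump-conormal-derivative-1} and then converts each subdomain integral into a boundary pairing by \eqref{Green-particular-p-adj}, whereas you subtract two per-subdomain reciprocity identities and use $[{\bf t}({\bf u},\pi)]={\bf 0}$, $[\gamma {\bf v}_*]={\bf 0}$; the two arrangements are equivalent.
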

\begin{proof}
The continuity of operators \eqref{dl-var-mu-alpha} and \eqref{dl-var-mu-alpha-1} follows from the well-posedness of transmission problem \eqref{transmission-B-dl-var} and Definition \ref{d-l-variational-var}. By invoking again Definition \ref{d-l-variational-var} and the transmission conditions in \eqref{transmission-B-dl-var} we obtain jump formulas \eqref{jump-dl-v-var-alpha}.

Next we show equality \eqref{transpose-dl-var-sm}, by using an argument similar to that in the proof of \cite[Proposition 6.7]{Sa-Se} for the constant-coefficient Stokes system and $p=2$.
Let $\boldsymbol\varphi\in H^{\frac{1}{2}}(\partial \Omega )^n$ be given, and let $({\bf u}_{\boldsymbol\varphi },\pi _{\boldsymbol\varphi })=\left({\bf W}_{\partial \Omega }\boldsymbol\varphi ,{\mathcal Q}_{\partial \Omega }^d\boldsymbol\varphi \right)\in {\mathcal H}^1({\mathbb R}^n\setminus \partial \Omega )^n\times L_2({\mathbb R}^n)$ be the unique solution of the problem \eqref{transmission-B-dl-var} with datum $\boldsymbol\varphi $.
Let also $\boldsymbol\psi_* \in H^{-\frac{1}{2}}(\partial \Omega )^n$ and {$({\bf v}_{\boldsymbol\psi_* },q_{\boldsymbol\psi_* })=({\bf V}_{*\partial \Omega }\boldsymbol\psi_* ,{\mathcal Q}_{*\partial \Omega }^{s}\boldsymbol\psi_* )\in {\mathcal H}^{1}({\mathbb R}^n)^n\times L_2({\mathbb R}^n)$ be the solution of the problem \eqref{var-Brinkman-transmission-sl-adj} with datum $\boldsymbol\psi_* $, i.e., the single layer velocity and pressure potentials with density $\boldsymbol\psi_* $ and corresponding the adjoint Stokes system (see Definition \ref{s-l-S-variational-variable-adj})}. Then by formulas \eqref{jump-conormal-derivative-1} and \eqref{jump-dl-v-var-alpha-cd}, 
\begin{align}
\label{dl-identity-var}
0=\langle [{\bf t}({\bf W}_{\partial \Omega }\boldsymbol\varphi ,{\mathcal Q}_{\partial \Omega }^d\boldsymbol\varphi )],\gamma {{\bf V}_{*\partial \Omega }\boldsymbol\psi_* }\rangle _{\partial \Omega }
=\left \langle a_{ij}^{\alpha \beta }E_{j\beta }\left({\bf W}_{\partial \Omega }\boldsymbol\varphi \right),
E_{i\alpha }\left({{\bf V}_{*\partial \Omega }\boldsymbol\psi_* }\right)\right \rangle _{{\mathbb R}^n\setminus \partial \Omega }.
\end{align}
Moreover, the Green formula {\eqref{Green-particular-p-adj} for the adjoint Stokes system} and equality \eqref{dl-identity-var} yield that
\begin{align}\label{sl-dl-var}
&{\left\langle {\bf t}^{*+}\left({\bf V}_{*\partial \Omega }\boldsymbol\psi_*,
{\mathcal Q}_{*\partial \Omega }^{s}\boldsymbol\psi_* \right),
\gamma _{+}({\bf W}_{\partial \Omega }\boldsymbol\varphi )\right\rangle _{\partial \Omega }
={\left \langle a_{ij}^{\alpha \beta }E_{i\alpha}\left({\bf V}_{*\partial \Omega }\boldsymbol\psi_* \right),
E_{j\beta}\left({\bf W}_{\partial \Omega }\boldsymbol\varphi \right)\right \rangle _{\Omega _+}}}\nonumber\\
&=-\left \langle a_{ij}^{\alpha \beta }E_{i\alpha}\left({\bf V}_{\partial \Omega }^*\boldsymbol\psi_* \right),
E_{j\beta}\left({\bf W}_{\partial \Omega }\boldsymbol\varphi \right)\right \rangle _{\Omega _-}
={\left\langle {\bf t}^{*-}\left({\bf V}_{*\partial \Omega }\boldsymbol\psi ,{\mathcal Q}^{s}_{*\partial \Omega }\boldsymbol\psi_* \right),\gamma _{-}\left({\bf W}_{\partial \Omega }\boldsymbol\varphi \right)\right\rangle _{\partial \Omega }}\,.
\end{align}
{\rd Therefore, we obtain the equality}
\begin{align}
\label{sl-dl-var-new1}
{\rd \left\langle {\bf t}^{*+}\left({\bf V}_{*\partial \Omega }\boldsymbol\psi_*,
{\mathcal Q}_{*\partial \Omega }^{s}\boldsymbol\psi_* \right),
\gamma _{+}({\bf W}_{\partial \Omega }\boldsymbol\varphi )\right\rangle _{\partial \Omega }
={\left\langle {\bf t}^{*-}\left({\bf V}_{*\partial \Omega }\boldsymbol\psi ,{\mathcal Q}^{s}_{*\partial \Omega }\boldsymbol\psi_* \right),\gamma _{-}\left({\bf W}_{\partial \Omega }\boldsymbol\varphi \right)\right\rangle _{\partial \Omega }}\,.}
\end{align}
Then the {second formula \eqref{sl-identities-var-1-adjoint}},
the first formula \eqref{jump-dl-v-var-alpha} and formula \eqref{sl-dl-var-new1} lead to the equality
\begin{align}
\label{sl-dl-var-new2}
{\rd \left\langle \frac{1}{2}\boldsymbol\psi_*
+{\boldsymbol{\mathcal K}_{*\partial\Omega}}\boldsymbol\psi_* ,
-\frac{1}{2}\boldsymbol\varphi +{\bf K}_{\partial\Omega }\boldsymbol\varphi
\right\rangle _{\partial \Omega }
=\left\langle -\frac{1}{2}\boldsymbol\psi_*
+{\boldsymbol{\mathcal K}_{*\partial\Omega}}\boldsymbol\psi_* ,\frac{1}{2}\boldsymbol\varphi +{\bf K}_{\partial\Omega }\boldsymbol\varphi
\right\rangle _{\partial \Omega }}\,,
\end{align}
and hence to equality \eqref{transpose-dl-var-sm}, as asserted.
\end{proof}
\begin{rem}
{If the operator ${\mathbb L}$ is self-adjoint, i.e., $A^{*\alpha \beta}=A^{\beta \alpha}$,
$a^{\beta\alpha}_{ji}=a^{\alpha\beta}_{ij}$, $\alpha,\beta,i,j =1,\ldots ,n$, see \eqref{2.45},
and particularly in the isotropic case \eqref{isotropic}, then Definition $\ref{s-l-S-variational-variable-adj}$ reduces to Definition $\ref{s-l-S-variational-variable}$ and the operator $\boldsymbol{\mathcal K}_{*\partial \Omega }:H^{-\frac{1}{2}}(\partial \Omega )^n\to H^{-\frac{1}{2}}(\partial \Omega )^n$ given by \eqref{slp-S-oper-var-adj} {coincides with
$\boldsymbol{\mathcal K}_{\partial\Omega}:H^{-\frac{1}{2}}(\partial \Omega )^n\!\to \!H^{-\frac{1}{2}}(\partial \Omega )^n$} given by \eqref{slp-S-oper-var}.}
\end{rem}
\subsubsection{\bf Invertibility of the operator ${\bf D}_{\partial\Omega }$}
Let $\boldsymbol{\mathcal R}$ be the set of rigid body motion fields in ${\mathbb R}^n$, 
\begin{align}
\boldsymbol{\mathcal R}:=\left\{{\bf b}+{\bf B}{\bf x}: {\bf b}\in {\mathbb R}^n \mbox{ and } {\bf B}\in {\mathbb R}^{n\times n} \mbox{ such that } {\bf B}=-{\bf B}^\top\right\},\ \boldsymbol{\mathcal R}_{\partial \Omega }:=\gamma \boldsymbol{\mathcal R},
\label{E3.74}
\\
\label{R-perp}
\boldsymbol{\mathcal R}_{\partial \Omega }^\perp :=\left\{\boldsymbol\Psi \in H^{-\frac{1}{2}}(\partial\Omega )^n: \langle \boldsymbol\Psi ,{\bf r}\rangle _{\partial \Omega }=0 \quad \forall \, {\bf r}\in \boldsymbol{\mathcal R}_{\partial \Omega }\right\}.
\end{align}
Also let $H_{\boldsymbol{\mathcal R}}^{\frac{1}{2}}(\partial\Omega )^n$ be the closed subspace of $H^{\frac{1}{2}}(\partial\Omega )^n$ defined by
\begin{align}
\label{HR}
H_{\boldsymbol{\mathcal R}}^{\frac{1}{2}}(\partial\Omega )^n:=\left\{\boldsymbol\varphi \in H^{\frac{1}{2}}(\partial\Omega )^n :\int_{\partial \Omega } \boldsymbol\varphi \cdot {\bf r}d\sigma =0 \quad \forall \, {\bf r}\in \boldsymbol{\mathcal R}_{\partial \Omega }\right\}\,.
\end{align}
It is easy to see that
\begin{align}\label{Er}
{\mathbb E}({\bf r})=0,\quad {\rm{div}}\,{\bf r}=0\quad\forall\ {\bf r}\in\boldsymbol{\mathcal R}.
\end{align}

Next we show the isomorphism property of the operator ${\bf D}_{\partial\Omega }$ defined in \eqref{jump-dl-v-var-alpha-cd} (cf. \cite[Lemma 3.17]{K-M-W-2} for a different structure of the kernel and range of the similar operator when ${\mathbb A}$ is an $L_\infty $ strongly elliptic {viscosity coefficient tensor}, and \cite[Propositions 6.4 and 6.5]{Sa-Se} for the Stokes system with constant coefficients).
\begin{lem}
\label{isom-dlc}
Let conditions \eqref{Stokes-1}-\eqref{mu} hold.
Then
\begin{align}
\label{kernel-range-dl}
&{\rm{Ker }}\left\{{\bf D}_{\partial\Omega }:H^{\frac{1}{2}}(\partial\Omega )^n\to H^{-\frac{1}{2}}(\partial\Omega )^n\right\}=\boldsymbol{\mathcal R}_{\partial \Omega }\,,\\
\label{range-dl}
&{\bf D}_{\partial\Omega }\boldsymbol\varphi \in \boldsymbol{\mathcal R}_{\partial \Omega }^\perp  \quad \forall\, \boldsymbol\varphi \in H^{\frac{1}{2}}(\partial\Omega )^n \,,
\end{align}
and the following operator is an isomorphism,
\begin{align}
\label{sl-v-isom}
{\bf D}_{\partial\Omega }:H_{\boldsymbol{\mathcal R}}^{\frac{1}{2}}(\partial\Omega )^n\to \boldsymbol{\mathcal R}_{\partial \Omega }^\perp \,.
\end{align}
\end{lem}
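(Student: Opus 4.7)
My plan is to establish the three claims \eqref{kernel-range-dl}, \eqref{range-dl}, and \eqref{sl-v-isom} in turn. The main tools are the Green formula \eqref{Green-particular-p} applied to the pair $({\bf u}_{\boldsymbol\varphi},\pi_{\boldsymbol\varphi})=({\bf W}_{\partial\Omega}\boldsymbol\varphi,\mathcal Q^d_{\partial\Omega}\boldsymbol\varphi)$ solving the transmission problem \eqref{transmission-B-dl-var}, the ellipticity \eqref{mu}, and the properties \eqref{Er} of rigid body motion fields.

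For $\boldsymbol{\mathcal R}_{\partial\Omega}\subseteq\mathrm{Ker}\,{\bf D}_{\partial\Omega}$, given ${\bf r}\in\boldsymbol{\mathcal R}$ I will verify that $({\bf u},\pi):=(-\mathring E_+{\bf r},0)$ belongs to $\mathcal H^1(\mathbb R^n\setminus\partial\Omega)^n\times L_2(\mathbb R^n)$ (bounded $\Omega_+$ ensures ${\bf r}|_{\Omega_+}\in H^1(\Omega_+)^n$) and, thanks to \eqref{Er}, solves \eqref{transmission-B-dl-var} with datum $\boldsymbol\varphi=\gamma{\bf r}$; uniqueness (Theorem \ref{dlp-var-apr-var-p}) then identifies $({\bf W}_{\partial\Omega}\gamma{\bf r},\mathcal Q^d_{\partial\Omega}\gamma{\bf r})=({\bf u},\pi)$, whence ${\bf D}_{\partial\Omega}\gamma{\bf r}={\bf t}^+(-{\bf r},0)={\bf 0}$. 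The range inclusion \eqref{range-dl} follows by applying \eqref{Green-particular-p} on $\Omega_+$ with test ${\bf w}_+={\bf r}|_{\Omega_+}$: both terms on the right-hand side vanish by \eqref{Er}, and the left-hand side is $\langle{\bf D}_{\partial\Omega}\boldsymbol\varphi,\gamma{\bf r}\rangle_{\partial\Omega}$ by \eqref{jump-dl-v-var-alpha-cd}.

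For the opposite inclusion in \eqref{kernel-range-dl}, assume ${\bf D}_{\partial\Omega}\boldsymbol\varphi={\bf 0}$, so ${\bf t}^\pm({\bf u}_{\boldsymbol\varphi},\pi_{\boldsymbol\varphi})={\bf 0}$ by \eqref{jump-dl-v-var-alpha-cd}. Testing \eqref{Green-particular-p} on each $\Omega_\pm$ with ${\bf w}_\pm={\bf u}_{\boldsymbol\varphi}|_{\Omega_\pm}$ and using $\mathrm{div}\,{\bf u}_{\boldsymbol\varphi}=0$ reduces the identity to $\langle a^{\alpha\beta}_{ij}E_{j\beta}({\bf u}_{\boldsymbol\varphi}),E_{i\alpha}({\bf u}_{\boldsymbol\varphi})\rangle_{\Omega_\pm}=0$. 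Since ${\mathbb E}({\bf u}_{\boldsymbol\varphi})$ is symmetric and trace-free, ellipticity \eqref{mu} forces ${\mathbb E}({\bf u}_{\boldsymbol\varphi})={\bf 0}$ on each $\Omega_\pm$, so ${\bf u}_{\boldsymbol\varphi}$ is a rigid body motion on each side. Leray vanishing at infinity of $\mathcal H^1(\Omega_-)^n$-functions (Section~\ref{S2.2}) excludes nonzero rigid motions on $\Omega_-$, giving ${\bf u}_{\boldsymbol\varphi}|_{\Omega_-}={\bf 0}$; then $\nabla\pi_{\boldsymbol\varphi}=\boldsymbol{\mathbb L}{\bf u}_{\boldsymbol\varphi}={\bf 0}$ on $\Omega_-$ combined with $\pi_{\boldsymbol\varphi}\in L_2(\mathbb R^n)$ yields $\pi_{\boldsymbol\varphi}|_{\Omega_-}=0$. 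Consequently $\boldsymbol\varphi=-[\gamma{\bf u}_{\boldsymbol\varphi}]=-\gamma({\bf u}_{\boldsymbol\varphi}|_{\Omega_+})\in\boldsymbol{\mathcal R}_{\partial\Omega}$.

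For the isomorphism \eqref{sl-v-isom}, consider the bounded bilinear form ${\mathfrak a}(\boldsymbol\varphi,\boldsymbol\psi):=-\langle{\bf D}_{\partial\Omega}\boldsymbol\varphi,\boldsymbol\psi\rangle_{\partial\Omega}$ on $H^{\frac{1}{2}}(\partial\Omega)^n\times H^{\frac{1}{2}}(\partial\Omega)^n$. Summing the two instances of \eqref{Green-particular-p} tested with ${\bf u}_{\boldsymbol\varphi}$ on each side, together with $[\gamma{\bf u}_{\boldsymbol\varphi}]=-\boldsymbol\varphi$ and \eqref{mu}, yields ${\mathfrak a}(\boldsymbol\varphi,\boldsymbol\varphi)\geq c_{\mathbb A}^{-1}\|{\mathbb E}({\bf u}_{\boldsymbol\varphi})\|_{L_2(\mathbb R^n\setminus\partial\Omega)^{n\times n}}^2$. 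The main obstacle is the coercivity of ${\mathfrak a}$ on $H_{\boldsymbol{\mathcal R}}^{\frac{1}{2}}(\partial\Omega)^n$, i.e., proving $\|{\mathbb E}({\bf u}_{\boldsymbol\varphi})\|_{L_2(\mathbb R^n\setminus\partial\Omega)^{n\times n}}\geq c\|\boldsymbol\varphi\|_{H^{\frac{1}{2}}(\partial\Omega)^n}$ for $\boldsymbol\varphi\in H_{\boldsymbol{\mathcal R}}^{\frac{1}{2}}(\partial\Omega)^n$. I will combine: (i) a Korn inequality on $\mathcal H^1(\Omega_-)^n$ (rigid motions excluded by Leray decay) giving $\|{\bf u}_{\boldsymbol\varphi}\|_{\mathcal H^1(\Omega_-)^n}\leq c\|{\mathbb E}({\bf u}_{\boldsymbol\varphi})\|_{L_2(\Omega_-)}$; (ii) the second Korn inequality on the bounded $\Omega_+$ supplying ${\bf r}\in\boldsymbol{\mathcal R}$ with $\|{\bf u}_{\boldsymbol\varphi}-{\bf r}\|_{H^1(\Omega_+)^n}\leq c\|{\mathbb E}({\bf u}_{\boldsymbol\varphi})\|_{L_2(\Omega_+)}$; and (iii) the orthogonality $\int_{\partial\Omega}\boldsymbol\varphi\cdot{\bf r}'\,d\sigma=0$ defining $H_{\boldsymbol{\mathcal R}}^{\frac{1}{2}}(\partial\Omega)^n$. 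Substituting $-\boldsymbol\varphi=[\gamma{\bf u}_{\boldsymbol\varphi}]$ into (iii) produces $\int_{\partial\Omega}{\bf r}\cdot{\bf r}'\,d\sigma=\int_{\partial\Omega}\bigl(\gamma_-{\bf u}_{\boldsymbol\varphi}-\gamma_+({\bf u}_{\boldsymbol\varphi}-{\bf r})\bigr)\cdot{\bf r}'\,d\sigma$ for every ${\bf r}'\in\boldsymbol{\mathcal R}$, and positive definiteness on the finite-dimensional space $\boldsymbol{\mathcal R}$ of the form $({\bf r},{\bf r}')\mapsto\int_{\partial\Omega}{\bf r}\cdot{\bf r}'\,d\sigma$ (a rigid motion vanishing on $\partial\Omega$ must be zero) lets me solve for ${\bf r}$ with $\|\gamma{\bf r}\|_{H^{\frac{1}{2}}(\partial\Omega)^n}\leq c\|{\mathbb E}({\bf u}_{\boldsymbol\varphi})\|_{L_2(\mathbb R^n\setminus\partial\Omega)}$; combined with the trace theorem applied to (i) and (ii) this delivers the coercivity. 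The Lax-Milgram lemma then makes ${\bf D}_{\partial\Omega}\colon H_{\boldsymbol{\mathcal R}}^{\frac{1}{2}}(\partial\Omega)^n\to\bigl(H_{\boldsymbol{\mathcal R}}^{\frac{1}{2}}(\partial\Omega)^n\bigr)'$ an isomorphism, and I identify this dual with $\boldsymbol{\mathcal R}_{\partial\Omega}^\perp$ through the algebraic decomposition $H^{\frac{1}{2}}(\partial\Omega)^n=H_{\boldsymbol{\mathcal R}}^{\frac{1}{2}}(\partial\Omega)^n\oplus\boldsymbol{\mathcal R}_{\partial\Omega}$ together with \eqref{range-dl}, finishing the proof.
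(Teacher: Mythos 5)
Your proof is correct, and for the two structural claims it coincides with the paper's argument: the inclusion $\boldsymbol{\mathcal R}_{\partial\Omega}\subseteq{\rm Ker}\,{\bf D}_{\partial\Omega}$ via the explicit solution $(-\mathring E_+{\bf r},0)$ of \eqref{transmission-B-dl-var} and uniqueness, the reverse inclusion via the energy identity, ellipticity \eqref{mu} on trace-free symmetric matrices, the exclusion of rigid motions in ${\mathcal H}^1(\Omega_-)^n$, and the jump relation giving $\boldsymbol\varphi=-\gamma_+({\bf u}_{\boldsymbol\varphi}|_{\Omega_+})$ (your extra remark that $\pi_{\boldsymbol\varphi}$ vanishes in $\Omega_-$ is true but not needed), and \eqref{range-dl} by testing the Green formula \eqref{Green-particular-p} with rigid fields. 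Where you genuinely deviate is the coercivity estimate behind \eqref{sl-v-isom}. The paper proves $\langle-{\bf D}_{\partial\Omega}\boldsymbol\varphi,\boldsymbol\varphi\rangle_{\partial\Omega}\geq{\mathcal C}\|\boldsymbol\varphi\|^2_{H^{1/2}(\partial\Omega)^n}$ on $H^{1/2}_{\boldsymbol{\mathcal R}}(\partial\Omega)^n$ by invoking the norm-equivalence Lemma \ref{equiv-norm-Sobolev} from the appendix (whose proof rests on Theorem \ref{Korn-exterior} and the Tartar-type compactness Lemma \ref{Tartar-lemma}): when the boundary moments $\int_{\partial\Omega}[\gamma{\bf w}]\cdot\gamma{\bf r}_j\,d\sigma$ vanish, $\|{\mathbb E}({\bf u}_{\boldsymbol\varphi})\|_{L_2({\mathbb R}^n\setminus\partial\Omega)^{n\times n}}$ controls the full ${\mathcal H}^1({\mathbb R}^n\setminus\partial\Omega)^n$ norm, and the trace estimate \eqref{coercive-dlc-1} finishes. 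You instead prove the same inequality inline: Theorem \ref{Korn-exterior} on $\Omega_-$, the second (quotient) Korn inequality on the bounded $\Omega_+$ supplying a rigid corrector ${\bf r}$, and the $H^{1/2}_{\boldsymbol{\mathcal R}}$ orthogonality converted, via the invertible Gram matrix of $\{\gamma{\bf r}_j\}$ (positive definite because a rigid motion vanishing on $\partial\Omega$ vanishes identically), into a bound $\|\gamma{\bf r}\|_{H^{1/2}(\partial\Omega)^n}\leq c\|{\mathbb E}({\bf u}_{\boldsymbol\varphi})\|_{L_2({\mathbb R}^n\setminus\partial\Omega)^{n\times n}}$, after which the triangle inequality and the trace theorem give the coercivity. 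This buys you independence from the abstract equivalent-norm lemma and makes the role of the constraint in $H^{1/2}_{\boldsymbol{\mathcal R}}(\partial\Omega)^n$ completely explicit, at the price of importing the quotient Korn inequality on $\Omega_+$ (itself a compactness result), so neither route is strictly more elementary; both conclude identically with the Lax--Milgram lemma and the identification of $\bigl(H^{1/2}_{\boldsymbol{\mathcal R}}(\partial\Omega)^n\bigr)'$ with $\boldsymbol{\mathcal R}_{\partial\Omega}^{\perp}$ through the direct decomposition $H^{1/2}(\partial\Omega)^n=H^{1/2}_{\boldsymbol{\mathcal R}}(\partial\Omega)^n\oplus\boldsymbol{\mathcal R}_{\partial\Omega}$ and \eqref{range-dl}.
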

\begin{proof}
$(i)$
First, we show formula \eqref{kernel-range-dl}.
Let us assume that $\boldsymbol\varphi \in H^{\frac{1}{2}}(\partial\Omega )^n$ satisfies the equation ${\bf D}_{\partial\Omega }\boldsymbol\varphi ={\bf 0}$ on $\partial \Omega $.
Let ${\bf u}_{\boldsymbol\varphi} :={\bf W}_{\partial\Omega }\boldsymbol\varphi $ and $\pi _{\boldsymbol\varphi} :={\mathcal Q}_{\partial\Omega }^d\boldsymbol\varphi $.
Since ${\rm div}\, {\bf u}_{\boldsymbol\varphi}=0$ in $\Omega_\pm$, we have $E_{ii}({\bf u}_{\boldsymbol\varphi})=0$ implying that assumption \eqref{mu} is applicable for $E_{i\alpha}({\bf u}_{\boldsymbol\varphi})$.
According to  Lemma \ref{lem-add1}, the jump relations \eqref{jump-dl-v-var-alpha} and \eqref{jump-dl-v-var-alpha-cd}, and assumption \eqref{mu} we obtain that
\begin{align}
\label{kernel-dlc-D}
0=\langle -{\bf D}_{\partial\Omega }\boldsymbol\varphi ,\boldsymbol\varphi \rangle _{\partial \Omega }
&=\left \langle a_{ij}^{\alpha \beta }E_{j\beta }\left({\bf u}_{\boldsymbol\varphi} \right),
E_{i\alpha }\left({\bf u}_{\boldsymbol\varphi} \right)\right \rangle _{\Omega _+}
+\left \langle a_{ij}^{\alpha \beta }E_{j\beta }\left({\bf u}_{\boldsymbol\varphi} \right),
E_{i\alpha }\left({\bf u}_{\boldsymbol\varphi} \right)\right \rangle _{\Omega _-}\nonumber\\
&\ge c_{\mathbb A}^{-1}\left(\|{\mathbb E}({\bf u}_{\boldsymbol\varphi} )\|_{L_2(\Omega _+)^{n\times n}}^2+\|{\mathbb E}({\bf u}_{\boldsymbol\varphi} )\|_{L_2(\Omega _-)^{n\times n}}^2\right)
\end{align}
and accordingly ${\mathbb E}({\bf u}_{\boldsymbol\varphi} )=0$ in $\Omega _\pm$. Then by, e.g., \cite[Lemma 3.1]{Med-AAM-11} there exist a constant ${\bf b}\in {\mathbb R}^n$ and an antisymmetric matrix ${\bf B}\in {\mathbb R}^{n\times n}$ such that ${\bf u}_{\boldsymbol\varphi} ={\bf b}+{\bf B}{\bf x}$ in $\Omega _+$, while the condition ${\mathbb E}({\bf u}_{\boldsymbol\varphi} )=0$ in $\Omega _{-}$ yields that
${\bf u}_{\boldsymbol\varphi}|_{\Omega _-} \in {\mathcal R}|_{\Omega _-}$, and the membership
${\bf u}_{\boldsymbol\varphi} \in\mathcal H^1(\Omega _{-})^n$ implies that ${\bf u}_{\boldsymbol\varphi} ={\bf 0}$ in $\Omega _{-}$.
Then by using again the jump relations \eqref{jump-dl-v-var-alpha} we obtain that $\boldsymbol\varphi =-({\bf b}+{\bf B}{\bf x})|_{\partial \Omega }$. This relation shows that
\begin{align}
\label{kernel-dl-1}
{\rm{Ker }}\, {\bf D}_{\partial\Omega }\subseteq \boldsymbol{\mathcal R}_{\partial \Omega }\,.
\end{align}
Now let ${\bf r}\in \boldsymbol{\mathcal R}$ and let ${\bf u}_{\bf r}$ and $\pi _{\bf r}$ be the fields given by
\begin{align}\label{E3.79}
{\bf u}_{\bf r}:=
\left\{\begin{array}{ll}
-{\bf r} & \mbox{ in } \Omega _+\\
\, {\bf 0} & \mbox{ in } \Omega _- ,
\end{array}
\right. \mbox{ and }\
\pi _{\bf r}=0 \mbox{ in } {\mathbb R}^n\,.
\end{align}
By \eqref{Er}, ${\mathbb E}({\bf u}_{\bf r})=0$ and ${\rm{div}}\, {\bf u}_{\bf r}=0$ in ${\mathbb R}^n\setminus \partial \Omega $, and hence, in view of Lemma \ref{lem-add1},
\begin{align}
{\pm \langle {\bf t}^\pm ({\bf u}_{\bf r},\pi _{\bf r}),\gamma _\pm {\bf v}_\pm \rangle _{\partial \Omega }=0 \quad \forall \, {\bf v}_\pm \in {\mathcal H}^1(\Omega _\pm )^n}\,,
\end{align}
which show that ${\bf t}^\pm ({\bf u}_{\bf r},\pi _{\bf r})={\bf 0}$, and accordingly that $[{\bf t}({\bf u}_{\bf r},\pi _{\bf r})]={\bf 0}$ on $\partial \Omega $. Moreover, we have that $[\gamma {\bf u}_{\bf r}]={-{\bf r}|_{\partial \Omega }}$ on $\partial \Omega $. Consequently, the pair $({\bf u}_{\bf r},\pi _{\bf r})$ belongs to ${\mathcal H}^1({\mathbb R}^n\setminus \partial \Omega )^n\times L_2({\mathbb R}^n)$ and satisfies the transmission problem \eqref{transmission-B-dl-var} with given boundary datum ${\bf r}|_{\partial \Omega }\in H^{\frac{1}{2}}(\partial \Omega )^n$. Then Definition \ref{d-l-variational-var} yields that ${\bf W}_{\partial \Omega }({\bf r}|_{\partial \Omega })={\bf u}_{\bf r}$ and ${\mathcal Q}^d_{\partial \Omega }({\bf r}|_{\partial \Omega })=0$ in ${\mathbb R}^n\setminus \partial \Omega $, and by formula \eqref{jump-dl-v-var-alpha-cd} we obtain that
${\bf D}_{\partial\Omega }({\bf r}|_{\partial \Omega })={\bf 0}$ on $\partial \Omega $. Therefore,
\begin{align}
\label{kernel-dl-2}
\boldsymbol{\mathcal R}_{\partial \Omega }\subseteq {\rm{Ker }}\, {\bf D}_{{\mathbb A} ;\partial\Omega }\,.
\end{align}
Relations \eqref{kernel-dl-1} and \eqref{kernel-dl-2} imply \eqref{kernel-range-dl}.

Now let $\boldsymbol\varphi \in H^{\frac{1}{2}}(\partial\Omega )^n$. By applying the Green formula \eqref{Green-particular-p} to the pair $({\bf W}_{\partial \Omega }\boldsymbol\varphi ,{\mathcal Q}_{\partial\Omega }^d\boldsymbol\varphi )$ and by using relation \eqref{jump-dl-v-var-alpha-cd}  along with \eqref{Er},
we obtain the formula
\begin{align}
{\langle {\bf D}_{\partial \Omega }\boldsymbol\varphi ,\gamma _+{\bf r}\rangle _{\partial \Omega }=\left\langle a_{ij}^{\alpha \beta }E_{j\beta }({\bf W}_{\partial \Omega }\boldsymbol\varphi ),E_{i\alpha }({\bf r})\right\rangle _{\Omega _+}-\left\langle {\mathcal Q}_{\partial\Omega }^d\boldsymbol\varphi ,{\rm{div}}\, {\bf r}\right\rangle _{\Omega _+}}=0 \quad \forall \, {\bf r}\in \boldsymbol{\mathcal R}\,,
\end{align}
implying formula \eqref{range-dl}.

$(ii)$
To prove that operator \eqref{sl-v-isom} is an isomorphism,
we show that there exists a constant ${\mathcal C}={\mathcal C}(\partial \Omega ,c_{\mathbb A},n)>0$ such that
\begin{align}
\label{coercive-dlc}
\langle -{\bf D}_{\partial \Omega }\boldsymbol\varphi ,\boldsymbol\varphi \rangle _{\partial \Omega}\geq {\mathcal C}\|\boldsymbol\varphi \|_{H^{\frac{1}{2}}(\partial \Omega )^n}^2 \quad \forall \, \boldsymbol\varphi \in H_{\boldsymbol{\mathcal R}}^{\frac{1}{2}}(\partial\Omega )^n
\end{align}
(cf. \cite[Proposition 6.5]{Sa-Se} in the constant-coefficient Stokes system).
Indeed, by applying Lemma \ref{lem-add1} to the pair $({\bf u}_{\boldsymbol\varphi} ,\pi _{\boldsymbol\varphi} ):=({\bf W}_{\partial \Omega }\boldsymbol\varphi ,{\mathcal Q}_{\partial\Omega }^d\boldsymbol\varphi )$ with $\boldsymbol\varphi \in H_{\boldsymbol{\mathcal R}}^{\frac{1}{2}}(\partial\Omega )^n$, and using the jump relations \eqref{jump-dl-v-var-alpha} and {condition} \eqref{mu}, 
we obtain the inequality
\begin{align}
\label{coercive-dlc-5}
\langle -{\bf D}_{\partial\Omega }\boldsymbol\varphi ,\boldsymbol\varphi \rangle _{\partial \Omega }
\geq c_{\mathbb A}^{-1}\left(\|{\mathbb E}({\bf u}_{\boldsymbol\varphi})\|_{L_2(\Omega _+)^{n\times n}}^2+\|{\mathbb E}({\bf u}_{\boldsymbol\varphi})\|_{L_2(\Omega _-)^{n\times n}}^2\right)\,.
\end{align}
In addition, the continuity of the trace operators $\gamma _\pm :{\mathcal H}^1(\Omega _\pm )^n\to H^{\frac{1}{2}}(\partial \Omega )^n$ and the jump formulas \eqref{jump-dl-v-var-alpha} imply that there exists a constant ${\mathcal C}_1={\mathcal C}_1(\partial \Omega ,c_{\mathbb A},n)>0$ such that
\begin{align}
\label{coercive-dlc-1}
\|\boldsymbol\varphi \|_{H^{\frac{1}{2}}(\partial \Omega )^n}^2=\left\|[\gamma {\bf u}_{\boldsymbol\varphi} ]\right\|_{H^{\frac{1}{2}}(\partial \Omega )^n}^2
\leq {\mathcal C}_1\left(\|{\bf u}_{\boldsymbol\varphi}\|_{{\mathcal H}^1(\Omega _{+})^n}^2+\|{\bf u}_{\boldsymbol\varphi}\|_{{\mathcal H}^1(\Omega _{-})^n}^2\right)={\mathcal C}_1\|{\bf u}_{\boldsymbol\varphi}\|_{{\mathcal H}^1({\mathbb R}^n\setminus \partial \Omega )^n}^2\,.
\end{align}
Now let $\left\{{\bf r}_j:j=1,\ldots ,{n(n+1)}/{2}\right\}$ be a basis of the $n(n+1)/2$-dimensional space $\boldsymbol{\mathcal R}$. Then the formula
\begin{align}
\label{coercive-dlc-2}
\|{\bf w}\|_{1;\rho ;{\mathbb R}^n\setminus \partial \Omega }^2:=\|{\mathbb E}({\bf w})\|_{L_2({\mathbb R}^n\setminus \partial \Omega )^{n\times n}}^2+\sum_{j=1}^{n(n+1)/2}\left|\int_{\partial \Omega }[\gamma {\bf w}]\cdot {\bf r}_jd\sigma \right |^2 \quad \forall \, {\bf w}\in {\mathcal H}^1({\mathbb R}^n\setminus \partial \Omega )^n
\end{align}
defines a norm on the space ${\mathcal H}^1({\mathbb R}^n\setminus \partial \Omega )^n$, which is equivalent to the norm $\|\cdot \|_{{\mathcal H}^1({\mathbb R}^n\setminus \partial \Omega )^n}$ (see Lemma \ref{equiv-norm-Sobolev}, cf. also \cite[p.78]{Sa-Se} for $n=3$). Therefore, there exists a constant ${\mathcal C}_2>0$ such that
\begin{align}
\label{coercive-dlc-3}
\|{\bf w}\|_{{\mathcal H}^1({\mathbb R}^n\setminus \partial \Omega )^n}\leq {\mathcal C}_2\|{\bf w}\|_{1;\rho ;{\mathbb R}^n\setminus \partial \Omega } \quad \forall \, {\bf w}\in {\mathcal H}^1({\mathbb R}^n\setminus \partial \Omega )^n\,.
\end{align}

Now, by considering ${\bf w}={\bf u}_{\boldsymbol\varphi} $ in \eqref{coercive-dlc-2} and by using the jump formulas \eqref{jump-dl-v-var-alpha}, and the assumption that $\boldsymbol\varphi \in H_{\boldsymbol{\mathcal R}}^{\frac{1}{2}}(\partial\Omega )^n$, as well as inequality \eqref{coercive-dlc-3}, we obtain that
\begin{align}
\label{coercive-dlc-4}
\|{\mathbb E}({\bf u}_{\boldsymbol\varphi})\|_{L_2(\Omega _+)^{n\times n}}^2+\|{\mathbb E}({\bf u}_{\boldsymbol\varphi})\|_{L_2(\Omega _-)^{n\times n}}^2=\|{\bf u}_{\boldsymbol\varphi} \|_{1;\rho ;{\mathbb R}^n\setminus \partial \Omega }^2
\geq {{\mathcal C}_2^{-2}}\|{\bf u}_{\boldsymbol\varphi} \|_{{\mathcal H}^1({\mathbb R}^n\setminus \partial \Omega )^n}^2\,.
\end{align}

Finally, by exploiting inequalities \eqref{coercive-dlc-5}, \eqref{coercive-dlc-1} and \eqref{coercive-dlc-4} we obtain the coercivity inequality \eqref{coercive-dlc} with the constant ${\mathcal C}=c_{\mathbb A}^{-1}{\mathcal C}_1^{-1}{\mathcal C}_2^{-2}$. Then the Lax-Milgram Lemma and the isomorphic identification of the dual of the space $H_{\boldsymbol{\mathcal R}}^{\frac{1}{2}}(\partial\Omega )^n$ with $\boldsymbol{\mathcal R}_{\partial \Omega }^\perp$, 
imply that operator \eqref{sl-v-isom} is an isomorphism, as asserted.
\end{proof}

\subsection{Poisson problems of transmission type for anisotropic Stokes system in $\mathbb R^n$. 
Potential approach.}

For given data $\tilde{\bf f}_\pm,\tilde{\bf f}_-,g_+,g_-,\boldsymbol\varphi,\boldsymbol\psi$,
we consider the following Poisson problem of transmission type, 
\begin{equation}
\label{Dirichlet-var-Stokes-tran}
\left\{
\begin{array}{ll}
{\boldsymbol{\mathcal L}({\bf u}_\pm ,\pi _\pm)={\tilde{\bf f}_{\pm}}|_{\Omega _{\pm }}\,,} \ \
{\rm{div}} \, {\bf u}_\pm = g_\pm & \mbox{ in } \Omega _{\pm }\,,\
\\
{\gamma }_{+}{\bf u}_+-{\gamma }_{-}{\bf u}_{-}=\boldsymbol\varphi &  \mbox{ on } \partial \Omega ,\\
{\bf t}^{+}({\bf u}_+,\pi _+;\tilde{\bf f}_+)-{\bf t}^{-}({\bf u}_-,\pi _- ;\tilde{\bf f}_-)=\boldsymbol\psi &  \mbox{ on } \partial \Omega \,.
\end{array}\right.
\end{equation}
where $\boldsymbol{\mathcal L}$ denotes the Stokes operator defined in \eqref{Stokes-new}.
The left-hand side in the last transmission condition in \eqref{Dirichlet-var-Stokes-tran} is understood in the sense of Definition~\ref{conormal-derivative-var-Brinkman}.

\begin{thm}
\label{T-2-tran}
Let conditions \eqref{Stokes-1}-\eqref{mu} hold.
Then for all given data $(\tilde{\bf f}_+,\tilde{\bf f}_-,g_+,g_-,\boldsymbol\varphi,\boldsymbol\psi)$ in the space
$\widetilde{H}^{-1}(\Omega _+)^n\times \widetilde{\mathcal H}^{-1}(\Omega_-)^n\times
L_2(\Omega _+)\times L_2(\Omega _-) \times H^{\frac{1}{2}}(\partial\Omega)^n\times H^{-\frac{1}{2}}(\partial\Omega)^n$,
transmission problem \eqref{Dirichlet-var-Stokes-tran} has a unique solution
$({\bf u}_+,\pi _+,{\bf u}_-,\pi _-)\in
{H}^1(\Omega _+)^n\times L_2(\Omega _+)\times {\mathcal H}^1(\Omega _-)^n\times L_2(\Omega _-)$.
Moreover, there exists a constant $C=C(\partial \Omega , c ,n)>0$ such that
\begin{multline}
\label{solution-transm-tran}
\|{\bf u}_+\|_{{H}^1(\Omega _+)^n}+\|\pi _+\|_{L_2(\Omega_+)}+\|{\bf u}_-\|_{{\mathcal H}^1(\Omega _-)^n}+\|\pi _-\|_{L_2(\Omega _-)}\leq
C\Big(\|\tilde{\bf f}_+\|_{\widetilde{H}^{-1}(\Omega _{+})^n}
+\|\tilde{\bf f}_-\|_{\widetilde{\mathcal H}^{-1}(\Omega _{-})^n}\\
+\|g_+\|_{L_2(\Omega_+)}+\|g_-\|_{L_2(\Omega_-)}
+\|\boldsymbol\varphi\|_{H^{\frac{1}{2}}(\partial \Omega )^n}
+\|\boldsymbol\psi\|_{H^{-\frac{1}{2}}(\partial \Omega )^n}\Big).
\end{multline}
\end{thm}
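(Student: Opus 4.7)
The plan is to construct the solution by superposition, combining the Newtonian and compressibility volume potentials of Definition~\ref{Newtonian-B-var-variable-1} with the single and double layer potentials of Definitions~\ref{s-l-S-variational-variable} and \ref{d-l-variational-var}, then to argue uniqueness by reducing to the global Stokes problem of Theorem~\ref{Brinkman-problem-p}.

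First I would dispose of the volume data. Set
\begin{equation*}
\tilde{\bf f}:=\tilde{\bf f}_++\tilde{\bf f}_-\in \mathcal H^{-1}(\mathbb R^n)^n,
\qquad
g:=\mathring E_+g_++\mathring E_-g_-\in L_2(\mathbb R^n),
\end{equation*}
and put ${\bf u}^{(0)}:=\boldsymbol{\mathcal N}_{\mathbb R^n}\tilde{\bf f}+\boldsymbol{\mathcal G}_{\mathbb R^n}g$ and $\pi^{(0)}:=\mathcal Q_{\mathbb R^n}\tilde{\bf f}+\mathcal G^0_{\mathbb R^n}g$. Lemma~\ref{Newtonian-B-var-1} gives $({\bf u}^{(0)},\pi^{(0)})\in\mathcal H^1(\mathbb R^n)^n\times L_2(\mathbb R^n)$ with $\boldsymbol{\mathcal L}({\bf u}^{(0)},\pi^{(0)})=\tilde{\bf f}$ and $\mathrm{div}\,{\bf u}^{(0)}=g$ in $\mathbb R^n$. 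Restricting to $\Omega_\pm$ and using that $\tilde{\bf f}_\mp|_{\Omega_\pm}=0$ (supports in $\overline{\Omega_\mp}$), one gets $\boldsymbol{\mathcal L}({\bf u}^{(0)}|_{\Omega_\pm},\pi^{(0)}|_{\Omega_\pm})=\tilde{\bf f}_\pm|_{\Omega_\pm}$, hence $({\bf u}^{(0)}|_{\Omega_\pm},\pi^{(0)}|_{\Omega_\pm},\tilde{\bf f}_\pm)\in\pmb{\mathcal H}^1(\Omega_\pm,\boldsymbol{\mathcal L})$. Since ${\bf u}^{(0)}\in\mathcal H^1(\mathbb R^n)^n$, we have $[\gamma{\bf u}^{(0)}]=0$, and Lemma~\ref{lemma-add-new-1} applied with ${\bf w}\in \mathcal H^1(\mathbb R^n)^n$ arbitrary (using the surjectivity of $\gamma$ from Theorem~\ref{trace-operator1}) forces $[{\bf t}({\bf u}^{(0)},\pi^{(0)};\tilde{\bf f})]=0$ in $H^{-\frac12}(\partial\Omega)^n$.

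Next I would neutralize the transmission data by subtracting layer potentials. Define
\begin{equation*}
{\bf u}:={\bf u}^{(0)}-{\bf W}_{\partial\Omega}\boldsymbol\varphi+{\bf V}_{\partial\Omega}\boldsymbol\psi,
\qquad
\pi:=\pi^{(0)}-\mathcal Q^d_{\partial\Omega}\boldsymbol\varphi+\mathcal Q^s_{\partial\Omega}\boldsymbol\psi,
\end{equation*}
and let ${\bf u}_\pm:=r_{\Omega_\pm}{\bf u}$, $\pi_\pm:=r_{\Omega_\pm}\pi$. By Lemmas~\ref{continuity-sl-S-h-var} and \ref{continuity-dl-h-var}(i), $\boldsymbol{\mathcal L}({\bf u}_\pm,\pi_\pm)=\tilde{\bf f}_\pm|_{\Omega_\pm}$ and $\mathrm{div}\,{\bf u}_\pm=g_\pm$ in $\Omega_\pm$. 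Using the jump relations \eqref{sl-identities-var}, \eqref{sl-identities-var-1} for ${\bf V}_{\partial\Omega},\mathcal Q^s_{\partial\Omega}$ and \eqref{jump-dl-v-var-alpha} for ${\bf W}_{\partial\Omega},\mathcal Q^d_{\partial\Omega}$, together with the vanishing jumps for ${\bf u}^{(0)}$ derived above and the linearity of the generalized conormal derivative in its last argument, one checks
\begin{align*}
[\gamma{\bf u}]&=0-(-\boldsymbol\varphi)+0=\boldsymbol\varphi,\\
[{\bf t}({\bf u},\pi;\tilde{\bf f})]&=0-0+\boldsymbol\psi=\boldsymbol\psi,
\end{align*}
so all four transmission/field conditions in \eqref{Dirichlet-var-Stokes-tran} hold. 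Membership $({\bf u}_+,\pi_+)\in H^1(\Omega_+)^n\times L_2(\Omega_+)$ follows from Remark~\ref{bounded-weight}, and $({\bf u}_-,\pi_-)\in\mathcal H^1(\Omega_-)^n\times L_2(\Omega_-)$ from the spatial ranges of $\boldsymbol{\mathcal N}_{\mathbb R^n},\boldsymbol{\mathcal G}_{\mathbb R^n},{\bf V}_{\partial\Omega},{\bf W}_{\partial\Omega}$. The estimate \eqref{solution-transm-tran} is then immediate: continuity of $\boldsymbol{\mathcal N}_{\mathbb R^n},\mathcal Q_{\mathbb R^n},\boldsymbol{\mathcal G}_{\mathbb R^n},\mathcal G^0_{\mathbb R^n}$ (Lemma~\ref{Newtonian-B-var-1}, quantified by \eqref{estimate-1-wp-B1}) bounds the ${\bf u}^{(0)},\pi^{(0)}$ contributions by $\|\tilde{\bf f}\|_{\mathcal H^{-1}(\mathbb R^n)^n}+\|g\|_{L_2(\mathbb R^n)}$, which in turn is controlled by the corresponding norms of $\tilde{\bf f}_\pm,g_\pm$; and the layer potential contributions are controlled by the continuity of ${\bf V}_{\partial\Omega},\mathcal Q^s_{\partial\Omega},{\bf W}_{\partial\Omega},\mathcal Q^d_{\partial\Omega}$ applied to $\boldsymbol\varphi,\boldsymbol\psi$.

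Uniqueness is the cleanest step and constitutes no real obstacle. Let $({\bf u}_\pm,\pi_\pm)$ solve the homogeneous version of \eqref{Dirichlet-var-Stokes-tran}. The condition $[\gamma{\bf u}]=0$ together with the gluing argument (see Lemma~B1 and its counterpart in the introduction of $\mathcal H^1(\mathbb R^n\setminus\partial\Omega)$) implies ${\bf u}:=\mathring E_+{\bf u}_++\mathring E_-{\bf u}_-\in\mathcal H^1(\mathbb R^n)^n$. Setting $\pi:=\mathring E_+\pi_++\mathring E_-\pi_-\in L_2(\mathbb R^n)$ and testing the Green formula of Lemma~\ref{lemma-add-new-1} with $\tilde{\bf f}_\pm=0$ against arbitrary ${\bf w}\in\mathcal H^1(\mathbb R^n)^n$, the vanishing of $[{\bf t}({\bf u},\pi)]$ converts \eqref{Green-particular} into a distributional identity that forces $\boldsymbol{\mathcal L}({\bf u},\pi)=0$ and $\mathrm{div}\,{\bf u}=0$ in $\mathbb R^n$. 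Theorem~\ref{Brinkman-problem-p} then gives $({\bf u},\pi)=({\bf 0},0)$, whence ${\bf u}_\pm=0,\ \pi_\pm=0$. The only mildly delicate point in the whole argument is keeping correct track of the third slot of the generalized conormal derivative when splitting the solution, which is resolved by its linearity in $\tilde{\bf f}_\pm$ inherent in Definition~\ref{conormal-derivative-var-Brinkman}.
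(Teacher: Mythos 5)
Your proof is correct and follows essentially the same route as the paper's: existence by superposing the volume potentials of Definition~\ref{Newtonian-B-var-variable-1} with the single and double layer potentials, and uniqueness by reduction to previously established well-posedness. The only differences are organizational: the paper takes the per-domain volume potentials $\boldsymbol{\mathcal N}_{\mathbb R^n}\tilde{\bf f}_\pm$, $\boldsymbol{\mathcal G}_{\mathbb R^n}\mathring E_\pm g_\pm$ in $\Omega_\pm$ and compensates their interface jumps through the corrected layer densities $\boldsymbol\varphi^0,\boldsymbol\psi^0$, whereas you use the global potentials of $\tilde{\bf f}_++\tilde{\bf f}_-$ and $\mathring E_+g_++\mathring E_-g_-$ (whose trace and conormal jumps vanish, as you verify via Lemma~\ref{lemma-add-new-1}) together with the unmodified densities $\boldsymbol\varphi,\boldsymbol\psi$, and for uniqueness you unpack the gluing-plus-Green-formula argument behind Theorem~\ref{slp-var-apr-1-p} and invoke Theorem~\ref{Brinkman-problem-p} instead of citing Theorem~\ref{slp-var-apr-1-p} directly.
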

\begin{proof}
Theorem \ref{slp-var-apr-1-p} yields uniqueness. Now we show existence, by considering the potentials
\begin{align}
\label{existence-linear-a}
&{\bf u}_\pm =\big(\boldsymbol{\mathcal N}_{\mathbb R^n}\tilde{\bf f}_\pm \big)|_{\Omega_\pm}
+\big(\boldsymbol{\mathcal G}_{\mathbb R^n}\mathring E_\pm g_\pm \big)|_{\Omega_\pm}
+{\bf V}_{\partial\Omega}\boldsymbol\psi^0-{\bf W}_{\partial\Omega}\boldsymbol\varphi^0
\quad \mbox{in } \Omega _\pm ,\\
\label{existence-linear-b}
&\pi _\pm =\big({\mathcal Q}_{\mathbb R^n}\tilde{\bf f}_\pm \big)|_{\Omega _\pm }
+\big({\mathcal G}^0_{\mathbb R^n}\mathring E_\pm g_\pm \big)|_{\Omega_\pm}
+{\mathcal Q}^s_{\partial\Omega}\boldsymbol\psi^0
-{\mathcal Q}^d_{\partial\Omega}\boldsymbol\varphi^0\quad \mbox{in } \Omega _\pm ,
\end{align}
where
\begin{align*}
\boldsymbol\varphi^0:=\boldsymbol\varphi
&-\gamma _+\boldsymbol{\mathcal N}_{\mathbb R^n}\tilde{\bf f}_+
+\gamma _-\boldsymbol{\mathcal N}_{\mathbb R^n}\tilde{\bf f}_-
-\gamma _+\boldsymbol{\mathcal G}_{\mathbb R^n}\mathring E_+ g_+
+\gamma _-\boldsymbol{\mathcal G}_{\mathbb R^n}\mathring E_- g_-,\\
\boldsymbol\psi^0:=\boldsymbol\psi
&-{\bf t}^+\big(\boldsymbol{\mathcal N}_{\mathbb R^n}\tilde{\bf f}_+,
{\mathcal Q}_{\mathbb R^n}\tilde{\bf f}_+;\tilde{\bf f}_+\big)
+{\bf t}^{-}\big(\boldsymbol{\mathcal N}_{\mathbb R^n}\tilde{\bf f}_-,
{\mathcal Q}_{\mathbb R^n}\tilde{\bf f}_-;\tilde{\bf f}_-\big)\\
&-{\bf t}^+\big(\boldsymbol{\mathcal G}_{\mathbb R^n}\mathring E_+ g_+,
{\mathcal G}^0_{\mathbb R^n}\mathring E_+ g_+\big)
+{\bf t}^{-}\big(\boldsymbol{\mathcal G}_{\mathbb R^n}\mathring E_- g_-,
{\mathcal G}^0_{\mathbb R^n}\mathring E_- g_-\big).
\end{align*}
Note that $\boldsymbol\varphi^0\in H^{\frac{1}{2}}(\partial \Omega )^n$ and $\boldsymbol\psi^0\in H^{-\frac{1}{2}}(\partial \Omega )^n$.
From Lemmas \ref{Newtonian-B-var-1}, \ref{continuity-sl-S-h-var} and \ref{continuity-dl-h-var}, we deduce that $({\bf u}_\pm,\pi_\pm)$ given in \eqref{existence-linear-a}, \eqref{existence-linear-b} provide a solution of the transmission problem \eqref{Dirichlet-var-Stokes-tran} in the space $({H}^1(\Omega _+)^n\times L_2(\Omega _+))\times ({\mathcal H}^1(\Omega _-)^n\times L_2(\Omega _-))$ satisfying inequality \eqref{solution-transm-tran}.
\end{proof}

\subsection{ \bf The third Green identities for anisotropic Stokes system}
Next we prove { the representation formulas (the third Green identities) for solutions of} the anisotropic Stokes system with $L_{\infty }$ coefficient tensor ({cf.} \cite[Proposition 6.8]{Sa-Se} for the homogeneous Stokes system {in case \eqref{isotropic} with $\mu =1$,  {$\lambda=0$}, and $n=3$,
and \cite[Theorem 6.10]{Lean} for the strongly elliptic systems with smooth coefficients}).
{They can be employed, e.g., for reduction of the boundary and transmission problems to {\em direct} boundary (integral) equations.}

\begin{thm}
\label{int-repres-formula}
Let conditions \eqref{Stokes-1}-\eqref{mu} hold and let $\boldsymbol{\mathcal L}$ denote the Stokes operator defined in \eqref{Stokes-new}.
Let ${\bf u}_+\in {H}^1(\Omega_+)^n$, ${\bf u}_-\in {\mathcal H}^1(\Omega_-)^n$ and $\pi_\pm\in L_2(\Omega_\pm)$ satisfy 
the Stokes system
\begin{align}
\label{int-repres-f1}
\boldsymbol{\mathcal L}({\bf u}_\pm,\pi_\pm)=\tilde{\mathbf f}_\pm|_{\Omega_\pm},\quad
{\rm{div}}\, {\bf u}_\pm= g_\pm &\quad \mbox{in } \Omega_\pm
\end{align}
for some
$\tilde{\mathbf f}_+\in \widetilde H^1(\Omega_+)^n$, $\tilde{\mathbf f}_-\in \widetilde {\mathcal H}^1(\Omega_-)^n$, $g_+\in L_2(\Omega _+)$, $g_-\in L_2(\Omega _-)$.
Let $\tilde{\bf f}:=\tilde{\mathbf f}_+ + \tilde{\mathbf f}_-$, $g:=\mathring E_+g_+ + \mathring E_-g_-$.
Then the following representations in terms of jumps hold,
\begin{align}
\label{int-repres-f2u}
&{\bf u}_\pm={-}{\bf W}_{\partial \Omega }[\gamma {\bf u}]
{+}{\bf V}_{\partial \Omega }\big[{\bf t}({\bf u},\pi;\tilde{\bf f})\big]
+\boldsymbol{\mathcal N}_{{\mathbb R}^n}\tilde{\bf f}
+\boldsymbol{\mathcal G}_{{\mathbb R}^n}g\,
\mbox{ in }\, \Omega_\pm \,,\\
\label{int-repres-f2p}
&\pi_\pm ={-}{\mathcal Q}^d_{\partial\Omega}[\gamma {\bf u}]
{+}{\mathcal Q}^s_{\partial \Omega }\big[{\bf t}({\bf u},\pi;\tilde{\bf f})\big]
+\mathcal Q_{{\mathbb R}^n}\tilde{\bf f}
+{\mathcal G}^0_{{\mathbb R}^n}g\,
\mbox{ in }\, \Omega_\pm \,.
\end{align}

Moreover, the following single-side representations also hold,
\begin{align}
\label{int-repres-f2us}
&{\bf u}_\pm=\mp{\bf W}_{\partial \Omega }\gamma_\pm {\bf u}_\pm
\pm{\bf V}_{\partial \Omega }{\bf t}^\pm({\bf u}_\pm,\pi_\pm;\tilde{\bf f}_\pm)
+\boldsymbol{\mathcal N}_{{\mathbb R}^n}\tilde{\bf f}_\pm
+\boldsymbol{\mathcal G}_{{\mathbb R}^n}\mathring E_\pm g_\pm\,
\mbox{ in }\, \Omega_\pm \,,\\
\label{int-repres-f2ps}
&\pi_\pm =\mp{\mathcal Q}^d_{\partial \Omega }\gamma_\pm{\bf u}_\pm
\pm{\mathcal Q}^s_{\partial \Omega }{\bf t}^\pm({\bf u}_\pm,\pi_\pm;\tilde{\bf f}_\pm)
+\mathcal Q_{{\mathbb R}^n}\tilde{\bf f}_\pm
+{\mathcal G}^0_{{\mathbb R}^n}\mathring E_\pm g_\pm\,
\mbox{ in }\, \Omega_\pm \,.
\end{align}
\end{thm}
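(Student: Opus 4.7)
The plan is to derive the jump representations \eqref{int-repres-f2u}-\eqref{int-repres-f2p} first by a uniqueness argument based on Theorem~\ref{T-2-tran}, and then to obtain the one-sided formulas \eqref{int-repres-f2us}-\eqref{int-repres-f2ps} as special cases.

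First, I would set up the combined function ${\bf u}:=\mathring E_+{\bf u}_++\mathring E_-{\bf u}_-$ and $\pi:=\mathring E_+\pi_++\mathring E_-\pi_-$, which belong to ${\mathcal H}^1({\mathbb R}^n\setminus\partial\Omega)^n\times L_2({\mathbb R}^n)$ because ${H}^1(\Omega_+)\subset {\mathcal H}^1(\Omega_+)$ and the extension by zero preserves the relevant weighted norms in $\Omega_-$. Together with $\tilde{\bf f}=\tilde{\bf f}_++\tilde{\bf f}_-$ and $g=\mathring E_+g_++\mathring E_-g_-$, the pair $({\bf u},\pi)$ is a member of the transmission class described in Theorem~\ref{T-2-tran}, solving problem \eqref{Dirichlet-var-Stokes-tran} with the data $(\tilde{\bf f}_+,\tilde{\bf f}_-,g_+,g_-,\boldsymbol\varphi,\boldsymbol\psi)=(\tilde{\bf f}_+,\tilde{\bf f}_-,g_+,g_-,[\gamma {\bf u}],[{\bf t}({\bf u},\pi;\tilde{\bf f})])$, where the jumps are well defined in $H^{\frac{1}{2}}(\partial\Omega)^n$ and $H^{-\frac{1}{2}}(\partial\Omega)^n$ respectively.

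Next, I would denote by $({\bf U}_\pm,\Pi_\pm)$ the right-hand sides of \eqref{int-repres-f2u}-\eqref{int-repres-f2p} and verify that $({\bf U}_+,\Pi_+,{\bf U}_-,\Pi_-)$ solves the same transmission problem. The Stokes equations in $\Omega_\pm$ and the divergence conditions follow from combining Lemma~\ref{Newtonian-B-var-1} (for $\boldsymbol{\mathcal N}_{{\mathbb R}^n}$ and $\boldsymbol{\mathcal G}_{{\mathbb R}^n}$), Lemma~\ref{continuity-sl-S-h-var} (for ${\bf V}_{\partial\Omega}$, ${\mathcal Q}^s_{\partial\Omega}$), and Lemma~\ref{continuity-dl-h-var}(i) (for ${\bf W}_{\partial\Omega}$, ${\mathcal Q}^d_{\partial\Omega}$). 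The two transmission conditions on $\partial\Omega$ are then read off from the jump relations: the trace jump is supplied by the first formula in \eqref{jump-dl-v-var-alpha} applied to $-{\bf W}_{\partial\Omega}[\gamma{\bf u}]$, combined with \eqref{sl-identities-var} for the single-layer contribution and the continuity of $\boldsymbol{\mathcal N}_{{\mathbb R}^n}\tilde{\bf f}$ and $\boldsymbol{\mathcal G}_{{\mathbb R}^n}g$ across $\partial\Omega$; the traction jump is produced by the first formula in \eqref{sl-identities-var-1} applied to ${\bf V}_{\partial\Omega}[{\bf t}({\bf u},\pi;\tilde{\bf f})]$, the second formula in \eqref{jump-dl-v-var-alpha} giving zero jump for the double layer, and the vanishing of $[{\bf t}(\boldsymbol{\mathcal N}_{{\mathbb R}^n}\tilde{\bf f},{\mathcal Q}_{{\mathbb R}^n}\tilde{\bf f};\tilde{\bf f})]$ and $[{\bf t}(\boldsymbol{\mathcal G}_{{\mathbb R}^n}g,{\mathcal G}^0_{{\mathbb R}^n}g)]$, which is a consequence of Lemma~\ref{lemma-add-new} together with Lemma~\ref{Newtonian-B-var-1}. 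The uniqueness part of Theorem~\ref{T-2-tran} then forces ${\bf u}_\pm={\bf U}_\pm$ and $\pi_\pm=\Pi_\pm$, giving \eqref{int-repres-f2u}-\eqref{int-repres-f2p}.

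Finally, to obtain \eqref{int-repres-f2us}-\eqref{int-repres-f2ps}, I would apply the jump formulas separately to the two ``one-sided'' configurations. For the $+$ sign (in $\Omega_+$) I would set the auxiliary data on the $-$ side to zero, i.e.\ take ${\bf u}_-=0$, $\pi_-=0$, $\tilde{\bf f}_-=0$, $g_-=0$, so that $[\gamma{\bf u}]=\gamma_+{\bf u}_+$, $[{\bf t}({\bf u},\pi;\tilde{\bf f})]={\bf t}^+({\bf u}_+,\pi_+;\tilde{\bf f}_+)$, $\tilde{\bf f}=\tilde{\bf f}_+$, $g=\mathring E_+g_+$; this yields \eqref{int-repres-f2us}-\eqref{int-repres-f2ps} with the upper signs. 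The symmetric choice with the roles of $+$ and $-$ exchanged yields the lower-sign identities. The main technical obstacle I anticipate is the rigorous verification of the jump relations for the Newtonian and compressibility potentials (namely $[{\bf t}(\boldsymbol{\mathcal N}_{{\mathbb R}^n}\tilde{\bf f},{\mathcal Q}_{{\mathbb R}^n}\tilde{\bf f};\tilde{\bf f})]=0$ and the analogous identity for $\boldsymbol{\mathcal G}_{{\mathbb R}^n}$), since these require interpreting the generalized conormal derivative in the sense of Definition~\ref{conormal-derivative-var-Brinkman} with the natural extension $\tilde{\bf f}$, and then applying the Green identity of Lemma~\ref{lemma-add-new-1} to see that both one-sided generalized conormal derivatives coincide.
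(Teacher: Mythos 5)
Your proposal is correct and follows essentially the same route as the paper: define the candidate $({\bf U}_\pm,\Pi_\pm)$ by the right-hand sides, check via Lemmas \ref{Newtonian-B-var-1}, \ref{continuity-sl-S-h-var}, \ref{continuity-dl-h-var} and the jump relations that it solves the same transmission problem as $({\bf u},\pi)$, invoke the uniqueness in Theorem \ref{T-2-tran}, and obtain the one-sided formulas by setting the data on the opposite side to zero. Only note that the vanishing of the conormal-derivative jump of the Newtonian and compressibility parts cannot rest on Lemma \ref{lemma-add-new} (its hypotheses ${\rm div}\,{\bf u}=0$ and $\boldsymbol{\mathcal L}({\bf u},\pi)\in L_2$ fail here), but, as you anticipate at the end, it follows from Definition \ref{conormal-derivative-var-Brinkman} together with the Green identity of Lemma \ref{lemma-add-new-1} and the fact that the potentials solve the system in all of ${\mathbb R}^n$ — which is exactly the computation carried out in the paper's formula \eqref{conormal-derivative-var-Brinkman-4}.
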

\begin{proof}
Due to the assumptions on ${\bf u}_\pm$, $\pi_\pm$ and $\tilde{\mathbf f}_\pm$, we have the inclusions $\boldsymbol{\boldsymbol\varphi} :={[\gamma {\bf u}]}\in H^{\frac{1}{2}}(\partial \Omega )^n$ and
$\boldsymbol\psi :=\big[{\bf t}({\bf u},\pi;\tilde{\mathbf f})\big]\in H^{-\frac{1}{2}}(\partial \Omega )^n$. Let
\begin{align}
\label{int-repres-f3}
{\bf v}:=-{\bf W}_{\partial \Omega }\boldsymbol{\boldsymbol\varphi}
+{\bf V}_{\partial \Omega }\boldsymbol\psi
+\boldsymbol{\mathcal N}_{{\mathbb R}^n}\tilde{\bf f}
+\boldsymbol{\mathcal G}_{{\mathbb R}^n}g,\
q:=-{\mathcal Q}^d_{\partial \Omega }\boldsymbol{\boldsymbol\varphi}
+{\mathcal Q}^s_{\partial \Omega }\boldsymbol\psi
+{\mathcal Q}_{{\mathbb R}^n}\tilde{\bf f}
+{\mathcal G}^0_{{\mathbb R}^n}g
\quad \mbox{in } {\mathbb R}^n\setminus \partial\Omega.
\end{align}
By definitions of the potentials and according to Lemmas \ref{Newtonian-B-var-1}, \ref{continuity-sl-S-h-var} and \ref{continuity-dl-h-var}(i), the pair $({\bf v},q)$ belongs to the space
${\mathcal H}^1({\mathbb R}^n\setminus \partial \Omega )^n\times L_2({\mathbb R}^n)$
and satisfies the Stokes system
\begin{align}
\label{int-repres-f4}
\boldsymbol{\mathcal L}({\bf v}_\pm,q_\pm)=\tilde{\bf f},\quad
{\rm{div}}\, {\bf v}_\pm=g \quad & \mbox{in } \Omega_\pm.
\end{align}
Due to Lemma \ref{Newtonian-B-var-1},
$\boldsymbol{\mathcal N}_{{\mathbb R}^n}\tilde{\bf f},\boldsymbol{\mathcal G}_{{\mathbb R}^n}g\in
{\mathcal H}^1({\mathbb R}^n)^n$
implying
$[\gamma\boldsymbol{\mathcal N}_{{\mathbb R}^n}\tilde{\bf f}]=\bf 0$,
$[\gamma\boldsymbol{\mathcal G}_{{\mathbb R}^n}g]=\bf 0$.
Then by formulas \eqref{sl-identities-var} and  \eqref{jump-dl-v-var-alpha},
\begin{align}
\label{int-repres-f4u}
[\gamma {\bf v}]=\boldsymbol{\boldsymbol\varphi} & \mbox{ on } \partial \Omega .
\end{align}

Let $r_{\Omega_\pm}$ be restriction operators to $\Omega _\pm $, i.e., $r_{\Omega_\pm}g:=g|_{\Omega_\pm}$.
By Definition~\ref{conormal-derivative-var-Brinkman}, the generalized conormal derivative is linear with respect to the triple of its arguments, implying that
\begin{multline}\label{t-strong1}
\mathbf t^\pm({\bf v}|_{\Omega_\pm},q|_{\Omega_\pm};\tilde{\bf f}_\pm)
=-\mathbf t^\pm(r_{\Omega_\pm}{\bf W}_{\partial\Omega}\boldsymbol{\boldsymbol\varphi},
r_{\Omega_\pm}{\mathcal Q}^d_{\partial \Omega }\boldsymbol{\boldsymbol\varphi};\mathbf 0)
+\mathbf t^\pm(r_{\Omega_\pm}{\bf V}_{\partial\Omega}\boldsymbol{\psi},
r_{\Omega_\pm}{\mathcal Q}^s_{\partial \Omega }\boldsymbol{\psi};\mathbf 0)
\\
+\mathbf t^\pm(r_{\Omega_\pm}(\boldsymbol{\mathcal N}_{{\mathbb R}^n}\tilde{\bf f}
+\boldsymbol{\mathcal G}_{{\mathbb R}^n}g),
r_{\Omega_\pm}({\mathcal Q}_{\mathbb R^n}\tilde{\bf f}+{\mathcal G}^0_{{\mathbb R}^n}g);\tilde{\bf f}_\pm).
\end{multline}
By formulas \eqref{sl-identities-var-1} and \eqref{jump-dl-v-var-alpha-cd}, we obtain
\begin{align}\label{tVW}
[\mathbf t({\bf V}_{\partial\Omega}\boldsymbol{\psi},
{\mathcal Q}^s_{\partial \Omega }\boldsymbol{\psi};\mathbf 0)]=\boldsymbol\psi,\quad
[\mathbf t({\bf W}_{\partial\Omega}\boldsymbol{\boldsymbol\varphi},
{\mathcal Q}^d_{\partial \Omega }\boldsymbol{\boldsymbol\varphi};{\bf 0})]={\bf 0}.
\end{align}
On the other hand, from \eqref{conormal-derivative-var-Brinkman-3} we have for any $\mathbf w\!\in\!H^{\frac{1}{2}}(\partial\Omega )^n$,
\begin{multline}
\label{conormal-derivative-var-Brinkman-4}
\left\langle [{\bf t}(\boldsymbol{\mathcal N}_{\mathbb R^n}\tilde{\bf f}+\boldsymbol{\mathcal G}_{{\mathbb R}^n}g,
{\mathcal Q}_{\mathbb R^n}\tilde{\bf f}+{\mathcal G}^0_{{\mathbb R}^n}g;\tilde{\bf f})],
\mathbf w\right\rangle _{_{\!\partial\Omega}}
\\
=\left\langle A^{\alpha\beta}\partial_\beta r_{\Omega_+}(\boldsymbol{\mathcal N}_{\mathbb R^n}\tilde{\bf f}
+\boldsymbol{\mathcal G}_{{\mathbb R}^n}g),
\partial_\alpha (\gamma^{-1}\mathbf w) \right\rangle_{\Omega_+}
+\left\langle A^{\alpha\beta}\partial_\beta r_{\Omega_-}(\boldsymbol{\mathcal N}_{\mathbb R^n}\tilde{\bf f}
+\boldsymbol{\mathcal G}_{{\mathbb R}^n}g),
\partial_\alpha (\gamma^{-1}\mathbf w) \right\rangle_{\Omega_-}
\\
-\left\langle r_{\Omega_+}({\mathcal Q}_{\mathbb R^n}\tilde{\bf f}+{\mathcal G}^0_{{\mathbb R}^n}g),
{\rm{div}}(\gamma^{-1}\mathbf w)\right\rangle _{\Omega_+}
-\left\langle r_{\Omega_-}({\mathcal Q}_{\mathbb R^n}\tilde{\bf f}+{\mathcal G}^0_{{\mathbb R}^n}g),
{\rm{div}}(\gamma^{-1}\mathbf w)\right\rangle _{\Omega_-}
+\left\langle {\tilde{\bf f}}_+,\gamma^{-1}\mathbf w\right\rangle_{\Omega_+}
+\left\langle {\tilde{\bf f}}_-,\gamma^{-1}\mathbf w\right\rangle_{\Omega_-}
\\
=\left\langle A^{\alpha \beta }\partial_\beta (\boldsymbol{\mathcal N}_{\mathbb R^n}\tilde{\bf f}
+\boldsymbol{\mathcal G}_{{\mathbb R}^n}g),
\partial_\alpha (\gamma^{-1}\mathbf w) \right\rangle_{\mathbb R^n}
-\left\langle {\mathcal Q}_{\mathbb R^n}\tilde{\bf f}+{\mathcal G}^0_{{\mathbb R}^n}g,
{\rm{div}}(\gamma^{-1}\mathbf w)\right\rangle_{\mathbb R^n}
+\left\langle \tilde{\bf f},\gamma^{-1}\mathbf w\right\rangle_{\mathbb R^n}
\\=\left\langle -\boldsymbol{\mathcal L}(\boldsymbol{\mathcal N}_{{\mathbb R}^n}\tilde{\bf f}
+\boldsymbol{\mathcal G}_{{\mathbb R}^n}g,
{\mathcal Q}_{\mathbb R^n}\tilde{\bf f}+{\mathcal G}^0_{{\mathbb R}^n}g)+\tilde{\bf f},
\gamma^{-1}\mathbf w\right\rangle_{\mathbb R^n}=\mathbf 0,
\end{multline}
where $\gamma^{-1}:H^{\frac{1}{2}}(\partial\Omega )^n\to {\mathcal H}^{1}(\mathbb R^n)^n$ is a $($non-unique$)$ bounded right inverse of the trace operator $\gamma:{\mathcal H}^{1}(\mathbb R^n)^n\to H^{\frac{1}{2}}(\partial\Omega )^n$.
The last equality in \eqref{conormal-derivative-var-Brinkman-4} follows since
$\boldsymbol{\mathcal L}(\boldsymbol{\mathcal N}_{{\mathbb R}^n}\tilde{\bf f}
+\boldsymbol{\mathcal G}_{{\mathbb R}^n}g,
{\mathcal Q}_{\mathbb R^n}\tilde{\bf f}+{\mathcal G}^0_{{\mathbb R}^n}g)=\tilde{\bf f}$ in $\mathbb R^n$.
Combining \eqref{t-strong1}-\eqref{conormal-derivative-var-Brinkman-4}, we obtain that the couple $({\bf v},q)$
satisfies the transmission condition
\begin{align}
\label{int-repres-f4t}
\big[{\bf t}({\bf v},q;\tilde{\bf f})\big]=\boldsymbol\psi  & \mbox{ on } \partial \Omega
\end{align}
and thus the transmission problem \eqref{int-repres-f4}, \eqref{int-repres-f4u}, \eqref{int-repres-f4t}.
The pair $({\bf u},\pi )$ satisfies the same transmission problem, which due to Theorem \ref{T-2-tran} has at most one solution in ${\mathcal H}^1({\mathbb R}^n\setminus \partial \Omega )^n\times L_2({\mathbb R}^n)$.
Consequently, ${\bf u}\!=\!{\bf v}$ and $\pi \!=\!q$, and then formulas \eqref{int-repres-f3} yield the representation formulas \eqref{int-repres-f2u}-\eqref{int-repres-f2p}.

To obtain formulas \eqref{int-repres-f2us}, \eqref{int-repres-f2ps} for $({\bf u}_+,\pi_+)$, we can employ representations \eqref{int-repres-f2u}, \eqref{int-repres-f2p} with
${\bf u}_-=\bf 0$, $\pi_-=0$, $\tilde{\bf f}_-=\bf 0$ and $g_-=0$. Formulas \eqref{int-repres-f2us}, \eqref{int-repres-f2ps} for $({\bf u}_-,\pi_-)$ can be obtained in a similar way.
\end{proof}

\section{Boundary value problems for the anisotropic Stokes system. Variational and potential approaches in weighted Sobolev spaces}

Girault and Sequeira in \cite[Theorem 3.4]{Gi-Se} used a variational approach to show the well-posedness in  ${\mathcal H}^{1}(\Omega ')^n\times L_2(\Omega ')$ for the exterior Dirichlet problem for the constant coefficient Stokes system in an exterior Lipschitz domain $\Omega '$ of ${\mathbb R}^n$, $n=2,3$. Dindo\u{s} and Mitrea \cite[Theorems 5.1, 5.6, 7.1, 7.3]{D-M} used a boundary integral approach and properties of Calder\'{o}n-Zygmund type singular integral operators to show well-posedness results in Sobolev and Besov spaces for Poisson problems of Dirichlet type for the Stokes and Navier-Stokes systems with smooth coefficients in Lipschitz domains on compact Riemannian manifolds (see also \cite[Theorem 7.1]{M-T}, and \cite[Proposition 4.5]{B-H} for an evolutionary exterior Stokes problem).

Recall that $\boldsymbol{\mathcal L}$ is the Stokes operator defined in \eqref{Stokes-new}, such that the corresponding {viscosity coefficient tensor} ${\mathbb A}=\left(A^{\alpha \beta }\right)_{1\leq \alpha ,\beta \leq n}$ satisfies conditions \eqref{Stokes-1}-\eqref{mu}.

\subsection{Exterior Dirichlet problem for the compressible Stokes system}
Let us consider the following
Dirichlet problem for the {anisotropic Stokes system with $L_{\infty}$ coefficients} 
\begin{equation}
\label{Dirichlet-var-Stokes-a}
\left\{
\begin{array}{ll}
{\boldsymbol{\mathcal L}({\bf u},\pi )={\bf f}}, \ \
{\rm{div}} \ {\bf u}=g & \mbox{ in } \Omega _{-},\
\\
{\gamma }_{-}{\bf u}={\boldsymbol\varphi} &  \mbox{ on } \partial \Omega \,,
\end{array}\right.
\end{equation}
where $\boldsymbol{\mathcal L}$ is the Stokes operator defined in \eqref{Stokes-new}
and the given data $({\bf f},{g,}{\boldsymbol\varphi})$ belong to
${\mathcal H}^{-1}(\Omega _{-})^n\times L_2(\Omega_-)\times H^{\frac{1}{2}}(\partial \Omega )^n$.
We show the well-posedness of this problem in the space ${\mathcal H}^1(\Omega _-)^n\times L_2(\Omega _-)$, and
for $g=0$ also
express its
solution in terms of the Newtonian and single layer potentials defined in Section \ref{N-S-D}.

\subsubsection{\bf $L_2$-based weighted Sobolev spaces with vanishing traces}
Recall that $\mathring{\mathcal H}^1(\Omega _-)^n$ is the closure of the space ${\mathcal D}(\Omega _{-})^n$ in ${\mathcal H}^1(\Omega _-)^n$. Equivalently,
\begin{align}
\label{null-trace-forms}
\mathring{\mathcal H}^1({\Omega _-})^n
&=\left\{{\bf v}\in {\mathcal H}^1(\Omega _{-})^n:\gamma_{-}{\bf v}={0} \mbox{ on } \partial \Omega \right\}\nonumber\\
&=\left\{{\bf v}|_{\Omega _{-}}:{\bf v}\in {\mathcal H}^1({\mathbb R}^n)^n \mbox{ and } \gamma {\bf v}={0} \mbox{ a.e. on } \partial \Omega \right\}\,.
\end{align}
The space $\mathring{\mathcal H}^1(\Omega _-)^n$ can be isomorphically identified with the space of vector fields in ${\mathcal H}^1(\Omega _-)^n$ whose extensions by zero belong to the space ${\mathcal H}^1({\mathbb R}^n)^n$, and also with the space
\begin{align}
\label{2.5}
&\widetilde{\mathcal H}^1(\Omega _-)^n:=\left\{\widetilde{f}\in {\mathcal H}^1({\mathbb R}^n)^n:{\rm{supp}}\,\widetilde{f}\subseteq \overline{\Omega }_-\right\}
\end{align}
(cf., e.g., \cite[Lemma 5.10, (5.71)]{B-M-M-M}, \cite[Proposition 3.3]{J-K1}, \cite[Theorem 3.33]{Lean}).
We also need the spaces
\begin{align}
\label{0-1-div}
&\mathring{\mathcal H}_{\rm{div}}^1(\Omega _-)^n:=\left\{{\bf v}\in \mathring{\mathcal H}^1(\Omega _-)^n:{\rm{div}}\, {\bf v}=0 \mbox{ in } \Omega _-\right\},\\
\label{X-p}
&{\mathcal X}(\Omega _-):=\mathring{\mathcal H}^1(\Omega _-)^n\times L_2(\Omega _-),
\end{align}
and the space ${\mathcal X}'(\Omega _-)$ which is the dual of ${\mathcal X}(\Omega _-)$, i.e.,
\begin{align}
\label{0-1-div-prime}
{\mathcal X}'(\Omega _-)={\mathcal H}^{-1}(\Omega _-)^n\times L_{2}(\Omega _-).
\end{align}

The next result plays a key role in the analysis of the Poisson problem with inhomogeneous Dirichlet condition for the Stokes system with $L_{\infty}$ elliptic coefficient tensor in an exterior Lipschitz domain, and follows from 
Theorem 2 of Bogovski\u{\i} \cite{Bogovskii} (see also \cite[Theorem 3.1]{Am-Ciarlet}, \cite[Theorem 3.3]{Borchers-Shor}).
\begin{lem}\label{Bog}
The divergence operator
$
{\rm{div}}:\mathring{\mathcal H}^1(\Omega _{-})^n\to L_2(\Omega _-)
$
is surjective and there exists a linear continuous operator
$R_{\Omega _-}:L_2(\Omega _-)\to \mathring{\mathcal H}^1(\Omega _{-})^n$
such that
$
{\rm{div}}(R_{\Omega _-}g)=g
$
for any $g\in L_2(\Omega _- )$.
\end{lem}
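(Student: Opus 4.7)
My strategy combines two tools already recorded in the paper with the classical Bogovski\u{\i} right inverse of $\mathrm{div}$ on bounded Lipschitz domains, which, for any such $D\subset\mathbb R^n$, provides a linear continuous operator $B_D:L_{2,0}(D)\to\mathring{H}^1(D)^n$ with $\mathrm{div}(B_D f)=f$ on the mean-zero subspace $L_{2,0}(D)\subset L_2(D)$. Given $g\in L_2(\Omega_-)$, I first extend by zero to $\tilde g:=\mathring{E}_-g\in L_2(\mathbb R^n)$ and apply the bounded right inverse of $\mathrm{div}:\mathcal H^1(\mathbb R^n)^n\to L_2(\mathbb R^n)$ delivered by \eqref{surj-div-p} to produce ${\bf w}\in\mathcal H^1(\mathbb R^n)^n$ with $\mathrm{div}\,{\bf w}=\tilde g$ and $\|{\bf w}\|_{\mathcal H^1(\mathbb R^n)^n}\leq c\|g\|_{L_2(\Omega_-)}$. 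The restriction ${\bf w}|_{\Omega_-}$ already has the correct divergence in $\Omega_-$, but its trace $\gamma_-{\bf w}=\gamma{\bf w}$ on $\partial\Omega$ is generally nonzero.

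I would then subtract a divergence-free correction carrying exactly the residual trace $\boldsymbol\varphi:=\gamma{\bf w}\in H^{\frac12}(\partial\Omega)^n$. The key observation is the flux identity
\[\int_{\partial\Omega}\boldsymbol\varphi\cdot\boldsymbol\nu\, d\sigma=\int_{\Omega_+}\mathrm{div}\,{\bf w}\,dx=0,\]
which holds because $\mathrm{div}\,{\bf w}=\tilde g=0$ on $\Omega_+$. Fix $R>0$ with $\overline{\Omega_+}\subset B_R$, set $D:=\Omega_-\cap B_{2R}$ (a bounded Lipschitz domain) and pick a cut-off $\chi\in C^\infty_c(B_{2R})$ with $\chi\equiv 1$ on a neighbourhood of $\overline{\Omega_+}$. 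The compactly supported lifting ${\bf z}_0:=\chi\,(\gamma_-^{-1}\boldsymbol\varphi)\in\mathcal H^1(\Omega_-)^n$ satisfies $\gamma_-{\bf z}_0=\boldsymbol\varphi$, and $h:=\mathrm{div}\,{\bf z}_0\in L_2(D)$ is supported in $\overline D$ with vanishing mean on $D$: indeed, the divergence theorem together with ${\bf z}_0\equiv 0$ on $\partial B_{2R}$ leaves only the boundary term on $\partial\Omega$, which by the choice of outward normal equals $-\int_{\partial\Omega}\boldsymbol\varphi\cdot\boldsymbol\nu\,d\sigma=0$. Thus $h\in L_{2,0}(D)$ and Bogovski\u{\i}'s operator yields ${\bf z}_1:=B_Dh\in\mathring{H}^1(D)^n$ with $\mathrm{div}\,{\bf z}_1=h$; extending by zero to $\Omega_-$, I set ${\bf z}:={\bf z}_0-{\bf z}_1\in\mathcal H^1(\Omega_-)^n$, so that $\mathrm{div}\,{\bf z}=0$ in $\Omega_-$ and $\gamma_-{\bf z}=\boldsymbol\varphi$.

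Setting $R_{\Omega_-}g:={\bf w}|_{\Omega_-}-{\bf z}$ then gives $\mathrm{div}(R_{\Omega_-}g)=g$ and $\gamma_-(R_{\Omega_-}g)={\bf 0}$, so by the characterisation \eqref{null-trace-forms} one has $R_{\Omega_-}g\in\mathring{\mathcal H}^1(\Omega_-)^n$. Every intermediate map is linear and bounded with norm depending only on the fixed choices of $R$, $\chi$, and the right inverses from \eqref{surj-div-p}, of $\gamma_-$, and $B_D$, so the composition $R_{\Omega_-}:L_2(\Omega_-)\to\mathring{\mathcal H}^1(\Omega_-)^n$ is linear and continuous, delivering both surjectivity of $\mathrm{div}$ and the announced bounded right inverse. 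The principal obstacle is Bogovski\u{\i}'s compatibility constraint: $B_D$ inverts $\mathrm{div}$ only on mean-zero data. It is precisely the interplay between the unbounded-domain surjectivity from \eqref{surj-div-p} --- which automatically makes ${\bf w}$ divergence-free on $\Omega_+$ --- and the localised Bogovski\u{\i} step that closes the gap via the flux identity; without this interplay one would be forced to introduce an auxiliary ``reservoir'' field to absorb the mean.
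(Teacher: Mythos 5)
Your argument is correct, but it follows a different route from the paper: the paper does not prove Lemma \ref{Bog} at all, it simply invokes Bogovski\u{\i}'s Theorem 2 (together with the versions in Amrouche--Ciarlet--Mardare and Borchers--Sohr), i.e.\ it quotes an off-the-shelf right inverse of the divergence on exterior domains in the relevant (weighted/homogeneous) setting. You instead reconstruct that result from ingredients already in the paper: the whole-space bounded right inverse of $\mathrm{div}:\mathcal H^1(\mathbb R^n)^n\to L_2(\mathbb R^n)$ from \eqref{surj-div-p} applied to the zero extension of $g$, followed by a trace correction built from a cut-off lifting $\chi\,\gamma_-^{-1}\boldsymbol\varphi$ and the classical Bogovski\u{\i} operator on the bounded Lipschitz domain $D=\Omega_-\cap B_{2R}$, where the compatibility $\int_D \mathrm{div}\,\mathbf z_0\,dx=0$ is exactly the flux identity $\int_{\partial\Omega}\boldsymbol\varphi\cdot\boldsymbol\nu\,d\sigma=\int_{\Omega_+}\mathrm{div}\,\mathbf w\,dx=0$, valid because the extended datum vanishes in $\Omega_+$. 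What your approach buys is self-containedness and transparency: only the bounded-domain Bogovski\u{\i} operator is needed, the absence of any mean-zero restriction on $g\in L_2(\Omega_-)$ is explained structurally (the excess flux is absorbed by the whole-space solution rather than by an auxiliary reservoir), and linearity and boundedness of $R_{\Omega_-}$ are explicit since every step (zero extension, the linear right inverse from \eqref{surj-div-p}, trace, multiplication by $\chi$, $\gamma_-^{-1}$, $B_D$, zero extension) is a bounded linear map; membership in $\mathring{\mathcal H}^1(\Omega_-)^n$ then follows from the vanishing trace via \eqref{null-trace-forms}. What the paper's citation buys is brevity and direct access to the sharp exterior-domain statement without the localisation argument. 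The only point worth making explicit in your write-up is that the right inverse promised after \eqref{surj-div-p} (from Kozono--Sohr) is indeed a linear operator, since your construction's linearity rests on it; with that noted, the proof is complete.
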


\subsubsection{\bf Variational approach to the Poisson problem for the compressible Stokes system with homogeneous Dirichlet condition}
Let us consider the following problem with $L_{\infty }$ viscosity coefficients tensor,
\begin{equation}
\label{Dirichlet-var-p}
\left\{
\begin{array}{ll}
\boldsymbol{\mathcal L}({\bf u},\pi )={\mathbf f},\ \
{{\rm{div}}}\, {\bf u}=g & \mbox{ in } \Omega _-,\
\\
{\gamma }_{-}{\bf u}={\bf 0} &  \mbox{ on } \partial \Omega ,
\end{array}\right.
\end{equation}
We show that problem \eqref{Dirichlet-var-p} has a unique solution $({\bf u},\pi )$ in ${\mathcal X}(\Omega _-)$ whenever ${\mathbf f}\in{\mathcal H}^{-1}(\Omega_-)^n$, $g\in L_{2}(\Omega_-)$ (see also \cite[Theorem 7.4]{B-M-M-M} for well-posedness of the higher-order inhomogeneous Dirichlet problem, Lemma 3.2 and Corollary 5.3 in \cite{Choi-Lee} devoted to the Stokes system with VMO coefficients in Lipschitz domains with small Lipschitz constant in ${\mathbb R}^n$, \cite[Theorem 5.2]{Maz'ya-Rossmann}, and \cite[Theorems 5.1, 5.6]{D-M} for the Stokes system with smooth coefficients in compact Riemannian manifolds, \cite[Proposition 4.5]{B-H}, \cite[Theorem 6.7]{Barton}).

Let $a_{{{\mathbb A};\Omega _-}}:{\mathring{\mathcal H}^1(\Omega _-)^n\times \mathring{\mathcal H}^{1}(\Omega _-)^n}\to {\mathbb R}$ and $b_{_{\Omega _-}}:\mathring{\mathcal H}^1(\Omega _-)^n\times L_{2}(\Omega _-)\to {\mathbb R}$ be the bilinear forms
\begin{align}
\label{a-Omega-p}
&a_{{{\mathbb A};\Omega _-}}({\bf v},{\bf w}):=\left \langle A^{\alpha \beta }\partial _\beta {\bf v},\partial _\alpha {\bf w}\right \rangle _{\Omega _-}=\left \langle a_{ij}^{\alpha \beta }E_{j\beta }({\bf v}),E_{i\alpha }({\bf w})\right \rangle _{\Omega _-} \quad
\forall \, {\bf v},{\bf w}\in {\mathring{\mathcal H}^{1}(\Omega _-)^n},\\
\label{b-Omega-p}
&b_{_{\Omega_-}}({\bf v},q):=-\langle {\rm{div}}\, {\bf v},q\rangle _{\Omega _-} \quad \forall \, {\bf v}\in  \mathring{\mathcal H}^1(\Omega _-)^n,\ q\in L_{2}(\Omega _-).
\end{align}
\begin{defn}
Let $\boldsymbol{\mathbf f}\in {\mathcal H}^{-1}(\Omega _-)^n$ and $g\in L_2(\Omega_-)$. Then the pair $({\bf u},\pi )\in {\mathcal X}(\Omega _-)$ is a {\it weak solution} of the Poisson problem with homogeneous Dirichlet condition \eqref{Dirichlet-var-p} if
\begin{equation}
\label{variational-Dirichlet-p}
\left\{\begin{array}{ll}
a_{{{\mathbb A};\Omega _-}}({\bf u},{\bf w})+b_{\Omega _-}({\bf w},\pi)
= -\langle \boldsymbol{\mathbf f},{\bf w}\rangle_{\Omega_-}
 &\forall \, {\bf w}\in \mathring{\mathcal H}^{1}(\Omega _-)^n,\\
b_{_{\Omega_-}}({\bf u},q)= -\langle g,q\rangle_{\Omega_-}  &\forall \, q\in L_2(\Omega _-).
\end{array}\right.
\end{equation}
\end{defn}
We note that the dense embedding of the space ${\mathcal D}(\Omega _{-})^n$ in $\mathring{\mathcal H}^1(\Omega _{-})^n$ implies that the exterior Dirichlet problem \eqref{Dirichlet-var-p} has the equivalent variational formulation \eqref{variational-Dirichlet-p}.
Therefore, in order to show the well-posedness of the Poisson problem of Dirichlet type \eqref{Dirichlet-var-p} we
show that the mixed variational formulation \eqref{variational-Dirichlet-p} has a unique solution $({\bf u},\pi )$ in the space $\mathcal X$. To do so, we define the operator
\begin{align}
\label{T}
{\mathcal T}_{\Omega _-}:{\mathcal X}(\Omega _-)\to {\mathcal X}'(\Omega _-),\ {\mathcal T}_{\Omega _{-}}({\bf u},\pi )=\left({\mathcal T}_{1;\Omega _-}({\bf u},\pi ),{\mathcal T}_{2;\Omega _-}({\bf u},\pi )\right)\in {\mathcal X}'(\Omega _-)\,, 
\end{align}
by setting for every $({\bf u},\pi )\in {\mathcal X}(\Omega _-)$ and $({\bf w},q)\in {\mathcal X}(\Omega _-)$,
\begin{equation}
\label{variational-Dirichlet-p-1-1}
\left\{\begin{array}{ll}
\langle {\mathcal T}_{1;\Omega _-}({\bf u},\pi ),{\bf w}\rangle _{\Omega _-}
=a_{{{\mathbb A};\Omega _-}}({\bf u},{\bf w})+b_{\Omega _-}({\bf w},\pi ),\\
\langle {\mathcal T}_{2;\Omega _-}({\bf u},\pi ),q\rangle _{\Omega _-}=b_{\Omega _{-}}({\bf u},q)\,.
\end{array}\right.
\end{equation}
The H\"{o}lder inequality and the boundedness of the bilinear forms in \eqref{variational-Dirichlet-p-1-1} imply that the operator ${\mathcal T}_{\Omega _-}\!:\!{\mathcal X}(\Omega _-)\!\to \!{\mathcal X}'(\Omega _-)$ is linear and bounded. Moreover, we prove the following result (see also \cite[Theorem 7.5]{B-M-M-M} for the well-posedness of the inhomogeneous Poisson problem for higher-order elliptic operators, \cite[Theorem 5.1]{Maz'ya-Rossmann} and \cite[Theorem 3.1]{Ott-Kim-Brown} for the Stokes mixed problem in polyhedral domains or in bounded Lipschitz domains of ${\mathbb R}^2$).
\begin{thm}
\label{T-p}
Let conditions \eqref{Stokes-1}-\eqref{mu} be satisfied in $\Omega_-$.
Then the following assertions hold.
\begin{itemize}
\item[$(i)$]
The operator ${\mathcal T}_{\Omega _-}:{\mathcal X}(\Omega _-)\to {\mathcal X}'(\Omega _-)$ given by \eqref{T} and \eqref{variational-Dirichlet-p-1-1} is an isomorphism.
\item[$(ii)$]
For any $\boldsymbol{\mathbf f}\in {\mathcal H}^{-1}(\Omega _-)^n$
and $g\in L_{2}(\Omega_-)$,
problem
\eqref{Dirichlet-var-p} has a unique solution $({\bf u},\pi )\in {\mathcal X}(\Omega _-)$, and there exists a constant $C\!=\!C(\Omega _-, c ,n)\!>\!0$ such that
\begin{align}
\|({\bf u},\pi )\|_{{\mathcal X}(\Omega _-)}\leq C\left(\|\boldsymbol{\mathbf f}\|_{\mathcal H^{-1}(\Omega_-)^n}
+\|g\|_{L_{2}(\Omega_-)}\right)\,.\nonumber
\end{align}
\end{itemize}
\end{thm}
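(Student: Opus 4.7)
The plan is to apply Theorem~\ref{B-B} with $X=\mathring{\mathcal{H}}^1(\Omega_-)^n$ and $\mathcal M=L_2(\Omega_-)$. The subspace $V$ defined in \eqref{V} is, by \eqref{0-1-div} and \eqref{b-Omega-p}, exactly $\mathring{\mathcal H}^1_{\rm div}(\Omega_-)^n$. Continuity of $a_{{\mathbb A};\Omega_-}$ and $b_{\Omega_-}$ follows from \eqref{Stokes-1} and H\"older's inequality, along the lines of the corresponding step in Lemma~\ref{lemma-a47-1-Stokes}.

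The first substantive step is coercivity of $a_{{\mathbb A};\Omega_-}$ on $V$. For any ${\bf v}\in\mathring{\mathcal H}^1(\Omega_-)^n$, the zero-extension $\mathring{E}_-{\bf v}$ lies in ${\mathcal H}^1({\mathbb R}^n)^n$ by the isomorphic identification stated after \eqref{2.5}, and $\mathbb E(\mathring{E}_-{\bf v})=\mathring{E}_-\mathbb E({\bf v})$ a.e. Hence the Korn inequality \eqref{Korn3-R3} on ${\mathcal H}^1({\mathbb R}^n)^n$ yields
\[
\|\nabla {\bf v}\|^2_{L_2(\Omega_-)^{n\times n}}\le 2\|\mathbb E({\bf v})\|^2_{L_2(\Omega_-)^{n\times n}}.
\]
For ${\bf v}\in V$ the constraint ${\rm div}\,{\bf v}=0$ gives $\sum_i E_{ii}({\bf v})=0$ pointwise a.e., so the ellipticity assumption \eqref{mu} applies and
\[
a_{{\mathbb A};\Omega_-}({\bf v},{\bf v})\ge c_{\mathbb A}^{-1}\|\mathbb E({\bf v})\|^2_{L_2(\Omega_-)^{n\times n}}.
\]
Combining these with the equivalence of the seminorm $|\cdot|_{{\mathcal H}^1(\Omega_-)}$ from \eqref{seminorm} with the full norm $\|\cdot\|_{{\mathcal H}^1(\Omega_-)}$ on $\mathring{\mathcal H}^1(\Omega_-)^n$, we obtain coercivity \eqref{coercive} with a constant $C_a=C_a(\Omega_-,c_{\mathbb A},n)>0$.

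The second step is the Babu\v{s}ka--Brezzi condition for $b_{\Omega_-}$, and this is immediate from Lemma~\ref{Bog}: given $q\in L_2(\Omega_-)$, the vector field ${\bf v}_q:=-R_{\Omega_-}q\in\mathring{\mathcal H}^1(\Omega_-)^n$ satisfies ${\rm div}\,{\bf v}_q=-q$ and $\|{\bf v}_q\|_{{\mathcal H}^1(\Omega_-)^n}\le \|R_{\Omega_-}\|\,\|q\|_{L_2(\Omega_-)}$, so
\[
b_{\Omega_-}({\bf v}_q,q)=\|q\|^2_{L_2(\Omega_-)}\ge \|R_{\Omega_-}\|^{-1}\|{\bf v}_q\|_{{\mathcal H}^1(\Omega_-)^n}\|q\|_{L_2(\Omega_-)},
\]
which implies \eqref{inf-sup-sm} with $C_b=\|R_{\Omega_-}\|$. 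Theorem~\ref{B-B} applied with the bounded linear functionals ${\bf v}\mapsto-\langle{\bf f},{\bf v}\rangle_{\Omega_-}$ on $X$ and $q\mapsto-\langle g,q\rangle_{\Omega_-}$ on $\mathcal M$ then delivers unique solvability of \eqref{variational-Dirichlet-p} together with the estimate in (ii), after observing that \eqref{variational-Dirichlet-p} and \eqref{Dirichlet-var-p} are equivalent: the two equations in \eqref{variational-Dirichlet-p} recover, respectively, the PDE $\boldsymbol{\mathcal L}({\bf u},\pi)={\bf f}$ in $\Omega_-$ (by density of ${\mathcal D}(\Omega_-)^n$ in $\mathring{\mathcal H}^1(\Omega_-)^n$) and the compressibility equation ${\rm div}\,{\bf u}=g$, while the homogeneous Dirichlet condition is encoded in the choice $X=\mathring{\mathcal H}^1(\Omega_-)^n$. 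Finally, part (i) is a direct reformulation of (ii): by \eqref{variational-Dirichlet-p-1-1} the operator equation $\mathcal T_{\Omega_-}({\bf u},\pi)=(-{\bf f},-g)$ is nothing but the mixed variational problem \eqref{variational-Dirichlet-p}, so the bijectivity and continuous inversion just established, together with the already-known continuity of $\mathcal T_{\Omega_-}$, establish the isomorphism.

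The only genuine technical point is the Korn inequality in the weighted setting, and it is handled for free by the zero-extension identification mentioned above; the rest of the argument is a standard Babu\v{s}ka--Brezzi verification. Consequently I do not expect any serious obstacle beyond carefully tracking the constants through the Korn--ellipticity--seminorm chain.
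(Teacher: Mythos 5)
Your proposal is correct and follows essentially the same route as the paper: Babu\v{s}ka--Brezzi (Theorem~\ref{B-B}) with $X=\mathring{\mathcal H}^1(\Omega_-)^n$, $V=\mathring{\mathcal H}^1_{\rm div}(\Omega_-)^n$, $\mathcal M=L_2(\Omega_-)$, coercivity from ellipticity plus a Korn inequality and the seminorm--norm equivalence, and the inf-sup condition from the Bogovski\u{\i}-type right inverse of the divergence (Lemma~\ref{Bog}). The only (harmless) difference is that you obtain the exterior-domain Korn inequality by zero extension and \eqref{Korn3-R3}, where the paper invokes a direct argument on $\Omega_-$ following \cite{Sa-Se}.
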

\begin{proof}
$(i)$ Let $(\mathbf F ,\eta )\in {\mathcal X}'(\Omega _-)$. We show that there exists a unique pair $({\bf u},\pi )\in {\mathcal X}(\Omega _-)$ such that ${\mathfrak T}_{\Omega _-}({\bf u},\pi )=(\mathbf F ,\eta )$, i.e.,
there exists a unique solution $({\bf u},\pi )\in \mathring{\mathcal H}^{1}(\Omega _{-})^n\times L_2(\Omega _{-})$ of the variational problem
\begin{equation}
\label{variational-Dirichlet-2-Stokes}
\left\{\begin{array}{lll}
a_{{{\mathbb A};\Omega _{-}}}({\bf u},{\bf w})+b_{\Omega _{-}}({\bf w},\pi )=
\langle \mathbf F ,{\bf w}\rangle _{\Omega _-} \quad \forall \, {\bf w}\in \mathring{\mathcal H}^{1}(\Omega _{-})^n,\\
b_{_{\Omega _{-}}}({\bf u},q)=\langle \eta ,q\rangle _{\Omega _-} \quad \forall \, q\in L_2(\Omega _{-}),
\end{array}\right.
\end{equation}
where $a_{{{\mathbb A};\Omega _{-}}}$ and $b_{_{\Omega _{-}}}$ are the bilinear forms given by \eqref{a-Omega-p} and \eqref{b-Omega-p}, respectively.

Note that in order to have a well defined pairing of dual spaces in the right hand side of the first equation in \eqref{variational-Dirichlet-2-Stokes} we use the property that the spaces  $\mathring{\mathcal H}^{1}(\Omega _{-})^n$ and $\widetilde{\mathcal H}^{1}(\Omega _{-})^n$ can be identified through the isomorphism $\mathring E_{-}:\mathring{\mathcal H}^{1}(\Omega _{-})^n\to\widetilde{\mathcal H}^{1}(\Omega _{-})^n$ (see \cite[Theorem 5.11]{B-M-M-M}).

Now, in view of condition \eqref{Stokes-1}
and the H\"{o}lder inequality, as well as the definition of the norm of the space $\mathring{\mathcal H}^{1}(\Omega _{-})$
(cf. \eqref{weight-2p}, with $\Omega _-$ in place of ${\mathbb R}^n$), we obtain that
\begin{align}
\label{a-1-v-d-Stokes}
|a_{{{\mathbb A};\Omega _{-}}}({\bf v}_1,{\bf v}_2)|&\leq {\|{\mathbb A}\|_{L_{\infty }(\Omega _-)}}\|\nabla {\bf v}_1\|_{L_2(\Omega _{-})^{n\times n}}\|\nabla {\bf v}_2\|_{L_2(\Omega _{-})^{n\times n}}\nonumber\\
&\leq {\|{\mathbb A}\|_{L_{\infty }(\Omega _-)}}\|{\bf v}_1\|_{{\mathcal H}^1(\Omega _{-})^n}\|{\bf v}_2\|_{{\mathcal H}^1(\Omega _{-})^n} \quad \forall\, {\bf v}_1,\, {\bf v}_2\in \mathring{\mathcal H}^1(\Omega _{-})^n.
\end{align}
Due to the equivalence of the semi-norm \eqref{seminorm} to the norm \eqref{weight-2p} (with $\Omega _{-}$ in place of ${\mathbb R}^n$) on $\mathring{\mathcal H}^1(\Omega_-)^n$ and  assumption \eqref{mu}, there exists a constant $C=C(\Omega _{-},n)>0$ such that
\begin{align}
\label{estimate-Korn-2a}
\|{\bf u}\|_{{\mathcal H}^1(\Omega _{-})^n}^2&\leq C\|\nabla {\bf u}\|_{L_2(\Omega_-)^{n\times n}}^2
\leq 2C\|{\mathbb E}({\bf u})\|_{L_2(\Omega _{-})^{n\times n}}^2
\leq 2C{c_{\mathbb A}}a_{\mathbb A;\Omega_-}({\bf u},{\bf u}) \quad \forall \, {\bf u}\in \mathring{\mathcal H}^1_{\rm{div}}(\Omega _{-})^n.
\end{align}
Note that the second inequality in \eqref{estimate-Korn-2a} is a version of the Korn type inequality \eqref{Korn3-R3} with $\Omega _{-}$ in place of ${\mathbb R}^n$, and follows with similar arguments to those for inequality (2.2) in \cite{Sa-Se}.
Inequalities \eqref{a-1-v-d-Stokes} and \eqref{estimate-Korn-2a} show that
$a_{{{\mathbb A};\Omega _{-}}}(\cdot ,\cdot ):\mathring{\mathcal H}^1(\Omega _{-})^n
\times \mathring{\mathcal H}^1(\Omega _{-})^n\to {\mathbb R}$
is bounded, while
$a_{{{\mathbb A};\Omega _{-}}}(\cdot ,\cdot ):\mathring{\mathcal H}^1_{\rm{div}}(\Omega_{-})^n
\times \mathring{\mathcal H}^1_{\rm{div}}(\Omega _{-})^n\to {\mathbb R}$
is coercive.

Moreover, arguments similar to those for inequality \eqref{a-1-v-d-Stokes} imply that the bilinear form
$b_{_{\Omega _{-}}}(\cdot ,\cdot):\mathring{\mathcal H}^1(\Omega _{-})^n\times L_2(\Omega _{-})\to {\mathbb R}$ given by \eqref{b-Omega-p} is also bounded.
Lemma \ref{Bog} implies that there exists a constant $C_{\Omega_-} >0$ such that for any $q\in L_2(\Omega _{-})$ there exists ${\bf v}\in \mathring{\mathcal H}^1(\Omega _{-})^n$ satisfying the equation $-{\rm{div}}\, {\bf v}=q$ and the inequality
$\|{\bf v}\|_{\mathring{\mathcal H}^1(\Omega _{-})^n}\leq C_{\Omega_-}\|q\|_{L_2(\Omega _{-})}$, and hence
\begin{align*}
b_{_{\Omega _{-}}}({\bf v},q)=-\left\langle {\rm{div}}\, {\bf v},q\right\rangle _{\Omega _{-}}=\langle q,q\rangle _{\Omega _{-}}=\|q\|_{L_2(\Omega _{-})}^2\geq C_{\Omega_-}^{-1} \|{\bf v}\|_{\mathring{\mathcal H}^1(\Omega _{-})^n}\|q\|_{L_2(\Omega _{-})}.
\end{align*}
Consequently, the bilinear form $b_{_{\Omega _{-}}}(\cdot ,\cdot):\mathring{\mathcal H}^1(\Omega _{-})^n\times L_2(\Omega _{-})\to {\mathbb R}$ satisfies the inf-sup condition
\begin{align}
\label{inf-sup-Omega-Stokes}
\inf _{q\in L_2(\Omega _{-})\setminus \{0\}}\sup _{{\bf w}\in \mathring{\mathcal H}^1(\Omega _{-})^n\setminus \{\bf 0\}}\frac{b_{_{\Omega _{-}}}({\bf w},q)}{\|{\bf w}\|_{\mathring{\mathcal H}^1(\Omega _{-})^n}\|q\|_{L_2(\Omega _{-})}}
\geq\inf _{q\in L_2(\Omega _{-})\setminus \{0\}}\frac{b_{_{\Omega _{-}}}({\bf v},q)}{\|{\bf v}\|_{\mathring{\mathcal H}^1(\Omega _{-})^n}\|q\|_{L_2(\Omega _{-})}}
\geq C_{\Omega_-}^{-1}
\end{align}
(cf. \cite[{Theorem 3.3}]{Gi-Se}).
In addition, we use the following characterization of the subspace $\mathring{\mathcal H}_{{\rm{div}}}^1(\Omega_{-})^n$ of $\mathring{\mathcal H}^1(\Omega_{-})^n$ consisting of divergence-free vector fields in $\Omega _{-}$,
\begin{align*}
\mathring{\mathcal H}_{{\rm{div}}}^1(\Omega_{-})^n
&=\left\{{\bf w}\in \mathring{\mathcal H}^1(\Omega_{-})^n: b_{_{\Omega _{-}}}({\bf w},q)=0 \quad \forall \, q\in L_2(\Omega _{-})\right\}\,.
\end{align*}
Then Theorem \ref{B-B} (with $X=\mathring{\mathcal H}^1(\Omega _{-})^n$, $V=\mathring{\mathcal H}_{{\rm{div}}}^1(\Omega_{-})^n$ and $M=L_2(\Omega _{-})$) implies that for any $(\mathbf F ,\eta )\in {\mathcal H}^{-1}(\Omega _-)^n\times L_2(\Omega _-)$, there exists a unique solution $({\bf u},\pi )\in \mathring{\mathcal H}^1(\Omega _{-})^n\times L_2(\Omega _{-})$ of the variational problem \eqref{variational-Dirichlet-2-Stokes}, which depends continuously on the given data $\mathbf F $ and $\eta $. Equivalently, the pair $({\bf u},\pi )\in {\mathcal X}(\Omega _-)$ is the unique solution of the equation ${\mathcal T}_{\Omega _{-}}({\bf u},\pi )=(\mathbf F ,\eta )$, and accordingly, the operator ${\mathcal T}_{\Omega _{-}}:{\mathcal X}(\Omega _-)\to \mathcal X'(\Omega _-)$ is an isomorphism, as asserted.

$(ii)$
The existence and uniqueness of a solution $({\bf u},\pi )\in {\mathcal X}(\Omega _-)$ of the Poisson problem \eqref{Dirichlet-var-p}, depending continuously on $\boldsymbol{\mathbf f}$ and $g$, follows from item $(i)$.
\end{proof}

\subsubsection{\bf Variational approach to the Poisson problem for the compressible Stokes system  with inhomogeneous Dirichlet condition}

Next, we extend the well-posedness result in Theorem \ref{T-p}
to a Poisson problem with inhomogeneous Dirichlet condition (cf. \cite[Theorem 5.1]{K-M-W-1} for the Stokes system in the isotropic case \eqref{isotropic}, \cite[Theorem 3.4]{Gi-Se} and \cite[Theorem 9.2.2]{M-W} for the Stokes system \eqref{Dirichlet-var-p} with constant coefficients,
\cite[Proposition 3.1]{Kozono-Shor}, and \cite[Theorem 7.5]{B-M-M-M} for a inhomogeneous higher-order Poisson problem on bounded $(\varepsilon ,\delta )$-domains in ${\mathbb R}^n$).
\begin{thm}
\label{T-p-complete}
Let conditions \eqref{Stokes-1}-\eqref{mu} hold in $\Omega_-$.
for all $({\bf f},{g,}{\boldsymbol\varphi})\in
{\mathcal H}^{-1}(\Omega _{-})^n\times L_2(\Omega_-)
\times H^{\frac{1}{2}}(\partial \Omega )^n$,
problem \eqref{Dirichlet-var-Stokes-a} has a unique solution $({\bf u},\pi )$ in ${\mathcal H}^1(\Omega _{-})^n\!\times \!L_2(\Omega _{-})$ and there exists a constant $C=C(\partial \Omega ,c_{\mathbb A},n)>0$ such that
\begin{align}
\label{estimate-Dirichlet-var-ext}
\|{\bf u}\|_{{\mathcal H}^1(\Omega _{-})^n}
+\|\pi \|_{L_2(\Omega _{-})}\leq
C\left(\|{\bf f}\|_{{{\mathcal H}^{-1}(\Omega _{-})^n}}
+\|g\|_{L_{2}(\Omega_-)}
+\|\boldsymbol\varphi \|_{H^{\frac{1}{2}}(\partial \Omega )^n}\right).
\end{align}
\end{thm}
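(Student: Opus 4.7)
The plan is to reduce the inhomogeneous Dirichlet problem \eqref{Dirichlet-var-Stokes-a} to the already-solved homogeneous problem \eqref{Dirichlet-var-p}, by subtracting off an extension of the boundary datum $\boldsymbol\varphi$ into $\Omega_-$. Since we work in the weighted exterior setting, the existence of a continuous right inverse $\gamma_-^{-1}:H^{1/2}(\partial\Omega)^n\to \mathcal H^1(\Omega_-)^n$ to the trace operator (recalled after \eqref{ext-trace}) is precisely what makes this reduction available.

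First, I would pick such an extension $\mathbf u_0:=\gamma_-^{-1}\boldsymbol\varphi\in\mathcal H^1(\Omega_-)^n$ and set $g_0:={\rm div}\,\mathbf u_0\in L_2(\Omega_-)$, with the norms controlled by $\|\boldsymbol\varphi\|_{H^{1/2}(\partial\Omega)^n}$. Writing $\mathbf u=\mathbf v+\mathbf u_0$, the unknown pair $(\mathbf v,\pi)\in\mathring{\mathcal H}^1(\Omega_-)^n\times L_2(\Omega_-)$ must satisfy the homogeneous-Dirichlet Poisson problem
\begin{equation*}
\boldsymbol{\mathcal L}(\mathbf v,\pi)=\mathbf f-\boldsymbol{\mathbb L}\mathbf u_0,\quad {\rm div}\,\mathbf v=g-g_0\ \mbox{ in } \Omega_-,\quad \gamma_-\mathbf v=\mathbf 0\ \mbox{ on }\partial\Omega.
\end{equation*}
The second step is to verify that the modified data lie in the appropriate spaces. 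Clearly $g-g_0\in L_2(\Omega_-)$. The distribution $\boldsymbol{\mathbb L}\mathbf u_0=\partial_\alpha(A^{\alpha\beta}\partial_\beta\mathbf u_0)$, by duality against any $\mathbf w\in\mathring{\mathcal H}^1(\Omega_-)^n$, acts as $-a_{\mathbb A;\Omega_-}(\mathbf u_0,\mathbf w)$, which by \eqref{a-1-v-d-Stokes} satisfies
\begin{equation*}
|\langle\boldsymbol{\mathbb L}\mathbf u_0,\mathbf w\rangle_{\Omega_-}|\le \|\mathbb A\|_{L_\infty(\Omega_-)}\|\mathbf u_0\|_{\mathcal H^1(\Omega_-)^n}\|\mathbf w\|_{\mathcal H^1(\Omega_-)^n},
\end{equation*}
so $\boldsymbol{\mathbb L}\mathbf u_0\in\mathcal H^{-1}(\Omega_-)^n$ with norm controlled by $\|\boldsymbol\varphi\|_{H^{1/2}(\partial\Omega)^n}$. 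Consequently $\mathbf f-\boldsymbol{\mathbb L}\mathbf u_0\in\mathcal H^{-1}(\Omega_-)^n$.

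The third step is to invoke Theorem \ref{T-p}$(ii)$, which yields a unique solution $(\mathbf v,\pi)\in\mathcal X(\Omega_-)$ of the reduced problem, together with the estimate
\begin{equation*}
\|\mathbf v\|_{\mathring{\mathcal H}^1(\Omega_-)^n}+\|\pi\|_{L_2(\Omega_-)}\le C\bigl(\|\mathbf f-\boldsymbol{\mathbb L}\mathbf u_0\|_{\mathcal H^{-1}(\Omega_-)^n}+\|g-g_0\|_{L_2(\Omega_-)}\bigr).
\end{equation*}
Setting $\mathbf u:=\mathbf v+\mathbf u_0$ produces a solution of \eqref{Dirichlet-var-Stokes-a}, and combining the last inequality with the bounds on $\mathbf u_0$ and $g_0$ yields \eqref{estimate-Dirichlet-var-ext}. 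For uniqueness, if $(\mathbf u_1,\pi_1)$ and $(\mathbf u_2,\pi_2)$ are two solutions, their difference lies in $\mathring{\mathcal H}^1(\Omega_-)^n\times L_2(\Omega_-)$ and solves the fully homogeneous problem, which by Theorem \ref{T-p} has only the trivial solution.

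I do not anticipate a genuine obstacle: the two structural ingredients (surjectivity of $\gamma_-$ with bounded right inverse, and well-posedness in the homogeneous case with \emph{arbitrary} $L_2$ divergence datum) are already in place, so the only points that merit care are the verification that $\boldsymbol{\mathbb L}\mathbf u_0\in\mathcal H^{-1}(\Omega_-)^n$ (which uses the symmetry-free bound \eqref{a-1-v-d-Stokes}) and the fact that the lifted function $\mathbf u_0$ is genuinely independent of the choice of right inverse for the solution $(\mathbf u,\pi)$, which follows from uniqueness.
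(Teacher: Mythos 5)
Your proposal is correct and follows essentially the same route as the paper: lift $\boldsymbol\varphi$ by the bounded right inverse $\gamma_-^{-1}$, subtract the lift to reduce to the homogeneous Dirichlet problem \eqref{Dirichlet-var-p}, apply Theorem \ref{T-p}(ii) for existence, the estimate and uniqueness. Your explicit verification that $\boldsymbol{\mathbb L}\mathbf u_0\in\mathcal H^{-1}(\Omega_-)^n$ via the bound \eqref{a-1-v-d-Stokes} is a detail the paper leaves implicit, but it is the same argument.
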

\begin{proof}
Let $({\bf f},g,{\boldsymbol\varphi})\in
{\mathcal H}^{-1}(\Omega _{-})^n\times L_2(\Omega_-)
\times H^{\frac{1}{2}}(\partial \Omega )^n$ be given.
First, let us introduce ${\bf u}^0:=\gamma^{-1}_-\boldsymbol\varphi\in {\mathcal H}^1(\Omega _{-})^n$, where $\gamma^{-1}_-:H^{\frac{1}{2}}({\partial\Omega })^n\!\to \! \mathcal{H}^1({\Omega }_-)^n$ is a continuous linear right inverse to the trace operator $\gamma_{-}$ and hence $\gamma_{-}{\bf u}^0=\boldsymbol\varphi$ on $\partial \Omega$.

By considering the new function $\mathring{\bf u}:={\bf u}-{\bf u}^0\in \mathring{\mathcal H}^1(\Omega _{-})^n$, the Poisson problem with inhomogeneous Dirichlet condition \eqref{Dirichlet-var-Stokes-a} reduces to the following Poisson problem with homogeneous Dirichlet condition 
\begin{equation}
\label{Dirichlet-var-p-complete}
\left\{
\begin{array}{ll}
{\boldsymbol{\mathcal L}(\mathring{\bf u},\pi )={\bf f}^0,}\ \
{\rm{div}}\, \mathring{\bf u}=g^0 & \mbox{ in } \Omega _-,\
\\
{\gamma }_{-}\mathring{\bf u}={\bf 0} &  \mbox{ on } \partial \Omega ,
\end{array}\right.
\end{equation}
where
${\bf f}^0\!:=\!{\bf f}-\partial _\alpha\left(A^{\alpha \beta }\partial _\beta ({\bf u}^0)\right)\in {\mathcal H}^{-1}(\Omega _-)^n$,
$g^0=g-{\rm{div}}\,{\bf u}^0\in L_2(\Omega _{-})$.

In view of the well-posedness result in Theorem \ref{T-p} (ii), problem \eqref{Dirichlet-var-p-complete} has a unique solution $(\mathring{\bf u},\pi )\in {\mathcal X}(\Omega _-)=\mathring{\mathcal H}^1(\Omega )^n\times L_2(\Omega _-)$ which depends continuously on ${\bf f}^0$ and $g^0$.
Then the pair
$({\bf u},\pi )=(\mathring{\bf u}+{\bf u}^0,\pi )$
belongs to the space ${\mathcal H}^1(\Omega _-)^n\times L_2(\Omega _-)$ and solves the Poisson problem with inhomogeneous Dirichlet condition \eqref{Dirichlet-var-Stokes-a}.
Moreover, in view of Theorem \ref{T-p}(ii) and continuity of the operator $\gamma^{-1}_-:H^{\frac{1}{2}}({\partial\Omega })^n\!\to \! \mathcal{H}^1({\Omega }_-)^n$, this solution depends continuously on the given data ${\bf f}$, $g$ and $\boldsymbol\varphi $.
Finally, uniqueness of the solution of the problem \eqref{Dirichlet-var-Stokes-a} in the space ${\mathcal H}^1(\Omega _-)^n\times L_2(\Omega _-)$ follows from the uniqueness result in Theorem \ref{T-p} (ii).
\end{proof}

\subsubsection{\bf Potential approach to the exterior Dirichlet problem for the Stokes system}
Theorem \ref{T-p-complete} shows the well-posedness of the exterior Dirichlet problem for the $L_{\infty}$ coefficient Stokes system \eqref{Dirichlet-var-Stokes-a} in the space ${\mathcal H}^1(\Omega _{-})^n\times L_2(\Omega _{-})$. 
When the given datum $({\bf f},{\boldsymbol\varphi})$ belongs to ${\mathcal H}^{-1}(\Omega _{-})^n\times H_{\boldsymbol \nu }^{\frac{1}{2}}(\partial \Omega )^n$ and $g=0$, the solution can be
expressed in terms of volume and single layer potentials (cf. \cite[Theorem 5.2]{K-M-W-1} in the isotropic case \eqref{isotropic}, \cite[Theorem 3.4]{Gi-Se} for the constant coefficient Stokes system, \cite[Theorem 10.1]{Fa-Me-Mi}, \cite[Theorem 5.1]{Lang-Mendez} for the Laplace operator).
\begin{thm}
\label{Dirichlet-ext-var-Stokes-new}
Let conditions \eqref{Stokes-1}-\eqref{mu} hold in $\Omega_-$.
If ${\bf f}\in {\mathcal H}^{-1}(\Omega_-)^n$, $g\in L_2(\Omega_-)$ and ${\boldsymbol\varphi}\in H_{\boldsymbol \nu }^{\frac{1}{2}}(\partial \Omega )^n$ then
problem \eqref{Dirichlet-var-Stokes-a}
has a unique solution $({\bf u},\pi )\in {\mathcal H}^{1}(\Omega _{-})^n\times L_2(\Omega _{-})$, given by
\begin{align}
\label{solution-Dirichlet-ext-0}
&{\bf u}\!=\!\boldsymbol{\mathcal N}_{{{\mathbb R}^n}}\tilde{\bf f}
+\boldsymbol{\mathcal G}_{\mathbb R^n}\mathring E_- g
+\!{\bf V}_{\partial \Omega }\boldsymbol{\mathcal V}_{\partial \Omega }^{-1}\big({\boldsymbol\varphi}
-\gamma _{-}\boldsymbol{\mathcal N}_{\mathbb R^n}\tilde{\bf f}
-\gamma_-\boldsymbol{\mathcal G}_{\mathbb R^n}\mathring E_- g\big)\quad \mbox{in } \Omega_{-},\\
\label{solution-Dirichlet-ext-p-0}
&\pi \!=\!{\mathcal Q}_{\mathbb R^n}\tilde{\bf f}
+{\mathcal G}^0_{\mathbb R^n}\mathring E_- g
+{\mathcal Q}_{\partial \Omega }^s\boldsymbol{\mathcal V}_{\partial \Omega }^{-1}\big({\boldsymbol\varphi}
-\gamma _{-}\boldsymbol{\mathcal N}_{\mathbb R^n}\tilde{\bf f}
-\gamma_-\boldsymbol{\mathcal G}_{\mathbb R^n}\mathring E_- g\big)\quad \mbox{in } \Omega_{-},
\end{align}
where $\tilde{\bf f}$ is an extension of ${\bf f}$ to an element of $\widetilde{\mathcal H}^{-1}(\Omega_-^n)\subset {\mathcal H}^{-1}({\mathbb R}^n)^n$.
\end{thm}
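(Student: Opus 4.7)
Uniqueness of a solution in $\mathcal H^1(\Omega_-)^n\times L_2(\Omega_-)$ is already contained in Theorem~\ref{T-p-complete}, so only the validity of the explicit representations \eqref{solution-Dirichlet-ext-0}--\eqref{solution-Dirichlet-ext-p-0} needs to be established. The plan is to split the candidate solution into a volume part and a single-layer boundary correction, verify that the correction is well-defined (this is the one place that is not entirely automatic), and then check directly that the sum solves the required problem in $\Omega_-$.

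Fix any extension $\tilde{\bf f}\in\widetilde{\mathcal H}^{-1}(\Omega_-)^n$ of $\bf f$ and set
\begin{align*}
{\bf u}^{\rm vol}&:=\boldsymbol{\mathcal N}_{\mathbb R^n}\tilde{\bf f}
+\boldsymbol{\mathcal G}_{\mathbb R^n}\mathring E_- g\in\mathcal H^1(\mathbb R^n)^n,\\
\pi^{\rm vol}&:=\mathcal Q_{\mathbb R^n}\tilde{\bf f}
+\mathcal G^0_{\mathbb R^n}\mathring E_- g\in L_2(\mathbb R^n).
\end{align*}
By Lemma~\ref{Newtonian-B-var-1}, the pair $({\bf u}^{\rm vol},\pi^{\rm vol})$ satisfies $\boldsymbol{\mathcal L}({\bf u}^{\rm vol},\pi^{\rm vol})=\tilde{\bf f}$ and ${\rm div}\,{\bf u}^{\rm vol}=\mathring E_-g$ in $\mathbb R^n$. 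The main (although still short) obstacle is to show that the residual boundary datum
$$\boldsymbol\varphi^0:=\boldsymbol\varphi-\gamma_-{\bf u}^{\rm vol}$$
actually belongs to $H_{\boldsymbol\nu}^{\frac{1}{2}}(\partial\Omega)^n$, so that Lemma~\ref{isom-sl-v} can be invoked. Since $\boldsymbol\varphi\in H_{\boldsymbol\nu}^{\frac{1}{2}}(\partial\Omega)^n$ by hypothesis, this reduces to verifying $\langle\gamma_-{\bf u}^{\rm vol},\boldsymbol\nu\rangle_{\partial\Omega}=0$. The membership ${\bf u}^{\rm vol}\in\mathcal H^1(\mathbb R^n)^n$ forces $\gamma_+{\bf u}^{\rm vol}=\gamma_-{\bf u}^{\rm vol}$, and since $\mathring E_-g$ vanishes on $\Omega_+$, ${\rm div}\,{\bf u}^{\rm vol}=0$ on the bounded Lipschitz domain $\Omega_+$. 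Applying Gauss's formula yields $\langle\gamma_+{\bf u}^{\rm vol},\boldsymbol\nu\rangle_{\partial\Omega}=\int_{\Omega_+}{\rm div}\,{\bf u}^{\rm vol}\,dx=0$, as required.

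Consequently $\boldsymbol\psi^0:=\boldsymbol{\mathcal V}_{\partial\Omega}^{-1}\boldsymbol\varphi^0$ is a well-defined element of the quotient $H^{-\frac{1}{2}}(\partial\Omega)^n/{\rm span}\{\boldsymbol\nu\}$; moreover, because ${\bf V}_{\partial\Omega}\boldsymbol\nu={\bf 0}$ and $\mathcal Q^s_{\partial\Omega}\boldsymbol\nu=-\chi_{_{\Omega_+}}$ (Lemma~\ref{slp-properties}), both ${\bf V}_{\partial\Omega}\boldsymbol\psi^0$ and the restriction $\mathcal Q^s_{\partial\Omega}\boldsymbol\psi^0\big|_{\Omega_-}$ are independent of the chosen representative. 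Setting
$${\bf u}:={\bf u}^{\rm vol}+{\bf V}_{\partial\Omega}\boldsymbol\psi^0,\qquad
\pi:=\pi^{\rm vol}+\mathcal Q^s_{\partial\Omega}\boldsymbol\psi^0\quad\text{in }\Omega_-,$$
Lemmas~\ref{Newtonian-B-var-1} and~\ref{continuity-sl-S-h-var} give $({\bf u},\pi)\in\mathcal H^1(\Omega_-)^n\times L_2(\Omega_-)$, and linearity yields $\boldsymbol{\mathcal L}({\bf u},\pi)=\tilde{\bf f}|_{\Omega_-}={\bf f}$ together with ${\rm div}\,{\bf u}=g$ in $\Omega_-$. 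Finally, the identity $\gamma_-{\bf V}_{\partial\Omega}\boldsymbol\psi^0=\boldsymbol{\mathcal V}_{\partial\Omega}\boldsymbol\psi^0=\boldsymbol\varphi^0$ gives $\gamma_-{\bf u}=\gamma_-{\bf u}^{\rm vol}+\boldsymbol\varphi-\gamma_-{\bf u}^{\rm vol}=\boldsymbol\varphi$, so $({\bf u},\pi)$ solves \eqref{Dirichlet-var-Stokes-a}. By the uniqueness asserted in Theorem~\ref{T-p-complete}, this $({\bf u},\pi)$ is independent of the extension $\tilde{\bf f}$, completing the proof.
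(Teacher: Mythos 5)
Your proposal is correct and follows essentially the same route as the paper: represent the solution as Newtonian plus compressibility potentials plus a single layer potential with density $\boldsymbol{\mathcal V}_{\partial\Omega}^{-1}$ applied to the residual Dirichlet datum, check the equations via Lemmas~\ref{Newtonian-B-var-1}, \ref{continuity-sl-S-h-var}, \ref{jump-s-l}, \ref{isom-sl-v}, and invoke Theorem~\ref{T-p-complete} for uniqueness. The only difference is that you explicitly verify the compatibility $\langle\gamma_-{\bf u}^{\rm vol},\boldsymbol\nu\rangle_{\partial\Omega}=0$ (via the divergence theorem in $\Omega_+$) and the independence of the potentials on $\Omega_-$ of the representative modulo ${\rm span}\{\boldsymbol\nu\}$ — details the paper leaves implicit — which is a welcome, correct refinement rather than a change of method.
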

\begin{proof}
Let ${\bf f}\in {\mathcal H}^{-1}(\Omega _{-})^n$. {Since ${\mathcal H}^{-1}(\Omega _{-})^n=(\mathring{\mathcal H}^{1}(\Omega _{-})^n)'$ and the space $\mathring{\mathcal H}^{1}(\Omega _{-})^n$ can be identified, via the operator of extension by zero outside $\Omega _{-}$, with the
space $\widetilde{\mathcal H}^{1}(\Omega _{-})^n$, see \eqref{2.5},
the Hahn-Banach extension Theorem implies that there exists $\tilde{\bf f}\in {\mathcal H}^{-1}({\mathbb R}^n)^n$ such that $\tilde{\bf f}|_{\Omega _-}={\bf f}$ and $\|\tilde{\bf f}\|_{{\mathcal H}^{-1}({\mathbb R}^n)^n}\leq c\|{\bf f}\|_{{\mathcal H}^{-1}(\Omega _{-})^n}$, with some constant $c>0$ (see also the proof of \cite[Theorem 10.1]{Fa-Me-Mi}).}
Then Lemmas \ref{Newtonian-B-var-1},
\ref{continuity-sl-S-h-var},
\ref{jump-s-l}, and
\ref{isom-sl-v}
imply that $({\bf u},\pi)$
represented by \eqref{solution-Dirichlet-ext-0}, \eqref{solution-Dirichlet-ext-p-0}
solve problem \eqref{Dirichlet-var-Stokes-a}, while Theorem \ref{T-p-complete} yields uniqueness.
\end{proof}

\subsection{Potential approach for the exterior Neumann problem of the anisotropic Stokes system} 
The Neumann problem for the constant coefficient Stokes system in an exterior Lipschitz domain in ${\mathbb R}^n$, with boundary datum in $L_p$ spaces, has been studied in \cite[Theorem 9.2.6]{M-W} by a potential approach (see also \cite[Theorem 10.6.4]{M-W} for the Neumann problem for the same system in a bounded Lipschitz domain).
Next we consider the following exterior Neumann problem for the $L_{\infty}$ coefficient Stokes system 
\begin{equation}
\label{Neumann-var-Stokes}
\left\{
\begin{array}{ll}
{\boldsymbol{\mathcal L}({\bf u},\pi )={\bf 0}},\ \ 
{\rm{div}} \ {\bf u}=0 & \mbox{ in } \Omega _{-},\
\\
{\bf t}^{-}({\bf u},\pi )=\boldsymbol\psi \in \boldsymbol{\mathcal R}_{\partial \Omega }^\perp &  \mbox{ on } \partial \Omega ,
\end{array}\right.
\end{equation}
Recall that $\boldsymbol{\mathcal R}_{\partial \Omega }^\perp $ is defined in \eqref{R-perp},
${\bf D}_{\partial\Omega }:H_{\boldsymbol{\mathcal R}}^{\frac{1}{2}}(\partial\Omega )^n\!\to \!\boldsymbol{\mathcal R}_{\partial \Omega }^\perp $ is given by \eqref{jump-dl-v-var-alpha}
and
${\bf D}_{\partial\Omega}^{-1}:\boldsymbol{\mathcal R}_{\partial \Omega }^\perp\to
H_{\boldsymbol{\mathcal R}}^{\frac{1}{2}}(\partial\Omega )^n$
is a continuous operator due to Lemma~\ref{isom-dlc}.
\begin{thm}
\label{Neumann-ext-var-Stokes}
Let conditions \eqref{Stokes-1}-\eqref{mu} hold in $\Omega_-$.
If {${\boldsymbol\psi}\in \boldsymbol{\mathcal R}_{\partial \Omega }^\perp $} then problem \eqref{Neumann-var-Stokes} has a unique solution $({\bf u},\pi )\in {\mathcal H}^{1}(\Omega _{-})^n\times L_2(\Omega _{-})$, given by
\begin{align}
\label{solution-Neumann-ext}
{{\bf u}={\bf W}_{\partial \Omega }\big({\bf D}_{\partial \Omega }^{-1}{\boldsymbol\psi}\big),\
\pi ={\mathcal Q}^d_{\partial\Omega}\big({\bf D}_{\partial\Omega}^{-1}{\boldsymbol\psi}\big)
\mbox{ in } \Omega _{-}\,.}
\end{align}
Moreover, there exists a constant $C=C(\Omega _-,c_{\mathbb A},n)>0$ such that
\begin{align}
\label{solution-Neumann-ext-C}
\|{\bf u}\|_{{\mathcal H}^{1}(\Omega _{-})^n}+\|\pi \|_{L_2(\Omega _{-})}\leq C\|\boldsymbol\psi \|_{\boldsymbol{\mathcal R}_{\partial \Omega }^\perp }\,.
\end{align}
\end{thm}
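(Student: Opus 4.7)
\medskip
\noindent\textbf{Proof proposal.}
The plan is to verify existence by direct construction from the double layer potential, and then to prove uniqueness via a Green identity argument combined with the rigidity properties of fields in $\mathcal H^1(\Omega_-)^n$.

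For existence, since $\boldsymbol{\mathbf D}_{\partial\Omega}: H_{\boldsymbol{\mathcal R}}^{\frac12}(\partial\Omega)^n\to \boldsymbol{\mathcal R}_{\partial\Omega}^\perp$ is an isomorphism by Lemma \ref{isom-dlc}, the element $\boldsymbol\varphi:={\bf D}_{\partial\Omega}^{-1}\boldsymbol\psi\in H_{\boldsymbol{\mathcal R}}^{\frac12}(\partial\Omega)^n$ is well-defined and depends continuously on $\boldsymbol\psi$. Setting $({\bf u},\pi):=({\bf W}_{\partial\Omega}\boldsymbol\varphi,\mathcal Q_{\partial\Omega}^d\boldsymbol\varphi)$, Lemma \ref{continuity-dl-h-var}(i) yields $({\bf u},\pi)|_{\Omega_-}\in\mathcal H^1(\Omega_-)^n\times L_2(\Omega_-)$ together with $\boldsymbol{\mathcal L}({\bf u},\pi)={\bf 0}$ and $\mathrm{div}\,{\bf u}=0$ in $\Omega_-$. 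The second jump formula in \eqref{jump-dl-v-var-alpha} yields ${\bf t}^-({\bf u},\pi)={\bf D}_{\partial\Omega}\boldsymbol\varphi=\boldsymbol\psi$ on $\partial\Omega$, so the pair solves \eqref{Neumann-var-Stokes}. Continuity of the operators ${\bf W}_{\partial\Omega}$, $\mathcal Q_{\partial\Omega}^d$ and $\boldsymbol{\mathbf D}_{\partial\Omega}^{-1}$ delivers the estimate \eqref{solution-Neumann-ext-C}.

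For uniqueness, let $({\bf u},\pi)\in\mathcal H^1(\Omega_-)^n\times L_2(\Omega_-)$ solve the homogeneous version of \eqref{Neumann-var-Stokes}. Since $\boldsymbol{\mathcal L}({\bf u},\pi)={\bf 0}$ and $\mathrm{div}\,{\bf u}=0$ in $\Omega_-$, the triple $({\bf u},\pi,{\bf 0})$ lies in $\boldsymbol{\mathcal H}^1(\Omega_-,\boldsymbol{\mathcal L})$, and the generalized Green identity \eqref{Green-particular-p} with the test function $\mathbf w={\bf u}\in\mathcal H^1(\Omega_-)^n$, combined with ${\bf t}^-({\bf u},\pi)={\bf 0}$ and $\mathrm{div}\,{\bf u}=0$, produces
\begin{equation*}
0=-\langle{\bf t}^-({\bf u},\pi),\gamma_-{\bf u}\rangle_{\partial\Omega}
=\langle a_{ij}^{\alpha\beta}E_{j\beta}({\bf u}),E_{i\alpha}({\bf u})\rangle_{\Omega_-}.
\end{equation*}
Because $\mathrm{div}\,{\bf u}=0$ gives $\sum_iE_{ii}({\bf u})=0$, the ellipticity condition \eqref{mu} is applicable pointwise to $\mathbb E({\bf u})$ and forces $\mathbb E({\bf u})=0$ in $\Omega_-$. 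Hence ${\bf u}|_{\Omega_-}$ is a rigid body motion field (see, e.g., \cite[Lemma 3.1]{Med-AAM-11}), but the embedding $\mathcal H^1(\Omega_-)^n\hookrightarrow L_{\frac{2n}{n-2}}(\Omega_-)^n$ that follows from \eqref{weight-Lp} excludes any nontrivial rigid motion; thus ${\bf u}={\bf 0}$ in $\Omega_-$. Returning to $\boldsymbol{\mathcal L}({\bf u},\pi)={\bf 0}$, we obtain $\nabla\pi={\bf 0}$ in $\Omega_-$, and $\pi\in L_2(\Omega_-)$ then gives $\pi=0$, concluding the uniqueness and therefore the proof.

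No single step appears to be a genuine obstacle, since every ingredient has already been established earlier in the paper; the only point requiring some care is the verification that the rigidity argument rules out constants and infinitesimal rotations, for which the Sobolev-type embedding built into the weighted space $\mathcal H^1(\Omega_-)^n$ is exactly what is needed.
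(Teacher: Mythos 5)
Your proposal is correct and follows essentially the same route as the paper: existence by composing the inverse of ${\bf D}_{\partial\Omega}$ from Lemma \ref{isom-dlc} with the double layer potentials and the jump relation \eqref{jump-dl-v-var-alpha}, and uniqueness via the Green formula \eqref{Green-particular-p}, the symmetric ellipticity condition \eqref{mu} (applicable since $\mathrm{div}\,{\bf u}=0$), the rigidity of $\mathbb E({\bf u})=0$, and the decay forced by the embedding ${\mathcal H}^1(\Omega_-)^n\hookrightarrow L_{\frac{2n}{n-2}}(\Omega_-)^n$, followed by the constancy-of-pressure argument. No gaps; your handling of the zero-trace requirement in \eqref{mu} is exactly the point the paper also relies on.
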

\begin{proof}
Lemmas \ref{continuity-dl-h-var} and \ref{isom-dlc} imply that $({\bf u},\pi)$ represented by \eqref{solution-Neumann-ext} solve problem \eqref{Neumann-var-Stokes}
and the operators involved in \eqref{solution-Neumann-ext} are continuous, which implies inequality \eqref{solution-Neumann-ext-C}.

To show uniqueness let us assume that a pair $({\bf u}_0,\pi _0)\in {\mathcal H}^{1}_{\rm{div}}(\Omega _{-})^n\times L_2(\Omega _{-})$ satisfies the homogeneous version of the exterior Neumann problem \eqref{Neumann-var-Stokes}. Then Lemma \ref{lem-add1} and assumption \eqref{mu} imply that
\begin{align}
0\!=\!-\langle {\bf t}^{-}({\bf u}_0,\pi _0),\gamma _{-}({\bf u}_0)\rangle _{\partial \Omega }\!
=\!\big\langle a_{ij}^{\alpha \beta }E_{j\beta }({\bf u}_0),E_{i\alpha }({\bf u}_0)\big\rangle _{\Omega _{-}}\nonumber
\!\!\geq \!c_{\mathbb A}^{-1}\|{\mathbb E}({\bf u}_0)\|_{L_2(\Omega _-)^{n\times n}}^2
\end{align}
and hence ${\mathbb E}({\bf u}_0)=0$ in $\Omega _{-}$. Then ${\bf u}_0\in \boldsymbol{\mathcal R}|_{\Omega _{-}}$,
see \eqref{Neumann-var-Stokes}, i.e.,
there exist a constant ${\bf b}_0\in {\mathbb R}^n$ and an antisymmetric matrix ${\bf B}_0\in {\mathbb R}^{n\times n}$ such that ${\bf u}_0={\bf b}_0+{\bf B}_0{\bf x}$ in $\Omega _{-}$.
However, the membership of ${\bf u}_0$ in the weighted Sobolev space ${\mathcal H}^{1}(\Omega _{-})^n$ and the embedding ${\mathcal H}^{1}(\Omega _{-})^n\hookrightarrow L_{\frac{2n}{n-2}}(\Omega _-)^n$ imply that ${\bf u}_0={\bf 0}$ in $\Omega_{-}$. Moreover, the Stokes equation in \eqref{Neumann-var-Stokes} shows that $\pi _0$ reduces to a constant $c_0\in {\mathbb R}$, but the membership of $\pi _0$ in $L_2(\Omega _{-})$ yields that $c_0=0$, and accordingly that $\pi _0=0$ in $\Omega _{-}$.
\end{proof}

\subsection{Variational approach of the exterior Poisson problem for the anisotropic Stokes system with mixed  Dirichlet-Neumann boundary conditions} 

As before, we assume that $\Omega_+ \subset {\mathbb R}^n$
is a bounded Lipschitz domain with connected boundary $\partial \Omega $ and let $\Omega _{-}:={\mathbb R}^n\setminus \overline\Omega_+ $.
Let additionally $D$ and $N$ be relatively open subsets of $\partial \Omega $, such that $D$ has {positive $(n-1)$-Hausdorff measure}, $D\cap N=\emptyset $, $\overline{D}\cup \overline{N}=\partial \Omega $, and $\overline{D}\cap \overline{N}=\Sigma $, where $\Sigma $ is an $(n-2)$-dimensional closed Lipschitz submanifold of $\partial \Omega $.

Let ${\mathcal H}^1_{D}({\mathbb R}^n)$ be the closure of the space ${\mathcal D}({\mathbb R}^n\setminus {D})$ (of smooth compactly supported functions in ${\mathbb R}^n\setminus {D}$) in ${\mathcal H}^1({\mathbb R}^n)$. Let $C_D^{\infty }(\Omega _{-}):=\{\phi |_{\Omega _-}:\phi \in {\mathcal D}({\mathbb R}^n) \mbox{ with } {\rm{supp}}(\phi )\cap {D}=\emptyset\}$. Then
${\mathcal H}^1_{D}(\Omega _-)$ denotes the closure of $C_D^{\infty }(\Omega _-)$ in $\left({\mathcal H}^1(\Omega _-),\|\cdot \|_{{\mathcal H}^1(\Omega _{-})}\right)$ (see also \cite[Definition 3.1]{B-M-M-M}, \cite[Definition 2.3]{H-J-K-R}). The spaces ${\mathcal H}^1_{D}({\mathbb R}^n)^n$ and ${\mathcal H}^1_{D}(\Omega _-)^n$ can be similarly defined. We have the following characterization 
\begin{align}
\label{H-D-0}
{\mathcal H}^1_{D}(\Omega _-)^n
=&\left\{{\bf v}|_{_{\Omega _-}}:{\bf v}\in {\mathcal H}^1_{D}({\mathbb R}^n)^n\right\}.
\end{align}
In addition (see also \cite[Corollary 3.11]{B-M-M-M}, \cite[Definition 2.7]{H-J-K-R}),
\begin{align}
\label{H-D}
{\mathcal H}^1_{D}(\Omega _-)^n\!=\!\left\{{\bf v}\in {\mathcal H}^1(\Omega _-)^n:
\left(\gamma _-{\bf v}\right)|_{_{D}}\!=\!{\bf 0}\ \mbox{ on } D\right\}.
\end{align}
Let us also define
$$
{\mathcal H}_{D;\rm{div}}^1(\Omega_-)^n:=\left\{{\bf v}\in {\mathcal H}^1_D(\Omega_-)^n:{\rm{div}}\, {\bf v}=0 \mbox{ in } \Omega _-\right\}
$$

Let $\Gamma$ be {a relatively} open $(n-1)$-dimensional subset of $\partial\Omega$, e.g., $D$ or $N$. Let $r_{_{\Gamma}}$ denote the operator of restriction of distributions from $\partial\Omega$ to  $\Gamma$.
We will also employ the boundary spaces (cf. e.g., \cite{Lean}, \cite[Definition 4.8, Theorem 5.1]{B-M-M-M})
\begin{align}
\label{D-N-spaces}
&H^{\pm\frac{1}{2}}(\Gamma)^n:=\left\{r_{_\Gamma}\boldsymbol\phi: \boldsymbol\phi\in H^{\pm\frac{1}{2}}(\partial\Omega )^n\right\},\
\widetilde{H}^{\pm\frac{1}{2}}(\Gamma)^n\!:=\!\left\{\widetilde{\boldsymbol\phi }\in H^{\pm\frac{1}{2}}(\partial \Omega )^n\!:
\widetilde{\boldsymbol\phi }\!=\!{\bf 0}\!\ \mbox{ on } \partial\Omega\setminus\bar\Gamma\right\},
\end{align}
where
$H^{\pm\frac{1}{2}}(\Gamma)^n$ can be identified with $\Big(\widetilde{H}^{\mp\frac{1}{2}}(\Gamma)^n\Big)'$.

The existence of a right inverse of the exterior trace operator in \eqref{ext-trace} implies the following result.
\begin{lem}
\label{mixt-trace-D}
The trace operator $\gamma _-:{\mathcal H}^1_{D}(\Omega _-)^n\to \widetilde{H}^{\frac{1}{2}}(N)^n$ is linear, bounded and surjective, having a $($non-unique$)$ linear, bounded right inverse
$\gamma _-^{-1}:\widetilde{H}^{\frac{1}{2}}(N)^n\to {\mathcal H}^1_{D}(\Omega _-)^n.$
\end{lem}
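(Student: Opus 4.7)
The plan is to deduce the lemma directly from the characterization \eqref{H-D} of $\mathcal{H}^1_D(\Omega_-)^n$ as the subspace of $\mathcal{H}^1(\Omega_-)^n$ consisting of those fields whose exterior trace vanishes on $D$, combined with the existence of a bounded right inverse for the full exterior trace operator $\gamma_-:\mathcal{H}^1(\Omega_-)^n\to H^{\frac{1}{2}}(\partial\Omega)^n$ established in Theorem~\ref{trace-operator1} (extended to the weighted Sobolev space $\mathcal{H}^1(\Omega_-)$ via the surjectivity statement recorded in \eqref{ext-trace}).

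For boundedness, I would take any $\mathbf v\in \mathcal{H}^1_D(\Omega_-)^n$ and apply $\gamma_-$ componentwise. Since $\gamma_-\mathbf v\in H^{\frac{1}{2}}(\partial\Omega)^n$ with $(\gamma_-\mathbf v)|_D=\mathbf 0$ by \eqref{H-D}, the definition \eqref{D-N-spaces} of $\widetilde H^{\frac{1}{2}}(N)^n$ gives the membership $\gamma_-\mathbf v\in \widetilde H^{\frac{1}{2}}(N)^n$, with norm dominated by $\|\gamma_-\mathbf v\|_{H^{\frac{1}{2}}(\partial\Omega)^n}\le C\|\mathbf v\|_{\mathcal{H}^1(\Omega_-)^n}$; this yields the continuity of $\gamma_-:\mathcal{H}^1_D(\Omega_-)^n\to\widetilde H^{\frac{1}{2}}(N)^n$.

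For the right inverse and surjectivity, I would proceed as follows. Let $\widetilde\gamma_-^{-1}:H^{\frac{1}{2}}(\partial\Omega)^n\to\mathcal{H}^1(\Omega_-)^n$ be the bounded right inverse of the full trace operator supplied by \eqref{ext-trace}. Given $\boldsymbol\phi\in \widetilde H^{\frac{1}{2}}(N)^n\subset H^{\frac{1}{2}}(\partial\Omega)^n$, set $\mathbf w:=\widetilde\gamma_-^{-1}\boldsymbol\phi\in\mathcal{H}^1(\Omega_-)^n$. Then $\gamma_-\mathbf w=\boldsymbol\phi$, and since $\boldsymbol\phi=\mathbf 0$ on $\partial\Omega\setminus\overline N\supset D$, the characterization \eqref{H-D} places $\mathbf w$ in $\mathcal{H}^1_D(\Omega_-)^n$. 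Defining $\gamma_-^{-1}:=\widetilde\gamma_-^{-1}\big|_{\widetilde H^{\frac{1}{2}}(N)^n}$ therefore produces a bounded linear right inverse, and surjectivity is an immediate consequence.

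The argument is essentially bookkeeping, so no serious technical obstacle is expected; the only step deserving a reference is the characterization \eqref{H-D} of $\mathcal{H}^1_D(\Omega_-)^n$ as $\{\mathbf v\in\mathcal{H}^1(\Omega_-)^n:(\gamma_-\mathbf v)|_D=\mathbf 0\}$, which identifies the abstract closure of $C_D^\infty(\Omega_-)^n$ with a trace-vanishing subspace. This is precisely the content of the cited results \cite[Corollary 3.11]{B-M-M-M} and \cite[Definition 2.7]{H-J-K-R}, to which the proof would simply appeal.
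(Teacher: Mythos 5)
Your proposal is correct and follows essentially the same route as the paper, which proves the lemma in one line by noting that it follows from the existence of the bounded right inverse of the exterior trace operator in \eqref{ext-trace}, combined with the characterization \eqref{H-D} of ${\mathcal H}^1_D(\Omega_-)^n$; your argument simply spells out this bookkeeping (restricting $\gamma_-^{-1}$ to $\widetilde{H}^{\frac{1}{2}}(N)^n$ and checking via \eqref{H-D} and \eqref{D-N-spaces} that traces land in, and preimages come from, the right spaces).
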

On the other hand, in view of the surjectivity of the operator ${\rm{div}}:\mathring{\mathcal H}^1(\Omega_{-})^n\to L_2(\Omega _{-})$ (see Lemma \ref{Bog})
and of the inclusion $\mathring{\mathcal H}^1(\Omega _{-})^n\subset {\mathcal H}_D^{1}(\Omega _-)^n$, the operator
\begin{align}
\label{div-tilde-Stokes-D}
{\rm{div}}:{\mathcal H}_D^1(\Omega_{-})^n\to L_2(\Omega _{-})
\end{align}
is surjective and has a bounded right inverse $R_{\Omega _-}:L_2(\Omega _-)\to {\mathcal H}_D^{1}(\Omega _{-})^n$
by Lemma \ref{Bog},
i.e., there exists a constant $C_D=C_D(D,n)>0$ such that
\begin{align}
\label{Borchers-Shor-div-2}
{\rm{div}}(R_{\Omega _-}g)=g,\ \|R_{\Omega _-}g\|_{{\mathcal H}_D^{1}(\Omega _{-})^n}\leq C_D\|g\|_{L_2(\Omega _-)} \quad \forall \, g\in L_2(\Omega _- ).
\end{align}

Let us consider the spaces
\begin{align}
\label{interp-D}
{\mathcal X}_{D}(\Omega_-):={\mathcal H}^1_{D}(\Omega_-)^n\times L_2(\Omega_-),\ {\mathcal X}_{D}'(\Omega_-)
:=\left({\mathcal X}_{D}(\Omega _-)\right)'
=\left({\mathcal H}^1_{D}(\Omega_-)^n\right)'\times L_2(\Omega_-)
\end{align}

Next, we define a conormal derivative operator related to the Poisson problem with mixed conditions for the Stokes system with {$L_{\infty }$ coefficient tensor}.
To this end, consider the space
\begin{align}
\label{conormal-derivative-var-Brinkman-1-DN}
{\pmb{\mathcal H}}^1_{D}(\Omega _-,\boldsymbol{\mathcal L})
:=\Big\{\big({\bf u},\pi ,{\tilde{\bf f}}\big)&\in {\mathcal H}^1_{D}(\Omega _-)^n\times L_2(\Omega _-)\times \big({\mathcal H}^{1}_{D}({\Omega _-})^n\big)':
\boldsymbol{\mathcal L}({\bf u},\pi )={\tilde{\bf f}}|_{\Omega _-}
 \mbox{ in } \Omega _-\Big\},
\end{align}
where $\boldsymbol{\mathcal L}$ is the operator defined in \eqref{Stokes}.

For arbitrary $\big({\bf u},\pi \big)\in {\mathcal H}^1_{D}(\Omega _-)^n\times L_2(\Omega _-)$,
$\tilde{\bf f}\in \big({\mathcal H}^{1}_{D}({\Omega _-})^n\big)'$, and particularly for
$\big({\bf u},\pi ,{\tilde{\bf f}}\big)\in {\pmb{\mathcal H}}^1_{D}(\Omega _-,\boldsymbol{\mathcal L})$,
we can not directly employ Definition~\ref{conormal-derivative-var-Brinkman} of the formal or generalised conormal derivative
${\bf t}^-({\bf u},\pi,{\tilde{\bf f}})$ on $\partial\Omega_-$ since the last term in \eqref{conormal-derivative-var-Brinkman-3} is then not well defined for arbitrary $\boldsymbol\Phi\!\in\!H^{\frac{1}{2}}(\partial\Omega )^n$.
To overcome this, we could (non-uniquely) extend the distribution $\tilde{\bf f}$ from
$\big({\mathcal H}^{1}_{D}({\Omega _-})^n\big)'$ to
$\big({\mathcal H}^{1}({\Omega _-})^n\big)'=\widetilde{\mathcal H}^{-1}({\Omega _-})^n$, e.g. appropriately generalising the procedure used in the proof of Theorem 2.16 in \cite{Mikh}.
Then the conormal derivative on $D$ will depend on the chosen extension of $\tilde{\bf f}$.
However, the restriction of the conormal derivative on $N$, $r_{_N}{\bf t}^{-}({\bf u},\pi ,\tilde{\bf f})$ is independent of the possible extension of $\tilde{\bf f}$ and can be still directly defined as follows (cf Definition \ref{conormal-derivative-var-Brinkman}).

\begin{defn}
\label{conormal-derivative-var-BrinkmanN}
Let conditions {\eqref{Stokes-1} and \eqref{Stokes-sym}} hold.
For any $\big({\bf u},\pi ,{\tilde{\bf f}}\big)\in {\mathcal H}^1_{D}(\Omega _-)^n\times L_2(\Omega _-)\times \big({\mathcal H}^{1}_{D}({\Omega _-})^n\big)'$,
the  formal conormal derivatives restriction $r_{_N}{\bf t}^{-}({\bf u},\pi ,\tilde{\bf f})\in H^{-\frac{1}{2}}(N)^n$ is defined in the weak form by
\begin{align}
\label{conormal-derivative-var-Brinkman-3N}
-\left\langle r_{_N}{\bf t}^-({\bf u},\pi;{\tilde{\bf f}}),\widetilde{\boldsymbol\Phi}\right\rangle_{N}\!\!
:=\!\left\langle a_{ij}^{\alpha \beta }E_{j\beta }({\bf u}),
E_{i\alpha }(\gamma _-^{-1}\widetilde{\boldsymbol\Phi})\right\rangle _{\Omega_{-}}\!
-\!\left\langle {\pi},{\rm{div}}(\gamma^{-1}_{-}\widetilde{\boldsymbol\Phi})\right\rangle _{\Omega_{-}}\!
+\!\left\langle {\tilde{\bf f}},\gamma^{-1}_{-}\widetilde{\boldsymbol\Phi}\right\rangle _{{\Omega_{-}}},
\end{align}
for any $\widetilde{\boldsymbol\Phi}\!\in\widetilde H^{\frac{1}{2}}(N)^n$, where $\gamma^{-1}_{-}:H^{\frac{1}{2}}(\partial\Omega )^n\to {\mathcal H}^{1}({\Omega_{-}})^n$ is a $($non-unique$)$ bounded right inverse to the trace operator
$\gamma_{-}:{\mathcal H}^{1}({\Omega_{-}})^n\to H^{\frac{1}{2}}(\partial\Omega)^n$.
Moreover, if
$({\bf u}_{-},\pi_{-} ,{\tilde{\bf f}}_{-})\!\in
\!\boldsymbol{\mathcal H}_D^1({\Omega_{-}},{\boldsymbol{\mathcal L}})$,
then relations \eqref{conormal-derivative-var-Brinkman-3N} define the generalized conormal derivative restriction
$r_{_N}{\bf t}^{-}({\bf u}_{-},\pi_{-} ;{\tilde{\bf f}}_{-})\in H^{-\frac{1}{2}}(N)^n$.
\end{defn}

 By using similar arguments to those for Lemma \ref{lem-add1}, we obtain its counterpart for the mixed problem (cf. \cite[Definition 7.1]{B-M-M-M} for strongly elliptic higher-order systems in divergence form).
\begin{lem}
\label{conormal-derivative-generalized-mixed}

Let conditions {\eqref{Stokes-1} and \eqref{Stokes-sym}} hold.
\begin{itemize}
\item[$(i)$] The formal conormal derivative operator restriction
$r_{_N}{\bf t}^{-}:{\mathcal H}^1_D({\Omega_{-}})^n\times L_2({\Omega_{-}})
\times \big({\mathcal H}^{1}_{D}({\Omega _-})^n\big)'\to H^{-\frac{1}{2}}(N)^n$
is linear and bounded.
\item[$(ii)$] The generalised conormal derivative operator restriction
$r_{_N}{\bf t}^{-}:{\pmb{\mathcal H}}^1_{D}(\Omega _-,\boldsymbol{\mathcal L})\to H^{-\frac{1}{2}}(N)^n$, is linear and bounded, and definition \eqref{conormal-derivative-var-Brinkman-3N} is invariant with respect to the choice of the right inverse operator $\gamma _{-}^{-1}$. Moreover, the Green formula
\begin{align}
\label{conormal-derivative-generalized-2-mixed}
-\left\langle r_{_N}{\bf t}^{-}({\bf u},\pi ,{\tilde{\bf f}}),\gamma _{-}{\bf w}\right\rangle_{{N}}=\left\langle a_{ij}^{\alpha \beta }E_{j\beta }({\bf u}),E_{i\alpha }({\bf w})\right\rangle _{\Omega _-}-\left\langle \pi ,{\rm{div}}\, {\bf w}\right\rangle _{\Omega _-}+\left\langle \tilde{\bf f},{\bf w}\right\rangle _{\Omega _-}
\end{align}
holds for all $\big({\bf u},\pi ,{\tilde{\bf f}}\big)\in {\pmb{\mathcal H}}^1_{D}({\Omega _- },{\boldsymbol {\mathcal L}})$ and any ${\bf w}\in {\mathcal H}^{1}_{D}(\Omega _-)^n$.
\end{itemize}
\end{lem}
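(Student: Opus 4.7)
The plan is to adapt the proof strategy of Lemma~\ref{lem-add1} to the mixed boundary setting, exploiting the trace characterization \eqref{H-D} together with Lemma~\ref{mixt-trace-D}.

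For part (i), I would read off from \eqref{conormal-derivative-var-Brinkman-3N} that the right-hand side is linear in $({\bf u},\pi,\tilde{\bf f})$ and in $\widetilde{\boldsymbol\Phi}$ separately, and that each of its three terms defines a bounded functional. The first two terms are controlled by H\"older's inequality, the boundedness of the coefficients $a_{ij}^{\alpha\beta}\in L_\infty(\Omega_-)$, and the estimate
\begin{equation*}
\|\gamma_-^{-1}\widetilde{\boldsymbol\Phi}\|_{\mathcal H^1_D(\Omega_-)^n}
\leq C\|\widetilde{\boldsymbol\Phi}\|_{\widetilde H^{1/2}(N)^n}
\end{equation*}
granted by Lemma~\ref{mixt-trace-D}. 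The third term is bounded by $\|\tilde{\bf f}\|_{(\mathcal H^1_D(\Omega_-)^n)'}\|\gamma_-^{-1}\widetilde{\boldsymbol\Phi}\|_{\mathcal H^1_D(\Omega_-)^n}$, since the key point of Lemma~\ref{mixt-trace-D} is that $\gamma_-^{-1}$ can be chosen so that its image lies in $\mathcal H^1_D(\Omega_-)^n$, making the duality pairing well defined.

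For the invariance statement in (ii), I would take two bounded right inverses $\gamma_-^{-1}$ and $\widetilde\gamma_-^{-1}$ as in Lemma~\ref{mixt-trace-D}, and note that for any $\widetilde{\boldsymbol\Phi}\in \widetilde H^{1/2}(N)^n$ the difference ${\bf z}:=\gamma_-^{-1}\widetilde{\boldsymbol\Phi}-\widetilde\gamma_-^{-1}\widetilde{\boldsymbol\Phi}$ belongs to $\mathcal H^1_D(\Omega_-)^n$ and satisfies $\gamma_-{\bf z}={\bf 0}$ on all of $\partial\Omega$, i.e., ${\bf z}\in\mathring{\mathcal H}^1(\Omega_-)^n$. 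Thus the claim reduces to showing that the right-hand side of \eqref{conormal-derivative-var-Brinkman-3N} vanishes whenever $\gamma_-^{-1}\widetilde{\boldsymbol\Phi}$ is replaced by an arbitrary ${\bf z}\in\mathring{\mathcal H}^1(\Omega_-)^n$. By density of $\mathcal D(\Omega_-)^n$ in $\mathring{\mathcal H}^1(\Omega_-)^n$ together with the continuity established in (i), it suffices to test against ${\bf z}\in\mathcal D(\Omega_-)^n$; for such ${\bf z}$ the right-hand side becomes $-\langle \boldsymbol{\mathcal L}({\bf u},\pi),{\bf z}\rangle_{\Omega_-}+\langle \tilde{\bf f},{\bf z}\rangle_{\Omega_-}$, which vanishes because $\boldsymbol{\mathcal L}({\bf u},\pi)=\tilde{\bf f}|_{\Omega_-}$ in the distributional sense.

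For the Green formula \eqref{conormal-derivative-generalized-2-mixed}, given ${\bf w}\in\mathcal H^1_D(\Omega_-)^n$ I would first observe that \eqref{H-D} and the definition \eqref{D-N-spaces} of $\widetilde H^{1/2}(N)^n$ give $\gamma_-{\bf w}\in \widetilde H^{1/2}(N)^n$, so we may substitute $\widetilde{\boldsymbol\Phi}=\gamma_-{\bf w}$ in \eqref{conormal-derivative-var-Brinkman-3N}. Splitting
\begin{equation*}
{\bf w}=\gamma_-^{-1}(\gamma_-{\bf w})+\bigl({\bf w}-\gamma_-^{-1}(\gamma_-{\bf w})\bigr),
\end{equation*}
the second summand has zero trace on $\partial\Omega$, hence lies in $\mathring{\mathcal H}^1(\Omega_-)^n$, and its contribution to each of the three bilinear expressions in the right-hand side of \eqref{conormal-derivative-generalized-2-mixed} cancels exactly as in the invariance argument above, by testing the equation $\boldsymbol{\mathcal L}({\bf u},\pi)=\tilde{\bf f}|_{\Omega_-}$. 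What remains is precisely \eqref{conormal-derivative-var-Brinkman-3N} with $\widetilde{\boldsymbol\Phi}=\gamma_-{\bf w}$, which is \eqref{conormal-derivative-generalized-2-mixed}. The main subtlety I anticipate is ensuring that $\gamma_-{\bf w}$ is correctly interpreted as an element of $\widetilde H^{1/2}(N)^n$ (so that its extension by zero to $\partial\Omega$ retains the requisite regularity), and that the chosen right inverse sends this class back into $\mathcal H^1_D(\Omega_-)^n$; both are covered by Lemma~\ref{mixt-trace-D} and the identification \eqref{H-D}.
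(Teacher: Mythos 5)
Your proposal is correct and follows essentially the same route as the paper, which proves this lemma simply by invoking "similar arguments to those for Lemma \ref{lem-add1}": boundedness via H\"older estimates and the trace right inverse, invariance and the Green formula via the decomposition ${\bf w}=\gamma_-^{-1}(\gamma_-{\bf w})+\bigl({\bf w}-\gamma_-^{-1}(\gamma_-{\bf w})\bigr)$ with the zero-trace part handled by density of ${\mathcal D}(\Omega_-)^n$ and the equation $\boldsymbol{\mathcal L}({\bf u},\pi)=\tilde{\bf f}|_{\Omega_-}$. Your explicit attention to why $\gamma_-{\bf w}\in\widetilde H^{1/2}(N)^n$ and why the pairing with $\tilde{\bf f}\in\bigl({\mathcal H}^1_D(\Omega_-)^n\bigr)'$ is well defined (via \eqref{H-D} and Lemma \ref{mixt-trace-D}) is exactly the adaptation the mixed setting requires.
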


Then the following well-posedness result holds for the Poisson problem of mixed type for the anisotropic Stokes system (see also \cite{Ch-Mi-Na-3} for the exterior mixed problem for a strongly elliptic operator).
\begin{thm}
\label{mixed-B}
Let conditions \eqref{Stokes-1}-\eqref{mu} hold in $\Omega_-$ and let $\boldsymbol{\mathcal L}$ denote the Stokes operator defined in \eqref{Stokes-new}.
Then
for all $\tilde{\bf f}\in \left({\mathcal H}^{1}_{D}(\Omega _-)^n\right)'$,
$g\in L_2(\Omega_-)$
and $\boldsymbol\psi\in H^{-\frac{1}{2}}(N)^n$, the problem
\begin{align}
\label{mixed-B-manifold}
\left\{\begin{array}{llll}
{\boldsymbol{\mathcal L}({\bf u},\pi )=\tilde{\bf f}}|_{\Omega _-},\ \
{{\rm{div}}}\, {\bf u}= g & \mbox{ in } \Omega _-\,,\\
r_{_D}\gamma _-{\bf u}={\bf 0} & \mbox{ on } D,\\
r_{_N}{\bf t}^{-}({\bf u},\pi ;\tilde{\bf f})=\boldsymbol\psi & \mbox{ on } N
\end{array}
\right.
\end{align}
has a unique solution $({\bf u},\pi )\!\in \!{\mathcal H}^1_{D}(\Omega _-)^n\!\times \! L_2(\Omega _-)$, and there exists a constant $C_{DN}\!=\!C_{DN}(\partial \Omega ,c_{\mathbb A},n)\!>\!0$ such that
$\|{\bf u}\|_{{\mathcal H}^1(\Omega _-)^n}+\|\pi \|_{{L_2(\Omega _-)}}\leq
C_{DN}\left(\|\tilde{\bf f}\|_{({\mathcal H}^1_{D}(\Omega _-))'}
+\|g\|_{L_{2}(\Omega_-)}
+\|\boldsymbol\psi  \|_{H^{-\frac{1}{2}}(N)^n}\right).$
\end{thm}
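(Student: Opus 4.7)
The strategy is to recast problem \eqref{mixed-B-manifold} as an abstract mixed variational problem in the framework $X = {\mathcal H}^1_D(\Omega_-)^n$, ${\mathcal M} = L_2(\Omega_-)$, $V = {\mathcal H}^1_{D;\rm{div}}(\Omega_-)^n$, and then invoke the Babu\v{s}ka--Brezzi Theorem~\ref{B-B}, following the blueprint of Theorems~\ref{T-p} and \ref{T-p-complete}. Using the Green identity \eqref{conormal-derivative-generalized-2-mixed} of Lemma~\ref{conormal-derivative-generalized-mixed}, together with the density of ${\mathcal D}(\Omega_-)^n$ in $\mathring{\mathcal H}^1(\Omega_-)^n\subset{\mathcal H}^1_D(\Omega_-)^n$, one shows that \eqref{mixed-B-manifold} is equivalent to: find $({\bf u},\pi)\in{\mathcal H}^1_D(\Omega_-)^n\times L_2(\Omega_-)$ such that
\begin{equation*}
\left\{\begin{array}{ll}
a_{{\mathbb A};\Omega_-}({\bf u},{\bf w})+b_{\Omega_-}({\bf w},\pi)
=-\langle\tilde{\bf f},{\bf w}\rangle_{\Omega_-}
+\langle\boldsymbol\psi,r_{_N}\gamma_-{\bf w}\rangle_{N}
&\forall\,{\bf w}\in{\mathcal H}^1_D(\Omega_-)^n,\\[2pt]
b_{\Omega_-}({\bf u},q)=-\langle g,q\rangle_{\Omega_-}&\forall\,q\in L_2(\Omega_-),
\end{array}\right.
\end{equation*}
where the right-hand side of the first line defines a bounded functional on ${\mathcal H}^1_D(\Omega_-)^n$ thanks to the trace continuity from Lemma~\ref{mixt-trace-D} and the duality between $H^{-\frac{1}{2}}(N)^n$ and $\widetilde H^{\frac{1}{2}}(N)^n$.

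Boundedness of $a_{{\mathbb A};\Omega_-}$ and $b_{\Omega_-}$ on the relevant spaces is verbatim the argument of Theorem~\ref{T-p}(i), using \eqref{Stokes-1} and H\"older. The coercivity of $a_{{\mathbb A};\Omega_-}$ on the kernel $V={\mathcal H}^1_{D;\rm{div}}(\Omega_-)^n$ combines three ingredients: (i) for ${\bf w}\in V$ one has $\sum_iE_{ii}({\bf w})={\rm div}\,{\bf w}=0$, so the symmetric ellipticity \eqref{mu} applies and yields $a_{{\mathbb A};\Omega_-}({\bf w},{\bf w})\geq c_{{\mathbb A}}^{-1}\|{\mathbb E}({\bf w})\|_{L_2(\Omega_-)^{n\times n}}^2$; (ii) the Korn inequality $\|\nabla{\bf w}\|_{L_2(\Omega_-)^{n\times n}}^2\leq 2\|{\mathbb E}({\bf w})\|_{L_2(\Omega_-)^{n\times n}}^2$, as used already in \eqref{estimate-Korn-2a}; and (iii) the equivalence of the seminorm $\|\nabla\cdot\|_{L_2(\Omega_-)^{n\times n}}$ with the full norm $\|\cdot\|_{{\mathcal H}^1(\Omega_-)^n}$ on ${\mathcal H}^1_D(\Omega_-)^n$, i.e., the Hardy--Poincar\'e-type inequality $\|\rho^{-1}{\bf w}\|_{L_2(\Omega_-)^n}\leq c\|\nabla{\bf w}\|_{L_2(\Omega_-)^{n\times n}}$. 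The latter holds because $D\subset\partial\Omega$ has positive $(n-1)$-Hausdorff measure, so the trace of ${\bf w}$ vanishes on a set of positive measure, and the analogue of the argument used for $\mathring{\mathcal H}^1(\Omega_-)^n$ in \cite{AGG1997,Al-Am-1} extends to ${\mathcal H}^1_D(\Omega_-)^n$.

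The Babu\v{s}ka--Brezzi condition for $b_{\Omega_-}$ is verified using the surjectivity of ${\rm div}:{\mathcal H}^1_D(\Omega_-)^n\to L_2(\Omega_-)$ with a bounded right inverse $R_{\Omega_-}$ recorded in \eqref{div-tilde-Stokes-D}--\eqref{Borchers-Shor-div-2}: for any $q\in L_2(\Omega_-)$ the element ${\bf v}=R_{\Omega_-}q$ satisfies $b_{\Omega_-}({\bf v},q)=\|q\|_{L_2(\Omega_-)}^2\geq C_D^{-1}\|{\bf v}\|_{{\mathcal H}^1(\Omega_-)^n}\|q\|_{L_2(\Omega_-)}$, proving \eqref{inf-sup-sm}. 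Theorem~\ref{B-B} then delivers the unique solution $({\bf u},\pi)\in{\mathcal H}^1_D(\Omega_-)^n\times L_2(\Omega_-)$, and the continuity estimates \eqref{mixed-Cu}--\eqref{mixed-Cp} produce the asserted inequality with $C_{DN}$ depending only on $\partial\Omega$, $c_{{\mathbb A}}$ and $n$.

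The main technical obstacle is step (iii): establishing the seminorm--norm equivalence on the mixed-trace space ${\mathcal H}^1_D(\Omega_-)^n$. Unlike the pure Dirichlet case $\mathring{\mathcal H}^1(\Omega_-)^n$ (where it is classical, cf.\ \cite[Theorem 1.2]{AGG1997}), here the vanishing trace lives only on the subset $D$, and the argument has to exploit both that $D$ has positive surface measure and the decay behaviour encoded by the weight $\rho^{-1}$ at infinity. Once this weighted Poincar\'e-type inequality is in hand, the remainder of the proof is a routine adaptation of the arguments for the homogeneous Dirichlet case in Theorem~\ref{T-p}.
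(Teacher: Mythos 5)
Your overall strategy is the same as the paper's: recast \eqref{mixed-B-manifold} as a mixed variational problem on ${\mathcal H}^1_D(\Omega_-)^n\times L_2(\Omega_-)$ via the Green formula \eqref{conormal-derivative-generalized-2-mixed}, verify boundedness, coercivity on the divergence-free kernel and the inf-sup condition for $b_{\Omega_-}$ through the right inverse in \eqref{div-tilde-Stokes-D}--\eqref{Borchers-Shor-div-2}, and conclude with Theorem \ref{B-B}. (A small slip: with the sign convention of \eqref{conormal-derivative-generalized-2-mixed}, the boundary datum enters the functional as $-\langle\boldsymbol\psi,\gamma_-{\bf w}\rangle_N-\langle\tilde{\bf f},{\bf w}\rangle_{\Omega_-}$, cf. \eqref{l-u-0-mixed}; your $+\langle\boldsymbol\psi,\cdot\rangle_N$ corresponds to $r_{_N}{\bf t}^-=-\boldsymbol\psi$. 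This does not affect well-posedness, only consistency with the stated problem.)

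The genuine weak point is your justification of coercivity on ${\mathcal H}^1_{D;{\rm div}}(\Omega_-)^n$. Your step (ii) invokes the first Korn inequality $\|\nabla{\bf w}\|^2_{L_2}\leq 2\|{\mathbb E}({\bf w})\|^2_{L_2}$ as in \eqref{estimate-Korn-2a}, but that inequality with constant $2$ requires the trace to vanish on the whole boundary (or functions on all of ${\mathbb R}^n$); elements of ${\mathcal H}^1_{D}(\Omega_-)^n$ vanish only on $D$, so this step is not valid as written. Your step (iii) then posits a weighted Poincar\'e inequality on ${\mathcal H}^1_D(\Omega_-)^n$ whose proof you defer to an unproved ``extension'' of the arguments of \cite{AGG1997,Al-Am-1} exploiting that $D$ has positive surface measure; this is precisely the missing ingredient, and as presented it is a gap. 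The paper bypasses both issues at once: by Theorem \ref{Korn-exterior} (Appendix C, proved via Lemma \ref{Tartar-lemma} and a compactness argument), $\|{\mathbb E}(\cdot)\|_{L_2(\Omega_-)^{n\times n}}$ is already an equivalent norm on the \emph{whole} weighted space ${\mathcal H}^1(\Omega_-)^n$ for $n\geq 3$ — no vanishing-trace condition and no positive measure of $D$ is needed, since rigid motions are excluded by the decay encoded in the weight $\rho^{-1}$ at infinity. This is exactly the first inequality in \eqref{estimate-Korn-2}, and together with \eqref{mu} it gives coercivity on ${\mathcal H}^1_{D;{\rm div}}(\Omega_-)^n$ directly. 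So what you flagged as the ``main technical obstacle'' is in fact unnecessary: replacing your steps (ii)--(iii) by a citation of Theorem \ref{Korn-exterior} closes the gap and makes your argument coincide with the paper's proof.
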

\begin{proof}
First, we show that the mixed problem \eqref{mixed-B-manifold} has the following equivalent variational formulation.

{\it Find $({\bf u},\pi )\in {\mathcal H}^1_{D}(\Omega _-)^n\times L_2(\Omega _-)$ such that
\begin{equation}
\label{variational-mixed-2a}
\left\{\begin{array}{lll}
a_{{\mathbb A};\Omega _-}({\bf u},{\bf w})+b_{\Omega _-}({\bf w},\pi )=
\mathbf F({\bf w}) & \forall \, {\bf w}\in {\mathcal H}^{1}_{D}(\Omega _-)^n,\\
b_{_{\Omega _-}}({\bf u},q)=-\langle g,q\rangle_{\Omega_-} & \forall \, q\in L_{2}(\Omega _-),
\end{array}\right.
\end{equation}
where $a_{{\mathbb A};\Omega _-}:{\mathcal H}^1_{D}(\Omega _-)^n\times {\mathcal H}^{1}_{D}(\Omega _-)\to {\mathbb R}$ and
$b_{\Omega _-}:{\mathcal H}^1_{D}(\Omega _-)^n\times L_2(\Omega _-)\to {\mathbb R}$ are the bilinear forms given by \eqref{a-Omega-p} and \eqref{b-Omega-p}, respectively, and $\mathbf F:{\mathcal H}^{1}_{D}(\Omega _-)^n\to {\mathbb R}$ is the linear form defined by
}
\begin{align}
\label{l-u-0-mixed}
\mathbf F({\bf w}):=-\left\langle \boldsymbol\psi,\gamma _-{\bf w}\right\rangle _N -\big\langle \tilde{\bf f},{\bf w}\big\rangle _{\Omega _-} \quad \forall \, {\bf w}\in {\mathcal H}^{1}_{D}(\Omega _-)^n.
\end{align}

Indeed, if $({\bf u},\pi )\in {\mathcal H}^1_{D}(\Omega _-)^n\times L_p(\Omega _-)$ satisfies the mixed boundary value problem \eqref{mixed-B-manifold}, then the Green formula \eqref{conormal-derivative-generalized-2-mixed} holds and implies the first equation in \eqref{variational-mixed-2a}.
The second equation in \eqref{variational-mixed-2a} follows from the second equation in \eqref{mixed-B-manifold} and a duality argument.

Conversely, let us assume that $({\bf u},\pi )\in {\mathcal H}^1_{D}(\Omega _-)^n\times L_2(\Omega _-)$ satisfies the mixed variational formulation \eqref{variational-mixed-2a}.
Then the first equation in \eqref{mixed-B-manifold}, in the sense of distributions, follows from
the first equation in \eqref{variational-mixed-2a}
written for any ${\bf w}\in {\mathcal D}(\Omega _-)^n\subset {\mathcal D}(\Omega _-)^n$.
The divergence equation in \eqref{mixed-B-manifold} follows from the second variational equation in \eqref{variational-mixed-2a}. Therefore, the pair $({\bf u},\pi )\in {\mathcal H}^1_{D}(\Omega _-)^n\times L_2(\Omega _-)$ satisfies the
Stokes system in \eqref{mixed-B-manifold} and the boundary condition $\left(\gamma _-{\bf u}\right)|_{D}={\bf 0}$, due to the definition of the space ${\mathcal H}^1_{D}(\Omega _-)^n$. Thus, $({\bf u},\pi )$ satisfies the Green formula \eqref{conormal-derivative-generalized-2-mixed}, where
the right hand side is equal to $-\left\langle \boldsymbol\psi,\gamma _-{\bf w}\right\rangle _N$, as follows from the first equation in \eqref{variational-mixed-2a}. Therefore, we obtain the equation
\begin{align}
\label{conormal-derivative-particular-3}
\left\langle r_{_N}{\bf t}^{-}({\bf u},\pi ,{\tilde{\bf f}}),\gamma _{-}{\bf w}\right\rangle_{{N}}=\left\langle \boldsymbol\psi,\gamma _-{\bf w}\right\rangle _N \quad \forall \, {\bf w}\in {\mathcal H}^{1}_{D}(\Omega _-)^n.
\end{align}
which, in view of the surjectivity of the trace map $\gamma _-:{\mathcal H}^{1}_{D}(\Omega _-)^n\to \widetilde{H}^{\frac{1}{2}}(N)^n$ (see Lemma \ref{mixt-trace-D}), implies
$r_{_N}{\bf t}^{-}({\bf u},\pi ,{\tilde{\bf f}})=\boldsymbol\psi$ on $N$.
Consequently, the mixed boundary value problem \eqref{mixed-B-manifold} has indeed the equivalent mixed variational formulation \eqref{variational-mixed-2a}, as asserted.

Next, we define the following operator related to the mixed variational formulation \eqref{variational-mixed-2a}
\begin{align}
\label{T-mixed}
{\mathcal T}_{D;\Omega _-}:{\mathcal X}_{D}(\Omega _-)\to {\mathcal X}_{D}'(\Omega _-),\ {\mathcal T}_{D;\Omega _{-}}({\bf u},\pi )=\left({\mathcal T}_{1;D;\Omega _-}({\bf u},\pi ),{\mathcal T}_{2;D;\Omega _-}({\bf u},\pi )\right)\in {\mathcal X}'(\Omega _-)\,,
\end{align}
by setting
\begin{equation}
\label{variational-mixed-D-N}
\left\{\begin{array}{ll}
\langle {\mathcal T}_{1;D;\Omega _-}({\bf u},\pi ),{\bf v}\rangle _{\Omega }
=a_{{{\mathbb A};\Omega _-}}({\bf u},{\bf v})+b_{\Omega _-}({\bf v},\pi ),\\
\langle {\mathcal T}_{2;D;\Omega _-}({\bf u},\pi ),q\rangle _{\Omega _-}=b_{\Omega _{-}}({\bf u},q),
\end{array}\right.
\end{equation}
for every $({\bf u},\pi ),({\bf v},q)\in {\mathcal X}_{D}(\Omega _-)$.
The H\"{o}lder inequality and continuity of the bilinear forms in \eqref{variational-mixed-D-N} imply that ${\mathcal T}_{D;\Omega _-}:{\mathcal X}_{D}(\Omega _-)\to {\mathcal X}_{D}'(\Omega _-)$ is a continuous linear operator.

By using condition \eqref{Stokes-1} and again the H\"{o}lder inequality, we deduce that
\begin{align}
\label{a-1-v-d-Stokes-mixed}
|a_{{{\mathbb A};\Omega _{-}}}({\bf v},{\bf w})|
\leq {\|{\mathbb A}\|_{L_{\infty }(\Omega _{-})}}\|{\bf v}\|_{{\mathcal H}^1(\Omega _{-})^n}\|{\bf w}\|_{{\mathcal H}^1(\Omega _{-})^n} \quad \forall\, {\bf v},\, {\bf w}\in {\mathcal H}_D^1(\Omega _{-})^n.
\end{align}
On the other hand, there exists a constant $c=c(\Omega _{-},n)>0$ such that the following inequalities hold
\begin{align}
\label{estimate-Korn-2}
\!\!\!{\|{\bf u}\|_{{\mathcal H}^1(\Omega _{-})^n}^2\!\leq c\|{\mathbb E}({\bf u})\|_{L_2(\Omega_-)^{n\times n}}^2}
\!
&\leq cc_{\mathbb A}\left\langle a_{ij}^{\alpha \beta }E_{j\beta }({\bf u}),E_{i\alpha }({\bf u})\right\rangle _{\Omega _{-}}\nonumber
\\
&=\!cc_{\mathbb A}a_{{{\mathbb A};\Omega _{-}}}({\bf u},{\bf u})\quad \forall \, {\bf u}\in {\mathcal H}^1_{D;\rm{div}}(\Omega_{-})^n,
\end{align}
where the first inequality follows from Theorem \ref{Korn-exterior}, while the second inequality is a consequence of assumption \eqref{mu}.
Inequalities \eqref{a-1-v-d-Stokes-mixed} and \eqref{estimate-Korn-2} show that
$a_{{{\mathbb A};\Omega _{-}}}(\cdot ,\cdot ):{\mathcal H}_D^1(\Omega _{-})^n
\times {\mathcal H}_D^1(\Omega _{-})^n\to {\mathbb R}$
is bounded, while
$a_{{{\mathbb A};\Omega _{-}}}(\cdot ,\cdot ):{\mathcal H}^1_{D;\rm{div}}(\Omega _{-})^n
\times {\mathcal H}^1_{D;\rm{div}}(\Omega _{-})^n\to {\mathbb R}$
is coercive.
The bilinear form $b_{_{\Omega _{-}}}(\cdot ,\cdot):{\mathcal H}_D^1(\Omega _{-})^n\times L_2(\Omega _{-})\to {\mathbb R}$ given by \eqref{b-Omega-p} is bounded, as well.

In addition, since the divergence operator in \eqref{div-tilde-Stokes-D}
is bounded and surjective and there is a bounded right inverse $R_{\Omega _-}$ satisfying \eqref{Borchers-Shor-div-2}  with a constant $C_D>0$, we obtain
 that for any $q\in L_2(\Omega _{-})$ the equation $-{\rm{div}}\, {\bf v}=q$ has a solution ${\bf v}\in {\mathcal H}_D^1(\Omega _{-})^n$, which satisfies the inequality
$\|{\bf v}\|_{{\mathcal H}_D^1(\Omega _{-})^n}\leq C_D\|q\|_{L_2(\Omega _{-})}$. Therefore, we obtain the inequality
\begin{align*}
b_{_{\Omega _{-}}}({\bf v},q)=-\left\langle {\rm{div}}\, {\bf v},q\right\rangle _{\Omega _{-}}=\langle q,q\rangle _{\Omega _{-}}=\|q\|_{L_2(\Omega _{-})}^2\geq C_D^{-1}\|{\bf v}\|_{{\mathcal H}_D^1(\Omega _{-})^n}\|q\|_{L_2(\Omega _{-})},
\end{align*}
which shows that the bounded bilinear form $b_{_{\Omega _{-}}}(\cdot ,\cdot):{\mathcal H}_D^1(\Omega _{-})^n\times L_2(\Omega _{-})\to {\mathbb R}$ satisfies the condition
\begin{align}
\label{inf-sup-Omega-Stokes-mixed}
\inf _{q\in L_2(\Omega _{-})\setminus \{0\}}\sup _{{\bf v}\in {\mathcal H}_D^1(\Omega _{-})^n\setminus \{\bf 0\}}\frac{b_{_{\Omega _{-}}}({\bf v},q)}{\|{\bf v}\|_{{\mathcal H}_D^1(\Omega _{-})^n}\|q\|_{L_2(\Omega _{-})}}\geq C_D^{-1}.
\end{align}
Then Theorem \ref{B-B} with $X={\mathcal H}_D^1(\Omega _{-})^n$,
$V={\mathcal H}_{D;{\rm{div}}}^1(\Omega_{-})^n$
and $\mathcal M=L_2(\Omega _{-})$ implies that for any $(\mathbf F ,\eta )\in \left({\mathcal H}_D^{1}(\Omega _-)^n\right)'\times L_2(\Omega _-)$, there exists a unique solution $({\bf u},\pi )\in {\mathcal H}_D^1(\Omega _{-})^n\times L_2(\Omega _{-})$ of the variational problem
\begin{equation}
\label{variational-mixed-2a-DN}
\left\{\begin{array}{lll}
a_{{\mathbb A};\Omega _-}({\bf u},{\bf w})+b_{\Omega _-}({\bf w},\pi )=
\mathbf F ({\bf w}) & \forall \ {\bf w}\in {\mathcal H}_D^{1}(\Omega _-)^n,\\
b_{_{\Omega _-}}({\bf u},q)=\eta (q) & \forall \ q\in L_2(\Omega _-),
\end{array}\right.
\end{equation}
which depends continuously on the given data $\mathbf F $ and $\eta $. In view of definition \eqref{variational-mixed-D-N} of the operator ${\mathcal T}_{D;\Omega _{-}}:{\mathcal X}_{D;2}(\Omega _-)\to {\mathcal X}_{D;2}'(\Omega _-)$, we deduce that $({\bf u},\pi )\in {\mathcal X}_{D;2}(\Omega _-)$ is the unique solution of the equation ${\mathcal T}_{D:\Omega _{-}}({\bf u},\pi )=(\mathbf F ,\eta )$, and accordingly that this operator is an isomorphism.

Consequently, for all given data $\tilde{\bf f}\in \left({\mathcal H}^{1}_{D}(\Omega _-)^n\right)'$, $g\in L_2(\Omega_-)$ and $\boldsymbol\psi\in H^{-\frac{1}{2}}(N)^n$, the variational problem \eqref{variational-mixed-2a} has a unique solution $({\bf u},\pi )\in {\mathcal H}^1_{D}(\Omega _-)^n\times L_2(\Omega _-)$. The equivalence of the Poisson problem of mixed type \eqref{mixed-B-manifold} with the variational problem \eqref{variational-mixed-2a} shows that the pair $({\bf u},\pi )$ is also the unique solution in the space ${\mathcal H}^1_{D}(\Omega _-)^n\times L_2(\Omega _-)$ of the mixed problem \eqref{mixed-B-manifold}, and depends continuously on $\tilde{\bf f}$, $g$ and $\boldsymbol\psi$.
\end{proof}

Theorem \ref{mixed-B} implies the following well-posedness result for the inhomogeneous Dirichlet condition (cf. \cite[Theorem 8.5 ]{Ch-Mi-Na-3} for the exterior non-homogeneous mixed problem for a strongly elliptic operator).
\begin{thm}
\label{mixed-B-1}
Let conditions \eqref{Stokes-1}-\eqref{mu} hold in $\Omega_-$.
Then for all
$\tilde{\bf f}\in \left({\mathcal H}^{1}_{D}(\Omega _-)^n\right)'$, $g\in L_2(\Omega_-)$, $\boldsymbol\varphi\in H^{\frac{1}{2}}(D)^n$ and $\boldsymbol\psi\in H^{-\frac{1}{2}}(N)^n$, the problem
\begin{align}
\label{mixed-B-inhomogeneous}
\left\{\begin{array}{llll}
{\boldsymbol{\mathcal L}({\bf u},\pi )=\tilde{\bf f}}|_{\Omega _-},\ \
{{\rm{div}}}\, {\bf u}=g & \mbox{ in } \Omega _-\,,\\
r_{_D}\gamma _-{\bf u}=\boldsymbol\varphi & \mbox{ on } D,\\
r_{_N}{\bf t}^{-}({\bf u},\pi ;\tilde{\bf f})=\boldsymbol\psi & \mbox{ on } N
\end{array}
\right.
\end{align}
has a unique solution $({\bf u},\pi )\!\in \!{\mathcal H}^1(\Omega _-)^n\!\times \! L_2(\Omega _-)$, and there exists a constant ${\mathcal C}_{DN}\!=\!{\mathcal C}_{DN}(\partial \Omega ,c_{\mathbb A},n)\!>\!0$ such that
\begin{align*}
\|{\bf u}\|_{{\mathcal H}^1(\Omega _-)^n}+\|\pi \|_{{L_2(\Omega _-)}}\leq
{\mathcal C}_{DN}\left(\|\tilde{\bf f}\|_{({\mathcal H}^1_{D}(\Omega _-))'}
+\|g\|_{L_{2}(\Omega_-)}
+\|\boldsymbol\varphi\|_{ H^{\frac{1}{2}}(D)^n}
+\|\boldsymbol\psi\|_{H^{-\frac{1}{2}}(N)^n}\right).
\end{align*}
\end{thm}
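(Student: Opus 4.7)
The plan is to reduce the inhomogeneous mixed problem \eqref{mixed-B-inhomogeneous} to its counterpart with homogeneous Dirichlet datum on $D$, which is already covered by Theorem~\ref{mixed-B}. The argument mirrors the one used for the pure Dirichlet problem in Theorem~\ref{T-p-complete}, with the Neumann condition requiring additional care.

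First, I would lift the Dirichlet datum $\boldsymbol\varphi\in H^{\frac{1}{2}}(D)^n$ to a function on all of $\partial\Omega$. By the definition of $H^{\frac{1}{2}}(D)^n$ in \eqref{D-N-spaces}, there exists $\widetilde{\boldsymbol\varphi}\in H^{\frac{1}{2}}(\partial\Omega)^n$ such that $r_{_D}\widetilde{\boldsymbol\varphi}=\boldsymbol\varphi$ and $\|\widetilde{\boldsymbol\varphi}\|_{H^{\frac{1}{2}}(\partial\Omega)^n}\leq C\|\boldsymbol\varphi\|_{H^{\frac{1}{2}}(D)^n}$. Using the bounded right inverse $\gamma_-^{-1}:H^{\frac{1}{2}}(\partial\Omega)^n\to\mathcal{H}^1(\Omega_-)^n$ from Theorem~\ref{trace-operator1}, I set ${\bf u}^0:=\gamma_-^{-1}\widetilde{\boldsymbol\varphi}\in\mathcal{H}^1(\Omega_-)^n$, so that $r_{_D}\gamma_-{\bf u}^0=\boldsymbol\varphi$ on $D$.

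Next, I would introduce $\mathring{\bf u}:={\bf u}-{\bf u}^0$ and rewrite problem \eqref{mixed-B-inhomogeneous} in terms of the pair $(\mathring{\bf u},\pi)$. The Dirichlet condition reduces to $r_{_D}\gamma_-\mathring{\bf u}={\bf 0}$, so $\mathring{\bf u}\in\mathcal{H}_D^1(\Omega_-)^n$. Setting $g^0:=g-{\rm div}\,{\bf u}^0\in L_2(\Omega_-)$, the divergence equation becomes ${\rm div}\,\mathring{\bf u}=g^0$. For the Stokes equation, observe that $\boldsymbol{\mathcal L}({\bf u}^0,0)=\partial_\alpha(A^{\alpha\beta}\partial_\beta{\bf u}^0)\in\mathcal{H}^{-1}(\Omega_-)^n$ defines, by restriction, a bounded linear functional on $\mathcal{H}_D^1(\Omega_-)^n$; let $\widetilde{\bf F}^0\in(\mathcal{H}_D^1(\Omega_-)^n)'$ be given by
\begin{equation*}
\big\langle\widetilde{\bf F}^0,{\bf w}\big\rangle_{\Omega_-}
:=\big\langle\tilde{\bf f},{\bf w}\big\rangle_{\Omega_-}
-\big\langle a_{ij}^{\alpha\beta}E_{j\beta}({\bf u}^0),E_{i\alpha}({\bf w})\big\rangle_{\Omega_-}
\quad\forall\,{\bf w}\in\mathcal{H}_D^1(\Omega_-)^n,
\end{equation*}
so that $\boldsymbol{\mathcal L}(\mathring{\bf u},\pi)=\widetilde{\bf F}^0|_{\Omega_-}$ in the sense of distributions.

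The main technical point is to translate the Neumann condition into the data for $\mathring{\bf u}$. Using Definition~\ref{conormal-derivative-var-BrinkmanN} and the linearity of the formal conormal derivative restriction in all three arguments (item $(i)$ of Lemma~\ref{conormal-derivative-generalized-mixed}), I would verify the identity
\begin{equation*}
r_{_N}{\bf t}^-(\mathring{\bf u},\pi;\widetilde{\bf F}^0)
=r_{_N}{\bf t}^-({\bf u},\pi;\tilde{\bf f})-r_{_N}{\bf t}^-({\bf u}^0,0;\widetilde{\bf F}^0_{u^0})
\end{equation*}
in $H^{-\frac{1}{2}}(N)^n$, where $\widetilde{\bf F}^0_{u^0}\in(\mathcal{H}_D^1(\Omega_-)^n)'$ is the functional associated with $\boldsymbol{\mathcal L}({\bf u}^0,0)$. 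The key observation is that the particular choice of $\gamma_-^{-1}$ in the definition cancels once we pass to the generalized conormal derivative on $N$. Hence, setting $\boldsymbol\psi^0:=\boldsymbol\psi-r_{_N}{\bf t}^-({\bf u}^0,0;\widetilde{\bf F}^0_{u^0})\in H^{-\frac{1}{2}}(N)^n$, the pair $(\mathring{\bf u},\pi)$ solves the mixed problem \eqref{mixed-B-manifold} with data $(\widetilde{\bf F}^0,g^0,\boldsymbol\psi^0)$.

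Theorem~\ref{mixed-B} then yields existence of a unique solution $(\mathring{\bf u},\pi)\in\mathcal{H}_D^1(\Omega_-)^n\times L_2(\Omega_-)$ depending continuously on $(\widetilde{\bf F}^0,g^0,\boldsymbol\psi^0)$, and hence on $(\tilde{\bf f},g,\boldsymbol\varphi,\boldsymbol\psi)$ via the continuity of the lifting ${\bf u}^0=\gamma_-^{-1}\widetilde{\boldsymbol\varphi}$ and the boundedness of the conormal derivative operator (Lemma~\ref{conormal-derivative-generalized-mixed}$(i)$). Consequently, $({\bf u},\pi)=(\mathring{\bf u}+{\bf u}^0,\pi)\in\mathcal{H}^1(\Omega_-)^n\times L_2(\Omega_-)$ solves \eqref{mixed-B-inhomogeneous} and satisfies the stated norm estimate. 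Uniqueness for the inhomogeneous problem follows at once from uniqueness for the homogeneous one: if $({\bf u}_1,\pi_1)$ and $({\bf u}_2,\pi_2)$ are two solutions of \eqref{mixed-B-inhomogeneous}, their difference solves the homogeneous problem of Theorem~\ref{mixed-B} and is therefore zero.

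I expect the main obstacle to be the careful verification of the Neumann transmission formula for the formal conormal derivative when ${\bf u}^0$ does not satisfy any PDE and $\widetilde{\bf F}^0_{u^0}$ is only an element of $(\mathcal{H}_D^1(\Omega_-)^n)'$; establishing the independence of $r_{_N}{\bf t}^-$ of the chosen right inverse $\gamma_-^{-1}$ for such triples requires a density argument analogous to the one in the proof of Lemma~\ref{lem-add1}, applied to the subspace of test functions vanishing on $D$.
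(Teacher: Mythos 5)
Your overall strategy coincides with the paper's: lift the Dirichlet datum, pass to the homogeneous-Dirichlet mixed problem covered by Theorem~\ref{mixed-B}, and obtain uniqueness from the homogeneous case. However, your implementation via a bare trace lifting ${\bf u}^0=\gamma_-^{-1}\widetilde{\boldsymbol\varphi}$ breaks down at precisely the step you flag as delicate. First, the sign in $\widetilde{\bf F}^0$ is wrong: since $\boldsymbol{\mathcal L}(\mathring{\bf u},\pi)=\tilde{\bf f}|_{\Omega_-}-\boldsymbol{\mathcal L}({\bf u}^0,0)$ and, for test functions ${\bf w}$, $\langle\boldsymbol{\mathcal L}({\bf u}^0,0),{\bf w}\rangle_{\Omega_-}=-\langle a_{ij}^{\alpha\beta}E_{j\beta}({\bf u}^0),E_{i\alpha}({\bf w})\rangle_{\Omega_-}$, the compatible choice is $\langle\widetilde{\bf F}^0,{\bf w}\rangle_{\Omega_-}:=\langle\tilde{\bf f},{\bf w}\rangle_{\Omega_-}+\langle a_{ij}^{\alpha\beta}E_{j\beta}({\bf u}^0),E_{i\alpha}({\bf w})\rangle_{\Omega_-}$. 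With your minus sign, $(\mathring{\bf u},\pi,\widetilde{\bf F}^0)\notin{\pmb{\mathcal H}}^1_D(\Omega_-,\boldsymbol{\mathcal L})$, so $r_{_N}{\bf t}^-(\mathring{\bf u},\pi;\widetilde{\bf F}^0)$ is only a formal conormal derivative which genuinely depends on $\gamma_-^{-1}$ (no density argument removes this -- that is exactly why the paper distinguishes formal from generalized conormal derivatives), and adding ${\bf u}^0$ back to the solution of \eqref{mixed-B-manifold} with data $(\widetilde{\bf F}^0,g^0,\boldsymbol\psi^0)$ yields $\boldsymbol{\mathcal L}({\bf u},\pi)=\tilde{\bf f}|_{\Omega_-}+2\,\mathrm{div}\left(\mathbb{A}\mathbb{E}({\bf u}^0)\right)$, not $\tilde{\bf f}|_{\Omega_-}$. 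Second, your conormal-derivative identity is asserted rather than checked, and it does not close: inserting your $\widetilde{\bf F}^0$ into \eqref{conormal-derivative-var-Brinkman-3N} gives $\langle r_{_N}{\bf t}^-(\mathring{\bf u},\pi;\widetilde{\bf F}^0)-r_{_N}{\bf t}^-({\bf u},\pi;\tilde{\bf f}),\widetilde{\boldsymbol\Phi}\rangle_N=2\langle a_{ij}^{\alpha\beta}E_{j\beta}({\bf u}^0),E_{i\alpha}(\gamma_-^{-1}\widetilde{\boldsymbol\Phi})\rangle_{\Omega_-}$, whereas the term you subtract, $r_{_N}{\bf t}^-({\bf u}^0,0;\widetilde{\bf F}^0_{u^0})$ with $\widetilde{\bf F}^0_{u^0}$ the canonical (bilinear-form) extension of $\boldsymbol{\mathcal L}({\bf u}^0,0)$, vanishes identically by direct substitution in \eqref{conormal-derivative-var-Brinkman-3N}; for any other extension its value depends on the extension, so the identity cannot hold in the form you state.

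The route can be repaired: with the corrected sign the $ {\bf u}^0$-contributions cancel directly in the weak formula, giving $r_{_N}{\bf t}^-(\mathring{\bf u},\pi;\widetilde{\bf F}^0)=r_{_N}{\bf t}^-({\bf u},\pi;\tilde{\bf f})$, so one applies Theorem~\ref{mixed-B} with data $\big(\widetilde{\bf F}^0,\,g-\mathrm{div}\,{\bf u}^0,\,\boldsymbol\psi\big)$ (the Neumann datum unchanged), and the estimate follows from the boundedness of ${\bf w}\mapsto\langle a_{ij}^{\alpha\beta}E_{j\beta}({\bf u}^0),E_{i\alpha}({\bf w})\rangle_{\Omega_-}$ and of the lifting. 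The paper avoids this bookkeeping altogether: it lifts $\boldsymbol\varphi$ by solving the exterior Dirichlet problem with zero body force and zero divergence (Theorem~\ref{T-p-complete}), so the lifting pair $(\breve{\bf u},\breve\pi)$ satisfies $\boldsymbol{\mathcal L}(\breve{\bf u},\breve\pi)={\bf 0}$ and $\mathrm{div}\,\breve{\bf u}=0$; then $\tilde{\bf f}$ and $g$ need no modification and only the Neumann datum changes, $\boldsymbol\psi^0=\boldsymbol\psi-r_{_N}{\bf t}^-(\breve{\bf u},\breve\pi)$, where the conormal derivative is the well-defined generalized one. Your uniqueness argument agrees with the paper's.
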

\begin{proof}
Let ${\mathfrak E}_{_D}$
be a linear continuous extension operator from $H^{\frac{1}{2}}(D)^n $ to $H^{\frac{1}{2}}(\partial \Omega )^n$. Then for given datum $\boldsymbol\varphi\in H^{\frac{1}{2}}(D)^n$, we consider its extension $\breve{\boldsymbol\varphi}:={\mathfrak E}_{_D}\boldsymbol\varphi$ to $H^{\frac{1}{2}}(\partial \Omega )^n$ and the Dirichlet problem
\begin{align}
\label{Dirichlet-B-inhomogeneous-3}
\left\{\begin{array}{llll}
{\boldsymbol{\mathcal L}(\breve{\bf u},\breve \pi )={\bf 0}},\ \
{{\rm{div}}}\, \breve{\bf u}=0 & \mbox{ in } \Omega _-\,,\\
\gamma _-\breve {\bf u}=\breve{\boldsymbol\varphi} & \mbox{ on } \partial \Omega \,
\end{array}
\right.
\end{align}
for $(\breve{\bf u},\breve\pi )\!\in \!{\mathcal H}^1(\Omega _-)^n\!\times \! L_2(\Omega _-)$.
By Theorem~\ref{T-p-complete}, problem \eqref{Dirichlet-B-inhomogeneous-3} is uniquely solvable and the map
$\breve{\boldsymbol\varphi}\mapsto \left(\breve{\bf u},\breve{\pi }\right)$ is linear and continuous. Hence the map $\boldsymbol\varphi\mapsto \left(\breve{\bf u},\breve{\pi }\right)$ is linear and continuous as well.

Let  $\boldsymbol\psi^0:=\boldsymbol\psi-r_{_N}{\bf t}^{-}(\breve{\bf u},\tilde{\pi })\in H^{-\frac{1}{2}}(N)^n$ and the pair $({\bf u}^0,\pi ^0)\in {\mathcal H}^1_{D}(\Omega _-)^n\times L_2(\Omega _-)$ be unique solution of the problem
\begin{align}
\label{mixed-B-inhomogeneous-add}
\left\{\begin{array}{llll}
{\boldsymbol{\mathcal L}({\bf u}^0,\pi ^0)=\tilde{\bf f}},\ \
{{\rm{div}}}\, {\bf u}^0= g & \mbox{ in } \Omega _-\,,\\
r_{_D}\gamma _-{\bf u}^0={\bf 0} & \mbox{ on } D,\\
r_{_N}{\bf t}^{-}({\bf u}^0,\pi ^0;\tilde{\bf f})=\boldsymbol\psi^0 & \mbox{ on } N\,.
\end{array}
\right.
\end{align}
The well-posedness of problem \eqref{mixed-B-inhomogeneous-add} is implied by Theorem \ref{mixed-B}.

By \eqref{Dirichlet-B-inhomogeneous-3} and  \eqref{mixed-B-inhomogeneous-add}, we deduce that the pair
$({\bf u},\pi ):=\left(\breve{\bf u}+{\bf u}^0,\breve{\pi }+\pi ^0\right)$ belongs to the space
${\mathcal H}^1(\Omega _-)^n\times L_2(\Omega _-)$, is a solution of the exterior mixed problem \eqref{mixed-B-inhomogeneous} and depends continuously on the given data
$\tilde{\bf f}$, $g$, $\boldsymbol\varphi$ and $\boldsymbol\psi$.
The uniqueness result in Theorem \ref{mixed-B} shows that this solution is unique.
\end{proof}

\subsection{Variational approach to a transmission problem for the Stokes system in a bounded composite domain}

To facilitate considering in Section \ref{NSTP} the Navier-Stokes problems in bounded and unbounded domains, we show in this section the well-posedness of a linear Dirichlet-transmission problem for the anisotropic Stokes system in a  bounded Lipschitz composite domain of ${\mathbb R}^n$ ($n\geq 3$), consisting of two adjacent bounded Lipschitz domains such that the boundary of one of them is their common interface (see Theorem \ref{T-2}).
\subsubsection{\bf Notations and auxiliary results}
Let $\Omega _0$ be a bounded Lipschitz domain in ${\mathbb R}^n$. 
\comment{
Next we consider the {\it homogeneous Sobolev space} $\dot{H}^1 (\Omega _0)^n$
defined as the closure of the space ${\mathcal D}(\Omega _0)^n$ with respect to the norm
\begin{align}
\label{seminorm-Omega}
\|{\bf u}\|_{\dot{H}^1(\Omega _0)^n}:=|{\bf u}|_{{H}^1(\Omega _0)^n}=\|\nabla {\bf u}\|_{L_2(\Omega _0)^{n\times n}}\,.
\end{align}
}
{\mn Let us denote by $\mathring{H}^1(\Omega _0)^n$ the closed subspace of the space $H^1(\Omega _0)$ consisting of functions with zero traces on $\partial \Omega _0$.
The semi-norm 
\begin{align}
\label{seminorm-Omega}
|{\bf u}|_{\mathring{H}^1(\Omega _0)^n}=|{\bf u}|_{{H}^1(\Omega _0)^n}=\|\nabla {\bf u}\|_{L_2(\Omega _0)^{n\times n}}\,.
\end{align}
 is also a norm in} $\mathring{H}^1(\Omega _0)$  equivalent to the norm
\begin{align}
\label{norm-H1}
\|{\bf u}\|_{H^1(\Omega _0)^n}=\|{\bf u}\|_{L_2(\Omega _0)^n}+\|\nabla {\bf u}\|_{L_2(\Omega _0 )^{n\times n}}
\end{align}
(cf., e.g., Theorem II.5.1 and Remark II.6.2 {\mn in \cite{Galdi}).}
Let also ${H}^{-1}(\Omega _0)^n=\left(\mathring{H}^1(\Omega _0)^n\right)'$ and let
$|\!|\!|\cdot |\!|\!|_{H^{-1}(\Omega _0)^n}$ denote the corresponding norm on ${H}^{-1}(\Omega _0)^n$ generated by the {\mn seminorm $|\cdot|_{\mathring{H}^1(\Omega _0)^n}$} in \eqref{seminorm-Omega}, i.e.,
\begin{align}
\label{norm-3e}
|\!|\!|{\bf g}|\!|\!|_{{H}^{-1}(\Omega _0)^n}:=\sup_{{\bf v}\in \mathring{H}^1(\Omega _0)^n,\ \|\nabla {\bf v}\|_{L_2(\Omega _0)^{n\times n}}=1}|\langle {\bf g},{\bf v}\rangle _{\Omega _0}| \quad \forall \ {\bf g}\in {H}^{-1}(\Omega _0)^n.
\end{align}

{Let $\Omega$ be a bounded or exterior Lipschitz domain and let us introduce the spaces
\begin{align}
\label{data-1}
&{\mathcal D}_{\rm{div}}(\Omega)^n:=\{{\bf w}\in {\mathcal D}(\Omega)^n:{\rm{div}}\, {\bf w}=0 \mbox{ in } \Omega\},\\
\label{data-div}
&\mathring{\mathcal H}_{\rm{div}}^1(\Omega)^n:=\{{\bf w}\in \mathring{\mathcal H}^1(\Omega)^n:{\rm{div}}\, {\bf w}=0 \mbox{ in } \Omega\},
\\
\label{data-divH}
&\mathring{H}_{\rm{div}}^1(\Omega)^n:=\{{\bf w}\in \mathring{H}^1(\Omega)^n:{\rm{div}}\, {\bf w}=0 \mbox{ in } \Omega\},\\
&(\mathring H^{1}_{\rm div}(\Omega)^n)^\bot=\{\boldsymbol\Phi\in H^{-1}(\Omega)^n :
\langle\boldsymbol\Phi,\mathbf v\rangle_{\Omega}= 0\quad \forall\,  \mathbf v\in \mathring H^{1}_{\rm div}(\Omega)^n\}.
\end{align}
Thus $(\mathring H^{1}_{\rm div}(\Omega)^n)^\bot$ is the subspace of $H^{-1}(\Omega)^n$ orthogonal to  $\mathring H^{1}_{\rm div}(\Omega)^n$, and {note that} its dual, $[(\mathring H^{1}_{\rm div}(\Omega)^n)^\bot]'$, can be identified with the space
$\mathring H^{1}(\Omega)^n/\mathring H^{1}_{\rm div}(\Omega)^n.$}
For $\mn\mathbf v\in \mathring{H}^1(\Omega)^n/\mathring{H}_{\rm{div}}^1(\Omega)^n$ 
we also introduce the following norm in this quotient space,
\begin{align*}
\mn|\mathbf v|_{\mathring{H}^1(\Omega)^n/\mathring{H}_{\rm{div}}^1(\Omega)^n}
:={\rm inf}_{\boldsymbol\phi\in\mathring{H}_{\rm{div}}^1(\Omega)^n}
\|\nabla ({\bf v}-\boldsymbol\phi)\|_{L_2(\Omega _0)^{n\times n}}.
\end{align*}

We will further need the following analogue of Lemma \ref{Bog} for bounded domains, cf.\cite[Lemma 2.2]{LS1976}.
\begin{lem}\label{La-So}
 Let $\Omega$ be a bounded Lipschitz domain.
The divergence operator
$
{\rm{div}}:\mathring{H}^1(\Omega)^n\to L_{2;0}(\Omega)
$
is surjective and there exists a linear operator
$R_{\Omega}:L_{2;0}(\Omega)\to \mathring{H}^1(\Omega)^n$
such that
$
{\rm{div}}(R_{\Omega}g)=g
$
for any $g\in L_{2;0}(\Omega)$ and
${\mn|R_{\Omega}g|_{\mathring{H}^1(\Omega)^n}}\le C_\Omega\|g\|_{L_{2}(\Omega)}$
with a positive constant $C_\Omega$ that does not dependent on $g$.
\end{lem}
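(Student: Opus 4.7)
The plan is to prove this via the classical Bogovski\u{\i} construction, which furnishes an explicit linear integral right inverse to the divergence on star-shaped Lipschitz subdomains, and then to patch these local inverses together using a partition of unity. Note first that the zero-mean constraint in $L_{2;0}(\Omega)$ is necessary, since for $\mathbf u\in\mathring H^1(\Omega)^n$ the divergence theorem yields $\int_\Omega\mathrm{div}\,\mathbf u\,dx=0$; the content of the lemma is that this is the only obstruction and that a linear continuous right inverse exists.

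First I would invoke the standard geometric fact that any bounded Lipschitz domain $\Omega\subset\mathbb R^n$ admits a finite open cover $\Omega=\bigcup_{k=1}^N\Omega_k$ such that each $\Omega_k$ is a bounded Lipschitz subdomain star-shaped with respect to some open ball $B_k$ with $\overline{B_k}\subset\Omega_k$. On each $\Omega_k$, fixing $\omega_k\in C_0^\infty(B_k)$ with $\int\omega_k\,dx=1$, Bogovski\u{\i}'s explicit singular integral operator $R_{\Omega_k}$, whose kernel is built from $(x-y)|x-y|^{-n}$ weighted by integrals of $\omega_k$ along rays emanating from $y$ through $x$, is known to map $L_{2;0}(\Omega_k)$ continuously into $\mathring H^1(\Omega_k)^n$ and to satisfy $\mathrm{div}(R_{\Omega_k}f)=f$ for every $f\in L_{2;0}(\Omega_k)$, with $|R_{\Omega_k}f|_{\mathring H^1(\Omega_k)^n}\le C_k\|f\|_{L_2(\Omega_k)}$.

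Next, given $g\in L_{2;0}(\Omega)$, I would construct a linear decomposition $g=\sum_{k=1}^N g_k$ with each $g_k\in L_{2;0}(\Omega_k)$ (extended by zero). Starting from a smooth partition of unity $\{\varphi_k\}$ subordinate to the cover, the raw pieces $\varphi_k g$ generically have nonzero mean, so one corrects them by subtracting and redistributing constants via auxiliary fixed functions supported in the pairwise overlaps $\Omega_k\cap\Omega_{k+1}$. The zero-mean hypothesis $\int_\Omega g\,dx=0$ is exactly what allows this correction to close up consistently across all pieces, and the resulting decomposition is linear in $g$ with uniform $L_2$ bounds $\|g_k\|_{L_2(\Omega_k)}\le c\|g\|_{L_2(\Omega)}$. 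Setting
\begin{equation*}
R_\Omega g:=\sum_{k=1}^N\mathring E_{\Omega_k}R_{\Omega_k}g_k,
\end{equation*}
where $\mathring E_{\Omega_k}$ denotes extension by zero from $\Omega_k$ to $\Omega$, then yields an element of $\mathring H^1(\Omega)^n$ with $\mathrm{div}(R_\Omega g)=g$ and $|R_\Omega g|_{\mathring H^1(\Omega)^n}\le C_\Omega\|g\|_{L_2(\Omega)}$.

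The main obstacle is the continuity estimate for $\nabla R_{\Omega_k}$ in $L_2$: its kernel, obtained by differentiating Bogovski\u{\i}'s kernel, defines a Calder\'on-Zygmund singular integral operator whose requisite cancellation property relies essentially on $\int\omega_k\,dx=1$ together with the zero-mean condition imposed on $f$. Once this $L_2$ continuity is established on each star-shaped piece, the partition-of-unity patching, the extension by zero across Lipschitz boundaries of the subdomains, and the verification that $R_\Omega$ is linear and continuous are straightforward bookkeeping, and give the claimed surjectivity with a bounded linear right inverse.
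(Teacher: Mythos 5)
Your proof is correct, but it is not the route the paper takes: in the paper this lemma carries no proof at all --- it is imported from Ladyzhenskaya and Solonnikov \cite[Lemma 2.2]{LS1976} (with the scale-invariance of $C_\Omega$ handled separately in Remark \ref{La-SoC} via \cite[Corollary 2.1]{LS1976}), just as the exterior-domain counterpart, Lemma \ref{Bog}, is imported from Theorem 2 of Bogovski\u{\i} \cite{Bogovskii}. What you supply instead is the classical constructive argument behind those references: cover the bounded Lipschitz domain by finitely many Lipschitz subdomains star-shaped with respect to balls, apply the explicit Bogovski\u{\i} integral operator on each piece, and glue by a mean-corrected partition-of-unity decomposition $g=\sum_k g_k$ with $g_k\in L_{2;0}(\Omega_k)$, which closes up precisely because $\int_\Omega g\,dx=0$ (and which requires ordering the pieces so that consecutive ones overlap, available since $\Omega$ is connected). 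This buys self-containedness (modulo the Calder\'on--Zygmund bound for the Bogovski\u{\i} kernel and the star-shaped covering lemma), yields the right inverse together with the homogeneous seminorm bound $|R_\Omega g|_{\mathring H^1(\Omega)^n}\le C_\Omega\|g\|_{L_2(\Omega)}$ directly, and makes the scaling claim of Remark \ref{La-SoC} transparent, since the entire construction commutes with dilations; the paper's citation route simply buys brevity. One small inaccuracy worth correcting: the zero-mean condition on $f$ plays no role in the $L_2\to H^1$ continuity of $R_{\Omega_k}$ --- the Bogovski\u{\i} operator is bounded on all of $L_2(\Omega_k)$ --- and the normalization $\int\omega_k\,dx=1$ together with the mean-zero hypothesis is needed only for the identity $\mathrm{div}(R_{\Omega_k}f)=f$; so your gradient estimate holds a fortiori and the argument is unaffected.
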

\begin{rem}\label{La-SoC}
 By simple scaling it is easy to show (cf. \cite[Corollary 2.1]{LS1976}) that the operator $R_{\Omega}$ in Lemma~\ref{La-So} can be chosen in such a way that the constant $C_\Omega$ may depend on the shape of $\Omega$ but not on {the domain scaling}. Particularly, if $\Omega$ is a ball $B$, then {$C_\Omega$ will not depend on its diameter.}
\end{rem}

Let us provide a theorem about isomorphism of divergence and gradient operators (cf., e.g., Proposition 1.1 and Remark 1.4 in \cite[Chapter 1]{Temam}).
\begin{thm}\label{La-SoT}
{Let $\Omega$ be a bounded Lipschitz domain {in ${\mathbb R}^n$, $n\geq 3$.
Then the} operators}
\begin{align}
\label{div-is}
&{\rm{div}}:\mathring{H}^1(\Omega)^n/\mathring{H}_{\rm{div}}^1(\Omega)^n\to L_{2;0}(\Omega),\\
\label{grad-is}
&{\rm{grad}}:L_{2}(\Omega)/\mathbb R\to (\mathring H^{1}_{\rm div}(\Omega)^n)^\bot
\end{align}
are isomorphisms {and the following estimates hold}
\begin{align}
\label{div-est}
&{\mn|\mathbf v|_{\mathring{H}^1(\Omega)^n/\mathring{H}_{\rm{div}}^1(\Omega)^n}}
\le C_\Omega\|{\rm{div}}\, \mathbf v\|_{L_{2}(\Omega)}\quad
\forall\, \mathbf v\in \mathring{H}^1(\Omega)^n/\mathring{H}_{\rm{div}}^1(\Omega)^n,\\
\label{grad-est}
&\|q\|_{L_{2}(\Omega)/\mathbb R}
\le C_\Omega|\!|\!|{\rm{grad}}\, q|\!|\!|_{ H^{-1}(\Omega)^n}\quad
\forall\, q\in L_{2}(\Omega)/\mathbb R,
\end{align}
with a positive constant $C_\Omega$ that may depend on the shape of $\Omega$ but not on the domain scaling. Particularly, if $\Omega$ is a ball $B$, then $C_\Omega$ will not depend on its diameter.
\end{thm}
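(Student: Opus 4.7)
The plan is to derive both isomorphisms from Lemma~\ref{La-So} together with the Babu\v{s}ka--Brezzi duality framework in Lemma~\ref{surj-inj-inf-sup}, thereby avoiding any repetition of the construction of the Bogovski\u{\i} right inverse.

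\textbf{Step 1: Divergence isomorphism \eqref{div-is}.}  By Lemma~\ref{La-So}, the operator ${\rm div}:\mathring{H}^1(\Omega)^n\to L_{2;0}(\Omega)$ is surjective with a bounded right inverse $R_\Omega$ satisfying $|R_\Omega g|_{\mathring H^1(\Omega)^n}\le C_\Omega\|g\|_{L_2(\Omega)}$, where by Remark~\ref{La-SoC} the constant $C_\Omega$ depends only on the shape of $\Omega$. The kernel of this operator is, by definition \eqref{data-divH}, precisely $\mathring{H}_{\rm div}^1(\Omega)^n$. Passing to the quotient yields the continuous bijection \eqref{div-is}; the inverse is induced by $R_\Omega$, and choosing $\mathbf v\in\mathring H^1(\Omega)^n/\mathring H^1_{\rm div}(\Omega)^n$ and writing $g:={\rm div}\,\mathbf v\in L_{2;0}(\Omega)$ we have $\mathbf v-R_\Omega g\in \mathring H^1_{\rm div}(\Omega)^n$, so $|\mathbf v|_{\mathring H^1(\Omega)^n/\mathring H^1_{\rm div}(\Omega)^n}\le |R_\Omega g|_{\mathring H^1(\Omega)^n}\le C_\Omega\|g\|_{L_2(\Omega)}$, which is estimate \eqref{div-est}.

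\textbf{Step 2: Gradient isomorphism \eqref{grad-is} via duality.}  Introduce the bounded bilinear form $b(\mathbf v,q):=-\langle{\rm div}\,\mathbf v,q\rangle_\Omega$ on $X\times\mathcal M$ with $X:=\mathring H^1(\Omega)^n$ (equipped with the seminorm $|\cdot|_{\mathring H^1}$, equivalent to the full norm) and $\mathcal M:=L_2(\Omega)/\mathbb R$, whose dual is isometrically identified with $L_{2;0}(\Omega)$ via the $L_2$ pairing. The associated operator $B:X\to\mathcal M'$ coincides with $-{\rm div}$, and its transpose $B^*:\mathcal M\to X'$ acts on a class $[q]\in L_2(\Omega)/\mathbb R$ by $\langle B^*[q],\mathbf v\rangle=b(\mathbf v,q)=\langle q,-{\rm div}\,\mathbf v\rangle_\Omega=\langle\nabla q,\mathbf v\rangle_\Omega$ for every $\mathbf v\in\mathcal D(\Omega)^n$, so that $B^*={\rm grad}$. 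The kernel of $B$ is exactly $V=\mathring H^1_{\rm div}(\Omega)^n$, hence $V^\perp=(\mathring H^1_{\rm div}(\Omega)^n)^\perp$. Step~1 is precisely assertion (ii) of Lemma~\ref{surj-inj-inf-sup} with $C_b=C_\Omega$, so the equivalent assertion (iii) provides that ${\rm grad}:L_2(\Omega)/\mathbb R\to(\mathring H^1_{\rm div}(\Omega)^n)^\perp$ is an isomorphism and $\|{\rm grad}\,q\|_{X'}\ge C_\Omega^{-1}\|q\|_{L_2(\Omega)/\mathbb R}$, which is \eqref{grad-est} once one notes that the norm on $X'$ induced by the $|\cdot|_{\mathring H^1}$ seminorm coincides with $|\!|\!|\cdot|\!|\!|_{H^{-1}(\Omega)^n}$.

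\textbf{Step 3: Scaling invariance.}  The constant $C_\Omega$ transferred through Lemma~\ref{surj-inj-inf-sup} is the same one appearing in Step~1, hence depends only on the shape of $\Omega$ by Remark~\ref{La-SoC}; in particular for $\Omega=B$ a ball it is independent of the radius. Since the seminorms used on the quotient spaces and on $H^{-1}(\Omega)^n$ are homogeneous of the correct degree, no scale-dependent constants are introduced in the identifications above.

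The main delicate point is the identification of dual spaces and the action of $B^*$: one must verify carefully that $(L_2(\Omega)/\mathbb R)'$ is paired with $L_{2;0}(\Omega)$ in a way compatible with the $L_2$ pairing used to define $b$, and that the norm on $X'$ generated by the seminorm $|\cdot|_{\mathring H^1(\Omega)^n}$ equals $|\!|\!|\cdot|\!|\!|_{H^{-1}(\Omega)^n}$ as defined in \eqref{norm-3e}. Once this is set up, the surjectivity of the gradient onto the \emph{orthogonal complement} of $\mathring H^1_{\rm div}(\Omega)^n$ (rather than onto all of $H^{-1}(\Omega)^n$) is automatic, and the scale-free character of the constant follows without further work.
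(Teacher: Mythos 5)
Your proof is correct and follows essentially the same route as the paper: both rest on Lemma~\ref{La-So} (with Remark~\ref{La-SoC} for the shape-only dependence of $C_\Omega$) for the divergence isomorphism and estimate \eqref{div-est}, and then pass to the gradient by duality. The only difference is that where the paper merely asserts that the adjoint is an isomorphism and that \eqref{grad-est} follows by ``a similar argument,'' you make this step explicit through the equivalence of assertions $(ii)$ and $(iii)$ in Lemma~\ref{surj-inj-inf-sup}, together with the isometric identification $\left(L_2(\Omega)/\mathbb R\right)'\cong L_{2;0}(\Omega)$ and the dual-norm identification in \eqref{norm-3e} — which is a legitimate filling-in of the paper's sketch rather than a different argument.
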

\begin{proof}
In view of Lemma \ref{La-So}, the operator ${\rm{div}}:\mathring{H}^1(\Omega)^n/\mathring{H}_{\rm{div}}^1(\Omega)^n\to L_{2;0}(\Omega)$ is an isomorphism, and then, its adjoint, ${\rm{grad}}:L_{2}(\Omega)/\mathbb R\to (\mathring H^{1}_{\rm div}(\Omega)^n)^\bot$ is an isomorphism as well.
In addition, the isomorphism property of the operator in \eqref{div-is} yields that
\begin{align*}
{\mn|\mathbf v|_{\mathring{H}^1(\Omega)^n/\mathring{H}_{\rm{div}}^1(\Omega)^n}}
\leq \|{\rm{div}}^{-1}\|\, \|{\rm{div}}\, \mathbf v\|_{L_{2}(\Omega)}
\leq C_\Omega \|{\rm{div}}\, \mathbf v\|_{L_{2}(\Omega)}
\quad
\forall\, \mathbf v\in \mathring{H}^1(\Omega)^n/\mathring{H}_{\rm{div}}^1(\Omega)^n,
\end{align*}
where 
$$
\mn \|\mathrm{div}^{-1}\|:=\sup_{g\in  L_{2;0}(\Omega),\ \|g\|_{L_2(\Omega)}=1}|\mathrm{div}^{-1}g|_{\mathring{H}^1(\Omega)^n/\mathring{H}_{\rm{div}}^1(\Omega)^n}
$$
and we have used the inequality $\|{\rm{div}}^{-1}\|\leq C_\Omega $ provided by Lemma \ref{La-So}. Therefore, estimate \eqref{div-est} follows. Estimate \eqref{grad-est} can be obtained with a similar argument.
{Finally, the claim that $C_\Omega$ may depend on the shape of $\Omega$ but not on its scale follows from Remark~\ref{La-SoC}}.
\end{proof}

\subsubsection{\bf The Poisson problem of Dirichlet-transmission type for the Stokes system in a bounded composite domain}\label{S5.3-new}

Let $n\geq 3$. 
Let $\Omega ^0\subset {\mathbb R}^n$ be a bounded {\it composite} domain consisting of two Lipschitz domains $\Omega _+$ and $\Omega _-^0$ such that $\overline{\Omega }_+\subset \Omega _0$ and $\Omega _-^0:=\Omega^0\setminus \overline\Omega _{+}$. Let $\partial\Omega^0 $ be the boundary of $\Omega ^0$. Thus, $\Omega ^0=\overline \Omega _+\cup \Omega ^0_{-}$, and the boundary $\partial \Omega $ of $\Omega _+$ is also the interface between $\Omega _+$ and $\Omega _-^0$.

Let us introduce the following spaces,
\begin{align}
& L_{2;0}(\Omega):=\left\{f\in L_2(\Omega):\langle f,1\rangle _{\Omega}=0\right\},\\
\label{5.8}
&{H}_{\partial\Omega^0} ^1(\Omega ^0_{-})^n:=\left\{{\bf v}\in H^1(\Omega ^0_{-})^n:\gamma _+{\bf v}={\bf 0} \mbox{ on } \partial\Omega^0 \right\},\\
\label{sol-dt}
&{\mathfrak X}_{\Omega _+,\Omega ^0_{-}}
:= {H}^1(\Omega _+)^n
\times (L_{2}(\Omega^0)/\mathbb R)\big|_{\Omega_+}
\times {H}^1(\Omega^0_{-})^n
\times (L_{2}(\Omega^0)/\mathbb R)\big|_{\Omega^0_-}\,,\\
\label{data-dt}
&{\mathfrak Y}_{\Omega _+,\Omega^0_-}
:=\widetilde{H}^{-1}(\Omega _{+})^n
\times\widetilde H^{-1}_{\partial\Omega}(\Omega^0_-)^n
\times L_{2;0}(\Omega^0)
\times H^{-\frac{1}{2}}(\partial \Omega )^n\,.
\end{align}
Here $\widetilde H^{-1}_{\partial\Omega}(\Omega^0_-)^n$ is defined as a space of distributions from $H^{-1}(\Omega^0)^n$ that vanish on $\Omega _+$, i.e., $\widetilde H^{-1}_{\partial\Omega}(\Omega^0_-)^n:=\left\{\boldsymbol\varphi \in H^{-1}(\Omega^0)^n: \boldsymbol\varphi ={\bf 0} \mbox{ on } \Omega _+\right\}.$
This space can be identified with the dual to ${H}_{\partial\Omega^0}^1(\Omega ^0_{-})^n$, cf., e.g., arguments of Theorems 3.29 and 3.30 in \cite{Lean}.

Let $\boldsymbol{\mathcal L}$ be the Stokes operator defined in \eqref{Stokes-new}.
Next, we consider the Poisson problem of Dirichlet-transmission type for the anisotropic Stokes system 
\begin{equation}
\label{Dirichlet-var-Stokes}
\left\{
\begin{array}{ll}
{\boldsymbol{\mathcal L}({\bf u}_+,\pi _+)={\tilde{\bf f}_{+}}|_{\Omega _{+}}}\,, \ \
{\rm{div}} \, {\bf u}_+= g|_{\Omega_+} & \mbox{ in } \Omega _{+}\,,\\
{\boldsymbol{\mathcal L}({\bf u}_-,\pi _-)={\tilde{\bf f}_{-}}|_{\Omega _{-}^0}}\,, \ \
{\rm{div}} \, {\bf u}_-= g|_{\Omega^0_-} & \mbox{ in } \Omega ^0_{-},\\
{\gamma }_{+}{\bf u}_+-{\gamma }_{-}{\bf u}_{-}={\bf 0} &  \mbox{ on } \partial \Omega ,\\
{\bf t}^{+}({\bf u}_+,\pi _+;\tilde{\bf f}_+)-{\bf t}^{-}({\bf u}_-,\pi _- ;\tilde{\bf f}_-)=\boldsymbol\psi &  \mbox{ on } \partial \Omega ,\\
{\gamma }_{+}{\bf u}_-={\bf 0} &  \mbox{ on } \partial\Omega^0 \,,
\end{array}\right.
\end{equation}
with given data $(\tilde{\bf f}_+,\tilde{\bf f}_-, g,\boldsymbol\psi)\in {\mathfrak Y}_{\Omega _+,\Omega^0_-}$
and unknown
$({\bf u}_+,\pi _+,{\bf u}_-,\pi _-)\in {\mathfrak X}_{\Omega _+,\Omega^0_-}$.

Let us consider the following mixed variational formulation.

{\it Given $\mathbf F\in {H}^{-1}(\Omega ^0)^n$,
find
$({\bf u},\pi )\in \mathring{H}^1(\Omega ^0)^n\times L_{2}(\Omega^0)/\mathbb R$ such that
\begin{equation}
\label{transmission-linear-Stokes-a}
\left\{\begin{array}{lll}
a_{{\mathbb A};\Omega ^0}({\bf u},{\bf v})+b_{\Omega ^0}({\bf v},\pi )=\langle \mathbf F,{\bf v}\rangle _{\Omega ^0} & \forall \, {\bf v}\in
\mathring{H}^{1}(\Omega ^0)^n,\\
b_{\Omega ^0}({\bf u},q)= -\langle g,q\rangle_{\Omega ^0} & \forall \, q\in L_{2}(\Omega ^0)/\mathbb R,
\end{array}
\right.
\end{equation}
where
$a_{{\mathbb A};\Omega ^0}{:\mathring{H}^1(\Omega ^0)^n\!\times \!\mathring{H}^1(\Omega ^0)^n\!\to \! {\mathbb R}}$ and $b_{\Omega ^0}:{\mathring{H}^1(\Omega ^0)^n\!\times \!{L_{2}(\Omega ^0)/{\mathbb R}}\!\to \!{\mathbb R}}$ are the bounded bilinear forms}
\begin{align}
\label{a-v-Omega0}
&a_{{\mathbb A};\Omega ^0}({\bf u},{\bf v}):
=\left\langle a_{ij}^{\alpha \beta }E_{j\beta }({\bf u}),E_{i\alpha }({\bf v})\right\rangle _{\Omega ^0} \quad \forall \, {\bf u},{\bf v}\in \mathring{H}^1(\Omega ^0)^n\,,\\
\label{b-v-Omega0}
&b_{\Omega ^0}({\bf v},q):=-\langle {\rm{div}}\, {\bf v},q\rangle _{\Omega ^0} \quad \forall \,
{\bf v}\in \mathring{H}^1(\Omega ^0)^n\ \ \forall \, q\in {L_{2}(\Omega ^0)/{\mathbb R}}\,.
\end{align}

The variational problem \eqref{transmission-linear-Stokes-a} is equivalent to the Dirichlet problem
\begin{equation}
\label{int-NS-D-00S}
\hspace{-1em}\left\{
\begin{array}{ll}
{\boldsymbol{\mathcal L}({\bf u},\pi)}=-\mathbf F\,, \ \
{\rm{div}} \, {\bf u}=0
& \mbox{ in } \Omega^0,\\
{\gamma }_{+}{\bf u}={\bf 0} &  \mbox{ on } \partial\Omega^0,
\end{array}\right.
\end{equation}
{with the unknown} $({\bf u},\pi )\in {H}^1(\Omega ^0)^n\times L_{2}(\Omega^0)/\mathbb R$.

An argument similar to that for problem \eqref{var-Brinkman-transmission-sl} implies that the Dirichlet-transmission problem \eqref{Dirichlet-var-Stokes}
{is also equivalent to the mixed variational formulation \eqref{transmission-linear-Stokes-a}-\eqref{F}
if $\mathbf F\in {H}^{-1}(\Omega ^0)^n$ is taken as
\begin{align*}
\langle \mathbf F,{\bf v}\rangle _{\Omega ^0}
&=-\big\langle \tilde{\bf f}_+,{\bf v}{|_{\Omega_+}}\big\rangle_{\Omega_+}
-\big\langle \tilde{\bf f}_-,{\bf v}{|_{\Omega^0_-}}\big\rangle_{\Omega^0_-}+
\langle \boldsymbol\psi,\gamma {\bf v}\rangle _{\partial \Omega }\nonumber\\
&=-\big\langle \tilde{\bf f}_+,{\bf v}\big\rangle _{\Omega ^0}-\big\langle \tilde{\bf f}_-,{\bf v}\big\rangle _{\Omega ^0}+\langle \gamma ^*\boldsymbol\psi,{\bf v}\rangle _{\Omega ^0}
 \quad \forall \, {\bf v}\in \mathring{H}^{1}(\Omega ^0)^n\,,
\end{align*}
i.e.,
\begin{align}
\label{F}
\mathbf F=-(\tilde{\bf f}_++\tilde{\bf f}_-)+\gamma ^*\boldsymbol\psi.
\end{align}
Note that $\gamma ^*:H^{-\frac{1}{2}}(\partial \Omega )^n\to {H}^{-1}(\mathbb R^n)^n$ is the adjoint of the trace operator $\gamma :{H}^{1}(\mathbb R^n)^n\to H^{\frac{1}{2}}(\partial \Omega )^n$, and the support of $\gamma ^*\boldsymbol\psi$ is a subset of $\partial \Omega $.
The solutions of \eqref{Dirichlet-var-Stokes} and \eqref{transmission-linear-Stokes-a} are related as}
\begin{align}\label{u}
{\bf u}_+={\bf u}|_{\Omega _+},\ {\bf u}_-={\bf u}|_{\Omega_-^0},\
\pi_+=\pi|_{\Omega _+},\ \pi_-=\pi|_{\Omega _-^0}.
\end{align}

\begin{thm}
\label{T-2}
Let conditions \eqref{Stokes-1}-\eqref{mu} hold in $\Omega^0$. {Then} for any $\mathbf F\in {H}^{-1}(\Omega ^0)^n$, the linear variational problem \eqref{transmission-linear-Stokes-a}-\eqref{b-v-Omega0} {and the Dirichlet problem \eqref{int-NS-D-00S}} have a unique solution $({\bf u},\pi )\in \mathring{H}^1(\Omega^0)^n\times L_2(\Omega^0)/\mathbb R$ and the following estimates hold
\begin{align}
\label{mixed-Cu0}
&{\mn\|\nabla{\bf u}\|_{L_2(\Omega^0)^{n\times n}}}
\leq {2}c_{\mathbb A}|\!|\!|\mathbf F|\!|\!|_{H^{-1}(\Omega ^0)^n}
+C'_{\Omega^0}\|g\|_{L_{2}(\Omega^0)},\\
\label{mixed-Cp0}
&\|\pi\|_{L_2(\Omega^0)/\mathbb R}
\leq C'_{\Omega^0}
|\!|\!|\mathbf F|\!|\!|_{H^{-1}(\Omega ^0)^n}
+C^*_{\Omega^0}\|g\|_{L_{2}(\Omega^0)},
\end{align}
where the {ellipticity} constant $c_{\mathbb A}$ is defined in \eqref{mu}, the constant $C_{\Omega^0}$ in {Lemma} $\ref{La-So}$, while
$$
C'_{\Omega^0}:=C_{\Omega^0}(1+{2}c_{\mathbb A}n^4\|{\mathbb A}\|_{L_\infty ({\Omega^0})^n}),\quad
C^*_{\Omega^0}:=n^4\|{\mathbb A}\|_{L_\infty ({\Omega^0})^n}C_{\Omega^0}C'_{\Omega^0}.
$$

In addition, {given $(\tilde{\bf f}_+,\tilde{\bf f}_-, g,\boldsymbol\psi)\in {\mathfrak Y}_{\Omega _+,\Omega^0_-}$}, relations \eqref{u} provide the unique solution $({\bf u}_+,\pi _+,{\bf u}_-,\pi _-)$ in ${\mathfrak X}_{\Omega _+,\Omega^0_-}$ of the linear Dirichlet-transmission problem \eqref{Dirichlet-var-Stokes} whenever $\mathbf F\in {H}^{-1}(\Omega ^0)^n$ is taken as in \eqref{F}.
\end{thm}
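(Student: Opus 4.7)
The strategy is to apply the Babu\v{s}ka-Brezzi framework (Theorem~\ref{B-B}) to the mixed variational formulation \eqref{transmission-linear-Stokes-a}, with $X=\mathring H^1(\Omega^0)^n$ and $\mathcal M=L_2(\Omega^0)/\mathbb R$, and then to transfer the well-posedness to the two equivalent formulations \eqref{int-NS-D-00S} and \eqref{Dirichlet-var-Stokes}. The three ingredients to check are the boundedness of the bilinear forms $a_{\mathbb A;\Omega^0}$ and $b_{\Omega^0}$, the coercivity of $a_{\mathbb A;\Omega^0}$ on the kernel subspace $V=\mathring H^1_{\rm div}(\Omega^0)^n$, and the Babu\v{s}ka-Brezzi condition for $b_{\Omega^0}$.

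First I would establish boundedness: the estimate $|a_{\mathbb A;\Omega^0}(\mathbf u,\mathbf v)|\leq n^4\|\mathbb A\|_{L_\infty(\Omega^0)}\|\nabla\mathbf u\|_{L_2(\Omega^0)^{n\times n}}\|\nabla\mathbf v\|_{L_2(\Omega^0)^{n\times n}}$ follows exactly as in \eqref{a-1-v}, while boundedness of $b_{\Omega^0}$ is immediate from the H\"older inequality and the fact that on $L_2(\Omega^0)/\mathbb R$ we can test with any representative. For coercivity, I use that any $\mathbf v\in \mathring H^1_{\rm div}(\Omega^0)^n$, after extension by zero, lies in $H^1(\mathbb R^n)^n$ and so satisfies the Korn inequality $\|\nabla\mathbf v\|_{L_2(\Omega^0)^{n\times n}}^2\leq 2\|\mathbb E(\mathbf v)\|_{L_2(\Omega^0)^{n\times n}}^2$; combined with $\sum_i E_{ii}(\mathbf v)=0$ and the ellipticity assumption~\eqref{mu} this yields $a_{\mathbb A;\Omega^0}(\mathbf v,\mathbf v)\geq \frac{1}{2}c_{\mathbb A}^{-1}\|\nabla\mathbf v\|_{L_2(\Omega^0)^{n\times n}}^2$, that is $C_a=2c_{\mathbb A}$ with respect to the seminorm $|\cdot|_{\mathring H^1(\Omega^0)^n}$. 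For the inf-sup condition, I invoke Theorem~\ref{La-SoT} (or directly Lemma~\ref{La-So}): for every $q\in L_2(\Omega^0)/\mathbb R$ there exists $\mathbf v\in\mathring H^1(\Omega^0)^n$ with $-{\rm div}\,\mathbf v=q$ and $|\mathbf v|_{\mathring H^1(\Omega^0)^n}\leq C_{\Omega^0}\|q\|_{L_2(\Omega^0)/\mathbb R}$, giving $b_{\Omega^0}(\mathbf v,q)=\|q\|^2\geq C_{\Omega^0}^{-1}|\mathbf v|_{\mathring H^1}\|q\|$, i.e.\ $C_b=C_{\Omega^0}$.

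With these three ingredients Theorem~\ref{B-B} delivers unique solvability of \eqref{transmission-linear-Stokes-a} together with the estimates \eqref{mixed-Cu} and \eqref{mixed-Cp} applied to the data $f(\cdot)=\langle\mathbf F,\cdot\rangle_{\Omega^0}$ and $g(\cdot)=-\langle g,\cdot\rangle_{\Omega^0}$. Substituting the values $C_a=2c_{\mathbb A}$, $\|a\|\leq n^4\|\mathbb A\|_{L_\infty(\Omega^0)}$, $C_b=C_{\Omega^0}$ yields precisely \eqref{mixed-Cu0} and \eqref{mixed-Cp0} with the constants $C'_{\Omega^0}$ and $C^*_{\Omega^0}$ written in the statement. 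Equivalence with the Dirichlet problem \eqref{int-NS-D-00S} is immediate from the density of $\mathcal D(\Omega^0)^n$ in $\mathring H^1(\Omega^0)^n$: testing \eqref{transmission-linear-Stokes-a} against $\mathbf v\in\mathcal D(\Omega^0)^n$ recovers $\boldsymbol{\mathcal L}(\mathbf u,\pi)=-\mathbf F$ and ${\rm div}\,\mathbf u=0$ in the distributional sense, while the boundary condition is built into the membership $\mathbf u\in\mathring H^1(\Omega^0)^n$.

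Finally, to pass to \eqref{Dirichlet-var-Stokes}, I would argue as in the proof of Theorem~\ref{slp-var-apr-1-p}. If $(\mathbf u_+,\pi_+,\mathbf u_-,\pi_-)\in\mathfrak X_{\Omega_+,\Omega^0_-}$ solves \eqref{Dirichlet-var-Stokes}, the jump condition $[\gamma \mathbf u]=\mathbf 0$ and the exterior Dirichlet condition on $\partial\Omega^0$ place the glued field $\mathbf u$ in $\mathring H^1(\Omega^0)^n$, and the gluing of $\pi_\pm$ gives an element of $L_2(\Omega^0)/\mathbb R$; then the Green formula from Lemma~\ref{lemma-add-new-1} (applied separately on $\Omega_+$ and $\Omega^0_-$ and summed) together with the transmission condition on $[\mathbf t]$ produces \eqref{transmission-linear-Stokes-a} with $\mathbf F$ as in \eqref{F}. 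Conversely, starting from the variational solution $(\mathbf u,\pi)$, testing with $\mathbf v\in\mathcal D(\Omega_\pm)^n$ gives the PDE in each subdomain, the definition of the generalized conormal derivative from Definition~\ref{conormal-derivative-var-Brinkman} and the surjectivity of the trace $\gamma:\mathring H^1(\Omega^0)^n\to H^{1/2}(\partial\Omega)^n$ (for suitable test functions supported away from $\partial\Omega^0$) recover the jump condition $[\mathbf t(\mathbf u,\pi;\tilde{\mathbf f})]=\boldsymbol\psi$, and the outer Dirichlet condition is built into $\mathring H^1(\Omega^0)^n$. The main technical point to be careful about is verifying that the constant $C_{\Omega^0}$ entering \eqref{mixed-Cu0}-\eqref{mixed-Cp0} is exactly the one provided by Theorem~\ref{La-SoT}, so that its independence from the scale of $\Omega^0$ (crucial for the later Navier-Stokes fixed-point argument) is preserved throughout the computation.
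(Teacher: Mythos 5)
Your proposal is correct and follows essentially the same route as the paper: boundedness of $a_{\mathbb A;\Omega^0}$, coercivity on $\mathring H^1_{\rm div}(\Omega^0)^n$ via Korn's first inequality and \eqref{mu} (giving $C_a=2c_{\mathbb A}$), the inf-sup condition with constant $C_{\Omega^0}^{-1}$ from Lemma~\ref{La-So}/Theorem~\ref{La-SoT}, and then Theorem~\ref{B-B} with exactly the constant bookkeeping that produces \eqref{mixed-Cu0}--\eqref{mixed-Cp0}. The equivalences with \eqref{int-NS-D-00S} and \eqref{Dirichlet-var-Stokes}, which you spell out, are handled in the paper by the discussion preceding the theorem (referring back to the argument for Theorem~\ref{slp-var-apr-1-p}), so there is no substantive difference.
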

\begin{proof}
The bilinear form
$a_{{\mathbb A};\Omega ^0}:\mathring{H}^1(\Omega )^n\times \mathring{H}^1(\Omega ^0)^n\to {\mathbb R}$ is bounded on the Hilbert space $\mathring{H}^1(\Omega ^0)^n$
and by \eqref{A-norm}, {we have the following estimate {\mn for} the bilinear form $a_{{\mathbb A};\Omega ^0}$ on $\mn\mathring{H}^1(\Omega )^n\times \mathring{H}^1(\Omega ^0)^n$,}
\begin{align}\label{Anorm0}
\mn|a_{{\mathbb A};\Omega ^0}({\bf v},{\bf w})|
\le n^4\|{\mathbb A}\|_{L_\infty ({\Omega^0})^n}
\|\nabla{\bf v}\|_{L_2(\Omega^0)^{n\times n}}\|\nabla{\bf w}\|_{L_2(\Omega^0)^{n\times n}}\quad 
\forall\, {\bf v},{\bf w}\in \mathring{H}^1(\Omega ^0)^n.
\end{align}
Moreover, condition \eqref{mu} and the first Korn inequality (see, e.g., Theorem 10.1 in \cite{Lean}) show that
\begin{align*}
\frac{1}{2}c_{\mathbb A}^{-1}\|\nabla {\bf v}\|_{L_2(\Omega ^0)^{n\times n}}^2\leq c_{\mathbb A}^{-1}\|{\mathbb E}({\bf v})\|_{L_2(\Omega ^0)^{n\times n}}^2
\leq \left\langle a_{ij}^{\alpha \beta }E_{j\beta }({\bf v}),E_{i\alpha }({\bf v})\right\rangle _{\Omega ^0} \quad \forall \, {\bf v}\in \mathring{H}_{\rm{div}}^1(\Omega ^0)^n\,,
\end{align*}
implying that $a_{{\mathbb A};\Omega ^0}$ is also coercive on $\mathring{H}_{\rm{div}}^1(\Omega ^0)^n$ and
\begin{align}\label{a-coers}
a_{{\mathbb A};\Omega ^0}({\bf v},{\bf v})\ge \frac{1}{2c_{\mathbb A}}{\mn\|\nabla{\bf v}\|_{L_2(\Omega^0)^{n\times n}}^2}
\quad \forall\, {\bf v}\in \mathring{H}_{\rm{div}}^1(\Omega ^0)^n.
\end{align}
{By Theorem~\ref{La-SoT} and Lemma~\ref{surj-inj-inf-sup}}, we obtain that the bilinear form
$b_{_{\Omega ^0}}(\cdot ,\cdot):\mathring{H}^1(\Omega ^0)^n\times L_2(\Omega ^0)\to {\mathbb R}$
satisfies the inf-sup condition
\begin{align}
\label{inf-sup-Omega-Stokes0}
\inf _{\left[\!\left[q \right]\!\right]\in L_2(\Omega ^0)/\mathbb R\setminus \{0\}}
\sup _{{\bf v}\in \mathring{H}^1(\Omega ^0)^n\setminus \{\bf 0\}}
\frac{b_{_{\Omega ^0}}({\bf v},\left[\!\left[q \right]\!\right])}
{{\mn\|\nabla{\bf v}\|_{L_2(\Omega^0)^{n\times n}}}\left[\!\left[q \right]\!\right]\|_{L_2(\Omega ^0)/\mathbb R}}
\geq C_{\Omega^0}^{-1}.
\end{align}

Then due to estimates \eqref{Anorm0}, \eqref{a-coers}, \eqref{inf-sup-Omega-Stokes0}, Theorem \ref{B-B} with $X=\mathring{H}^1(\Omega^0)^n$,
$V=\mathring{H}_{\rm{div}}^1(\Omega^0)^n$
and $\mathcal M=L_2(\Omega^0)/\mathbb R$ implies that for any
$(\mathbf F ,g )\in {H}^{-1}(\Omega^0)^n\times L_{2;0}(\Omega^0)$, there exists a unique solution
$({\bf u},\pi )\in \mathring{H}^1(\Omega^0)^n\times L_2(\Omega^0)/\mathbb R$ of the variational problem \eqref{transmission-linear-Stokes-a} and inequalities \eqref{mixed-Cu0}, \eqref{mixed-Cp0} hold.
\end{proof}

\section{Transmission problems for the anisotropic Navier-Stokes systems with $L_\infty $ coefficients and general data in $L_2$-based Sobolev spaces}\label{NSTP}

In this section we show the existence of a weak solution of a transmission problem in ${\mathbb R}^3$ for the anisotropic Navier-Stokes system
with general data in $L_2$-based weighted Sobolev spaces (see problem \eqref{Stokes-NS}).
We combine a well-posedness result for the linear Stokes system with the Leray-Schauder fixed point Theorem, and show existence of
solutions for the nonlinear {Dirichlet and} Dirichlet-transmission problems for the anisotropic Navier-Stokes system in a bounded Lipschitz composite domain (see Theorem \ref{int-D-NS-var}). Further, we construct a sequence of weak solutions $\{{\bf u}_k\}_{k\in {\mathbb N}}$ for Dirichlet-transmission problems for the Navier-Stokes system in a sequence of increasing bounded Lipschitz domains which approximate the exterior Lipschitz domain, such that each ${\bf u}_k$ satisfies {an estimate uniformly in $k$}.
Finally, we take the limit and obtain a
solution of the nonlinear transmission problem
in the sense of distributions (see Theorem \ref{existence-N-S-variable-ext}).

\subsection{Dirichlet and Dirichlet-transmission problems for the incompressible Navier-Stokes system in a bounded Lipschitz composite domain}\label{S5.3}

Next we need the following form of the Leray-Schauder fixed point theorem (see, e.g., \cite[Theorem 11.3]{Gilbarg-Trudinger}).
\begin{thm}
\label{L-S-fixed}
Let $X$ be a Banach space. Let $T:X\to X$ be a continuous and compact operator. If the set
$\left\{x\in X : x=\chi Tx \mbox{ for some } \chi\in[0,1]\right\}$
is bounded, then the operator $T$ has a fixed point.
\end{thm}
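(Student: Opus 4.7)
The plan is to deduce this version of the Leray--Schauder theorem from Schauder's fixed point theorem (every continuous compact self-map of a nonempty closed bounded convex subset of a Banach space has a fixed point) via a radial truncation argument. First, I would use the hypothesis to fix $M>0$ such that every $x\in X$ with $x=\chi T x$ for some $\chi\in[0,1]$ satisfies $\|x\|\le M$. Choosing any $R>M$, I would introduce the radial retraction $\rho:X\to\overline{B_R(0)}$ defined by $\rho(x)=x$ if $\|x\|\le R$ and $\rho(x)=Rx/\|x\|$ if $\|x\|>R$. This $\rho$ is continuous on $X$ and $\|\rho(x)\|\le\|x\|$ for every $x\in X$.

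The main step is to consider the auxiliary operator $T^{*}:=\rho\circ T:\overline{B_R(0)}\to\overline{B_R(0)}$. Since $T$ is continuous and compact on $X$ and $\rho$ is continuous, $T^{*}$ is a continuous compact self-map of the nonempty closed bounded convex set $\overline{B_R(0)}$. Applying Schauder's theorem yields a point $x^{*}\in\overline{B_R(0)}$ with $x^{*}=\rho(T(x^{*}))$.

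The final step is to show that the retraction is inactive at $x^{*}$, which will deliver the desired fixed point of $T$. If $\|T(x^{*})\|\le R$, then $\rho(T(x^{*}))=T(x^{*})$, so $x^{*}=T(x^{*})$ and we are done. Otherwise $\|T(x^{*})\|>R$, which forces $x^{*}=\chi T(x^{*})$ with $\chi:=R/\|T(x^{*})\|\in[0,1)$. The hypothesis then gives $\|x^{*}\|\le M$, whereas the formula $x^{*}=\rho(T(x^{*}))$ with $\|T(x^{*})\|>R$ yields $\|x^{*}\|=R>M$, a contradiction. Hence $\|T(x^{*})\|\le R$ and $x^{*}$ is a fixed point of $T$.

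The only delicate point is the verification that $T^{*}$ is compact as a self-map of $\overline{B_R(0)}$: this reduces to the fact that $T(\overline{B_R(0)})$ is relatively compact in $X$ (by the assumed compactness of $T$ on bounded sets) and that the continuous image of a relatively compact set under $\rho$ remains relatively compact. With this in hand, everything else is routine, and the argument does not appeal to degree theory.
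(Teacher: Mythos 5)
Your argument is correct, and it is essentially the classical proof: truncate $T$ by the radial retraction onto $\overline{B_R(0)}$ with $R$ strictly larger than the a priori bound $M$, apply Schauder's theorem (in the form valid for continuous compact self-maps of a closed bounded convex set, which rests on Mazur's theorem that the closed convex hull of a compact set is compact), and rule out the truncated case by observing it would force $\|x^*\|=R>M$ while the hypothesis gives $\|x^*\|\le M$. The paper itself does not prove this statement; it simply quotes it from Gilbarg--Trudinger (Theorem 11.3), whose proof is exactly the retraction-plus-Schauder argument you reproduce, so there is no divergence of method to report.
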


\subsubsection{\bf Existence of solution}

Let us consider the following mixed nonlinear variational problem in a bounded Lipschitz domain $\Omega^0$.

{\it Given $\mathbf F\in {H}^{-1}(\Omega ^0)^n$,
find $({\bf u},\pi )\in \mathring{H}^1(\Omega ^0)^n\times  L_{2}(\Omega^0)/\mathbb R$
such that}
\begin{equation}
\label{transmission-linear-NS-a}
\left\{\begin{array}{lll}
a_{{\mathbb A};\Omega ^0}({\bf u},{\bf v})+b_{\Omega ^0}({\bf v},\pi )=\langle \mathbf F,{\bf v}\rangle _{\Omega ^0}
-\!\left\langle({\bf u}\cdot \nabla ){\bf u},{\bf v}\right\rangle_{\Omega ^0}&
\forall \, {\bf v}\in \mathring{H}^{1}(\Omega ^0)^n,\\[1ex]
b_{\Omega ^0}({\bf u},q)=0
&
\forall \, q\in L_{2}(\Omega ^0)/{\mathbb R}\,,
\end{array}
\right.
\end{equation}
{\it where the bilinear forms
$a_{{\mathbb A};\Omega ^0}:\mathring{H}^1(\Omega )^n\times \mathring{H}^1(\Omega ^0)^n\to {\mathbb R}$
and
$b_{\Omega^0}: \mathring{H}^1(\Omega^0)^n\times L_2(\Omega^0)/\mathbb R\to \mathbb R$
are defined in \eqref{a-v-Omega0}, \eqref{b-v-Omega0}.
}

The {variational problem \eqref{transmission-linear-NS-a} is equivalent to the Dirichlet problem
\begin{equation}
\label{int-NS-D-00}
\hspace{-1em}\left\{
\begin{array}{ll}
{\boldsymbol{\mathcal L}({\bf u},\pi)}=-\mathbf F+({\bf u}\cdot\nabla){\bf u}\,, \ \
{\rm{div}} \, {\bf u}=0
& \mbox{ in } \Omega^0,\\
{\gamma }_{+}{\bf u}={\bf 0} &  \mbox{ on } \partial\Omega^0,
\end{array}\right.
\end{equation}
{with the unknown} $({\bf u},\pi )\in {H}^1(\Omega ^0)^n\times  L_{2}(\Omega^0)/\mathbb R$.

Now we can prove the existence result for variational problem \eqref{transmission-linear-NS-a} and hence for the Dirichlet problem \eqref{int-NS-D-00}, for the anisotropic $L_\infty -$coefficient Navier-Stokes equation in a bounded domain.}
\begin{thm}
\label{int-D-NS-var}
Let $n=3$. 
Let conditions \eqref{Stokes-1}-\eqref{mu} hold in $\Omega^0$. 
Then for any $\mathbf F\in {H}^{-1}(\Omega ^0)^n$ there exists a pair 
$({\bf u},\pi )\in { \mathring{H}^1(\Omega^0)^n}\times L_2(\Omega^0)/\mathbb R$ which satisfies the nonlinear variational problem \eqref{transmission-linear-NS-a}, as well as the nonlinear Dirichlet problem \eqref{int-NS-D-00}. Moreover, the following estimates hold
\begin{align}
\label{mixed-Cu0N}
&{\mn\|\nabla{\bf u}\|_{L_2(\Omega^0)^{n\times n}}}
\leq {2}c_{\mathbb A}|\!|\!|\mathbf F|\!|\!|_{{H^{-1}(\Omega ^0)^n}}
\\
\label{mixed-Cp0N}
&\|\pi\|_{L_2(\Omega^0)/\mathbb R}
\leq  C'_{\Omega^0}|\!|\!|\mathbf F|\!|\!|_{{H^{-1}(\Omega ^0)^n}}
+C''_{\Omega^0}|\Omega^0|^{1/6}|\!|\!|\mathbf F|\!|\!|^2_{{H^{-1}(\Omega ^0)^n}},
\end{align}
where the constant $c_{\mathbb A}$ is defined in \eqref{mu},
\begin{align}\label{E5.5}
C'_{\Omega^0}:=C_{\Omega^0}(1+{2}c_{\mathbb A}n^4\|{\mathbb A}\|_{L_\infty ({\Omega^0})^n}),\quad
C''_{\Omega^0}:=\frac{16}{3}{C_{\Omega^0}}c^2_{\mathbb A},
\end{align}
while $C_{\Omega^0}$ is as in {Lemma} $\ref{La-So}$ and does not depend on the diameter of $\Omega^0$.
\end{thm}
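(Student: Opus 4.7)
\medskip

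The plan is to apply the Leray-Schauder fixed point theorem (Theorem \ref{L-S-fixed}) to a solution map associated with the linearized Stokes problem treated in Theorem \ref{T-2}. On the Hilbert space $X=\mathring{H}^1_{\rm div}(\Omega^0)^n$, I would define an operator $\mathcal{T}:X\to X$ as follows: given $\mathbf{w}\in X$, let $\mathcal{T}\mathbf{w}:=\mathbf{u}$, where $(\mathbf{u},\pi)\in\mathring{H}^1(\Omega^0)^n\times L_2(\Omega^0)/\mathbb{R}$ is the unique solution of the linear Stokes variational problem \eqref{transmission-linear-Stokes-a}--\eqref{b-v-Omega0} with right-hand side $\mathbf{F}-(\mathbf{w}\cdot\nabla)\mathbf{w}$ and $g=0$. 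Clearly any fixed point of $\mathcal{T}$ together with its associated pressure solves \eqref{transmission-linear-NS-a}.

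The first step is to check that $\mathcal{T}$ is well defined: for $n=3$ the Sobolev embedding $H^1(\Omega^0)\hookrightarrow L_6(\Omega^0)$ gives $\mathbf{w}\otimes\mathbf{w}\in L_3(\Omega^0)^{n\times n}$, so ${\rm div}(\mathbf{w}\otimes\mathbf{w})=(\mathbf{w}\cdot\nabla)\mathbf{w}\in H^{-1}(\Omega^0)^n$ since ${\rm div}\,\mathbf{w}=0$; then Theorem \ref{T-2} applies. Continuity and compactness of $\mathcal{T}$ follow from the Rellich compactness $H^1(\Omega^0)\hookrightarrow L_p(\Omega^0)$ for $p<6$: if $\mathbf{w}_k\rightharpoonup\mathbf{w}$ in $X$, then $\mathbf{w}_k\to\mathbf{w}$ strongly in $L_4(\Omega^0)^n$, so $(\mathbf{w}_k\cdot\nabla)\mathbf{w}_k\to(\mathbf{w}\cdot\nabla)\mathbf{w}$ in $H^{-1}(\Omega^0)^n$, and the continuity of the linear Stokes solution operator (Theorem \ref{T-2}) gives strong convergence of $\mathcal{T}\mathbf{w}_k$ in $X$.

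The essential step is the a priori bound on the set $\{\mathbf{u}\in X:\mathbf{u}=\chi\mathcal{T}\mathbf{u},\ \chi\in[0,1]\}$. For such a fixed point one has
\begin{equation*}
a_{\mathbb{A};\Omega^0}(\mathbf{u},\mathbf{u})=\chi\langle\mathbf{F},\mathbf{u}\rangle_{\Omega^0}-\chi\langle(\mathbf{u}\cdot\nabla)\mathbf{u},\mathbf{u}\rangle_{\Omega^0}.
\end{equation*}
Since ${\rm div}\,\mathbf{u}=0$ and $\gamma_+\mathbf{u}=\mathbf{0}$ on $\partial\Omega^0$, integration by parts yields $\langle(\mathbf{u}\cdot\nabla)\mathbf{u},\mathbf{u}\rangle_{\Omega^0}=0$. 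Combining this with the coercivity estimate \eqref{a-coers} (obtained in the proof of Theorem \ref{T-2} via the first Korn inequality and \eqref{mu}), I get $\frac{1}{2c_{\mathbb{A}}}\|\nabla\mathbf{u}\|^2_{L_2(\Omega^0)^{n\times n}}\le\chi\,|\!|\!|\mathbf{F}|\!|\!|_{H^{-1}(\Omega^0)^n}\|\nabla\mathbf{u}\|_{L_2(\Omega^0)^{n\times n}}$, producing the uniform bound and the estimate \eqref{mixed-Cu0N}. Leray-Schauder then provides a fixed point $\mathbf{u}\in X$; the associated pressure $\pi\in L_2(\Omega^0)/\mathbb{R}$ comes from the linear Stokes problem.

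For the pressure estimate \eqref{mixed-Cp0N} I would plug the fixed point into Theorem \ref{T-2}, applied with forcing $\mathbf{F}-(\mathbf{u}\cdot\nabla)\mathbf{u}$, so that $\|\pi\|_{L_2(\Omega^0)/\mathbb{R}}\le C'_{\Omega^0}|\!|\!|\mathbf{F}-(\mathbf{u}\cdot\nabla)\mathbf{u}|\!|\!|_{H^{-1}(\Omega^0)^n}$. To bound the convection term, I would use the identity $\langle(\mathbf{u}\cdot\nabla)\mathbf{u},\mathbf{v}\rangle_{\Omega^0}=-\langle u_j u_i,\partial_j v_i\rangle_{\Omega^0}$ valid for $\mathbf{v}\in\mathring{H}^1(\Omega^0)^n$ (divergence-free and vanishing trace of $\mathbf{u}$) and H\"older's inequality, obtaining $|\langle(\mathbf{u}\cdot\nabla)\mathbf{u},\mathbf{v}\rangle_{\Omega^0}|\le\|\mathbf{u}\|^2_{L_4(\Omega^0)^n}\|\nabla\mathbf{v}\|_{L_2(\Omega^0)^{n\times n}}$; then for $n=3$ one has the Sobolev-H\"older interpolation $\|\mathbf{u}\|^2_{L_4(\Omega^0)^n}\le|\Omega^0|^{1/6}\|\mathbf{u}\|^2_{L_6(\Omega^0)^n}\le C|\Omega^0|^{1/6}\|\nabla\mathbf{u}\|^2_{L_2(\Omega^0)^{n\times n}}$. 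Substituting the bound \eqref{mixed-Cu0N} into this estimate yields \eqref{mixed-Cp0N} with the constants \eqref{E5.5}. The main technical obstacle is to make the $|\Omega^0|^{1/6}$-dependence sharp with a Sobolev constant independent of the diameter of $\Omega^0$; for this I would exploit that in $\mathbb{R}^3$ the Gagliardo-Nirenberg-Sobolev inequality $\|\mathbf{u}\|_{L_6}\le C\|\nabla\mathbf{u}\|_{L_2}$ holds on $\mathring{H}^1$ with a scale-invariant constant, making $C''_{\Omega^0}$ depend only on the shape of $\Omega^0$, as stated.
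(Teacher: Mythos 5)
Your strategy is essentially the paper's: linearize via the solution operator of the Stokes problem from Theorem \ref{T-2}, apply the Leray--Schauder theorem, use the antisymmetry identity $\left\langle(\mathbf u\cdot\nabla)\mathbf u,\mathbf u\right\rangle_{\Omega^0}=0$ together with the first Korn inequality and \eqref{mu} for the a priori bound, and bound the convective term by $\tfrac43|\Omega^0|^{1/6}\|\nabla\mathbf u\|^2_{L_2(\Omega^0)^{n\times n}}$ via H\"older and the scale-invariant Sobolev inequality. The only structural difference is that you set up the fixed point on the divergence-free space $\mathring H^1_{\rm div}(\Omega^0)^n$ (velocity only), whereas the paper iterates on the pair $(\mathbf w,p)$ with the pressure as a fictitious parameter; your variant is fine and in fact slightly cleaner, since the bound on the fixed-point set then concerns only the velocity. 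The existence argument and the velocity estimate \eqref{mixed-Cu0N} go through exactly as in the paper.

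There is, however, a quantitative gap in your derivation of \eqref{mixed-Cp0N}. You recover the pressure by applying the linear estimate \eqref{mixed-Cp0} of Theorem \ref{T-2} with forcing $\mathbf F-(\mathbf u\cdot\nabla)\mathbf u$, which yields
$\|\pi\|_{L_2(\Omega^0)/\mathbb R}\le C'_{\Omega^0}\,|\!|\!|\mathbf F|\!|\!|_{H^{-1}(\Omega^0)^n}
+\tfrac{16}{3}C'_{\Omega^0}c_{\mathbb A}^2|\Omega^0|^{1/6}|\!|\!|\mathbf F|\!|\!|^2_{H^{-1}(\Omega^0)^n}$,
i.e.\ the quadratic term carries the coefficient $\tfrac{16}{3}C'_{\Omega^0}c_{\mathbb A}^2$, which is strictly larger than the stated $C''_{\Omega^0}=\tfrac{16}{3}C_{\Omega^0}c_{\mathbb A}^2$ because $C'_{\Omega^0}=C_{\Omega^0}\bigl(1+2c_{\mathbb A}n^4\|\mathbb A\|_{L_\infty(\Omega^0)}\bigr)>C_{\Omega^0}$. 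This is exactly the intermediate estimate \eqref{E5.41-new} in the paper, which is then improved: one returns to the equation $\nabla\pi=\mathbf F+{\rm div}\bigl(\mathbb A\,\mathbb E(\mathbf u)\bigr)-(\mathbf u\cdot\nabla)\mathbf u$ and applies the gradient-isomorphism estimate \eqref{grad-est} of Theorem \ref{La-SoT} directly, so that the factor $n^4\|\mathbb A\|_{L_\infty}$ multiplies only the term ${\rm div}(\mathbb A\,\mathbb E(\mathbf u))$ (contributing to the linear part $C'_{\Omega^0}$), while the convective term is multiplied only by $C_{\Omega^0}$, giving the quadratic coefficient $C''_{\Omega^0}$ as in \eqref{E5.5}. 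Without this refinement your argument proves the theorem only with a weaker constant in \eqref{mixed-Cp0N}; to obtain the statement as written you should add the direct estimate of $\nabla\pi$ described above.
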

\begin{proof}
Let us first assume that $n\!\in \!\{3,4\}$. 
For any $\mathbf F\in {H}^{-1}(\Omega ^0)^3$
and an arbitrary, fixed element ${\bf w}\in {\mathring{H}^1(\Omega ^0)^n}$, we define the functional
\begin{align}
\label{P}
&\mathbf F_{\mathbf w}:\mathring{H}^1(\Omega ^0)^n\to {\mathbb R},\
\langle\mathbf F_{\mathbf w},{\bf v}\rangle _{\Omega ^0}:= \langle\mathbf F,{\bf v}\rangle _{\Omega ^0}-\left\langle({\bf w}\cdot \nabla ){\bf w},{\bf v}\right\rangle _{\Omega ^0} \quad
\forall \, {\bf v}\in {\mathring{H}^1(\Omega ^0)^n}.
\end{align}
and inequality \eqref{P-01} implies that it is well-defined, linear and continuous (cf. \cite[Proposition 1.1]{Seregin}),
and hence $\mathbf F_{\mathbf w}\in {H}^{-1}(\Omega ^0)^n$.

By Theorem~\ref{T-2}, there exists a unique solution $({\bf u}_{\mathbf w},\pi_{\mathbf w} )\in \mathring{H}^1(\Omega^0)^n\times L_2(\Omega^0)/\mathbb R$ to the linear variational problem \eqref{transmission-linear-Stokes-a},
\begin{equation}
\label{P-2}
\left\{\begin{array}{lll}
a_{{\mathbb A};\Omega ^0}({\bf u}_{\mathbf w},{\bf v})+b_{\Omega ^0}({\bf v},\pi_{\mathbf w})=
\langle\mathbf F_{\mathbf w},{\bf v}\rangle _{\Omega ^0}&
\forall \, {\bf v}\in \mathring{H}^{1}(\Omega ^0)^n,\\[1ex]
b_{\Omega ^0}({\bf u}_{\mathbf w},q)=0
&
\forall \, q\in L_{2}(\Omega ^0)/{\mathbb R}\,.
\end{array}
\right.
\end{equation}

Consequently, we can define the operators
\begin{align}
\label{N}
&{\mathbf U}:\mathring{H}^1(\Omega ^0)^n
\to \mathring{H}^1(\Omega ^0)^n,\
\boldsymbol{\bf U}(\mathbf w):={\bf u}_{\mathbf w},\\
\label{P}
&{\rm P}:\mathring{H}^1(\Omega ^0)^n
\to L_{2}(\Omega ^0)/{\mathbb R},\
{\rm P}(\mathbf w):=\pi_{\mathbf w},
\end{align}
which associate to $\mathbf w\in \mathring{H}^1(\Omega ^0)^n$ the unique solution
$({\bf u}_{\mathbf w},\pi_{\mathbf w})\in \mathring{H}^1(\Omega^0)^n\times L_2(\Omega^0)/\mathbb R$ of the equation \eqref{P-2}.
In addition, we also define the operator
\begin{align}\label{Ufr}
&\mathfrak U:\mathring{H}^1(\Omega ^0)^n\times L_{2}(\Omega ^0)/{\mathbb R}\to \mathring{H}^1(\Omega ^0)^n
\times L_{2}(\Omega ^0)/{\mathbb R},\
{\mathfrak U}(\mathbf w,p)=({\mathbf U}(\mathbf w),{\rm P}(\mathbf w)),
\end{align}
where $p\in L_{2}(\Omega ^0)/{\mathbb R}$ is a fictitious parameter.

Next we show that the nonlinear operator $\mathfrak U$ has a fixed point
$(\mathbf u,\pi)\in \mathring{H}^1(\Omega ^0)^n\times L_2(\Omega^0)/\mathbb R$ , i.e., $\mathfrak U(\mathbf u,\pi)=(\mathbf u,\pi)$, which will be a solution of the nonlinear variational problem \eqref{transmission-linear-NS-a}.
We intend to apply the variant of the Leray-Schauder fixed point Theorem stated in Theorem \ref{L-S-fixed}.

First, we show that $\mathfrak U$ is continuous.
Let ${\bf w},{\bf w}_0\in \mathring{H}^1(\Omega ^0)^n$.
Then by \eqref{P-2}, \eqref{mixed-Cu0} and \eqref{P-01},
\begin{align}\label{L5.33}
{\mn\|\nabla({\boldsymbol{\bf U}(\mathbf w)-\boldsymbol{\bf U}(\mathbf w_0))}\|_{L_2(\Omega^0)^{n\times n}}}
&\le c_{\mathbb A}|\!|\!|\mathbf F_{\mathbf w}-\mathbf F_{\mathbf w_0}|\!|\!|_{H^{-1}(\Omega ^0)^n}
\le c_{\mathbb A}|\!|\!|({\bf w}\cdot \nabla ){\bf w}
-({\bf w}_0\cdot \nabla ){\bf w}_0|\!|\!|_{H^{-1}(\Omega ^0)^n}\nonumber\\
&\le c_4c_{\mathbb A}\|(({\bf w}-{\bf w}_0)\cdot \nabla ){\bf w}
+({\bf w}_0\cdot \nabla )({\bf w}-{\bf w}_0)\|_{H^{-1}(\Omega ^0)^n}\nonumber\\
&\leq 2c_4'c_{\mathbb A}\|{\bf w}-{\bf w}_0\|_{H^1(\Omega ^0)^n}\left(\|{\bf w}\|_{H^1(\Omega ^0)^n}
+\|{\bf w}_0\|_{H^1(\Omega ^0 )^n}\right).
\end{align}
for some constants $c_4,c_4'>0$. Similarly, by \eqref{P-2}, \eqref{mixed-Cp0} and \eqref{P-01},
\begin{align}\label{L5.33P}
\|{\mathrm P(\mathbf w)-\mathrm P(\mathbf w_0)}\|_{L_2(\Omega^0)/\mathbb R}
&\le C'_{\Omega^0}
|\!|\!|\mathbf F_{\mathbf w}-\mathbf F_{\mathbf w_0}|\!|\!|_{H^{-1}(\Omega ^0)^n}
\nonumber\\
&\leq c_4'C_{\Omega^0}'\|{\bf w}-{\bf w}_0\|_{H^1(\Omega ^0)^n}\left(\|{\bf w}\|_{H^1(\Omega ^0)^n}
+\|{\bf w}_0\|_{H^1(\Omega ^0 )^n}\right).
\end{align}

Therefore, the operator $\mathfrak U$ defined by \eqref{Ufr} is continuous.

Let us now assume that $n=3$ and show that the operator $\mathfrak U$ is compact.
To this end, assume that $\{({\bf w}_k,p_k)\}_{k\in {\mathbb N}}$ is a bounded sequence in
$\mathring{H}^1(\Omega ^0 )^3\times L_2(\Omega^0)/\mathbb R$.
Thus, there exists a constant $M>0$ such that 
$\mn\|\nabla{\bf w}_k\|_{L_2(\Omega^0)^{3\times 3}}\leq M$, 
for any $k\in {\mathbb N}$. We claim that the sequence $\{\boldsymbol{\bf U}({\bf w}_k)\}_{k\in {\mathbb N}}$ contains a convergent subsequence in $\mathring{H}^1(\Omega ^0)^3$.

Indeed, the continuous embedding of the space $\mathring{H}^1(\Omega ^0)^3$ into the space $L_{6}(\Omega^0)^3$, by the Rellich theorem implies compactness of the embedding of the space $\mathring{H}^1(\Omega ^0)^3$ into the space
$L_3(\Omega )^3$.
This, in turn, implies that there exists a subsequence of $\{{\bf w}_k\}_{k\in {\mathbb N}}$, labeled as the sequence, which converges in $L_3(\Omega ^0)^3$, and, thus, is a Cauchy sequence in $L_3(\Omega ^0)^3$.
Then we show that the corresponding subsequence $\{{\bf u}_k\}_{k\in {\mathbb N}}$, ${\bf u}_k=\boldsymbol{\bf U}({\bf w}_k)$, is a Cauchy sequence in $\mathring{H}^1(\Omega ^0)^3$.

As in \eqref{L5.33}, we obtain by \eqref{P-2}, \eqref{mixed-Cu0}, \eqref{P-04} and \eqref{P-06},
\begin{align}\label{L5.33k}
\mn\|\nabla(\boldsymbol{\bf U}(\mathbf w_k)-\boldsymbol{\bf U}(\mathbf w_\ell))\|_{L_2(\Omega^0)^{3\times 3}}
&\le {2}c_{\mathbb A}|\!|\!|\mathbf F_{\mathbf w_k}-\mathbf F_{\mathbf w_\ell}|\!|\!|_{H^{-1}(\Omega ^0)^3}
\le {2}c_{\mathbb A}|\!|\!|({\bf w}_k\cdot \nabla ){\bf w}_k
-({\bf w}_\ell\cdot \nabla ){\bf w}_\ell|\!|\!|_{H^{-1}(\Omega ^0)^3}
\nonumber\\
&={2}c_{\mathbb A}|\!|\!|(({\bf w}_k-{\bf w}_\ell)\cdot \nabla ){\bf w}_k
+({\bf w}_\ell\cdot \nabla )({\bf w}_k-{\bf w}_\ell)|\!|\!|_{H^{-1}(\Omega ^0)^3}.
\nonumber\\
&\hspace{-8em}\leq {2}c_{\mathbb A}\|{\bf w}_k-{\bf w}_\ell\|_{L_3(\Omega ^0)^3}
\left(c_7\|{\bf w}_k\|_{H^1(\Omega ^0)^3}+c_8\|{\bf w}_\ell\|_{H^1(\Omega ^0 )^3}\right)
\leq C_*\|{\bf w}_k-{\bf w}_\ell \|_{L_3(\Omega ^0)^3},
\end{align}
where $C_*={2}c_{\mathbb A}(c_7+c_8)M>0$.
Similarly, by \eqref{P-2}, \eqref{mixed-Cp0}, \eqref{P-04} and \eqref{P-06},
\begin{align}\label{L5.33kP}
\|\mathrm P({\bf w}_k)-\mathrm P({\bf w}_\ell)\|_{_{L_2(\Omega^0)/\mathbb R}}
&\le C'_{\Omega^0}|\!|\!|\mathbf F_{\mathbf w_k}-\mathbf F_{\mathbf w_\ell}|\!|\!|_{H^{-1}(\Omega ^0)^3}
\leq C_{**}\|{\bf w}_k-{\bf w}_\ell \|_{L_3(\Omega ^0)^3},
\end{align}
where $C_{**}=C'_{\Omega^0}(c_7+c_8)M>0$.

Therefore, the convergence of $\{{\bf w}_k\}_{k\in {\mathbb N}}$  in $L_3(\Omega )^3$ implies that
$\{\mathfrak U({\bf w}_k,p_k)\}_{k\in {\mathbb N}}$ is a Cauchy sequence and thus converges in the space $\mathring{H}^1(\Omega ^0)^3\times L_2(\Omega^0)/\mathbb R$.
Consequently, the operator $\mathfrak U$ defined by \eqref{Ufr}, with $n=3$, is compact.

In order to apply the statement of Theorem \ref{L-S-fixed} to the operator $\mathfrak U$, it remains to show that the following set is bounded
\begin{align}
\label{A}
{\mathcal A}:=\left\{({\bf w},p)\in \mathring{H}^1(\Omega ^0)^3\times L_2(\Omega^0)/\mathbb R :
({\bf w},p)=\lambda \mathfrak U({\bf w},p),\ 0\leq \lambda \leq 1\right\}\,.
\end{align}

Let $\lambda \in (0,1]$ and $({\bf w},p)\in \mathring{H}^1(\Omega ^0)^3\times L_2(\Omega^0)/\mathbb R$ be such that
$({\bf w},p)=\lambda \mathfrak U({\bf w},p)$,
Thus, $\frac{1}{\lambda}{\bf w}\!=\!\boldsymbol{\bf U}({\bf w})$ and $\frac{1}{\lambda}p={\rm P}(\mathbf w)$.
By \eqref{P-2} and \eqref{N}-\eqref{P}, this implies
\begin{align}
\label{P-7}
\left\langle a_{ij}^{\alpha \beta }E_{j\beta }({\bf w}),E_{i\alpha }({\bf v})\right\rangle _{\Omega ^0}
-\langle {\rm{div}}\,{\bf v},p\rangle _{\Omega ^0}
&=\lambda\langle \mathbf F_{\bf w},{\bf v}\rangle _{\Omega ^0 }\nonumber\\
&=\lambda\langle \mathbf F,{\bf v}\rangle _{\Omega ^0 }
-\lambda\left\langle({\bf w}\cdot \nabla ){\bf w},{\bf v}\right\rangle _{\Omega ^0} \quad
\forall \, {\bf v}\in \mathring{H}^1(\Omega ^0)^3\,,\\
\label{P-7div}
-\langle {\rm{div}}\,{\bf w},q\rangle _{\Omega ^0} =0 \quad \forall \, q\in {L_{2}(\Omega ^0)/{\mathbb R}}\,.&
\end{align}
Equation \eqref{P-7div} implies that ${\rm{div}}\,{\bf w}=0$, i.e.,
${\bf w}\in \mathring{H}_{\rm{div}}^1(\Omega ^0)^3$.
Let us take ${\bf v}={\bf w}$ in \eqref{P-7}.
Then by relation \eqref{P-5a} we obtain
\begin{align}
\label{P-8}
\left\langle a_{ij}^{\alpha \beta }E_{j\beta }({\bf w}),E_{i\alpha }({\bf w})\right\rangle_{\Omega ^0}
=\lambda \langle \mathbf F,{\bf w}\rangle _{\Omega ^0}\,.
\end{align}
Finally, according to the first Korn inequality, condition \eqref{mu},  definition \eqref{norm-3e} of the norm $|\!|\!|\cdot |\!|\!|_{{H}^{-1}(\Omega ^0)^3}$ of
the space ${H}^{-1}(\Omega )^3$, and by equality \eqref{P-8}, we obtain that
\begin{align*}
\frac{1}{2}c_{\mathbb A}^{-1}\|\nabla {\bf w}\|_{L_2(\Omega ^0)^{3\times 3}}^2
\leq c_{\mathbb A}^{-1}\|{\mathbb E}({\bf w})\|_{L_2(\Omega ^0)^{3\times 3}}^2
\leq \left\langle a_{ij}^{\alpha \beta }E_{j\beta }({\bf w}),E_{i\alpha }({\bf w})\right\rangle _{\Omega ^0}\leq \lambda |\!|\!|\mathbf F|\!|\!|_{H^{-1}(\Omega ^0)^3}\|\nabla {\bf w}\|_{L_2(\Omega ^0)^{3\times 3}}.
\end{align*}
Therefore, for any $\lambda \in [0,1]$, we have the inequality
\begin{align}
\label{P-12}
\mn\|\nabla {\bf w}\|_{L_2(\Omega ^0)^{3\times 3}}&
\leq 2{\lambda }c_{\mathbb A}|\!|\!|\mathbf F|\!|\!|_{H^{-1}(\Omega ^0)^3}
\leq {2}c_{\mathbb A}|\!|\!|\mathbf F|\!|\!|_{H^{-1}(\Omega ^0)^3}\,.
\end{align}

On the other hand, Theorem~\ref{T-2} implies that if $p$ satisfies the variational problem \eqref{P-7}-\eqref{P-7div}, then for any $\lambda \in [0,1]$,
\begin{align*}
\|p\|_{L_2(\Omega^0)/\mathbb R}\leq\lambda C'_{\Omega^0}|\!|\!|\mathbf F_{\bf w}|\!|\!|_{H^{-1}(\Omega ^0)^3}
\leq C'_{\Omega^0}|\!|\!|\mathbf F_{\bf w}|\!|\!|_{H^{-1}(\Omega ^0)^3}
\leq C'_{\Omega^0}\left(|\!|\!|\mathbf F|\!|\!|_{H^{-1}(\Omega ^0)^3}
+|\!|\!|({\bf w}\cdot \nabla ){\bf w}|\!|\!|_{H^{-1}(\Omega ^0)^3}\right).
\end{align*}
Since $n=3$, from \eqref{P-061} and \eqref{P-12} we have,
\begin{align}\label{E5.20}
|\!|\!|({\bf w}\cdot \nabla ){\bf w}|\!|\!|_{H^{-1}(\Omega ^0)^3}
\le  \frac{4}{3}|\Omega^0|^{1/6}\mn\|\nabla {\bf w}\|_{L_2(\Omega ^0)^{3\times 3}}^2&
\le \frac{16}{3}|\Omega^0|^{1/6}c^2_{\mathbb A} |\!|\!|\mathbf F|\!|\!|^2_{H^{-1}(\Omega ^0)^3}\,,
\end{align}
and hence
\begin{align}\label{E5.41-new}
\|p\|_{L_2(\Omega^0)/\mathbb R}
\le C'_{\Omega^0}|\!|\!|\mathbf F|\!|\!|_{H^{-1}(\Omega ^0)^3}
+C'_{\Omega^0}\frac{16}{3}|\Omega^0|^{1/6}c^2_{\mathbb A} |\!|\!|\mathbf F|\!|\!|^2_{H^{-1}(\Omega ^0)^3}\,.
\end{align}
Inequalities \eqref{P-12} and \eqref{E5.41-new} show that the set ${\mathcal A}$ given by \eqref{A} is bounded
in $\mathring{H}^1(\Omega ^0)^3\times L_2(\Omega^0)/\mathbb R$.

Consequently, the operator $\mathfrak U$ in \eqref{Ufr} satisfies the assumption of Theorem \ref{L-S-fixed}, and then there exist a couple $(\mathbf u,\pi)\in \mathring{H}^1(\Omega ^0)^3\times L_2(\Omega^0)/\mathbb R$ such that $\mathfrak U(\mathbf u,\pi)=(\mathbf u,\pi)$, or, equivalently, $(\mathbf u,\pi)$ satisfies the variational problem \eqref{transmission-linear-NS-a} {and hence the Dirichlet problem \eqref{int-NS-D-00},} and estimates \eqref{P-12}, \eqref{E5.41-new} hold with $\mathbf w=\mathbf u$ and $p=\pi$.

{Estimate \eqref{P-12} gives the desired estimate \eqref{mixed-Cu0N}, but estimate \eqref{E5.41-new} for $\pi$ can be further improved as follows.
The first equation in \eqref{int-NS-D-00} implies that
\begin{align}\label{E5.21}
\nabla\pi={\mathbf F}+{\rm{div}}\left({\mathbb A}{\mathbb E}({\bf u})\right)
-{({\bf u}\cdot \nabla ){\bf u}} \mbox{ in } \Omega^0.
\end{align}
Then by inequality \eqref{grad-est} in Theorem~\ref{La-SoT} along with \eqref{E5.21},  we obtain that
\begin{align}
\label{weak-Dpi-e1b}
\|\pi\|_{L_2(\Omega^0)/\mathbb R}
&\leq C_{\Omega^0}\big(|\!|\!|{\bf F}|\!|\!|_{H^{-1}(\Omega^0)^n}
+|\!|\!|{\rm{div}}\left({\mathbb A}{\mathbb E}({\bf u})\right)|\!|\!|_{H^{-1}(\Omega^0)^n}
+|\!|\!|({\bf u}\cdot \nabla){\bf u}|\!|\!|_{H^{-1}(\Omega^0)^n}\big),
\end{align}
By \eqref{A-norm}, and \eqref{mixed-Cu0N},
\begin{align}\label{E5.59b}
|\!|\!|{\rm{div}}\left({\mathbb A}{\mathbb E}({\bf u})\right)|\!|\!|_{H^{-1}(\Omega^0)^n}&=
{\sup_{\boldsymbol \Psi \in \mathring{H}^1(\Omega ^0)^n,\,
\|\nabla \boldsymbol \Psi \|_{L_2(\Omega^0)^n}=1}}\Big|\left\langle{\rm{div}}\left({\mathbb A}{\mathbb E}({\bf u})\right),\boldsymbol \Psi\right\rangle _{\Omega^0}\Big|\nonumber\\
&={\sup_{\boldsymbol \Psi \in \mathring{H}^1(\Omega ^0)^n,\,
\|\nabla \boldsymbol \Psi \|_{L_2(\Omega^0)^n}=1}}\Big|\left\langle{\mathbb A}{\mathbb E}({\bf u}),\nabla \boldsymbol \Psi\right\rangle _{\Omega^0}\Big|\nonumber\\
&\leq \|{\mathbb A}{\mathbb E}({\bf u})\|_{L_2(\Omega ^0)^{n\times n}}\leq n^4\|{\mathbb A}\|_{L_\infty (\Omega ^0)}\|\nabla \, {\bf u}\|_{L_2(\Omega ^0)^{n\times n}}\nonumber\\
&
\leq 2c_{{\mathbb A}}n^4\|{\mathbb A}\|_{L_\infty (\Omega ^0)}|\!|\!|{\bf F}|\!|\!|_{H^{-1}(\Omega^0)^n},
\end{align}
Hence, substituting  in \eqref{weak-Dpi-e1b} estimate \eqref{E5.59b} and estimate \eqref{E5.20} for $\mathbf w=\mathbf u$, we obtain  \eqref{mixed-Cp0N}.
}
\end{proof}

Let us now consider the following  Dirichlet-transmission problem for the Navier-Stokes system in $\Omega^0$.
\begin{equation}
\label{int-NS-D-0}
\hspace{-1em}\left\{
\begin{array}{ll}
{\boldsymbol{\mathcal L}({\bf u}_+,\pi_+)}={\tilde{\bf f}_+}|_{\Omega _+}+({\bf u}_+\cdot\nabla){\bf u}_+\,, \ \
{\rm{div}} \, {\bf u}_+=0
& \mbox{ in } \Omega _{+},\\
{\boldsymbol{\mathcal L}({\bf u}_-,\pi _-)}={\tilde{\bf f}_{-}}|_{\Omega _{-}^0}
+({\bf u}_-\cdot \nabla ){\bf u}_- \,, \ \
{\rm{div}} \, {\bf u}_-=0
& \mbox{ in } \Omega ^0_{-},\\
{\gamma }_{+}{\bf u}_+-{\gamma }_{-}{\bf u}_{-}={\bf 0} &  \mbox{ on } \partial \Omega ,\\
{\bf t}^{+}\left({\bf u}_+,\pi _+;\tilde{\bf f}_++\mathring{E}_{\Omega _+}\left(({\bf u}_+\cdot \nabla ){\bf u}_+\right)\right)-{\bf t}^{-}\left({\bf u}_-,\pi _- ;\tilde{\bf f}_-+\mathring{E}_{\Omega ^0_-}\left(({\bf u}_-\cdot \nabla ){\bf u}_-\right)\right)=\boldsymbol\psi &  \mbox{ on } \partial \Omega ,\\
{\gamma }_{+}{\bf u}_-={\bf 0} &  \mbox{ on } \partial\Omega^0
\end{array}\right.
\end{equation}
{with given data $\big({\tilde{\bf f}}_{+},{\tilde{\bf f}}_{-},\boldsymbol\psi\big)\in
{\widetilde{H}^{-1}(\Omega _{+})^n
\times \widetilde H^{-1}_{\partial\Omega}(\Omega^0_-)^n
\times H^{-\frac{1}{2}}(\partial \Omega )^n\,}$
and unknown
$({\bf u}_+,\pi _+,{\bf u}_-,\pi _-)\in {\mathfrak X}_{\Omega _+,\Omega^0_-}$.
}
Here
$\Omega ^0$, $\Omega _+$ and $\Omega ^0_{-}$, $\partial\Omega$ and $\partial\Omega^0$ are the domains and boundaries introduced at the beginning of Section~\ref{S5.3-new},
{$\boldsymbol{\mathcal L}$ is the Stokes operator defined in \eqref{Stokes-new}}, and $\mathring{E}_{\Omega _+}$ and $\mathring{E}_{\Omega ^0_-}$ are the operators of extension by zero outside $\Omega _+$ and $\Omega _-^0$, respectively.

Then an argument similar to that for problem \eqref{var-Brinkman-transmission-sl}
implies that the nonlinear problem \eqref{int-NS-D-0} is equivalent to the mixed variational formulation \eqref{transmission-linear-NS-a} in the following sense.
\begin{thm}
\label{int-D-NS-var-Dtr}
Let $n=3$ and {let conditions} \eqref{Stokes-1}-\eqref{mu} hold in $\Omega^0$.
{For given data
$\big({\tilde{\bf f}}_{+},{\tilde{\bf f}}_{-},\boldsymbol\psi\big)\in
{{\widetilde{H}^{-1}(\Omega _{+})^n}
\times {\left({H}_{\partial\Omega^0} ^1(\Omega ^0_{-})^n\right)'}
\times H^{-\frac{1}{2}}(\partial \Omega )^n}$} let $({\bf u},\pi )\in {\mathring{H}^1(\Omega^0)^n}\times L_2(\Omega^0)/\mathbb R$ be the solution of variational problem \eqref{transmission-linear-NS-a} provided by Theorem $\ref{int-D-NS-var}$ {whenever}
$\mathbf F=-(\tilde{\bf f}_++\tilde{\bf f}_-)+\gamma ^*\boldsymbol\psi.$
Then there exists a solution
$({\bf u}_+,\pi _+,{\bf u}_-,\pi _-)\in {\mathfrak X}_{\Omega _+,\Omega^0_-}$ of the nonlinear
Dirichlet-transmission problem \eqref{int-NS-D-0} given by the relations
${\bf u}_+={\bf u}|_{\Omega _+},\ {\bf u}_-={\bf u}|_{\Omega_-^0},\ \pi_+=\pi|_{\Omega _+},\ \pi_-=\pi|_{\Omega _-^0},$
and estimates \eqref{mixed-Cu0N}, \eqref{mixed-Cp0N} hold.
\end{thm}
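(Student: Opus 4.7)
The plan is to exploit the equivalence between the mixed variational formulation \eqref{transmission-linear-NS-a} and the Dirichlet-transmission problem \eqref{int-NS-D-0}, following the same pattern used to establish the equivalence between \eqref{transmission-linear-Stokes-a} and \eqref{Dirichlet-var-Stokes} at the end of Section~\ref{S5.3-new}, but now accounting for the nonlinear convective term. Given the data $(\tilde{\bf f}_+,\tilde{\bf f}_-,\boldsymbol\psi)$, choose $\mathbf F\in H^{-1}(\Omega^0)^n$ as specified and apply Theorem~\ref{int-D-NS-var} to obtain $({\bf u},\pi)\in \mathring H^1(\Omega^0)^n\times L_2(\Omega^0)/\mathbb R$ satisfying \eqref{transmission-linear-NS-a}, together with estimates \eqref{mixed-Cu0N}, \eqref{mixed-Cp0N}. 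Set ${\bf u}_\pm:={\bf u}|_{\Omega_+},\,{\bf u}|_{\Omega_-^0}$ and $\pi_\pm:=\pi|_{\Omega_+},\,\pi|_{\Omega_-^0}$; by construction $({\bf u}_+,\pi_+,{\bf u}_-,\pi_-)\in {\mathfrak X}_{\Omega_+,\Omega^0_-}$.

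Next, I would verify the equations of \eqref{int-NS-D-0} one by one. Since $n=3$, the Sobolev embedding $H^1(\Omega^0)^3\hookrightarrow L_6(\Omega^0)^3$ and the H\"older inequality show that $({\bf u}_\pm\cdot\nabla){\bf u}_\pm\in L_{3/2}\subset H^{-1}$ of each subdomain, so its extension by zero belongs to $\widetilde H^{-1}(\Omega_+)^n$ and to $\widetilde H^{-1}_{\partial\Omega}(\Omega^0_-)^n$ respectively, making the generalized conormal derivatives in \eqref{int-NS-D-0} well-defined via Definition~\ref{conormal-derivative-var-Brinkman}. Testing the first line of \eqref{transmission-linear-NS-a} against ${\bf v}\in {\mathcal D}(\Omega_+)^n$ and then against ${\bf v}\in {\mathcal D}(\Omega^0_-)^n$ and using the form of $\mathbf F$ yields, in the distributional sense, the Stokes--Navier equations $\boldsymbol{\mathcal L}({\bf u}_\pm,\pi_\pm)=\tilde{\bf f}_\pm|_{\Omega_\pm}+({\bf u}_\pm\cdot\nabla){\bf u}_\pm$ in $\Omega_+$ and $\Omega^0_-$. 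The divergence-free conditions in both subdomains come immediately from the second line of \eqref{transmission-linear-NS-a}. The Dirichlet condition $\gamma_+{\bf u}_-={\bf 0}$ on $\partial\Omega^0$ follows from ${\bf u}\in \mathring H^1(\Omega^0)^n$, and the transmission condition $\gamma_+{\bf u}_+=\gamma_-{\bf u}_-$ on $\partial\Omega$ follows from ${\bf u}\in H^1(\Omega^0)^n$.

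For the conormal jump condition, which I expect to be the main technical point, I would adapt the argument used in the proof of Theorem~\ref{slp-var-apr-1-p} and in Lemma~\ref{lemma-add-new-1}. Applying Definition~\ref{conormal-derivative-var-Brinkman} to each pair $\big({\bf u}_\pm,\pi_\pm,\tilde{\bf f}_\pm+\mathring E_{\Omega_\pm}(({\bf u}_\pm\cdot\nabla){\bf u}_\pm)\big)\in \pmb{\mathcal H}^1(\Omega_\pm,\boldsymbol{\mathcal L})$ with any ${\bf v}\in \mathring H^1(\Omega^0)^n$, summing the two Green identities, and invoking the first variational equation of \eqref{transmission-linear-NS-a}, one obtains
\[
\big\langle {\bf t}^+({\bf u}_+,\pi_+;\tilde{\bf f}_++\mathring E_{\Omega_+}(({\bf u}_+\!\cdot\!\nabla){\bf u}_+))-{\bf t}^-({\bf u}_-,\pi_-;\tilde{\bf f}_-+\mathring E_{\Omega^0_-}(({\bf u}_-\!\cdot\!\nabla){\bf u}_-)),\gamma{\bf v}\big\rangle_{\partial\Omega}=\langle\boldsymbol\psi,\gamma{\bf v}\rangle_{\partial\Omega},
\]
for all ${\bf v}\in \mathring H^1(\Omega^0)^n$, where all nonlinear contributions cancel pairwise with the convective term in the Navier--Stokes equation because $\mathring E_{\Omega_\pm}(({\bf u}_\pm\cdot\nabla){\bf u}_\pm)$ is exactly the extension used in the definition of the generalized conormal derivative. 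Since the trace operator $\gamma\colon \mathring H^1(\Omega^0)^n\to H^{1/2}(\partial\Omega)^n$ is surjective onto $H^{1/2}(\partial\Omega)^n$ (one can employ an $H^{1/2}(\partial\Omega)^n$ extension supported in a tubular neighbourhood of $\partial\Omega$ lying in $\Omega^0$), the required identity on $\partial\Omega$ follows.

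The main obstacle is the bookkeeping of the generalized conormal derivatives with the nonlinear source terms and verifying that the sum of the two Green identities matches the variational equation term by term; the rest is routine and the estimates \eqref{mixed-Cu0N}, \eqref{mixed-Cp0N} transfer immediately from Theorem~\ref{int-D-NS-var} via the definitions ${\bf u}_\pm:={\bf u}|_{\Omega_\pm}$, $\pi_\pm:=\pi|_{\Omega_\pm}$.
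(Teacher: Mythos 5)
Your proposal is correct and follows essentially the same route as the paper, which proves this theorem simply by invoking ``an argument similar to that for problem \eqref{var-Brinkman-transmission-sl}'', i.e., the equivalence of the variational formulation \eqref{transmission-linear-NS-a} with the transmission problem \eqref{int-NS-D-0} combined with Theorem \ref{int-D-NS-var}. Your write-up just fills in the details of that equivalence (interior equations by testing with compactly supported functions, trace and Dirichlet conditions from ${\bf u}\in\mathring H^1(\Omega^0)^n$, and the conormal jump via the Green identities with the nonlinear term absorbed into the data of the generalized conormal derivative), exactly as the paper intends.
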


\subsubsection{\bf Uniqueness of the weak solution in a bounded domain}
In the case $n=3$, the space $\mathring{H}^1(\Omega ^0)^n$ is continuously embedded in $L_4(\Omega ^0)^n$. Moreover, the semi-norm $\|{\nabla \bf v} \|_{L^2(\Omega ^0)^{n\times n}}$ is a norm on $\mathring{H}^1(\Omega ^0)^n$, which is equivalent to the norm $\|{\bf v} \|_{{H}^1(\Omega ^0)^n}$
given by \eqref{Sobolev}.
Then there exists a constant $c=c(n,\Omega ^0)>0$ such that
\begin{align}
\label{L4}
\|{\bf v}\|_{L_4(\Omega ^0)^n}\leq c\|\nabla {\bf v}\|_{L^2(\Omega ^0)^{n\times n}} \quad \forall \ {\bf v}\in \mathring{H}^1(\Omega ^0)^n.
\end{align}
This inequality and an additional constraint to the given data of the nonlinear Dirichlet-transmission problem \eqref{int-NS-D-0} imply the following uniqueness result (see also \cite[Lemma 3.1]{Seregin} and \cite[Corollary 1]{Ot-Sa} {in the isotropic case \eqref{isotropic} with {$\mu =1$, $\lambda =0$}}
and homogeneous Dirichlet condition, and \cite[Theorem 4.2]{K-M-W-2} for a pseudostress approach).
\begin{thm}
\label{well-posed-N-S-Stokes-small}
Let $n=3$ and conditions \eqref{Stokes-1}-\eqref{mu} hold on $\Omega^0$.
Let $\mathbf F\in {H}^{-1}(\Omega ^0)^n$ satisfy the condition
\begin{align}
\label{uniqueness}
4c_{\mathbb A}^2c^2\|\mathbf F\|_{{H}^{-1}(\Omega ^0)^n}<1,
\end{align}
where $c_{\mathbb A}$ is the constant in \eqref{mu}, $c$ is the constant in inequality \eqref{L4}.
Then the Dirichlet problem
\eqref{int-NS-D-00} has a unique solution
$({\bf u},\pi )\in {H}^1(\Omega^0)^n\times L_2(\Omega^0)/\mathbb R$.

If, moreover,  $\mathbf F=-(\tilde{\bf f}_+ +\tilde{\bf f}_-)+\gamma ^*\boldsymbol\psi$ and
$\big({\tilde{\bf f}}_{+},{\tilde{\bf f}}_{-},\boldsymbol\psi\big)\in
{\widetilde{H}^{-1}(\Omega _{+})^n
\times \left({H}_{\partial\Omega^0} ^1(\Omega ^0_{-})^n\right)'
\times H^{-\frac{1}{2}}(\partial \Omega )^n}$
then the Dirichlet-transmission
{problem} \eqref{int-NS-D-0} has a unique {solution}
$({\bf u}_+,\pi _+,{\bf u}_-,\pi _-)\in {\mathfrak X}_{\Omega _+,\Omega^0_-}$.
\end{thm}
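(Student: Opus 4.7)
My plan is to establish uniqueness only, since existence for both the Dirichlet problem \eqref{int-NS-D-00} and the Dirichlet-transmission problem \eqref{int-NS-D-0} is already supplied by Theorems~\ref{int-D-NS-var} and~\ref{int-D-NS-var-Dtr}. The core is a standard monotonicity argument on the difference of two solutions of the equivalent variational problem \eqref{transmission-linear-NS-a}, closed by the smallness assumption~\eqref{uniqueness}.

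First I would take two solutions $({\bf u}_i,\pi_i)\in\mathring H^1(\Omega^0)^n\times L_2(\Omega^0)/\mathbb R$, $i=1,2$, of \eqref{transmission-linear-NS-a} corresponding to the same $\mathbf F$, set $\mathbf w:={\bf u}_1-{\bf u}_2$, $q:=\pi_1-\pi_2$, subtract the two variational identities, and exploit the algebraic decomposition $({\bf u}_1\cdot\nabla){\bf u}_1-({\bf u}_2\cdot\nabla){\bf u}_2=(\mathbf w\cdot\nabla){\bf u}_1+({\bf u}_2\cdot\nabla)\mathbf w$. Testing against $\mathbf v=\mathbf w$ kills the pressure term (since $\mathrm{div}\,\mathbf w=0$) and the second convective contribution (by the skew-symmetry identity $\langle({\bf u}_2\cdot\nabla)\mathbf w,\mathbf w\rangle_{\Omega^0}=0$ valid for $\mathbf u_2\in\mathring H^1_{\rm div}(\Omega^0)^n$, see \eqref{P-5a}), leaving
\begin{equation*}
a_{\mathbb A;\Omega^0}(\mathbf w,\mathbf w)=-\langle(\mathbf w\cdot\nabla){\bf u}_1,\mathbf w\rangle_{\Omega^0}.
\end{equation*}
Bounding the left-hand side from below by $(2c_{\mathbb A})^{-1}\|\nabla\mathbf w\|_{L_2(\Omega^0)^{n\times n}}^2$ via Korn's first inequality together with \eqref{mu}, and the right-hand side from above by $c^2\|\nabla{\bf u}_1\|_{L_2(\Omega^0)^{n\times n}}\|\nabla\mathbf w\|_{L_2(\Omega^0)^{n\times n}}^2$ via H\"older's inequality and the embedding~\eqref{L4} available for $n=3$, then inserting the a priori velocity estimate $\|\nabla{\bf u}_1\|_{L_2(\Omega^0)^{n\times n}}\leq 2c_{\mathbb A}|\!|\!|\mathbf F|\!|\!|_{H^{-1}(\Omega^0)^n}$ from~\eqref{mixed-Cu0N}, I arrive at
\begin{equation*}
\bigl(1-4c_{\mathbb A}^2c^2|\!|\!|\mathbf F|\!|\!|_{H^{-1}(\Omega^0)^n}\bigr)\|\nabla\mathbf w\|_{L_2(\Omega^0)^{n\times n}}^2\leq 0,
\end{equation*}
and the smallness condition~\eqref{uniqueness} forces $\mathbf w=\mathbf 0$. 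The pressure difference is then handled by substituting $\mathbf u_1=\mathbf u_2$ into the Stokes equation, which reduces to $\nabla q=\mathbf 0$ in $\mathcal D'(\Omega^0)$; hence $q$ is constant and therefore zero in the quotient space $L_2(\Omega^0)/\mathbb R$.

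Uniqueness for the Dirichlet-transmission problem~\eqref{int-NS-D-0} is then an immediate corollary via Theorem~\ref{int-D-NS-var-Dtr}: any two solutions $({\bf u}_\pm^{(i)},\pi_\pm^{(i)})\in\mathfrak X_{\Omega_+,\Omega_-^0}$ glue into two solutions $({\bf u}^{(i)},\pi^{(i)})\in\mathring H^1(\Omega^0)^n\times L_2(\Omega^0)/\mathbb R$ of the equivalent Dirichlet problem~\eqref{int-NS-D-00} with the common right-hand side $\mathbf F=-(\tilde{\bf f}_++\tilde{\bf f}_-)+\gamma^*\boldsymbol\psi$, and these must coincide by the argument above. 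The only step that requires a moment of care, rather than being truly an obstacle, is the justification that~\eqref{mixed-Cu0N} applies to an \emph{arbitrary} weak solution, not just to those produced by the Leray-Schauder construction in Theorem~\ref{int-D-NS-var}; but inspecting \eqref{P-7}-\eqref{P-12} with $\lambda=1$ shows that the bound rests solely on testing the variational identity with $\mathbf v=\mathbf u_i$, the Korn-plus-ellipticity lower bound, and the skew-symmetry of the convective trilinear form on $\mathring H^1_{\rm div}(\Omega^0)^n$, none of which invoke the fixed-point scheme.
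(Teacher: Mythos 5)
Your proposal is correct and follows essentially the same route as the paper: subtract the two variational identities, test with the difference $\mathbf w={\bf u}_1-{\bf u}_2$, use the decomposition of the convective difference together with the skew-symmetry identity \eqref{P-5a}, combine Korn plus \eqref{mu} with H\"older and \eqref{L4} and the a priori bound \eqref{mixed-Cu0N}, invoke the smallness condition \eqref{uniqueness}, and recover the pressure from $\nabla(\pi_1-\pi_2)=\mathbf 0$, with the transmission statement following from the equivalence in Theorem~\ref{int-D-NS-var-Dtr}. Your explicit remark that \eqref{mixed-Cu0N} holds for an arbitrary weak solution (by testing against itself, not via the fixed-point construction) is a welcome clarification of a step the paper states without comment, but it does not change the argument.
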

\begin{proof}
Assume that problem \eqref{int-NS-D-00} has two solutions $({\bf u}_1,\pi_1)$ and $({\bf u}_2,\pi_2)$. Therefore, each of them belongs to the space $\mathring{H}^1(\Omega^0)^n\times L_2(\Omega^0)/\mathbb R$ and satisfies variational problem \eqref{transmission-linear-NS-a}, 
as well as inequality \eqref{mixed-Cu0N}.
Then we obtain that for any ${\bf v}\in {\mathcal D}(\Omega ^0)^n$,
\begin{equation}
\label{NS-var-eq-int-unique}
\left\langle a_{ij}^{\alpha \beta }E_{j\beta }({\bf u}_1-{\bf u}_2),E_{i\alpha }({\bf v })\right\rangle _{\Omega ^0}+\left\langle({\bf u}_1\cdot \nabla ){\bf u}_1-({\bf u}_2\cdot \nabla ){\bf u}_2,{\bf v}\right\rangle _{\Omega ^0}
-\langle {\rm{div}}\, {\bf v},\pi_1-\pi_2\rangle _{\Omega ^0} =0.
\end{equation}
Moreover, the dense embedding of the space ${\mathcal D}_{\rm{div}}(\Omega ^0)^n$ in $\mathring{H}_{\rm{div}}^1({\Omega }^0)^n$ (see, e.g., \cite[p.143]{Amrouche-3}) shows that relation \eqref{NS-var-eq-int-unique} is satisfied also for any ${\bf v}\in \mathring{H}_{\rm{div}}^1({\Omega }^0)^n$. Then by choosing ${\bf v}={\bf u}_1-{\bf u}_2$ in \eqref{NS-var-eq-int-unique}, we obtain
\begin{align}
\label{NS-var-eq-int-unique-0}
\left\langle a_{ij}^{\alpha \beta }E_{j\beta }({\bf u}_1-{\bf u}_2),E_{i\alpha }({\bf u}_1-{\bf u}_2)\right\rangle _{\Omega ^0}=&-\left\langle\left(({\bf u}_1-{\bf u}_2)\cdot \nabla )\right){\bf u}_1,{\bf u}_1-{\bf u}_2\right\rangle _{\Omega ^0}\nonumber\\
&-\left\langle({\bf u}_2\cdot \nabla )({\bf u}_1-{\bf u}_2),{\bf u}_1-{\bf u}_2\right\rangle _{\Omega ^0}\,.
\end{align}
Due to the membership of ${\bf u}_1$ and ${\bf u}_2$ { in} the space $\mathring{H}_{\rm{div}}^1({\Omega }^0)^n$, relation \eqref{P-5a} implies that
\begin{align}
\left\langle({\bf u}_2\cdot \nabla )({\bf u}_1-{\bf u}_2),{\bf u}_1-{\bf u}_2\right\rangle _{\Omega ^0}=0\,,
\end{align}
and accordingly that equation \eqref{NS-var-eq-int-unique-0} reduces to
\begin{align}
\label{NS-var-eq-int-uniqe-1}
\left\langle a_{ij}^{\alpha \beta }E_{j\beta }({\bf u}_1-{\bf u}_2),E_{i\alpha }({\bf u}_1-{\bf u}_2)\right\rangle _{\Omega ^0}=&-\left\langle\left(({\bf u}_1-{\bf u}_2)\cdot \nabla )\right){\bf u}_1,{\bf u}_1-{\bf u}_2\right\rangle _{\Omega ^0}\,.
\end{align}
On the other hand, in view of {condition \eqref{mu} and the first Korn inequality}, we deduce that
\begin{align}
\label{P-11-unique}
\|\nabla ({\bf u}_1-{\bf u}_2)\|_{L_2(\Omega ^0)^{n\times n}}^2\, \leq
2c_{\mathbb A}\left\langle a_{ij}^{\alpha \beta }E_{j\beta }({\bf u}_1-{\bf u}_2),E_{i\alpha }({\bf u}_1-{\bf u}_2)\right\rangle _{\Omega ^0}\,,
\end{align}
and by the H\"{o}lder inequality and inequalities \eqref{L4} and \eqref{mixed-Cu0N},
we obtain that
\begin{align}
\label{NS-var-eq-int-uniqe-2}
\left\langle\left(({\bf u}_1-{\bf u}_2)\cdot \nabla )\right){\bf u}_1,{\bf u}_1-{\bf u}_2\right\rangle _{\Omega ^0}&\leq \|{\bf u}_1-{\bf u}_2\|_{L_4(\Omega ^0)^n}^2\|\nabla {\bf u}_1\|_{L_2(\Omega ^0)^{n\times n}}\nonumber\\
&\leq c^2\|\nabla ({\bf u}_1-{\bf u}_2)\|_{L^2(\Omega ^0)^{n\times n}}^2\|\nabla {\bf u}_1\|_{L_2(\Omega ^0)^{n\times n}}\nonumber\\
&\leq 2c_{{\mathbb A}}c^2|\!|\!|\mathbf F|\!|\!|_{{H}^{-1}(\Omega ^0)^n}\|\nabla ({\bf u}_1-{\bf u}_2)\|_{L^2(\Omega ^0)^{n\times n}}^2.
\end{align}
Then equality \eqref{NS-var-eq-int-uniqe-1} and inequalities \eqref{P-11-unique} and \eqref{NS-var-eq-int-uniqe-2} imply that
\begin{align}
\label{uniqueness-1}
\|\nabla ({\bf u}_1-{\bf u}_2)\|_{L_2(\Omega ^0)^{n\times n}}^2\leq 4c_{{\mathbb A}}^2c^2|\!|\!|\mathbf F|\!|\!|_{{H}^{-1}(\Omega ^0)^n}\|\nabla ({\bf u}_1-{\bf u}_2)\|_{L_2(\Omega ^0)^{n\times n}}^2.
\end{align}
However, assumption \eqref{uniqueness} shows that estimate \eqref{uniqueness-1} is possible only if ${\bf u}_1-{\bf u}_2={\bf 0}$.

Hence, \eqref{NS-var-eq-int-unique} reduces to
$\langle {\rm{div}}\, {\bf v},\pi_1-\pi_2\rangle _{\Omega ^0} =0$
for any ${\bf v}\in {\mathcal D}(\Omega ^0)^n,$
i.e., $\nabla(\pi_1-\pi_2)=0$ in $\Omega ^0$, which implies that $\pi_1-\pi_2=const.$ and thus $\pi_1=\pi_2$ in $L_2(\Omega^0)/\mathbb R$.

Finally, let us mention that the equivalence of the nonlinear Dirichlet-transmission problem \eqref{int-NS-D-0} with the mixed variational formulation \eqref{transmission-linear-NS-a}, and hence with the nonlinear Dirichlet problem \eqref{int-NS-D-00}, implies that the Dirichlet-transmission problem \eqref{int-NS-D-0} has also a unique solution $({\bf u}_+,\pi _+,{\bf u}_-,\pi _-)\in {\mathfrak X}_{\Omega _+,\Omega^0_-}$.
\end{proof}

\subsection{Existence result for the anisotropic Navier-Stokes system in $\mathbb R^n$ and for the corresponding transmission problem}

We {assume in this section that $n=3$} and prove, first, the existence of a solution
$({\bf u},\pi)\in {\mathcal H}_{{\rm{div}}}^1({\mathbb R}^n)^n\times L_{2,\rm loc}(\Omega)/\mathbb R$,
of the Navier-Stokes equation
\begin{align}\label{E5.63}
{\boldsymbol{\mathcal L}({\bf u},\pi )}=-\mathbf F+({\bf u}\cdot \nabla ){\bf u}\, 
\mbox{ in } \, \mathbb R^n
\end{align}
in the sense of distributions, implying that the couple $({\bf u},\pi)$ satisfies the variational equation
\begin{equation}
\label{transmission-nonlinear-NS}
\left\langle a_{ij}^{\alpha \beta }E_{j\beta }({\bf u}),E_{i\alpha }({\bf v})\right\rangle _{{\mathbb R}^n}
+\left\langle({\bf u}\cdot \nabla ){\bf u},{\bf v}\right\rangle _{{\mathbb R}^n}
-\langle {\rm{div}}\, {\bf v},\pi \rangle _{{\mathbb R}^n}=\langle \mathbf F,{\bf v}\rangle _{{\mathbb R}^n} \quad \forall \, {\bf v}\in { {\mathcal D}({\mathbb R}^n)^n}\,.
\end{equation}
In particular, we will prove some estimates of pressure norm growth, which seem to be new even in the simpler isotropic case.

Next, we will apply these results to prove the {existence of a solution} for the following Poisson problem of transmission type for the anisotropic Navier-Stokes system in ${\mathbb R}^n$:
\begin{equation}
\label{Stokes-NS}
\left\{
\begin{array}{ll}
{\boldsymbol{\mathcal L}({\bf u}_+,\pi _+)}={\tilde{\bf f}_{+}}|_{\Omega _{+}}+({\bf u}_+\cdot \nabla ){\bf u}_+\,, \ \ {\rm{div}} \, {\bf u}_+=0 & \mbox{ in } \Omega _{+},
\\
{\boldsymbol{\mathcal L}({\bf u}_-,\pi _-)}={\tilde{\bf f}_{-}}|_{\Omega _{-}}+({\bf u}_-\cdot \nabla ){\bf u}_-\,, \ \  {\rm{div}} \, {\bf u}_-=0 & \mbox{ in } \Omega _{-},\ \\
{\gamma }_{+}{\bf u}_+-{\gamma }_{-}{\bf u}_{-}={\bf 0} &  \mbox{ on } \partial \Omega ,\\
{\bf t}^{+}\left({\bf u}_+,\pi _+;\tilde{\bf f}_+ +\mathring{E}_+\left(({\bf u}_+\cdot \nabla ){\bf u}_+\right)\right)-{\bf t}^{-}\left({\bf u}_-,\pi _- ;\tilde{\bf f}_- +\mathring{E}_{-}\left(({\bf u}_-\cdot \nabla ){\bf u}_-\right)\right)=\boldsymbol{\psi}  &  \mbox{ on } \partial \Omega ,
\end{array}\right.
\end{equation}
with general given data $(\tilde{\bf f}_+,\tilde{\bf f}_-,\boldsymbol{\psi})\in {\mathfrak Y}$ and unknown $\left({\bf u}_{+},\pi _{+},{\bf u}_{-},\pi _{-}\right)\in {\mathfrak X}$.
Here, as previously, $\Omega _+$ is a bounded Lipschitz domain in ${\mathbb R}^n$ with connected boundary {denoted by $\partial \Omega $}, and $\Omega _{-}:={\mathbb R}^n\setminus \overline{\Omega_+}$, while
\begin{align}
\label{sol}
&{\mathfrak X}:=
{H}^1(\Omega _+)^n\times
(L_{2,\rm loc}(\mathbb R^n)/\mathbb R)|_{\Omega_+}\times
{\mathcal H}^1(\Omega _-)^n\times
(L_{2,\rm loc}(\mathbb R^n)/\mathbb R)|_{\Omega_-}\,,\\
\label{data}
&{\mathfrak Y}:=
\widetilde{H}^{-1}(\Omega _{+})^n\times \widetilde{\mathcal H}^{-1}(\Omega _{-})^n
\times H^{-\frac{1}{2}}(\partial \Omega )^n\,.
\end{align}

Let us recall that the semi-norm
\begin{align*}
|{\bf v}|_{{\mathcal H}^1({\mathbb R}^n)^n}:=\|\nabla {\bf v}\|_{L_2({\mathbb R}^n)^{n\times n}} \quad \forall \, {\bf v}\in {\mathcal H}^1({\mathbb R}^n)^n,
\end{align*}
is a norm on the space ${\mathcal H}^1({\mathbb R}^n)^n$, equivalent to the norm $\|\cdot \|_{{\mathcal H}^1({\mathbb R}^n)^n}$ given by \eqref{weight-2p}.
The closure of the space ${\mathcal D}({\mathbb R}^n)^n$ with respect to the semi-norm $|\cdot |_{{\mathcal H}^1({\mathbb R}^n)^n}$ coincides with ${\mathcal H}^1({\mathbb R^n})^n$.
Due to this, the space ${\mathcal H}^{-1}({\mathbb R}^n)^n=\left({\mathcal H}^1({\mathbb R}^n)^n\right)'$ can be endowed with the norm
\begin{align}
\label{norm-Rn}
|\!|\!|{\bf g}|\!|\!|_{{\mathcal H}^{-1}({\mathbb R}^n)^n}:=\sup_{{\bf v}\in {\mathcal H}^1({\mathbb R}^n)^n,\ \|\nabla {\bf v}\|_{L_2({\mathbb R}^n)^{n\times n}}=1}|\langle {\bf g},{\bf v}\rangle _{{\mathbb R}^n}| \quad \forall \ {\bf g}\in {\mathcal H}^{-1}({\mathbb R}^n)^n.
\end{align}

If $\Omega _-\subseteq {\mathbb R}^n$ is an exterior Lipschitz domain, then the semi-norm
\begin{align}
\label{seminorm^n}
|{\bf v}|_{{\mathcal H}^1(\Omega _{-})^n}:=\|\nabla {\bf v}\|_{L_2(\Omega _{-})^{n\times n}}
\end{align}
is a norm on the space $\mathring{\mathcal H}^1(\Omega _-)^n$ of all functions in ${\mathcal H}^1(\Omega _-)^n$ with null traces on $\partial \Omega$, and is equivalent to the norm $\|\cdot \|_{{\mathcal H}^1(\Omega _-)^n}$ given by \eqref{weight-2p} (with $\Omega _-$ in place of ${\mathbb R}^n$){, cf. \eqref{seminorm}.
{\rd The space $\mathring{\mathcal H}^1(\Omega _-)^n$ can be equivalently defined as} the closure of the space
${\mathcal D}(\Omega _-)^n$ with respect to the {\it semi-norm} $|\cdot |_{{\mathcal H}^1(\Omega _-)^n}$.}
Let ${\mathcal H}^{-1}(\Omega _-)^n$ be the dual of $\mathring{\mathcal H}^1(\Omega _-)^n$, endowed with the norm $|\!|\!|\cdot |\!|\!|_{\mathcal H^{-1}(\Omega _-)^n}$ generated by the semi-norm \eqref{seminorm^n}
i.e.,
\begin{align}
\label{norm-3f}
|\!|\!|{\bf g}|\!|\!|_{{\mathcal H}^{-1}(\Omega _-)^n}
:=\sup_{{\bf v}\in \mathring{\mathcal H}^1(\Omega _-)^n,\ \|\nabla {\bf v}\|_{L_2(\Omega _{-})^{n\times n}}=1}
|\langle {\bf g},{\bf v}\rangle _{\Omega _-}| \quad
\forall \ {\bf g}\in \mathcal H^{-1}(\Omega _-)^n.
\end{align}

\begin{thm}
\label{existence-N-S-variable-ext}
Let $n=3$.
Let conditions \eqref{Stokes-1}-\eqref{mu} hold on $\mathbb R^n$ and let $\boldsymbol{\mathcal L}$ denote the Stokes operator defined in \eqref{Stokes-new}.
Let
$\mathbf F\in {\mathcal H}^{-1}({\mathbb R}^n)^n$.
Then there exists a {solution
$({\bf u},\pi)\in {\mathcal H}_{{\rm{div}}}^1({\mathbb R}^n)^n\times L_{2,\rm loc}(\mathbb R^n)/\mathbb R$,}
of the Navier-Stokes equation \eqref{E5.63}
{in} the sense of distributions, {{which means that} the couple $({\bf u},\pi)$ satisfies {the} variational equation \eqref{transmission-nonlinear-NS}.}
In addition, {the following estimates hold}
\begin{align}
\label{estimate-NS}
&\|\nabla {\bf u}\|_{L_2({\mathbb R}^n)^n}\leq  2c_{\mathbb A}
|\!|\!|\mathbf F|\!|\!|_{\mathcal H^{-1}(\mathbb R^n)^n},\\
\label{estimate-NS-pi}
&\|\pi\|_{L_2(\Omega^0)/\mathbb R}
\leq  C'_{\mathbb R^n}|\!|\!|\mathbf F|\!|\!|_{\mathcal H^{-1}(\mathbb R^n)^n}
+C''_{\mathbb R^n}|\Omega^0|^{1/6}|\!|\!|\mathbf F|\!|\!|^2_{\mathcal H^{-1}(\mathbb R^n)^n}
\end{align}
for any ball $\Omega^0$ {in ${\mathbb R}^n$}.
Here $c_{\mathbb A}$ is {the ellipticity constant introduced} in \eqref{mu},
\begin{align}\label{E5.40}
C'_{\mathbb R^n}:=C_{\Omega^0}(1+2c_{\mathbb A}n^4\|{\mathbb A}\|_{L_\infty ({\mathbb R^n})^n}),\quad
C''_{\mathbb R^n}:=\frac{16}{3}{C_{\Omega^0}}c^2_{\mathbb A},
\end{align}
while {the constant} $C_{\Omega^0}$ is as in Lemma $\ref{La-So}$ and does not depend on the radius of the ball $\Omega^0$.

Moreover, if $\mathbf F$ is defined {in terms of the given data $(\tilde{\bf f}_+,\tilde{\bf f}_-,\boldsymbol{\psi})\in {\mathfrak Y}$ by formula} \eqref{E5.56},
then the pair $({\bf u},\pi )$ solves the nonlinear transmission problem \eqref{Stokes-NS} {with the given data $(\tilde{\bf f}_+,\tilde{\bf f}_-,\boldsymbol{\psi})\in {\mathfrak Y}$}, in the sense of distributions.
\end{thm}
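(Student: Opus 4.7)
I approximate $\mathbb R^n$ by an increasing family of balls $B_k\supset \overline\Omega_+$ with radii $R_k\to\infty$, solve the associated nonlinear Dirichlet problem in each $B_k$ via Theorem~\ref{int-D-NS-var}, and pass to the limit. The restriction of $\mathbf F\in\mathcal H^{-1}(\mathbb R^n)^n$ to the subspace $\mathring H^1(B_k)^n$ (embedded into $\mathcal H^1(\mathbb R^n)^n$ by extension by zero) yields $\mathbf F_k\in H^{-1}(B_k)^n$ with $|\!|\!|\mathbf F_k|\!|\!|_{H^{-1}(B_k)^n}\le |\!|\!|\mathbf F|\!|\!|_{\mathcal H^{-1}(\mathbb R^n)^n}$. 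Theorem~\ref{int-D-NS-var} then supplies $(\mathbf u_k,\pi_k)\in \mathring H^1(B_k)^n\times L_2(B_k)/\mathbb R$ solving $\boldsymbol{\mathcal L}(\mathbf u_k,\pi_k)=-\mathbf F+(\mathbf u_k\cdot\nabla)\mathbf u_k$, ${\rm div}\,\mathbf u_k=0$ in $B_k$, with $\gamma\mathbf u_k=\mathbf 0$ on $\partial B_k$.

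\textbf{Uniform bounds and extraction.} Estimate \eqref{mixed-Cu0N} gives $\|\nabla\mathbf u_k\|_{L_2(B_k)^{n\times n}}\le 2c_{\mathbb A}|\!|\!|\mathbf F|\!|\!|_{\mathcal H^{-1}(\mathbb R^n)^n}$ uniformly in $k$; extending by zero produces $\tilde{\mathbf u}_k\in\mathcal H^1_{\rm div}(\mathbb R^n)^n$ with the same bound. By reflexivity of $\mathcal H^1(\mathbb R^n)^n$ and the local Rellich compactness $H^1(K)^n\hookrightarrow\hookrightarrow L_3(K)^n$ for bounded $K$ (the case $n=3$ is crucial here), a subsequence satisfies $\tilde{\mathbf u}_k\rightharpoonup\mathbf u$ weakly in $\mathcal H^1(\mathbb R^n)^n$ and strongly in $L_3(K)^n$ for each bounded $K\subset\mathbb R^n$, with $\mathbf u\in\mathcal H^1_{\rm div}(\mathbb R^n)^n$ and \eqref{estimate-NS} following by weak lower semicontinuity. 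For the pressure, fix any ball $\Omega^0\subset B_k$: the identity $\nabla\pi_k=\mathbf F+{\rm div}(\mathbb A\mathbb E(\mathbf u_k))-(\mathbf u_k\cdot\nabla)\mathbf u_k$ combined with inequality \eqref{grad-est} applied on $\Omega^0$, together with Remark~\ref{La-SoC} (which makes the Bogovski\u{\i} constant depend only on the shape of $\Omega^0$, not on its position inside $B_k$) and the convective-term bound used in the proof of Theorem~\ref{int-D-NS-var}, yields \eqref{estimate-NS-pi} uniformly in $k$. A diagonal extraction over a countable exhaustion $\{\Omega^0_j\}$ of $\mathbb R^n$ then produces $\pi_k\rightharpoonup\pi$ in $L_2(\Omega^0_j)/\mathbb R$ for every $j$, with $\pi\in L_{2,\rm loc}(\mathbb R^n)/\mathbb R$ satisfying \eqref{estimate-NS-pi}.

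\textbf{Passing to the limit.} For any $\mathbf v\in\mathcal D(\mathbb R^n)^n$ with ${\rm supp}\,\mathbf v\subset B_{k_0}$ and $k\ge k_0$, rewriting the convective term in divergence form via ${\rm div}\,\mathbf u_k=0$ gives
\begin{align*}
\bigl\langle a_{ij}^{\alpha\beta}E_{j\beta}(\mathbf u_k),E_{i\alpha}(\mathbf v)\bigr\rangle_{\mathbb R^n}
-\bigl\langle \mathbf u_k\otimes\mathbf u_k,\nabla\mathbf v\bigr\rangle_{\mathbb R^n}
-\bigl\langle{\rm div}\,\mathbf v,\pi_k\bigr\rangle_{\mathbb R^n}
=\langle\mathbf F,\mathbf v\rangle_{\mathbb R^n}.
\end{align*}
The linear terms pass to the limit by the weak convergences of $\nabla\tilde{\mathbf u}_k$ in $L_2(\mathbb R^n)$ and of $\pi_k$ in $L_2({\rm supp}\,\mathbf v)/\mathbb R$. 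The strong $L_3({\rm supp}\,\mathbf v)^n$ convergence of $\mathbf u_k$ yields $\mathbf u_k\otimes\mathbf u_k\to\mathbf u\otimes\mathbf u$ in $L_{3/2}({\rm supp}\,\mathbf v)^{n\times n}$, which handles the nonlinear term. Hence $(\mathbf u,\pi)$ satisfies \eqref{transmission-nonlinear-NS}, equivalently \eqref{E5.63} in $\mathcal D'(\mathbb R^n)^n$.

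\textbf{Transmission problem and main obstacle.} Specializing $\mathbf F=-(\tilde{\mathbf f}_++\tilde{\mathbf f}_-)+\gamma^*\boldsymbol\psi$: testing the distributional equation against $\mathbf v\in\mathcal D(\Omega_\pm)^n$ recovers the PDEs in $\Omega_\pm$; the interface condition $[\gamma\mathbf u]=\mathbf 0$ is automatic from $\mathbf u\in\mathcal H^1(\mathbb R^n)^n$; and testing against arbitrary $\mathbf w\in\mathcal H^1(\mathbb R^n)^n$ supported in a bounded neighborhood of $\partial\Omega$, and invoking Lemma~\ref{lemma-add-new} with $\tilde{\mathbf f}_\pm$ replaced by $\tilde{\mathbf f}_\pm+\mathring E_\pm((\mathbf u_\pm\cdot\nabla)\mathbf u_\pm)\in \widetilde{\mathcal H}^{-1}(\Omega_\pm)^n$ (the latter membership following from $\mathbf u\in L_{2n/(n-2)}(\mathbb R^n)^n$ and $\nabla\mathbf u\in L_2(\mathbb R^n)^{n\times n}$), isolates the conormal jump $[\mathbf t(\mathbf u,\pi;\,\cdot\,)]=\boldsymbol\psi$ on $\partial\Omega$. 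The principal obstacle is securing a $k$-uniform pressure bound with the explicit dependence on $|\Omega^0|$ exhibited in \eqref{estimate-NS-pi}: this rests on Remark~\ref{La-SoC}, which furnishes a Bogovski\u{\i} constant depending only on the shape of $\Omega^0$, so that the local control of $\pi_k$ persists as $B_k\nearrow\mathbb R^n$ and survives in the weak limit.
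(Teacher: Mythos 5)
Your construction of the velocity follows the paper's own scheme (invading balls $B_k$, Theorem~\ref{int-D-NS-var} on each ball, the uniform bound $|\!|\!|\mathbf F|_{B_k}|\!|\!|_{H^{-1}(B_k)^3}\le|\!|\!|\mathbf F|\!|\!|_{\mathcal H^{-1}(\mathbb R^3)^3}$, zero extension, weak limit, local Rellich compactness for the nonlinearity), so that part is essentially the same argument; where you genuinely diverge is the pressure. The paper first proves the limit velocity satisfies the equation against divergence-free test fields only, then invokes the De Rham theorem to manufacture $\pi\in\mathcal D'(\mathbb R^3)$, obtains $\pi\in L_{2,\rm loc}$ from the local $H^{-1}$ regularity of $\mathbf F+{\rm div}(\mathbb A\mathbb E(\mathbf u))-(\mathbf u\cdot\nabla)\mathbf u$, and derives \eqref{estimate-NS-pi} for the limit pair directly via \eqref{grad-est}. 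You instead carry the approximate pressures $\pi_k$ along, proving $k$-uniform local bounds and extracting a weak limit by a diagonal argument; this is a valid alternative (it avoids De Rham and lets you pass to the limit in the full, non-solenoidal variational identity at once), but it requires two small repairs: (i) the convective bound you cite from the proof of Theorem~\ref{int-D-NS-var} is \eqref{P-061}, valid for fields vanishing on $\partial\Omega^0$, which $\mathbf u_k|_{\Omega^0}$ does not; you need \eqref{P-0512} together with the Sobolev inequality, i.e.\ the variant \eqref{P-061cal} applied to the zero extension $\tilde{\mathbf u}_k\in\mathcal H^1_{\rm div}(\mathbb R^3)^3$, which gives exactly the constant in \eqref{E5.40}; (ii) since each $\pi_k$ is only a class modulo constants, the weak limits on the members of the exhaustion must be glued consistently (e.g.\ by normalizing all representatives to zero mean on the smallest ball), and the estimate \eqref{estimate-NS-pi} for an arbitrary ball then follows either by lower semicontinuity or, as in the paper, by re-running \eqref{grad-est} on the limit equation.

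The concrete gap is in the transmission step. The claimed membership $\mathring E_\pm\big((\mathbf u_\pm\cdot\nabla)\mathbf u_\pm\big)\in\widetilde{\mathcal H}^{-1}(\Omega_\pm)^3$ does not follow from $\mathbf u\in L_6(\mathbb R^3)^3$, $\nabla\mathbf u\in L_2(\mathbb R^3)^{3\times3}$ when $\Omega_-$ is unbounded: these give only $(\mathbf u\cdot\nabla)\mathbf u\in L_{3/2}(\Omega_-)^3$ and $\mathbf u\otimes\mathbf u\in L_3$, whereas duality with $\mathcal H^1(\Omega_-)^3\hookrightarrow L_6(\Omega_-)^3$ (cf.\ \eqref{weight-Lp}) would require $L_{6/5}$ integrability, or $\mathbf u\otimes\mathbf u\in L_2$, i.e.\ $\mathbf u\in L_4(\mathbb R^3)^3$ globally, none of which is available. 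Moreover Lemma~\ref{lemma-add-new} is not the right tool here, since it is stated for right-hand sides in $L_2(\mathbb R^3\setminus\partial\Omega)^3$. The repair is the paper's localization in Theorem~\ref{int-NS-var-Dtr}: fix a bounded Lipschitz domain $\Omega_0\supset\overline\Omega_+$, test only with $\mathbf v\in\mathcal D(\Omega_0)^3$, and replace $\Omega_-$ by $\Omega_{0-}=\Omega_-\cap\Omega_0$, where $L_{3/2}(\Omega_{0-})^3\hookrightarrow\widetilde H^{-1}(\Omega_{0-})^3$ so the generalized conormal derivative and the Green formula \eqref{Green-particular-p} apply; the density of $\mathcal D(\Omega_0)^3$ in $\mathring H^1(\Omega_0)^3$ and the surjectivity of the trace onto $H^{\frac12}(\partial\Omega)^3$ then yield the jump condition $[\mathbf t]=\boldsymbol\psi$ on $\partial\Omega$. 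Your restriction to test functions supported near $\partial\Omega$ points in this direction, but as written the appeal to a global $\widetilde{\mathcal H}^{-1}(\Omega_-)^3$ membership is false and must be replaced by this truncation argument.
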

\begin{proof}
We follow {first} an argument similar to that in the proof of \cite[Theorem 1.3]{A-A}. {To this end,} we consider an increasing sequence of real numbers $\{R_k\}_{k\geq 0}$ with $R_0>0$, such that $\overline\Omega_+ \subset B_{R_0}$, and $R_k\to \infty $ as $k\to \infty $, where $B_{R_k}$ is the ball in ${\mathbb R}^n$ of radius $R_k$ and center $0$ (assumed to be a point in $\Omega_+$).
Let $\Omega_{k-}:=\Omega _{-}\cap B_{R_k}$.
Then $\Omega_k:=\overline{\Omega_+}\cup \Omega_{k-}=B_{R_k}$.

{If $\mathbf F\!\in \!{\mathcal H}^{-1}({\mathbb R}^n)^n$,
then $\mathbf F|_{\Omega_k}\in {{H}^{-1}(\Omega_k)^n}$, and
by Theorem \ref{int-D-NS-var}, there exist
$({\bf u}_k, \pi_k)\in {\mathring{H}_{\rm{div}}^1(\Omega_k)^n}\times L_2(\Omega_k)/\mathbb R$ which satisfy
the Navier-Stokes system
\begin{equation}
\label{int-NS-D-0k}
{\boldsymbol{\mathcal L}({\bf u}_k,\pi_k)}=-\mathbf F|_{\Omega_k}+({\bf u}_k\cdot\nabla){\bf u}_k\,, \ \
{\rm{div}} \, {\bf u}_k=0
 \mbox{ in } \Omega_k,
\end{equation}
and the inequality
\begin{align}
\label{weak-D-k}
&\|\nabla {\bf u}_k\|_{L_2(\Omega_k)^{n\times n}}
\leq 2c_{\mathbb A}|\!|\!|\mathbf F|_{\Omega_k}|\!|\!|_{H^{-1}(\Omega_k)^n}\,.
\end{align}
Relations \eqref{int-NS-D-0k} imply that}
\begin{equation}
\label{NS-var-eq1}
\left\langle a_{ij}^{\alpha \beta }E_{j\beta }({\bf u}_k),E_{i\alpha }({\bf v})\right\rangle _{\Omega_k}+\langle ({\bf u}_k\cdot \nabla ){\bf u}_k,{\bf v}\rangle _{\Omega_k}=\langle \mathbf F|_{\Omega_k},{\bf v}\rangle _{\Omega_k} \quad \forall \ {\bf v}\in {\mathcal D}_{\rm{div}}(\Omega_k)^n\,.
\end{equation}

Note that $\mathbf F\in {{\mathcal H}^{-1}({\mathbb R}^n)^n}$ satisfies the relations
\begin{align}\label{E5.44}
{|\!|\!|\mathbf F|_{\Omega_k}|\!|\!|_{H^{-1}(\Omega_k)^n}}
&\!=\!{\sup_{\boldsymbol \Psi \in \mathring{H}^1(\Omega_k)^n,\,
\|\nabla \boldsymbol \Psi \|_{L_2(\Omega_k)^n}=1}}
\Big|\left\langle \mathbf F|_{\Omega_k},\boldsymbol \Psi \right\rangle _{\Omega_k}\Big|
\!=\!{\sup_{\boldsymbol \Psi \in \mathring{H}^1(\Omega_k)^n,\,
\|\nabla\mathring{E}_{\Omega_k}{\boldsymbol \Psi }\|_{L_2({\mathbb R}^n)^n}=1}}
\big|\big\langle \mathbf F,\mathring{E}_{\Omega_k}\boldsymbol \Psi \big\rangle _{{\mathbb R}^n}\big|\nonumber\\
&\leq {\sup_{\boldsymbol \phi \in {\mathcal H}^1({\mathbb R}^n)^n,\ \|\nabla \boldsymbol \phi \|_{L_2({\mathbb R}^n)^{n\times n}}=1}}|\left\langle \mathbf F,\boldsymbol \phi \right\rangle _{{\mathbb R}^n}|
={|\!|\!|\mathbf F|\!|\!|_{{\mathcal H}^{-1}({\mathbb R}^n)^n}}\,,
\end{align}
and hence the inequality
\begin{align}
\label{f-ext}
{|\!|\!|\mathbf F|_{\Omega_k}|\!|\!|_{H^{-1}(\Omega_k)^n}\leq |\!|\!|\mathbf F|\!|\!|_{{\mathcal H}^{-1}({\mathbb R}^n)^n}}.
\end{align}

Further, define
$\mathring{\bf u}_k:=\mathring{E}_{\Omega_k}{\bf u}_k$,
i.e., $\mathring{\bf u}_k$ is the extension of ${\bf u}_k$
by zero  in ${\mathbb R}^n\setminus \overline{\Omega_k}$.
Then $\mathring{\bf u}_k\in {{\mathcal H}_{\rm{div}}^1({\mathbb R}^n)^n}$ (since $\mathring{\bf u}_k$ does not have jump across $\partial \Omega_k$, and then ${\rm{div}}\, \mathring{\bf u}_k=0$ in ${\mathbb R}^n$) and  $\mathring{\pi}_k\in L_2(\mathbb R^3)/\mathbb R$.
Moreover, by inequalities \eqref{weak-D-k} and \eqref{f-ext},
\begin{align}
\label{weak-D-k-c}
&\|\nabla \mathring{\bf u}_k\|_{L^2({\mathbb R}^n)^{n\times n}}
{=\|\nabla {\bf u}_k\|_{L_2(\Omega_k)^{n\times n}}}
\leq 2c_{\mathbb A}{ |\!|\!|\mathbf F|\!|\!|_{\mathcal H^{-1}({\mathbb R}^n)^n}}.
\end{align}

By inequality \eqref{weak-D-k-c}, the sequence $\{\mathring{\bf u}_k\}_{k\in {\mathbb N}}$ is bounded in the Hilbert space
${\mathcal H}^1({\mathbb R}^n)^n$ and also in its closed subspace ${\mathcal H}_{\rm{div}}^1({\mathbb R}^n)^n$.
Hence $\{\mathring{\bf u}_k\}_{k\in {\mathbb N}}$ contains a {subsequence (still labeled as the sequence) weakly convergent} to an element ${\bf u}\in {\mathcal H}_{\rm{div}}^1({\mathbb R}^n)^n$.
This particularly implies that
\begin{multline}
\label{convergence-1}
\left\langle a_{ij}^{\alpha \beta }E_{j\beta }(\mathring{\bf u}_k),
E_{i\alpha }(\boldsymbol \phi )\right\rangle _{{\mathbb R}^n}
=\left\langle E_{j\beta }(\mathring{\bf u}_k),
a_{ij}^{\alpha \beta }E_{i\alpha }(\boldsymbol \phi )\right\rangle _{{\mathbb R}^n}
\to
\left\langle E_{j\beta }({\bf u}),
a_{ij}^{\alpha \beta }E_{i\alpha }(\boldsymbol \phi )\right\rangle _{{\mathbb R}^n}\\
=\left\langle a_{ij}^{\alpha \beta }E_{j\beta }({\bf u}),
E_{i\alpha }(\boldsymbol \phi )\right\rangle _{{\mathbb R}^n}
\mbox{ as } k\to \infty  \quad \forall \, \boldsymbol \phi \in {\mathcal H}^1({\mathbb R}^n)^n.
\end{multline}
Since $\|\nabla {\bf u}\|_{L_2(\Omega_k)^{n\times n}}$ is a norm in ${\mathcal H}^1({\mathbb R}^n)^n$ then by, e.g., \cite[Theorem II.1.3(i)]{Galdi} and \eqref{weak-D-k-c} we obtain
\begin{align}
\label{weak-D-k-2a}
{\|\nabla {\bf u}\|_{L_2({\mathbb R}^n)^{n\times n}}}   
\leq {\liminf}_{k\to \infty }{\|\nabla \mathring{\bf u}_k\|_{L_2({\mathbb R}^n)^{n\times n}}}
\leq 2c_{\mathbb A}|\!|\!|\mathbf F|\!|\!|_{{\mathcal H}^{-1}({\mathbb R}^n)^n}
\end{align}
i.e., ${\bf u}$ satisfies estimate \eqref{estimate-NS}, as asserted.

Next we show that ${\bf u}$ satisfies equation

\begin{equation}
\label{NS-var-eq}
\left\langle a_{ij}^{\alpha \beta }E_{j\beta }({\bf u}),E_{i\alpha }({\bf v})\right\rangle _{{\mathbb R}^n}+\langle ({\bf u}\cdot \nabla ){\bf u},{\bf v}\rangle _{{\mathbb R}^n}=\big\langle \mathbf F,{\bf v}\big\rangle _{{\mathbb R}^n} \quad\forall \ {{\bf v}\in {\mathcal D}_{\rm{div}}({\mathbb R}^n)^n}.
\end{equation}

To this end, let $\boldsymbol \phi \in {\mathcal D}_{\rm{div}}({\mathbb R}^n)^n$ and let $k_0\in {\mathbb N}$ be such that
${\rm{supp}}\, \boldsymbol \phi \!\subset \!\Omega_{k_0}\!\subseteq \!\Omega_k$ for any $k\geq k_0$. Then $\boldsymbol \phi \!\in \!{\mathcal D}_{\rm{div}}(\Omega_k)^n$ for any $k\geq k_0$ and by \eqref{NS-var-eq1},
\begin{equation}
\label{NS-var-eq-k}
\left\langle a_{ij}^{\alpha \beta }E_{j\beta }(\mathring{\bf u}_k),E_{i\alpha }(\boldsymbol \phi )\right\rangle _{\Omega_k}
+\langle (\mathring{\bf u}_k\cdot \nabla )\mathring{\bf u}_k,\boldsymbol \phi \rangle _{{\mathbb R}^n}=\langle \mathbf F,\boldsymbol \phi \rangle _{{\mathbb R}^n} \quad \forall \ k\geq k_0.
\end{equation}
Moreover, the compactness of embedding $H^1(\Omega ^0_{k_0})^n\!\hookrightarrow \!L_2(\Omega ^0_{k_0})^n$ yields that there exists a subsequence of the sequence $\{{\bf u}_k\}_{k\in {\mathbb N}}$, labeled again as the sequence, such that $\{{\bf u}_k\}_{k\in {\mathbb N}}$ converges strongly to ${\bf u}$ in $L_2(\Omega ^0_{k_0})^n$.
Let us prove that
\begin{align}
\label{convergence-3}
\langle (\mathring{\bf u}_k\cdot \nabla )\mathring{\bf u}_k,\boldsymbol \phi \rangle _{{\mathbb R}^n}\to
\langle ({\bf u}\cdot \nabla ){\bf u},\boldsymbol \phi \rangle _{{\mathbb R}^n} \mbox{ as } k\to \infty .
\end{align}
Indeed, the H\"{o}lder inequality, \eqref{weak-D-k-c} and the limiting relation $\|\mathring{\bf u}_k-{\bf u}\|_{L_2(\Omega ^0_{k_0})}\to 0 \mbox{ as } k\to \infty $ yield that
\begin{align}
\label{convergence-4}
\Big|\int_{{\mathbb R}^n}\left(\left((\mathring{\bf u}_k-{\bf u})\cdot \nabla \right)\mathring{\bf u}_k\right)\cdot \boldsymbol \phi dx\Big|&=
\Big|\int_{\Omega ^0_{k_0}}\left(\left((\mathring{\bf u}_k-{\bf u})\cdot \nabla \right)\mathring{\bf u}_k\right)\cdot \boldsymbol \phi dx\Big|\\
&\leq \|\mathring{\bf u}_k-{\bf u}\|_{L_2(\Omega ^0_{k_0})^n}\|\nabla \mathring{\bf u}_k\|_{L_2({\mathbb R}^n)^{n\times n}}\|\boldsymbol \phi \|_{L_{\infty }(\Omega ^0_{k_0})^n}\nonumber\\
&\leq 2c_{\mathbb A}{|\!|\!|\mathbf F|\!|\!|_{{\mathcal H}^{-1}({\mathbb R}^n)^n}}\|\boldsymbol \phi \|_{L_{\infty }(\Omega ^0_{k_0})^n}\|\mathring{\bf u}_k-{\bf u}\|_{L_2(\Omega ^0_{k_0})^n}\to 0
\mbox{ as } k\to \infty .\nonumber
\end{align}
In addition, by using the assumption that ${\rm{supp}}\, \boldsymbol \phi \!\subset \!\Omega ^0_{k_0}$, identity \eqref{antisym}, and again the strong convergence property of $\{\mathring{\bf u}_k\}_{k\in {\mathbb N}}$ to ${\bf u}$ in $ L_2(\Omega ^0_{k_0})^n$, we obtain that
\begin{align}
\label{convergence-5}
\Big|\int_{{\mathbb R}^n}\left(({\bf u}\cdot \nabla )(\mathring{\bf u}_k-{\bf u})\right)\cdot \boldsymbol \phi\, dx\Big|
&=\Big|\int_{\Omega ^0_{k_0}}\left(({\bf u}\cdot \nabla )\boldsymbol \phi \right)\cdot (\mathring{\bf u}_k-{\bf u})dx\Big|\nonumber\\
&\leq \|\nabla \boldsymbol \phi \|_{L_{\infty }(\Omega ^0_{k_{0}})}\|{\bf u}\|_{L_2(\Omega ^0_{k_{0}})}\|\mathring{\bf u}_k-{\bf u}\|_{L_2(\Omega ^0_{k_0})}
\to 0 \mbox{ as } k\to \infty .
\end{align}
Then relations \eqref{convergence-4} and \eqref{convergence-5} lead to relation \eqref{convergence-3}.
Finally, passing to the limit in formula \eqref{NS-var-eq-k} and using relations \eqref{convergence-1} and \eqref{convergence-3}, 
we conclude that {${\bf u}$ satisfies equation} \eqref{NS-var-eq}, and accordingly that ${\bf u}$ is a weak solution of the Navier-Stokes equation (in the Leray sense).

Note that
${\rm{div}}\left({\mathbb A}{\mathbb E}({\bf u})\right)\in {\mathcal H}^{-1}({\mathbb R}^n)^n \hookrightarrow {\mathcal D}'({\mathbb R}^n)^n$ ($\partial _\alpha $ { continuously maps $L_2({\mathbb R}^n)$} to ${\mathcal H}^{-1}({\mathbb R}^n)$).
In addition, the embedding ${\mathcal H}^1({\mathbb R}^n)\hookrightarrow L_{\frac{2n}{n-2}}({\mathbb R}^n)$, cf. \eqref{weight-Lp}, and the H\"{o}lder inequality imply for ${\bf u}\in {\mathcal H}^1({\mathbb R}^n)^n$ that
{$({\bf u}\cdot \nabla ){\bf u}\!\in \!{L_{\frac{n}{n-1}}({\mathbb R}^n)^n
\!\hookrightarrow \!L_{\frac{n}{n-1};{\rm{loc}}}({\mathbb R}^n)^n
\!\hookrightarrow \!H_{{\rm loc}}^{-1}({\mathbb R}^n)^n}
\!\hookrightarrow \!{\mathcal D}'({\mathbb R}^n)^n$.}
Thus, for given $\mathbf F\in {{\mathcal H}^{-1}({\mathbb R}^n)^n\hookrightarrow {\mathcal D}'({\mathbb R}^n)^n}$, we have
\begin{align}\label{E5.70}
\widetilde{\boldsymbol{\mathcal F}}:={\mathbf F}+{\rm{div}}\left({\mathbb A}{\mathbb E}({\bf u})\right)
-{({\bf u}\cdot \nabla ){\bf u}}\in {\mathcal D}'({\mathbb R}^n)^n,
\end{align}
and, by \eqref{NS-var-eq},
\begin{align}\label{E.5.75}
\langle \widetilde{\boldsymbol{\mathcal F}},\boldsymbol \phi \rangle _{{\mathbb R}^n}=0 \quad \forall \, \boldsymbol \phi \in {\mathcal D}_{\rm div}({\mathbb R}^n)^n.
\end{align}
Then due to the De Rham Theorem (cf., e.g., Proposition 1.1 in \cite[Chapter 1]{Temam}),
there exists $\pi \in {\mathcal D}'({\mathbb R}^n)$, unique up to an additive constant, such that $\nabla \pi \!=\! \widetilde{\boldsymbol{\mathcal F}}$ in ${\mathcal D}'({\mathbb R}^n)^n$, i.e.,
\begin{align}
\label{NS-Rn}
\langle\nabla \pi, \mathbf v\rangle_{\mathbb R^n}
=\langle{\mathbf F}+{\rm{div}}\left({\mathbb A}{\mathbb E}({\bf u})\right)
-({\bf u}\cdot \nabla){\bf u}, \mathbf v\rangle_{\mathbb R^n} \quad \forall \ {{\bf v}\in {\mathcal D}({\mathbb R}^n)^n},
\end{align}
and hence, equation \eqref{transmission-nonlinear-NS} holds.
Moreover, since $\widetilde{\boldsymbol{\mathcal F}}$ defined by \eqref{E5.70} belongs locally to
$H^{-1}({\mathbb R}^n)^n$, equation $\nabla \pi \!=\! \widetilde{\boldsymbol{\mathcal F}}$
implies that $\pi \in L_{2;\rm loc}({\mathbb R}^n)$
(see, e.g., Proposition 1.2 (ii) and Remark 1.4 in \cite[Chapter 1]{Temam}, \cite[Lemma 9]{Tartar1}, \cite[Lemma X.1.1]{Galdi}).

{Moreover, by inequality \eqref{grad-est} in Theorem~\ref{La-SoT} along with \eqref{E5.70},  we obtain that for any ball $\Omega^0=B_R$ of radius $R$,
\begin{align}
\label{weak-Dpi-e1}
\|\pi\|_{L_2(\Omega^0)/\mathbb R}
&\leq C_{\Omega^0}|\!|\!|\widetilde{\boldsymbol{\mathcal F}}|\!|\!|_{H^{-1}(\Omega^0)^n}\nonumber\\
&\leq C_{\Omega^0}\big(|\!|\!|{\bf F}|\!|\!|_{H^{-1}(\Omega^0)^n}
+|\!|\!|{\rm{div}}\left({\mathbb A}{\mathbb E}({\bf u})\right)|\!|\!|_{H^{-1}(\Omega^0)^n}
+|\!|\!|({\bf u}\cdot \nabla){\bf u}|\!|\!|_{H^{-1}(\Omega^0)^n}\big),
\end{align}
Similar to \eqref{E5.44}, we have
\begin{align}\label{E5.58}
|\!|\!|\mathbf F|\!|\!|_{H^{-1}(\Omega^0)^n}\le |\!|\!|\mathbf F|\!|\!|_{\mathcal H^{-1}(\mathbb R^n)^n}
\end{align}
By \eqref{A-norm}, and \eqref{estimate-NS},
\begin{align}\label{E5.59}
|\!|\!|{\rm{div}}\left({\mathbb A}{\mathbb E}({\bf u})\right)|\!|\!|_{H^{-1}(\Omega^0)^n}&=
{\sup_{\boldsymbol \Psi \in \mathring{H}^1(\Omega ^0)^n,\,
\|\nabla \boldsymbol \Psi \|_{L_2(\Omega^0)^n}=1}}\Big|\left\langle{\rm{div}}\left({\mathbb A}{\mathbb E}({\bf u})\right),\boldsymbol \Psi\right\rangle _{\Omega^0}\Big|\nonumber\\
&={\sup_{\boldsymbol \Psi \in \mathring{H}^1(\Omega ^0)^n,\,
\|\nabla \boldsymbol \Psi \|_{L_2(\Omega^0)^n}=1}}\Big|\left\langle{\mathbb A}{\mathbb E}({\bf u}),\nabla \boldsymbol \Psi\right\rangle _{\Omega^0}\Big|\nonumber\\
&\leq \|{\mathbb A}{\mathbb E}({\bf u})\|_{L_2(\Omega ^0)^{n\times n}}\leq n^4\|{\mathbb A}\|_{L_\infty (\Omega ^0)}\|\nabla \, {\bf u}\|_{L_2(\Omega ^0)^{n\times n}}\nonumber\\
&\leq n^4\|{\mathbb A}\|_{L_\infty (\Omega ^0)}\|\nabla \, {\bf u}\|_{L_2({\mathbb R}^n)^{n\times n}}
\leq 2c_{{\mathbb A}}n^4\|{\mathbb A}\|_{L_\infty (\Omega ^0)}\|{\bf F}\|_{{\mathcal H}^{-1}({\mathbb R}^n)^n},
\end{align}
}

{Since $n=3$, from \eqref{P-061cal} and \eqref{estimate-NS} we have,
\begin{align}\label{E5.60}
|\!|\!|({\bf u}\cdot \nabla ){\bf u}|\!|\!|_{H^{-1}(\Omega^0)^n}
\le  \frac{4}{3}|\Omega^0|^{1/6}\|{\nabla\bf u}\|^2_{L_2(\mathbb R^n)^{n\times n}}
\le\frac{4}{3}|\Omega^0|^{1/6}4c^2_{\mathbb A} |\!|\!|\mathbf F|\!|\!|^2_{\mathcal H^{-1}(\mathbb R^n)^n}.
\end{align}
{Hence, substituting \eqref{E5.58}, \eqref{E5.59}, and \eqref{E5.60} in \eqref{weak-Dpi-e1}, we obtain  \eqref{estimate-NS-pi}.}}
\end{proof}

Next we show the existence of a solution of the nonlinear problem \eqref{Stokes-NS} with general given data. 
In case \eqref{isotropic} with $\mu=1$ and $\lambda=0$,
we refer to \cite[Theorem 1.3]{A-A} and \cite[Theorem 1.3]{Al-Am} for the existence of a weak solution of the exterior Dirichlet problem for Navier-Stokes system, \cite{Ot-Sa} for the Dirichlet problem for the Navier-Stokes system in a bounded Lipschitz domain in ${\mathbb R}^2$, under singular sources, and \cite[Theorem 5.2]{K-L-M-W}. Existence in the case of the anisotropic tensor $\mathbb A$ satisfying a more restrictive ellipticity condition than in \eqref{mu} was analyzed in \cite[Theorem 4.2]{K-M-W-2} in a pseudostress setting, assuming small given data.
\begin{thm}
\label{int-NS-var-Dtr}
Let n=3
and conditions \eqref{Stokes-1}-\eqref{mu} hold in $\mathbb R^n$.
Let
$\big({\tilde{\bf f}}_{+},{\tilde{\bf f}}_{-},\boldsymbol{\psi}\big)\in {\mathfrak Y}$
and
$({\bf u},\pi)\in {\mathcal H}_{{\rm{div}}}^1({\mathbb R}^n)^n\times L_{2,\rm loc}(\Omega)/\mathbb R$
be the solution of equation \eqref{E5.63}
provided by Theorem~\ref{int-D-NS-var} for
\begin{align}\label{E5.56}
\mathbf F=-(\tilde{\bf f}_++\tilde{\bf f}_-)+\gamma ^*\boldsymbol{\psi}.
\end{align}
Then there exists  a solution
$({\bf u}_+,\pi _+,{\bf u}_-,\pi _-)\in {\mathfrak X}$ of the nonlinear
Dirichlet-transmission problem \eqref{int-NS-D-0} given by relations
$
{\bf u}_+={\bf u}|_{\Omega _+},\ {\bf u}_-={\bf u}|_{\Omega_-},\
\pi_+=\pi|_{\Omega _+},\ \pi_-=\pi|_{\Omega_-},
$
and estimates \eqref{mixed-Cu0N}, \eqref{mixed-Cp0N} hold.
\end{thm}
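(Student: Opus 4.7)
The plan is to apply Theorem~\ref{existence-N-S-variable-ext} to the given data and derive the Dirichlet-transmission problem~\eqref{Stokes-NS} as a consequence of the resulting variational identity. First I would verify that $\mathbf F:=-(\tilde{\bf f}_++\tilde{\bf f}_-)+\gamma^*\boldsymbol\psi$ lies in $\mathcal H^{-1}(\mathbb R^n)^n$: the inclusions $\tilde{\bf f}_\pm\in\widetilde{\mathcal H}^{-1}(\Omega_\pm)^n\subset\mathcal H^{-1}(\mathbb R^n)^n$ handle the volume contributions, while $\gamma^*\boldsymbol\psi\in\mathcal H^{-1}(\mathbb R^n)^n$ by continuity of $\gamma:\mathcal H^1(\mathbb R^n)^n\to H^{\frac{1}{2}}(\partial\Omega)^n$. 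Theorem~\ref{existence-N-S-variable-ext} then supplies $({\bf u},\pi)\in\mathcal H^1_{\rm div}(\mathbb R^n)^n\times L_{2,\rm loc}(\mathbb R^n)/\mathbb R$ satisfying the variational identity \eqref{transmission-nonlinear-NS}. Setting ${\bf u}_\pm:={\bf u}|_{\Omega_\pm}$ and $\pi_\pm:=\pi|_{\Omega_\pm}$, the identification $\mathcal H^1(\Omega_+)=H^1(\Omega_+)$ for the bounded piece and the natural restrictions of $L_{2,\rm loc}$ place $({\bf u}_+,\pi_+,{\bf u}_-,\pi_-)$ in the space $\mathfrak X$ of \eqref{sol}.

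The PDE lines of \eqref{Stokes-NS} follow immediately: testing \eqref{transmission-nonlinear-NS} against arbitrary ${\bf v}\in\mathcal D(\Omega_\pm)^n$ (extended by zero to $\mathbb R^n$) yields the distributional identities $\boldsymbol{\mathcal L}({\bf u}_\pm,\pi_\pm)=\tilde{\bf f}_\pm|_{\Omega_\pm}+({\bf u}_\pm\cdot\nabla){\bf u}_\pm$ on $\Omega_\pm$, while ${\rm div}\,{\bf u}_\pm=0$ inherits from the global divergence-free condition. The Dirichlet transmission condition $\gamma_+{\bf u}_+-\gamma_-{\bf u}_-={\bf 0}$ is the no-jump property $[\gamma{\bf u}]={\bf 0}$ valid for any ${\bf u}\in\mathcal H^1(\mathbb R^n)^n$, cited right after \eqref{standard-weight-p}.

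The key step is the Neumann-type transmission condition. Introduce $\tilde{\bf h}_\pm:=\tilde{\bf f}_\pm+\mathring E_\pm\bigl(({\bf u}_\pm\cdot\nabla){\bf u}_\pm\bigr)$; by the PDEs just derived, $\boldsymbol{\mathcal L}({\bf u}_\pm,\pi_\pm)=\tilde{\bf h}_\pm|_{\Omega_\pm}$, so the generalized conormal derivatives ${\bf t}^\pm({\bf u}_\pm,\pi_\pm;\tilde{\bf h}_\pm)$ are available through Definition~\ref{conormal-derivative-var-Brinkman}. Substituting $\mathbf F$ into \eqref{transmission-nonlinear-NS}, splitting every integral on $\mathbb R^n$ as a sum of integrals over $\Omega_+$ and $\Omega_-$, and identifying the result with the Green formula of Lemma~\ref{lemma-add-new-1} for the source $\tilde{\bf h}:=\tilde{\bf h}_++\tilde{\bf h}_-$, I would arrive at $\langle[{\bf t}({\bf u},\pi;\tilde{\bf h})],\gamma{\bf v}\rangle_{\partial\Omega}=\langle\boldsymbol\psi,\gamma{\bf v}\rangle_{\partial\Omega}$ for every ${\bf v}\in\mathcal D(\mathbb R^n)^n$. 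Density of $\mathcal D(\mathbb R^n)^n$ in $\mathcal H^1(\mathbb R^n)^n$ together with surjectivity of the trace operator onto $H^{\frac{1}{2}}(\partial\Omega)^n$ then force $[{\bf t}({\bf u},\pi;\tilde{\bf h})]=\boldsymbol\psi$ in $H^{-\frac{1}{2}}(\partial\Omega)^n$, which is precisely the fourth line of \eqref{Stokes-NS}.

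The principal subtlety concerns the exterior piece: since $n=3$ and $\mathcal H^1(\Omega_-)\hookrightarrow L_6(\Omega_-)$, the convective term $({\bf u}_-\cdot\nabla){\bf u}_-$ lies only in $L_{3/2}(\Omega_-)^n$, and this space does not embed globally into $\widetilde{\mathcal H}^{-1}(\Omega_-)^n$ on the unbounded domain, so Definition~\ref{conormal-derivative-var-Brinkman} does not apply verbatim to $\tilde{\bf h}_-$. This is handled by noting that the right inverse $\gamma^{-1}_-$ in \eqref{conormal-derivative-var-Brinkman-3} may be chosen with compact support near $\partial\Omega$, on which $L_{3/2}\hookrightarrow H^{-1}$ and $\pi_-\in L_{2,\rm loc}(\Omega_-)$ suffice to make every pairing well defined, so the formal Green identity and the comparison with $\boldsymbol\psi$ go through. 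The stated bounds then follow from the $\mathbb R^n$-estimates \eqref{estimate-NS}--\eqref{estimate-NS-pi} of Theorem~\ref{existence-N-S-variable-ext} by restriction to $\Omega_\pm$.
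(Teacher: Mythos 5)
Your proposal is correct and follows essentially the same route as the paper's proof: restrict the global solution, recover the interior Navier--Stokes equations and the trace-jump condition, and obtain the conormal transmission condition by comparing the variational identity \eqref{transmission-nonlinear-NS} with the Green formula and using that traces of compactly supported test functions are dense in $H^{\frac{1}{2}}(\partial\Omega)^n$. Your device of choosing a compactly supported right inverse $\gamma_-^{-1}$ so that the exterior pairings with $({\bf u}_-\cdot\nabla){\bf u}_-\in L_{3/2}$ and $\pi_-\in L_{2,\rm loc}(\Omega_-)$ make sense is just a variant of the paper's localization, which replaces $\Omega_-$ by the bounded truncation $\Omega_{0-}=\Omega_-\cap\Omega_0$ with $\overline\Omega_+\subset\Omega_0$ and then applies the Green formula \eqref{Green-particular-p} together with the density of $\mathcal D(\Omega_0)^n$ in $\mathring H^1(\Omega_0)^n$ and the surjectivity of the trace onto $H^{\frac{1}{2}}(\partial\Omega)^n$.
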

\begin{proof}
We have to show that
$({\bf u},\pi)\in {\mathcal H}_{{\rm{div}}}^1({\mathbb R}^n)^n\times L_{2,\rm loc}(\Omega)/\mathbb R$ solving equation \eqref{E5.63}
solves also the transmission problem \eqref{Stokes-NS} in the sense of distributions.
Since
${\bf u}_\pm:={\bf u}|_{\Omega_\pm}$ and $\pi_\pm:=\pi|_{\Omega _\pm}$,
we have ${\bf u}_+\in {H}_{{\rm{div}}}^1(\Omega_+)^n$, ${\bf u}_-\in \mathcal{H}_{{\rm{div}}}^1(\Omega_-)^n$,
$\pi_+\in (L_{2,\rm loc}(\mathbb R^n)/\mathbb R)|_{\Omega _+}$, $\pi_-\in(L_{2,\rm loc}(\mathbb R^n)/\mathbb R)|_{\Omega _-}$.
Due to \eqref{E5.56}, $\mathbf F\in {\mathcal H}^{-1}({\mathbb R}^n)^n$ and equation  \eqref{E5.63} implies
that $({\bf u}_\pm,\pi_\pm )$ satisfy the Navier-Stokes equations
\begin{align}
\label{eq-int}
\partial _\alpha\left(A^{\alpha \beta }\partial _\beta ({\bf u}_\pm)\right)-\nabla \pi_\pm
={\tilde{\bf f}_{\pm}}|_{\Omega _{\pm}}+({\bf u}_\pm\cdot \nabla ){\bf u}_\pm\
\mbox{ in } \Omega_\pm.
\end{align}
By choosing any ${\bf v}\in {\mathcal D}(\mathbb R^n)^n$ in \eqref{transmission-nonlinear-NS} and again taking into account \eqref{E5.56}, we obtain
\begin{align}
\label{eq-int-ext}
&\left\langle a_{ij}^{\alpha \beta }E_{j\beta }({\bf u}_+),E_{i\alpha }({\bf v})\right\rangle _{\Omega _+}+\langle ({\bf u}_+\cdot \nabla ){\bf u}_+,{\bf v}\rangle _{\Omega _+}-\langle \pi _+,{\rm{div}}\, {\bf v}\rangle_{\Omega _+}+\langle \tilde{\bf f}_+,{\bf v}\rangle _{\Omega _+}\nonumber\\
+&\left\langle a_{ij}^{\alpha \beta }E_{j\beta }({\bf u}_-),E_{i\alpha }({\bf v})\right\rangle _{\Omega_-}+\langle ({\bf u}_-\cdot \nabla ){\bf u}_-,{\bf v}\rangle _{\Omega_-}-\langle \pi _-,{\rm{div}}\, {\bf v}\rangle_{\Omega_-}+\langle \tilde{\bf f}_-,{\bf v}\rangle _{\Omega_-}=\langle \boldsymbol{\psi},\gamma {\bf v}\rangle _{\partial \Omega }\,.
\end{align}

Now let $\Omega_0$ be a bounded Lipschitz domain (e.g., a ball) such that $\overline\Omega_+ \subset \Omega_0$ and let $\Omega_{0-}:=\Omega _-\cap \Omega_0$.
By choosing ${\bf v}\in \mathcal D(\Omega_0)^n$, the domain $\Omega_-$ in \eqref{eq-int-ext} can be replaced by $\Omega_{0-}$.
Then the Green formula \eqref{Green-particular-p} shows that equation \eqref{eq-int-ext} reduces to
\begin{align}
\left\langle{\bf t}^{+}\left({\bf u}_+,\pi _+;\tilde{\bf f}_+ +({\bf u}_+\cdot \nabla ){\bf u}_+\right)-{\bf t}^{-}\left({\bf u}_-,\pi _- ;\tilde{\bf f}_- +({\bf u}_-\cdot \nabla ){\bf u}_-\right),\gamma{\bf v}\right\rangle _{\partial \Omega }=\left\langle \boldsymbol{\psi},\gamma {\bf v}\right\rangle _{\partial \Omega } \quad \forall \, {\bf v}\in {\mathcal D}(\Omega_0)^n,\nonumber
\end{align}
or, equivalently,
\begin{align}
\left\langle{\bf t}^{+}\left({\bf u}_+,\pi _+;\tilde{\bf f}_+ +({\bf u}_+\cdot \nabla ){\bf u}_+\right)-{\bf t}^{-}\left({\bf u}_-,\pi _- ;\tilde{\bf f}_- +({\bf u}_-\cdot \nabla ){\bf u}_-\right),\boldsymbol\phi \right\rangle _{\partial \Omega }=\left\langle \boldsymbol{\psi},\boldsymbol\phi \right\rangle _{\partial \Omega } \quad \forall \, \boldsymbol\phi \in H^{\frac{1}{2}}(\partial\Omega)^n,\nonumber
\end{align}
due to the dense embedding of the space ${\mathcal D}(\Omega_0)^n$ in $\mathring{H}^1(\Omega_0)^n$ and the surjectivity of the trace operator $\gamma $ from
$\mathring{H}^1(\Omega_0)^n$ to $H^{\frac{1}{2}}(\partial \Omega )^n$.
Therefore, the second transmission condition in \eqref{Stokes-NS} follows, as asserted. The first transmission condition is obviously satisfied since ${\bf u}\in {\mathcal H}^1({\mathbb R}^n)^n$.
\end{proof}

\subsection{Existence result for the exterior Dirichlet problem for the anisotropic Navier-Stokes system in exterior Lipschitz domains of ${\mathbb R}^n$ with general data}
Let $n=3$, $\Omega _+$ be a bounded Lipschitz domain in ${\mathbb R}^n$ with connected boundary $\partial \Omega $ and let $\Omega _{-}:={\mathbb R}^n\setminus \overline{\Omega_+}$.
Let us introduce the following norm in the space ${\mathcal H}^{-1}(\Omega _{-})^n$:
\begin{align}
\label{norm-3e}
{|\!|\!|{\bf g}|\!|\!|_{{\mathcal H}^{-1}(\Omega )^n}:=\sup_{{\bf v}\in \mathring{\mathcal H}^1(\Omega _{-})^n,\ \|\nabla {\bf v}\|_{L^2(\Omega _{-})^n}=1}|\langle {\bf g},{\bf v}\rangle _{\Omega _{-}}|,\ \ \forall \ {\bf g}\in {\mathcal H}^{-1}(\Omega _{-})^n.}
\end{align}

Next we consider the exterior Dirichlet problem for the anisotropic Navier-Stokes system 
\begin{eqnarray}
\label{ext-NS-D}
\left\{
\begin{array}{ll}
{\boldsymbol{\mathcal L}({\bf u}_-,\pi _-)}=-{\mathbf F}+({\bf u}_-\cdot \nabla ){\bf u}_-\,,\ \  
{\rm{div}}\, {\bf u}_{-}=0 & \mbox{ in } \Omega _{-},\\
\gamma _{-}({\bf u}_-)={\bf 0} &  \mbox{ on } \partial \Omega \,,
\end{array}\right.
\end{eqnarray}
with the given datum {${\bf F}\in {\mathcal H}^{-1}(\Omega _-)^n$}.

{By using similar arguments to those in the proof of Theorem \ref{existence-N-S-variable-ext} we obtain the following result (cf. \cite[Theorem 1.3]{Amrouche-1} for the exterior Dirichlet problem for {the Navier-Stokes system with constant coefficients})}.
\begin{thm}
\label{existence-N-S-variable-ext-Dirichlet}
Let $n=3$.
Let $\Omega_+ \subset {\mathbb R}^n$ be a bounded Lipschitz domain with connected boundary and let
${\Omega }_{-}:={\mathbb R}^n\setminus \overline{\Omega_+}$.
Let conditions \eqref{Stokes-1}-\eqref{mu} hold on $\Omega_-$.
Then for any ${\bf F}\in {\mathcal H}^{-1}(\Omega_-)^n$, there exists a solution
$({\bf u},\pi)\in {\mathcal H}_{{\rm{div}}}^1(\Omega_-)^n\times L_{2,\rm loc}(\Omega_-)/\mathbb R$ of the exterior Dirichlet problem \eqref{ext-NS-D} in the sense of distributions, which means that {the couple $({\bf u},\pi)$} satisfies the variational equation
\begin{equation}
\label{NS-var-eq-D}
\left\langle a_{ij}^{\alpha \beta }E_{j\beta }({\bf u}_{-}),E_{i\alpha }({\bf v})\right\rangle _{\Omega_-}
+\left\langle({\bf u}_{-}\cdot \nabla ){\bf u}_{-},{\bf v}\right\rangle _{\Omega _-}
{-\langle {\rm{div}}\, {\bf v},\pi \rangle_{\Omega _{-}}}
=-\langle {\bf F},{\bf v}\rangle _{\Omega _{-}}\ \forall \ {\bf v}\in {\mathcal D}(\Omega _{-})^n.
\end{equation}
In addition, 
\begin{align}
\label{estimate-NS-D}
&{\|\nabla {\bf u}_-\|_{L^2(\Omega _{-})^n}\leq {2}c_{\mathbb A}{|\!|\!|{\bf F}|\!|\!|_{{\mathcal H}^{-1}(\Omega _{-})^n}},}\\
\label{estimate-NS-D-pi}
&\|\pi\|_{L_2(\Omega^0)/\mathbb R}
\leq  C'_{\Omega_-}|\!|\!|\mathbf F|\!|\!|_{\mathcal H^{-1}(\Omega_-)^n}
+C''_{\Omega_-}|\Omega^0|^{1/6}|\!|\!|\mathbf F|\!|\!|^2_{\mathcal H^{-1}(\Omega_-)^n}
\end{align}
for any ball $\Omega^0$  such that $\partial\Omega\subset\Omega^0$.
Here $c_{\mathbb A}$ is defined in \eqref{mu},
\begin{align}\label{E5.40-}
C'_{\Omega_-}:=C_{\Omega^0}(1+n^4\|{\mathbb A}\|_{L_\infty ({\Omega_-})^n}2c_{\mathbb A}),\quad
C''_{\Omega_-}:=\frac{16}{3}{C_{\Omega^0}}c^2_{\mathbb A},
\end{align}
while $C_{\Omega^0}$ is as in Lemma $\ref{La-So}$ and does not depend on the radius of $\Omega^0$.
\end{thm}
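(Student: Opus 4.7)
The plan is to mirror the proof of Theorem~\ref{existence-N-S-variable-ext}, replacing the exhaustion of $\mathbb R^n$ by an exhaustion of $\Omega_-$ from the inside. I would fix an increasing sequence $R_k \to \infty$ with $\overline{\Omega}_+ \subset B_{R_k}$, and set $\Omega_{k-} := \Omega_- \cap B_{R_k}$, which is a bounded Lipschitz domain with boundary $\partial \Omega_{k-} = \partial\Omega \cup \partial B_{R_k}$. For each $k$, Theorem~\ref{int-D-NS-var} applied on $\Omega_{k-}$ with source $\mathbf F|_{\Omega_{k-}} \in H^{-1}(\Omega_{k-})^n$ produces a couple $(\mathbf u_k,\pi_k) \in \mathring{H}^1_{\rm div}(\Omega_{k-})^n \times L_2(\Omega_{k-})/\mathbb R$ solving the Navier-Stokes system on $\Omega_{k-}$ with homogeneous Dirichlet data on $\partial \Omega_{k-}$ and satisfying $\|\nabla \mathbf u_k\|_{L_2(\Omega_{k-})^{n\times n}} \le 2c_{\mathbb A}|\!|\!|\mathbf F|_{\Omega_{k-}}|\!|\!|_{H^{-1}(\Omega_{k-})^n}$. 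Denoting by $\mathring{\mathbf u}_k$ the zero extension of $\mathbf u_k$ to $\Omega_-$, we have $\mathring{\mathbf u}_k \in \mathring{\mathcal H}^1_{\rm div}(\Omega_-)^n$, and an argument analogous to \eqref{E5.44}, with extension by zero from $\Omega_{k-}$ to $\Omega_-$ now playing the role of the extension from $\Omega_k$ to $\mathbb R^n$, yields $|\!|\!|\mathbf F|_{\Omega_{k-}}|\!|\!|_{H^{-1}(\Omega_{k-})^n} \le |\!|\!|\mathbf F|\!|\!|_{\mathcal H^{-1}(\Omega_-)^n}$, so the sequence $\{\mathring{\mathbf u}_k\}$ is uniformly bounded in $\mathring{\mathcal H}^1_{\rm div}(\Omega_-)^n$.

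I would then extract a subsequence $\mathring{\mathbf u}_k \rightharpoonup \mathbf u$ weakly in $\mathring{\mathcal H}^1_{\rm div}(\Omega_-)^n$, and weak lower semicontinuity of the $L_2$-norm of the gradient gives estimate~\eqref{estimate-NS-D}. To pass to the limit in the variational formulation, I would fix any $\boldsymbol\phi \in \mathcal D_{\rm div}(\Omega_-)^n$, whose support is contained in some $\Omega_{k_0-}$, so $\boldsymbol\phi$ is an admissible test field for each $(\mathbf u_k,\pi_k)$ with $k \ge k_0$. The bilinear term $\langle a_{ij}^{\alpha\beta} E_{j\beta}(\mathring{\mathbf u}_k), E_{i\alpha}(\boldsymbol\phi)\rangle_{\Omega_-}$ converges by weak convergence, exactly as in \eqref{convergence-1}. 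For the trilinear term, the compact embedding $H^1(\Omega_{k_0-})^n \hookrightarrow L_2(\Omega_{k_0-})^n$ provides, after extracting a further subsequence, strong convergence $\mathring{\mathbf u}_k \to \mathbf u$ in $L_2(\Omega_{k_0-})^n$, and then estimates modelled on \eqref{convergence-4}--\eqref{convergence-5}, together with the uniform bound on $\|\nabla \mathring{\mathbf u}_k\|_{L_2(\Omega_-)^{n\times n}}$, give $\langle(\mathring{\mathbf u}_k \cdot \nabla)\mathring{\mathbf u}_k,\boldsymbol\phi\rangle_{\Omega_-} \to \langle(\mathbf u \cdot \nabla)\mathbf u,\boldsymbol\phi\rangle_{\Omega_-}$. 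This establishes \eqref{NS-var-eq-D} for every divergence-free test field in $\mathcal D(\Omega_-)^n$.

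The pressure is recovered by the de Rham theorem as in the proof of Theorem~\ref{existence-N-S-variable-ext}: the distribution $\widetilde{\boldsymbol{\mathcal F}} := \mathbf F + \mathrm{div}(\mathbb A \mathbb E(\mathbf u)) - (\mathbf u \cdot \nabla)\mathbf u$ lies in $\mathcal D'(\Omega_-)^n$ and vanishes on $\mathcal D_{\rm div}(\Omega_-)^n$, so there exists $\pi \in \mathcal D'(\Omega_-)$, unique up to an additive constant, with $\nabla \pi = \widetilde{\boldsymbol{\mathcal F}}$; since $({\mathbf u}\cdot\nabla){\mathbf u} \in L_{n/(n-1),\rm loc}(\Omega_-)^n$ and $\mathrm{div}(\mathbb A \mathbb E(\mathbf u)) \in H^{-1}_{\rm loc}(\Omega_-)^n$, we obtain $\pi \in L_{2,\rm loc}(\Omega_-)/\mathbb R$. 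Estimate~\eqref{estimate-NS-D-pi} on any ball $\Omega^0 \supset \partial\Omega$ follows by applying the inequality \eqref{grad-est} of Theorem~\ref{La-SoT} to $\pi$ on the Lipschitz subdomain $\Omega^0 \cap \Omega_-$ (or, equivalently, to an arbitrary extension of $\pi$ into the interior $\Omega_+$, which does not affect $\nabla \pi$ as a distribution supported in $\Omega_-$), combined with the term-by-term bounds of \eqref{E5.58}--\eqref{E5.60} with $\mathbb R^n$ replaced by $\Omega_-$. The main technical obstacle is the passage to the limit in the trilinear term, handled as above via Rellich compactness on each $\Omega_{k_0-}$; a secondary delicacy is that the constant in \eqref{estimate-NS-D-pi} must be independent of the ball radius, which is guaranteed by the scale invariance of $C_{\Omega^0}$ noted in Remark~\ref{La-SoC}.
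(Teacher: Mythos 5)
Your construction is precisely the route the paper takes: the paper's own ``proof'' of this theorem consists of the remark that it follows by the same arguments as Theorem \ref{existence-N-S-variable-ext}, i.e., by solving the problem on the invading bounded Lipschitz domains $\Omega_{k-}=\Omega_-\cap B_{R_k}$ via Theorem \ref{int-D-NS-var}, extending by zero, obtaining the uniform gradient bound from the analogue of \eqref{E5.44}, extracting a weakly convergent subsequence in $\mathring{\mathcal H}^1_{\rm div}(\Omega_-)^n$, passing to the limit in the trilinear term by Rellich compactness on a fixed $\Omega_{k_0-}$ containing the support of the test field, and recovering $\pi$ by the De Rham theorem. Your treatment of all of these steps, of the zero-trace (Dirichlet) condition, and of the velocity estimate \eqref{estimate-NS-D} is correct and matches the intended argument.

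The one step that does not hold as written is your justification of the pressure bound \eqref{estimate-NS-D-pi}. Applying \eqref{grad-est} on $\Omega^0\cap\Omega_-$ produces the constant $C_{\Omega^0\cap\Omega_-}$, not $C_{\Omega^0}$, and Remark \ref{La-SoC} does not give its independence of the radius: that remark covers pure dilations of a fixed shape, whereas here the shape of $\Omega^0\cap\Omega_-$ changes as the ball grows because the excluded set $\overline{\Omega}_+$ stays fixed; uniformity of the Bogovskii/inf-sup constant on such truncated exterior domains is true but requires a separate argument (e.g., a decomposition into a fixed region near $\partial\Omega$ and a large spherical shell). The parenthetical alternative is also incorrect: an ``arbitrary extension of $\pi$ into $\Omega_+$'' certainly changes $\nabla\pi$ as a distribution on $\Omega^0$ (in general it produces an uncontrolled gradient in $\Omega_+$ and a contribution concentrated on $\partial\Omega$), and the phrase ``$\nabla\pi$ as a distribution supported in $\Omega_-$'' is not meaningful, since only the equation $\nabla\pi=\widetilde{\boldsymbol{\mathcal F}}$ in $\mathcal D'(\Omega_-)$ is available. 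To get the estimate literally with $C_{\Omega^0}$ you would need a controlled extension of $\pi$ across $\partial\Omega$, or else interpret the left-hand side as $\|\pi\|_{L_2(\Omega^0\cap\Omega_-)/\mathbb R}$ and prove radius-uniformity of the constant for the truncated domains (note the theorem's statement itself is loose on this point, as $\pi$ is only defined in $\Omega_-$). This affects only \eqref{estimate-NS-D-pi}; the existence assertion, \eqref{NS-var-eq-D} and \eqref{estimate-NS-D} are fully established by your argument.
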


\begin{rem}
The well-posedness results obtained in this paper 
can be extended, similar to \cite{K-M-W-2} and \cite{K-W1}, to the setting of $L_p$-based Sobolev spaces with $p$ in an open interval containing $2$.
\end{rem}

\appendix
{

\section{The Agmon-Douglis-Nirenberg ellipticity of the anisotropic Stokes system}
\label{ADN}

Let $\sigma ({\bf x},\boldsymbol\xi )$ denote the principal symbol matrix of the anisotropic Stokes system \eqref{Stokes-0-0}, \eqref{Stokes}. Thus,
\begin{align}
\label{ADN-elliptic}
\sigma ({\bf x},\boldsymbol\xi )=
\left(\begin{array}{cc}
\xi _\alpha a_{\ell j}^{\alpha \beta }({\bf x})\xi _\beta  & -i\xi _\ell \\
-i\xi _j & 0
\end{array}
\right)_{\ell ,j=1,\ldots ,n}\,. 
\end{align}
Here $i^2=-1$, and $\boldsymbol\xi =(\xi _1,\ldots,\xi _n)$.

The Stokes system is {\it elliptic in the sense of Agmon-Douglis-Nirenberg} at ${\bf x}\in {\mathbb R}^n$ if $\sigma ({\bf x},\boldsymbol\xi )$ is defined and non-singular for any $\boldsymbol\xi \in {\mathbb R}^n\setminus \{{\bf 0}\}$ (see, e.g., \cite[Definition 6.2.3]{H-W}).
This property is well known in the isotropic case \eqref{isotropic} with $\mu =1$ and $\lambda =0$  (cf., e.g., \cite[p.329]{H-W}).
Next, we show that this ellipticity property remains valid even in the more general anisotropic case.
\begin{lem}
\label{ADN-system}
Let conditions \eqref{Stokes-1}-\eqref{mu} hold on ${\mathbb R}^n$. Then the anisotropic Stokes system
defined by \eqref{Stokes-0-0} and \eqref{Stokes} is elliptic in the sense of Agmon-Douglis-Nirenberg at almost any ${\bf x}\in {\mathbb R}^n$.
\end{lem}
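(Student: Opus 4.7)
The plan is to show directly that the $(n{+}1)\times(n{+}1)$ principal symbol matrix $\sigma(\mathbf x,\boldsymbol\xi)$ in \eqref{ADN-elliptic} is injective on $\mathbb C^{n+1}$ for every $\boldsymbol\xi\in\mathbb R^n\setminus\{\mathbf 0\}$ and for a.e.\ $\mathbf x$ at which the ellipticity condition \eqref{mu} holds. Suppose $(\mathbf v,q)\in\mathbb C^n\times\mathbb C$ lies in the kernel; expanding $\sigma(\mathbf x,\boldsymbol\xi)(\mathbf v,q)^\top=\mathbf 0$ yields the two relations
\begin{align}
a_{\ell j}^{\alpha\beta}(\mathbf x)\xi_\alpha\xi_\beta v_j-iq\xi_\ell=0,\qquad \boldsymbol\xi\cdot\mathbf v=0.\nonumber
\end{align}
Pairing the first relation with $\bar v_\ell$ and using the second (note that $\boldsymbol\xi$ is real, so $\boldsymbol\xi\cdot\bar{\mathbf v}=\overline{\boldsymbol\xi\cdot\mathbf v}=0$) eliminates the pressure and leaves
\begin{align}
a_{ij}^{\alpha\beta}(\mathbf x)\,\xi_\alpha\xi_\beta\, v_j\bar v_i=0.\nonumber
\end{align}

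Next I will exploit the symmetry conditions \eqref{Stokes-sym} to recast this into a form to which the ellipticity condition \eqref{mu} applies. The identities $a_{ij}^{\alpha\beta}=a_{\alpha j}^{i\beta}$ and $a_{ij}^{\alpha\beta}=a_{i\beta}^{\alpha j}$ imply that contraction of $a_{ij}^{\alpha\beta}$ with any tensor $S_{i\alpha}T_{j\beta}$ gives the same result as contraction with $S_{(i\alpha)}T_{(j\beta)}$, where parentheses denote symmetrisation in the enclosed pair of indices. Taking $S_{i\alpha}=\bar v_i\xi_\alpha$ and $T_{j\beta}=v_j\xi_\beta$, and defining the complex symmetric matrix
\begin{align}
\tau_{j\beta}:=\tfrac12\bigl(v_j\xi_\beta+v_\beta\xi_j\bigr),\nonumber
\end{align}
one rewrites the quadratic form as $a_{ij}^{\alpha\beta}\bar\tau_{i\alpha}\tau_{j\beta}=0$. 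Since $\boldsymbol\xi\cdot\mathbf v=0$, the matrix $\tau$ has zero trace. Splitting $\tau=\tau^R+i\tau^I$ with real symmetric trace-free matrices $\tau^R,\tau^I\in\mathbb R^{n\times n}$ and taking the real part yields
\begin{align}
0=\mathrm{Re}\bigl(a_{ij}^{\alpha\beta}\bar\tau_{i\alpha}\tau_{j\beta}\bigr)=a_{ij}^{\alpha\beta}\tau^R_{i\alpha}\tau^R_{j\beta}+a_{ij}^{\alpha\beta}\tau^I_{i\alpha}\tau^I_{j\beta}\ge c_{\mathbb A}^{-1}\bigl(|\tau^R|^2+|\tau^I|^2\bigr),\nonumber
\end{align}
by \eqref{mu} applied to each summand. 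Hence $\tau=0$, i.e.\ $\mathbf v\otimes\boldsymbol\xi+\boldsymbol\xi\otimes\mathbf v=0$; applying this identity to $\boldsymbol\xi$ and using $\boldsymbol\xi\cdot\mathbf v=0$ gives $|\boldsymbol\xi|^2\mathbf v=\mathbf 0$, so $\mathbf v=\mathbf 0$. Substituting back into $M(\boldsymbol\xi)\mathbf v=iq\boldsymbol\xi$ forces $q=0$, completing the argument.

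The only delicate point is the symmetrisation step, where the symmetry conditions \eqref{Stokes-sym} must be exploited twice so as to convert the contraction of $a_{ij}^{\alpha\beta}$ against the rank-one tensors $\bar v_i\xi_\alpha$ and $v_j\xi_\beta$ into a contraction against symmetric trace-free matrices; this is essential because the ellipticity hypothesis \eqref{mu} is assumed only on such matrices. The rest of the argument reduces to a short linear-algebraic manipulation, and the conclusion holds at every $\mathbf x$ for which \eqref{mu} is valid, hence a.e.\ on $\mathbb R^n$.
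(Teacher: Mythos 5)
Your proof is correct. You verify all the essential ingredients: the constraint $\boldsymbol\xi\cdot\mathbf v=0$ from the last row of the symbol, the elimination of the pressure by pairing with $\bar{\mathbf v}$ (using that $\boldsymbol\xi$ is real), the use of both symmetry identities in \eqref{Stokes-sym} to replace the rank-one tensors $\bar v_i\xi_\alpha$, $v_j\xi_\beta$ by the symmetrized, trace-free matrix $\tau$, and the splitting $\tau=\tau^R+i\tau^I$ so that \eqref{mu} can be applied separately to the real symmetric trace-free matrices $\tau^R,\tau^I$ (the cross terms are purely imaginary since the coefficients are real, so taking the real part is legitimate even without the symmetry $a_{ij}^{\alpha\beta}=a_{ji}^{\beta\alpha}$, which the paper does not assume). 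Since the symbol is a square matrix, the trivial kernel gives non-singularity, and the conclusion holds at every $\mathbf x$ where \eqref{mu} and the finiteness of the coefficients hold, hence a.e.

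Your route is, however, genuinely different from the paper's. The paper first passes to a real modified symbol $\widetilde\sigma$, interprets it as a finite-dimensional saddle-point system with bilinear forms $a_0(\hat{\mathbf u},\hat{\mathbf v})=\hat u_\ell\xi_\alpha a_{\ell j}^{\alpha\beta}\xi_\beta\hat v_j$ and $b_0(\hat{\mathbf v},\hat q)=-\xi_j\hat v_j\hat q$, proves coercivity of $a_0$ on $V_{\boldsymbol\xi}=\{\hat{\mathbf v}:\boldsymbol\xi\cdot\hat{\mathbf v}=0\}$ (using exactly the same symmetrization $(\hat{\mathbf v}\otimes\boldsymbol\xi)^s$ that you use) together with an inf-sup condition for $b_0$ with constant $|\boldsymbol\xi|$, and then invokes the Babu\v{s}ka--Brezzi theorem (Theorem \ref{B-B}) to conclude invertibility. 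You instead compute the kernel of the complex symbol directly and show it is trivial by elementary linear algebra, which avoids both the passage to the real modified symbol and the abstract saddle-point theorem (in particular, you never need the inf-sup condition for $b_0$). The paper's approach has the merit of reusing machinery already central to the rest of the paper and of staying entirely in the real setting; yours is shorter and more self-contained, at the modest cost of tracking real and imaginary parts of the symmetrized tensor. The only cosmetic blemish is the undefined notation $M(\boldsymbol\xi)$ in the last step, where you mean the matrix $\xi_\alpha A^{\alpha\beta}(\mathbf x)\xi_\beta$; stating this explicitly would make the final substitution $\mathbf v=\mathbf 0\Rightarrow q=0$ unambiguous.
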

\begin{proof}
First, we observe that the symbol matrix $\sigma ({\bf x},\boldsymbol\xi )$ given by \eqref{ADN-elliptic}
is non-singular
if and only if the modified symbol matrix
\begin{align}
\label{ADN-elliptic-1}
\widetilde\sigma ({\bf x},\boldsymbol\xi )=
\left(\begin{array}{cc}
\xi _\alpha a_{\ell j}^{\alpha \beta }({\bf x})\xi _\beta  & -\xi _\ell \\
-\xi _j & 0
\end{array}
\right)_{\ell ,j=1,\ldots ,n}
\end{align}
is non-singular as well.
Let ${\bf x}\in {\mathbb R}^n$ be such that the coefficients $a_{\ell j}^{\alpha \beta }({\bf x})$ are well defined and finite and the ellipticity condition \eqref{mu} holds. In order to show that $\widetilde\sigma ({\bf x},\boldsymbol\xi )$ is non-singular for any $\boldsymbol\xi \in {\mathbb R}^n\setminus \{0\}$, we
use Theorem \ref{B-B}.
To this end, for a fixed $\boldsymbol\xi \in {\mathbb R}^n\setminus \{0\}$, we consider the bilinear forms $a_0:{\mathbb R}^n\times {\mathbb R}^n\to {\mathbb R}$ and $b_0:{\mathbb R}^n\times {\mathbb R}\to {\mathbb R}$,
\begin{align}
\label{a-0}
&a_0(\hat{\bf u},\hat{\bf v}):=\hat{u}_\ell \xi_\alpha a_{\ell j}^{\alpha \beta }({\bf x})\xi _\beta \hat{v}_j\quad \ \forall \, \hat{\bf u},\hat{\bf v}\in {\mathbb R}^n\,,\\
\label{b-0}
&b_0(\hat{\bf v},\hat{q}):=-\xi_j\hat{v}_j\hat{q}\hspace{5.5em} \forall \, \hat{\bf v}\in {\mathbb R}^n,\, \hat{q}\in {\mathbb R},
\end{align}
as well as the closed subspace $V_{\boldsymbol\xi}$ of ${\mathbb R}^n$ given by
\begin{align}
\label{V-0}
V_{\boldsymbol\xi}:=\left\{\hat{\bf v}\in {\mathbb R}^n:b_0(\hat{\bf v},\hat{q})=0,\ \forall \, \hat{q}\in {\mathbb R}\right\}=\left\{\hat{\bf v}\in {\mathbb R}^n:\xi_j\hat{v}_j=0\right\}.
\end{align}
It is immediate that these bilinear forms are bounded, as they satisfy the estimates:
\begin{align*}
|a_0(\hat{\bf u},\hat{\bf v})|\leq \|{\mathbb A}\|_{L_\infty ({\mathbb R}^n)}
|\boldsymbol{\xi }|^2|\hat{\bf u}|\, |\hat{\bf v}|,\ \ |b_0(\hat{\bf v},\hat{q})|\leq |\boldsymbol{\xi }|\, |\hat{\bf v}|\, |\hat{q}| \quad \forall \, \hat{\bf u},\hat{\bf v}\in {\mathbb R}^n,\ \forall \, \hat{q}\in {\mathbb R}.
\end{align*}

The symmetry conditions \eqref{Stokes-sym} allow us to write the bilinear form $a_0$ as
\begin{align}
\label{a-0-1}
a_0(\hat{\bf u},\hat{\bf v}) 
=a_{\ell j}^{\alpha \beta }({\bf x})(\hat{\bf u}\otimes \boldsymbol\xi )_{\ell \alpha }^s(\hat{\bf v}\otimes \boldsymbol\xi )^s_{\beta j}\,,
\end{align}
where $(\hat{\bf u}\otimes \boldsymbol\xi )^s$ is the symmetric part of the matrix $\hat{\bf u}\otimes \boldsymbol\xi$, i.e.,
\begin{align}
(\hat{\bf u}\otimes \boldsymbol\xi )^s_{\ell \alpha}
:=\frac{1}{2}\left(\hat{u}_\ell \xi_\alpha +\hat{u}_\alpha \xi_\ell\right),\ \ \ell ,\alpha =1,\ldots ,n\,.
\end{align}
According to \eqref{a-0-1} and the ellipticity condition \eqref{mu} we obtain that $a_0$ satisfies the estimate
\begin{align}
a_0(\hat{\bf v},\hat{\bf v})\geq c_{{\mathbb A}}^{-1}|(\hat{\bf v}\otimes {\boldsymbol\xi })^s|^2\quad  \forall \, \hat{\bf v}\in {\mathbb R}^n \mbox{ such that } \hat{\bf v}\cdot \boldsymbol\xi =0\,,
\end{align}
where $\hat{\bf v}\cdot \boldsymbol\xi =\sum _{\ell =1}^n(\hat{\bf v}\otimes \boldsymbol\xi )_{\ell \ell }^s$ is the trace of the symmetric matrix $(\hat{\bf v}\otimes \boldsymbol\xi )^s$. Therefore, the bounded bilinear form $a_0:V_{\boldsymbol \xi}\times V_{\boldsymbol \xi}\to {\mathbb R}$ is coercive.

In addition, an elementary computation shows that
\begin{align}
\label{inf-sup-b0}
\inf_{\hat{q}\in {\mathbb R}\setminus \{0\}}\sup _{\hat{\bf v}\in {\mathbb R}^n\setminus \{{\bf 0}\}}\frac{b_0(\hat{\bf v},\hat{q})}{|\hat{\bf v}|\, |\hat{q}|}\geq |\boldsymbol\xi |\,,
\end{align}
and accordingly that the bilinear form $b_0$ satisfies the inf-sup condition with the inf-sup constant $|\boldsymbol\xi |$.

By applying Theorem \ref{B-B}, we conclude that the modified symbol matrix $\widetilde\sigma ({\bf x},\boldsymbol\xi )$ given by \eqref{ADN-elliptic-1} is invertible for any $\boldsymbol\xi \neq {\bf 0}$, and hence that the symbol matrix $\sigma ({\bf x},\boldsymbol\xi )$ given by \eqref{ADN-elliptic} has the same property. Thus, the anisotropic Stokes system is elliptic in the sense of Agmon-Douglis-Nirenberg, as asserted.
\end{proof}

\section{Extension result in weighted Sobolev spaces}
Theorem 5.13 in \cite{B-M-M-M} asserts that gluing two Sobolev functions defined inside and outside of a two-sided $(\epsilon ,\delta )$-domain (e.g., a Lipschitz domain), preserves Sobolev smoothness whenever the functions have equal traces on the boundary of the domain. Next we show a variant of this result in the case of a Lipschitz domain and for {\it weighted Sobolev spaces}.

\begin{lemma}
\label{extention}
Let $\Omega _+ \subset {\mathbb R}^n$ be a bounded Lipschitz domain with connected boundary and
${\Omega}_{-}:={\mathbb R}^n\setminus \overline{\Omega_+}$.
\begin{enumerate}
\item[(i)]
Let $q_+\in L_2(\Omega _+)$ and $q_-\in L_2(\Omega _-)$.
Then there exists a unique function $q\in L_2(\mathbb R^n)$ such that
$
q|_{\Omega_\pm}=q_\pm.
$
Moreover, 
${\|q\|_{L_2({\mathbb R}^n)}^2=\|q_+\|_{L_2(\Omega _+)}^2+\|q_-\|_{L_2(\Omega _-)}^2}.$
\item[(ii)]
Let $u_+\in H^1(\Omega _+)$ and $u_-\in {\mathcal H}^1(\Omega _-)$ be such that
$\gamma _+ u_+=\gamma _- u_- $ on  $\partial \Omega$.
Then there exists a unique function $u\in \mathcal H^1(\mathbb R^n)$ such that
$
{u|_{\Omega_\pm}=u_\pm}.
$
Moreover, there exists a constant $C>0$ depending on $n$ and $\Omega _\pm $, such that
\begin{align}
\label{E-u-4}
\|u\|_{{\mathcal H}^1({\mathbb R}^n)}\leq C\left(\|u_+\|_{H^1(\Omega _+)}+\|u_-\|_{{\mathcal H}^1(\Omega _-)}\right).
\end{align}
\item[(iii)]
{If $u\in \mathcal H^1(\mathbb R^n)$ then $[\gamma u]=0$, where $[\gamma u]=\gamma _+(u|_{\Omega _+})-\gamma _-(u|_{\Omega _-})$.}
\end{enumerate}
\end{lemma}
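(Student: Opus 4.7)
Part (i) will be essentially immediate: since $\partial\Omega$ has Lebesgue measure zero, the function $q$ defined piecewise by $q|_{\Omega_\pm}=q_\pm$ is well defined a.e.\ on $\mathbb R^n$, and the norm identity follows from additivity of the Lebesgue integral over the disjoint pair $\Omega_+$, $\Omega_-$.

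For part (ii) I would first define $u$ pointwise by $u|_{\Omega_\pm}=u_\pm$; uniqueness is automatic. To verify $u\in\mathcal H^1(\mathbb R^n)$, I would check the two conditions defining the norm \eqref{weight-2p} separately. The bound on $\|\rho^{-1}u\|_{L_2(\mathbb R^n)}$ splits additively over $\Omega_\pm$; since $\Omega_+$ is bounded, $\rho$ is comparable to a constant there and the $L_2(\Omega_+)$-norm of $u_+$ controls the weighted norm of $u_+$, while the weighted norm of $u_-$ is part of $\|u_-\|_{\mathcal H^1(\Omega_-)}$. The core step is to show that the distributional gradient of $u$ on $\mathbb R^n$ is just $\mathring E_+\nabla u_+ + \mathring E_-\nabla u_-$. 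For $\boldsymbol\phi\in\mathcal D(\mathbb R^n)$, I would write
\begin{align*}
\langle u,\partial_j\boldsymbol\phi\rangle_{\mathbb R^n}
=\langle u_+,\partial_j\boldsymbol\phi\rangle_{\Omega_+}
+\langle u_-,\partial_j\boldsymbol\phi\rangle_{\Omega_-\cap B_R}\,,
\end{align*}
choosing $R$ large enough that $\operatorname{supp}\boldsymbol\phi\subset B_R$, and then apply Green's first formula in the bounded Lipschitz domains $\Omega_+$ and $\Omega_-\cap B_R$. The boundary term on $\partial B_R$ vanishes by the support condition, while the interior boundary terms on $\partial\Omega$ combine into $-\langle\nu_j(\gamma_+u_+-\gamma_-u_-),\boldsymbol\phi|_{\partial\Omega}\rangle_{\partial\Omega}$, which is zero by hypothesis. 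This identifies $\partial_j u$ and yields $\|\nabla u\|_{L_2(\mathbb R^n)^n}^2=\|\nabla u_+\|_{L_2(\Omega_+)^n}^2+\|\nabla u_-\|_{L_2(\Omega_-)^n}^2$, from which estimate \eqref{E-u-4} follows.

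For part (iii) I would exploit the density of $\mathcal D(\mathbb R^n)$ in $\mathcal H^1(\mathbb R^n)$ recorded in Section~\ref{2.2.1}. Choose $u_k\in\mathcal D(\mathbb R^n)$ with $u_k\to u$ in $\mathcal H^1(\mathbb R^n)$; then $u_k|_{\Omega_+}\to u|_{\Omega_+}$ in $H^1(\Omega_+)$ and $u_k|_{\Omega_-}\to u|_{\Omega_-}$ in $\mathcal H^1(\Omega_-)$. Since $u_k\in C^\infty(\overline{\Omega}_\pm)$, the two-sided traces satisfy $\gamma_+(u_k|_{\Omega_+})=\gamma_-(u_k|_{\Omega_-})=u_k|_{\partial\Omega}$, and by continuity of the trace operators $\gamma_\pm$ (Theorem~\ref{trace-operator1} and its weighted extension recorded after \eqref{ext-trace}), passing to the limit gives $[\gamma u]=0$.

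The one real obstacle I anticipate is the application of Green's formula in the unbounded component $\Omega_-$ in part (ii): the classical identity does not directly apply to $\mathcal H^1(\Omega_-)$, so the truncation by $B_R$ is essential, and one must verify that the $\partial B_R$ contribution disappears for every fixed $\boldsymbol\phi$, which it does because of the compactness of $\operatorname{supp}\boldsymbol\phi$. Everything else is bookkeeping on norms and standard use of density.
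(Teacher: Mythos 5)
Your argument is correct, but for part (ii) it follows a genuinely different route than the paper. You define $u$ piecewise and verify $u\in\mathcal H^1(\mathbb R^n)$ directly: you compute the distributional gradient by applying the Gauss--Green formula on the bounded Lipschitz domains $\Omega_+$ and $\Omega_-\cap B_R$ (using that $\mathcal H^1(\Omega_-)$ functions are locally $H^1$ up to $\partial\Omega$, and that the $\partial B_R$ term dies by the support condition), and the interface terms cancel precisely because $\gamma_+u_+=\gamma_-u_-$; this identifies $\nabla u=\mathring E_+\nabla u_+ +\mathring E_-\nabla u_-$ and even gives the Pythagorean identity for the gradient norms, hence \eqref{E-u-4}. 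The paper instead avoids any integration by parts: it extends $u_+$ to $\mathbb R^n$ by a bounded extension operator $\mathcal E_{\Omega_+}$, observes that $u_--(\mathcal E_{\Omega_+}u_+)|_{\Omega_-}$ has vanishing exterior trace and therefore lies in $\mathring{\mathcal H}^1(\Omega_-)$, so its extension by zero belongs to $\mathcal H^1(\mathbb R^n)$, and assembles $u=\mathring E_{\Omega_-}\bigl(u_--(\mathcal E_{\Omega_+}u_+)|_{\Omega_-}\bigr)+\mathcal E_{\Omega_+}u_+$; the estimate then comes from the boundedness of $\mathcal E_{\Omega_+}$ and of the zero-extension on $\mathring{\mathcal H}^1(\Omega_-)$. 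Your approach buys an explicit, essentially constant-free norm identity but requires the trace/Green formula machinery on Lipschitz domains; the paper's buys a shorter proof by delegating the analytic content to known mapping properties (extension operator, characterization \eqref{property}, zero-extension into $\mathcal H^1(\mathbb R^n)$). For (iii) you prove by smooth approximation what the paper simply deduces from $u\in H^1_{\rm loc}(\mathbb R^n)$; your density argument is a legitimate self-contained justification of that citation, using \eqref{ext-trace} and Theorem \ref{trace-operator1}. Parts (i) and the uniqueness statements coincide with the paper's (equality a.e.\ since $\partial\Omega$ has measure zero).
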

\begin{proof}
(i) We can take
$q \!=\!\mathring E_{\Omega _+}q_+ \!+\!\mathring E_{\Omega _-}q_-\!\in \!L_2(\mathbb R)$,
where $\mathring E_{\Omega\pm}$ are the operators of extension by zero are defined in \eqref{ringE}.
Then evidently $q|_{\Omega_\pm}=q_\pm.$
To prove the uniqueness  let us assume that there are two such functions, $q_1$ and $q_2$. Then $q_0:=q_1-q_2$ belongs to $L_2(\mathbb R^n)$ and  $q_0|_{\Omega_\pm}=0$.
Hence $q_0=0$ in $\mathbb R^n$ in the sense of Lebesgue classes.

(ii) We follow similar arguments to those for Theorem 5.13 in \cite{B-M-M-M}.
Let $\mathcal E_{\Omega _+}$ be a bounded linear extension operator from $H^1(\Omega _+)$ to $H^1({\mathbb R}^n)$ (see, e.g., \cite[Theorem 2.4.1]{M-W}).
Let us take
\begin{align}
\label{E-u}
u^*_-:=(\mathcal E_{\Omega _+}u_+)|_{\Omega _-} \mbox{ in } \Omega _-.
\end{align}
Then $u^*_-\in H^1(\Omega _-)\subset {\mathcal H}^1(\Omega _-)$. Moreover, there exists a constant $c>0$ depending on $n$ and $\Omega _\pm $, such that
$\|u^*_-\|_{{\mathcal H}^1(\Omega _-)}\leq c\|u_+\|_{H^1(\Omega _+)}.$

In addition, in view of \eqref{E-u} we have $\gamma _- u^*_-=\gamma _- (\mathcal E_{\Omega _+}u_+)=\gamma _+u_+=\gamma _-u_-,$ and hence $u_--u^*_-$ belongs to $\mathring{\mathcal H}^1(\Omega _-)$. Thus, $\mathring{E}_{\Omega_-}(u_--u^*_-)$ belong to ${\mathcal H}^1({\mathbb R}^n)$ and there exists a constant $c_1=c_1(n,\Omega _\pm )$, such that
\begin{align}
\label{E-u-2}
\|\mathring{E}_{\Omega_-}(u_--u^*_-)\|_{{\mathcal H}^1({\mathbb R}^n)}\leq c_1\left(\|u_+\|_{H^1(\Omega _+)}+\|u_-\|_{{\mathcal H}^1(\Omega _-)}\right).
\end{align}
Let us now define the function
\begin{align}
\label{E-u-3}
u:=\mathring{E}_{\Omega_-}(u_--u^*_-)+\mathcal E_{\Omega _+}u_+ .
\end{align}
It belongs to
${\mathcal H}^1({\mathbb R}^n)$, and there exists a constant $C_1>0$ depending on $n$ and $\Omega _\pm $, such that the inequality \eqref{E-u-4} holds.
According to \eqref{E-u} and \eqref{E-u-3} we have also the following relations
\begin{align*}
&u|_{\Omega _+}=0+(\mathcal E_{\Omega _+}u_+)|_{\Omega _+}=u_+ \mbox{ a.e. in } \Omega _+,\\
&u|_{\Omega _-}=u_--u^*_-+(\mathcal E_{\Omega _+}u_+)|_{\Omega _-}=u_--u^*_-+u^*_-=u_- \mbox{ a.e. in } \Omega _-,
\end{align*}
and thus the existence of a function $u$ is proved.

To prove that the function $u$ is unique, let us assume that there are two such functions, $u_1$ and $u_2$. Then $u_0:=u_1-u_2$ belongs to $\mathcal H^1(\mathbb R^n)$ and  $u_0|_{\Omega_\pm}=0$. Thus, $u_0\in H^1(\mathbb R^n)\subset L_2(\mathbb R^n)$ and its support is a subset of $\partial\Omega$. Hence $u_0=0$ in $\mathbb R^n$ in the sense of Lebesgue classes (cf. also Theorem 2.10(i) in \cite{Mikh}).

{(iii) Let $u\in {\mathcal H}^1({\mathbb R}^n)$. Consequently, $u\in H_{\rm{loc}}^1({\mathbb R}^n)$, and then $\gamma _+u=\gamma _-u$, i.e., $[\gamma u]=0$.}
\end{proof}

\section{Equivalent norms in the weighted Sobolev space ${\mathcal H}^1({\mathbb R}^n\setminus \partial \Omega )^n$}

The next result plays a main role in establishing the equivalence of norms on Banach spaces, in particular, on some Sobolev spaces that appear in our arguments (cf. \cite[Lemma 11.1]{Tartar}).
\begin{lem}
\label{Tartar-lemma}
Let $(X,\|\cdot \|_X)$ be a Banach space, and let $(Y,\|\cdot \|_Y)$, $(Z,\|\cdot \|_Z)$, $(\Upsilon ,\|\cdot \|_\Upsilon )$ be normed spaces. Let ${\mathcal P}:X\to Y$, ${\mathfrak C}:X\to Z$ and ${\mathcal T}:X\to \Upsilon $ be linear and continuous operators, such that  \begin{itemize}
\item[$(i)$]
The operator ${\mathfrak C}:X\to Z$ is compact.
\item[$(ii)$]
$\|P(\cdot )\|_{Y}+\|{\mathfrak C}(\cdot )\|_{Z}$ is a norm on $X$ equivalent to the norm $\|\cdot \|_X$.
\item[$(iii)$]
The operator ${\mathcal T}:X\to \Upsilon $ satisfies the condition ${\mathcal T}(u)\neq 0$ whenever $P(u)=0$ and $u\neq 0$.
\end{itemize}
Then $\|u\|:=\|P(u)\|_Y+\|{\mathcal T}(u)\|_{\Upsilon },\ u\in X,$
is a norm on $X$ equivalent to the given norm $\|\cdot \|_X$.
\end{lem}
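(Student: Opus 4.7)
The plan is to prove the equivalence by establishing two inequalities. The ``easy'' direction, that $\|P(u)\|_Y+\|{\mathcal T}(u)\|_{\Upsilon}\leq C\|u\|_X$ for some $C>0$, is immediate from the boundedness (continuity) of the linear operators ${\mathcal P}$ and ${\mathcal T}$, with $C:=\|{\mathcal P}\|+\|{\mathcal T}\|$. So the core of the proof lies in the reverse inequality: there exists $c>0$ such that
\begin{equation*}
\|u\|_X\leq c\,\big(\|P(u)\|_Y+\|{\mathcal T}(u)\|_{\Upsilon}\big)\quad\forall\,u\in X.
\end{equation*}

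I would argue this reverse inequality by contradiction, in the classical spirit of Tartar. Suppose it fails. Then there exists a sequence $\{u_k\}_{k\in{\mathbb N}}\subset X$ with $\|u_k\|_X=1$ for every $k$, while $\|P(u_k)\|_Y+\|{\mathcal T}(u_k)\|_{\Upsilon}\to 0$ as $k\to\infty$. In particular, $P(u_k)\to 0$ in $Y$ and ${\mathcal T}(u_k)\to 0$ in $\Upsilon$. Since $\{u_k\}$ is bounded in $X$ and ${\mathfrak C}:X\to Z$ is compact by hypothesis $(i)$, we can extract a subsequence (still denoted by $\{u_k\}$) such that $\{{\mathfrak C}(u_k)\}$ is a Cauchy sequence in $Z$. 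Combined with the convergence of $P(u_k)$, this gives that
\begin{equation*}
\|P(u_k-u_\ell)\|_Y+\|{\mathfrak C}(u_k-u_\ell)\|_Z\longrightarrow 0\quad\mbox{as }k,\ell\to\infty.
\end{equation*}

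Invoking hypothesis $(ii)$, namely the equivalence of $\|P(\cdot)\|_Y+\|{\mathfrak C}(\cdot)\|_Z$ with the norm $\|\cdot\|_X$, I conclude that $\{u_k\}$ is a Cauchy sequence in the Banach space $(X,\|\cdot\|_X)$ and therefore converges to some $u_\infty\in X$ with $\|u_\infty\|_X=1$, in particular $u_\infty\neq 0$. By continuity of $P$ and ${\mathcal T}$ we obtain $P(u_\infty)=\lim_{k\to\infty}P(u_k)=0$ in $Y$ and ${\mathcal T}(u_\infty)=\lim_{k\to\infty}{\mathcal T}(u_k)=0$ in $\Upsilon$. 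However, hypothesis $(iii)$ asserts that ${\mathcal T}(u_\infty)\neq 0$ whenever $u_\infty\neq 0$ and $P(u_\infty)=0$, which contradicts ${\mathcal T}(u_\infty)=0$. This contradiction proves the required reverse inequality, completing the equivalence. Finally, one observes that $\|\cdot\|:=\|P(\cdot)\|_Y+\|{\mathcal T}(\cdot)\|_{\Upsilon}$ is indeed a norm on $X$: homogeneity and the triangle inequality are inherited from $\|\cdot\|_Y$ and $\|\cdot\|_\Upsilon$ and linearity of $P$, ${\mathcal T}$, while the separation property $\|u\|=0\Rightarrow u=0$ is exactly the content of the equivalence just established.

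The main obstacle is the subtle interplay between the three hypotheses at the contradiction step: one needs $(i)$ to produce a Cauchy subsequence in $Z$, then $(ii)$ to upgrade this together with $P(u_k)\to 0$ into a Cauchy property in the full $X$-norm (so that completeness of $X$ delivers a nonzero limit $u_\infty$), and finally $(iii)$ to rule out this limit. The delicate point is that one cannot use $(iii)$ until the limit $u_\infty$ has been constructed in $X$ itself; without the compactness of ${\mathfrak C}$ and the equivalence in $(ii)$, a weak-limit argument would only give $u_\infty$ in some weaker topology and the value ${\mathcal T}(u_\infty)$ would not be under control.
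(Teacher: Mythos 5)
Your proof is correct: the two-sided estimate via the easy boundedness direction plus a contradiction argument (normalized sequence, compactness of ${\mathfrak C}$ to get a Cauchy image in $Z$, hypothesis $(ii)$ to upgrade to a Cauchy sequence in the Banach space $X$, then continuity of $P$, ${\mathcal T}$ and hypothesis $(iii)$ to exclude the nonzero limit) is exactly the classical Peetre--Tartar equivalence-of-norms argument. The paper itself does not prove this lemma but only cites Tartar's Lemma 11.1, whose proof proceeds along the same lines as yours, so there is nothing to object to.
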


The following  result
for $n=3$ is implied by Proposition 2.7(i) in \cite{Sa-Se}, and its proof is based on the Korn first inequality (see \cite[Theorem 10.1]{Lean}), a compactness argument, and Lemma \ref{Tartar-lemma}.
The result for $n> 3$ follows with the same arguments.
\begin{thm}
\label{Korn-exterior}
Let $n\geq 3$. Let $\Omega \subset {\mathbb R}^n$ be a bounded Lipschitz domain with connected boundary $\partial \Omega $, and $\Omega _-:={\mathbb R}^n\setminus \overline{\Omega }$. Then $\|{\mathbb E}(\cdot )\|_{L_2(\Omega _-)^{n\times n}}$ is a norm in the weighted Sobolev space ${\mathcal H}^1(\Omega _-)^n$, which is equivalent to the norm $\|\cdot \|_{{\mathcal H}^1(\Omega _-)^n}$ given by \eqref{weight-2p} with $\Omega _-$ in place of ${\mathbb R}^n$. Therefore, there exists a constant $C=C(\Omega _-,n)>0$ such that
\begin{align}
\label{Korn-ineq-exterior}
C\|{\bf u}\|_{{\mathcal H}^1(\Omega _{-})^n}\leq \|{\mathbb E}({\bf u})\|_{L_2(\Omega_-)^{n\times n}}\leq \|{\bf u}\|_{{\mathcal H}^1(\Omega _{-})^n}\ \forall \, {\bf u}\in {\mathcal H}^1(\Omega _{-})^n.
\end{align}
\end{thm}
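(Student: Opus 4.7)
The plan is to apply Lemma~\ref{Tartar-lemma} with $X={\mathcal H}^1(\Omega_-)^n$ (equipped with its full norm), $Y=L_2(\Omega_-)^{n\times n}$, $P({\bf u}):={\mathbb E}({\bf u})$, a fixed ball $B_R\supset\overline{\Omega_+}$, $Z=L_2(\Omega_-\cap B_{2R})^n$, ${\mathfrak C}({\bf u}):={\bf u}|_{\Omega_-\cap B_{2R}}$, and $\mathcal T\equiv 0$. The upper bound $\|{\mathbb E}({\bf u})\|_{L_2(\Omega_-)^{n\times n}}\le\|\nabla{\bf u}\|_{L_2(\Omega_-)^{n\times n}}\le\|{\bf u}\|_{{\mathcal H}^1(\Omega_-)^n}$ is immediate from the pointwise pointwise bound $|{\mathbb E}({\bf u})|\le|\nabla{\bf u}|$ and definition \eqref{weight-2p}. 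Compactness of ${\mathfrak C}$ follows by composing the continuous restriction ${\mathcal H}^1(\Omega_-)^n\to H^1(\Omega_-\cap B_{2R})^n$ with the Rellich--Kondrachov embedding $H^1(\Omega_-\cap B_{2R})^n\hookrightarrow L_2(\Omega_-\cap B_{2R})^n$.

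Condition (iii) of Lemma~\ref{Tartar-lemma} will be checked to hold vacuously, because $\mathrm{Ker}\,P=\{0\}$. Indeed, if ${\bf u}\in{\mathcal H}^1(\Omega_-)^n$ satisfies ${\mathbb E}({\bf u})=0$, then on the connected exterior domain $\Omega_-$ one has ${\bf u}(x)={\bf b}+\mathbf Bx$ for some ${\bf b}\in{\mathbb R}^n$ and skew-symmetric $\mathbf B\in{\mathbb R}^{n\times n}$; the membership $\rho^{-1}{\bf u}\in L_2(\Omega_-)$ then forces ${\bf b}={\bf 0}$ and $\mathbf B={\bf 0}$, since $\int_{|x|>R_0}(|{\bf b}|^2+|\mathbf Bx|^2)(1+|x|^2)^{-1}\,dx=+\infty$ otherwise for $n\ge 3$. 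Hence ${\bf u}={\bf 0}$, and (iii) is vacuously satisfied with ${\mathcal T}\equiv 0$.

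The substantial step is condition (ii): the auxiliary seminorm $\|{\mathbb E}(\cdot)\|_{L_2(\Omega_-)^{n\times n}}+\|{\mathfrak C}(\cdot)\|_Z$ must be equivalent to $\|\cdot\|_{{\mathcal H}^1(\Omega_-)^n}$. I would fix a cutoff $\chi\in C_c^\infty({\mathbb R}^n)$ with $\chi\equiv 1$ on $B_R$ and $\chi\equiv 0$ off $B_{2R}$ (choosing $R$ so $B_R$ is a neighborhood of $\overline{\Omega_+}$). Then $\nabla{\bf u}$ is split into a contribution on $\Omega_-\cap B_{2R}$ controlled by the classical first Korn inequality on the bounded Lipschitz domain $\Omega_-\cap B_{2R}$, $\|\nabla{\bf u}\|_{L_2(\Omega_-\cap B_{2R})^{n\times n}}\lesssim\|{\mathbb E}({\bf u})\|_{L_2(\Omega_-\cap B_{2R})^{n\times n}}+\|{\bf u}\|_{L_2(\Omega_-\cap B_{2R})^n}$, and a contribution on ${\mathbb R}^n\setminus B_{2R}$ controlled by applying the Korn inequality \eqref{Korn3-R3} on ${\mathbb R}^n$ to $(1-\chi){\bf u}$ (extended by zero into $\Omega_+$); the cutoff commutators $\nabla\chi\otimes{\bf u}$ are supported in $B_{2R}\setminus B_R$ and are absorbed into $\|{\mathfrak C}({\bf u})\|_Z$. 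For the weighted term, the piece $\|\rho^{-1}{\bf u}\|_{L_2(\Omega_-\cap B_{2R})^n}$ is trivially bounded by $\|{\mathfrak C}({\bf u})\|_Z$ since $\rho^{-1}\le 1$; the piece $\|\rho^{-1}{\bf u}\|_{L_2(\Omega_-\setminus B_{2R})^n}$ is comparable to $\||x|^{-1}{\bf u}\|_{L_2(\Omega_-\setminus B_{2R})^n}$ and is controlled via the Hardy inequality on ${\mathbb R}^n$ (valid for $n\ge 3$) applied to $(1-\chi){\bf u}$, whose gradient norm has just been estimated. The main technical obstacle is the bookkeeping of these cutoff commutator terms and their reabsorption into the compact $L_2$ perturbation; once this is organized, Lemma~\ref{Tartar-lemma} yields the desired equivalence \eqref{Korn-ineq-exterior} directly.
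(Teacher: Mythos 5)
Your proposal is correct and follows essentially the route the paper itself indicates (the paper only cites \cite[Proposition 2.7(i)]{Sa-Se}, describing the ingredients as a Korn inequality, a compactness argument, and Lemma \ref{Tartar-lemma}): you combine Korn's inequality on the bounded Lipschitz piece $\Omega_-\cap B_{2R}$, the whole-space Korn inequality \eqref{Korn3-R3} applied to the cut-off function, Rellich compactness, and Tartar's lemma, and your degenerate choice ${\mathcal T}\equiv 0$ is legitimate precisely because the weight $\rho^{-1}$ (or the embedding \eqref{weight-Lp}) rules out nonzero rigid motions in ${\mathcal H}^1(\Omega_-)^n$, so condition $(iii)$ holds vacuously — this is exactly what distinguishes this theorem from Lemma \ref{equiv-norm-Sobolev}, where a nontrivial ${\mathcal T}$ is needed. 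The only slip is terminological: the estimate $\|\nabla {\bf u}\|_{L_2(\Omega_-\cap B_{2R})^{n\times n}}\leq C\left(\|{\mathbb E}({\bf u})\|_{L_2(\Omega_-\cap B_{2R})^{n\times n}}+\|{\bf u}\|_{L_2(\Omega_-\cap B_{2R})^{n}}\right)$ on a bounded Lipschitz domain is Korn's \emph{second} inequality (cf. \cite[Theorem 10.2]{Lean}), not the first.
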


Recall that $\rho $ is the weight function given by \eqref{rho},
${\mathcal H}^1({\mathbb R}^n\setminus \partial \Omega )$ is the space defined in \eqref{Omega-pm}-\eqref{standard-weight-p},
$\boldsymbol{\mathcal R}$ and $\boldsymbol{\mathcal R}_{\partial\Omega}$ are the space of rigid body motion fields in ${\mathbb R}^n$ and its trace defined in \eqref{E3.74}.
Note that ${\rm{dim}}\, \boldsymbol{\mathcal R}=n(n+1)/2$ (cf., e.g., \cite[p. 287]{Med-AAM-11}) and let $\left\{{\bf r}_j:j=1,\ldots ,{n(n+1)}/{2}\right\}$ be a basis of $\boldsymbol{\mathcal R}$.
\begin{lem}
\label{equiv-norm-Sobolev}
Let $\Omega \subset {\mathbb R}^n$, $n\geq 3$, be a bounded Lipschitz domain with connected boundary $\partial \Omega $.
Then the formula
\begin{align}
\label{coercive-dlc-apend}
\|{\bf w}\|_{1;\rho ;{\mathbb R}^n\setminus \partial \Omega }^2:=\|{\mathbb E}({\bf w})\|_{L_2({\mathbb R}^n\setminus \partial \Omega )^{n\times n}}^2
+\sum_{j=1}^{n(n+1)/2}\left|\int_{\partial \Omega }[\gamma {\bf w}]\cdot \gamma {\bf r}_jd\sigma \right|^2 \quad \forall \, {\bf w}\in {\mathcal H}^1({\mathbb R}^n\setminus \partial \Omega )^n
\end{align}
defines a norm in the weighted Sobolev space ${\mathcal H}^1({\mathbb R}^n\setminus \partial \Omega )^n$, which is equivalent to the norm
\begin{align}
\label{standard-weight}
\|{\bf w}\|_{{\mathcal H}^1({\mathbb R}^n\setminus \partial \Omega )^n}^2=\|\rho ^{-1}{\bf w}\|_{L_2({\mathbb R}^n\setminus \partial \Omega )^n}^2+\|\nabla {\bf w}\|_{L_2({\mathbb R}^n\setminus \partial \Omega )^n}^2\,.
\end{align}
\end{lem}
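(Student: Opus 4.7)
The natural strategy is to invoke the Peetre--Tartar type Lemma \ref{Tartar-lemma} with $X=\mathcal{H}^1(\mathbb{R}^n\setminus\partial\Omega)^n$ equipped with the norm \eqref{standard-weight}, the continuous operator $P=\mathbb{E}:X\to Y:=L_2(\mathbb{R}^n\setminus\partial\Omega)^{n\times n}$, the compact operator $\mathfrak{C}:X\to Z:=L_2(\Omega_+)^n$ defined by restriction $\mathbf w\mapsto \mathbf w|_{\Omega_+}$ (which is compact by the Rellich theorem applied to the bounded Lipschitz domain $\Omega_+$), and the continuous linear map
\[
\mathcal{T}:X\to \mathbb{R}^{n(n+1)/2},\qquad
\mathcal{T}(\mathbf w):=\left(\int_{\partial\Omega}[\gamma\mathbf w]\cdot\gamma\mathbf r_j\,d\sigma\right)_{j=1}^{n(n+1)/2},
\]
which is well defined and bounded since $\gamma_{\pm}$ are continuous on $H^1(\Omega_+)^n$ and $\mathcal{H}^1(\Omega_-)^n$ respectively, and $\gamma\mathbf r_j\in L_2(\partial\Omega)^n$.

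The first main step is to verify condition (ii) of Lemma \ref{Tartar-lemma}, i.e.\ that $\|\mathbb{E}(\mathbf w)\|_{L_2(\mathbb{R}^n\setminus\partial\Omega)^{n\times n}}+\|\mathbf w\|_{L_2(\Omega_+)^n}$ is equivalent to \eqref{standard-weight}. On $\Omega_-$ this is immediate from Theorem \ref{Korn-exterior}, which yields $\|\mathbf w\|_{\mathcal{H}^1(\Omega_-)^n}\leq C\|\mathbb{E}(\mathbf w)\|_{L_2(\Omega_-)^{n\times n}}$. On $\Omega_+$ I use the second Korn inequality on the bounded Lipschitz domain $\Omega_+$ in the form $\|\mathbf w\|_{H^1(\Omega_+)^n}\leq C\bigl(\|\mathbb{E}(\mathbf w)\|_{L_2(\Omega_+)^{n\times n}}+\|\mathbf w\|_{L_2(\Omega_+)^n}\bigr)$. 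Adding these two estimates and recalling that $\|\mathbf w\|^2_{\mathcal{H}^1(\mathbb{R}^n\setminus\partial\Omega)^n}$ is equivalent to $\|\mathbf w\|^2_{H^1(\Omega_+)^n}+\|\mathbf w\|^2_{\mathcal{H}^1(\Omega_-)^n}$ gives the claim; the reverse inequality is trivial.

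The second main step is condition (iii): if $\mathbb{E}(\mathbf w)=0$ in $\mathbb{R}^n\setminus\partial\Omega$ and $\mathbf w\neq \mathbf 0$, then $\mathcal{T}(\mathbf w)\neq 0$. From $\mathbb{E}(\mathbf w)=0$ and, e.g., \cite[Lemma 3.1]{Med-AAM-11}, there exist $\mathbf r_\pm\in\boldsymbol{\mathcal R}$ such that $\mathbf w|_{\Omega_\pm}=\mathbf r_\pm$. Since $\mathcal{H}^1(\Omega_-)^n$ embeds into $L_{2n/(n-2)}(\Omega_-)^n$ (see \eqref{weight-Lp}), the only rigid motion in $\mathcal{H}^1(\Omega_-)^n$ is $\mathbf 0$, so $\mathbf r_-=\mathbf 0$ and $[\gamma\mathbf w]=\gamma\mathbf r_+$. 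Writing $\mathbf r_+=\sum_{k}\alpha_k\mathbf r_k$ with $\boldsymbol\alpha\neq 0$ (otherwise $\mathbf w=\mathbf 0$), we find
\[
\mathcal{T}(\mathbf w)_j=\sum_{k}M_{jk}\alpha_k,\qquad
M_{jk}:=\int_{\partial\Omega}\gamma\mathbf r_j\cdot\gamma\mathbf r_k\,d\sigma.
\]
The Gram matrix $M$ is positive definite because $\{\gamma\mathbf r_j\}$ are linearly independent in $L_2(\partial\Omega)^n$: a nontrivial rigid motion cannot vanish on the connected Lipschitz hypersurface $\partial\Omega$. Hence $M\boldsymbol\alpha\neq 0$ and $\mathcal{T}(\mathbf w)\neq 0$.

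With (i)--(iii) established, Lemma \ref{Tartar-lemma} yields that $\|P(\cdot)\|_Y+\|\mathcal{T}(\cdot)\|_\Upsilon$ is a norm on $X$ equivalent to the standard norm. Since this is, up to absorbing the Euclidean norm of $\mathcal{T}(\mathbf w)\in\mathbb{R}^{n(n+1)/2}$ into the expression \eqref{coercive-dlc-apend}, exactly $\|\cdot\|_{1;\rho;\mathbb{R}^n\setminus\partial\Omega}$, the desired equivalence follows. The main technical point is the verification of (ii) via the compatibility of the two Korn-type inequalities (the bounded one on $\Omega_+$ and the weighted exterior one on $\Omega_-$); the rest is fairly direct linear algebra and compactness.
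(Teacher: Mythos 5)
Your proposal is correct and follows essentially the same route as the paper's proof: the Peetre--Tartar Lemma \ref{Tartar-lemma} with $P=\mathbb{E}$, the compact restriction to $\Omega_+$, the trace functionals against $\gamma\mathbf r_j$, and the two Korn inequalities (Theorem \ref{Korn-exterior} on $\Omega_-$ and the second Korn inequality on $\Omega_+$) for condition (ii). The only cosmetic difference is in condition (iii), where the paper shows $[\gamma\mathbf w]=\mathbf 0$ and concludes that $\mathbf w$ is a global rigid motion in $\mathcal{H}^1(\mathbb{R}^n)^n$, hence zero, while you first eliminate the exterior rigid motion via the weighted-space decay and then use positive definiteness of the Gram matrix of $\{\gamma\mathbf r_j\}$; both arguments are valid.
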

\begin{proof}
First, we note that by Theorem \ref{Korn-exterior}, $\|{\mathbb E}(\cdot )\|_{L_2(\Omega _{-})^{n\times n}}$ is a norm in ${\mathcal H}^1(\Omega _{-})^n$, which is equivalent to the norm $\|\cdot \|_{{\mathcal H}^1(\Omega _{-})^n}$, defined as in \eqref{standard-weight} with $\Omega_{-}$ in place of ${\mathbb R}^n\setminus \partial \Omega $.
Moreover, in view of the Korn inequality (see, e.g., \cite[Theorem 10.2]{Lean}, \cite[Proposition 11.4.2]{M-W}), $\|{\mathbb E}(\cdot )\|_{L_2(\Omega _+)^{n\times n}}+\|\cdot \|_{L_2(\Omega _+)^{n}}$ is an equivalent norm in the space ${H}^1(\Omega _{+})^n$.
Therefore,
\begin{align}
\label{standard-weight-1}
\|{\mathbb E}({\bf w})\|_{L_2(\Omega _{-})^{n\times n}}+\|{\mathbb E}({\bf w})\|_{L_2(\Omega _+)^{n\times n}}
+\|{\bf w}\|_{L_2(\Omega _+)^{n}}
=\|{\mathbb E}({\bf w})\|_{L_2({\mathbb R}^n\setminus \partial \Omega )^{n\times n}}+\|{\bf w}\|_{L_2(\Omega _+)^{n}} 
\end{align}
is a norm in the space ${\mathcal H}^1({\mathbb R}^n\setminus \partial \Omega )^n$, equivalent to the norm \eqref{standard-weight} of this space.

Now we consider the operators
\begin{align}
&P:{\mathcal H}^1({\mathbb R}^n\setminus \partial \Omega )^n\to L_2({\mathbb R}^n\setminus \partial \Omega )^{n\times n},\ P({\bf w}):={\mathbb E}({\bf w})\,,\\
&{\mathfrak C}:{\mathcal H}^1({\mathbb R}^n\setminus \partial \Omega )^n\to L_2(\Omega _+)^{n},\ {\mathfrak C}({\bf w})={\bf w}|_{\Omega _+},\\
&{\mathcal T}:{\mathcal H}^1({\mathbb R}^n\setminus \partial \Omega )^n\to {\mathbb R}^{{n(n+1)}/{2}},\ {\mathcal T}({\bf w})
:=\left(\int_{\partial \Omega }[\gamma {\bf w}]\cdot {\gamma}{\bf r}_1d\sigma ,\ldots ,\int_{\partial \Omega }[\gamma {\bf w}]\cdot {\gamma}{\bf r}_{n(n+1)/2}d\sigma \right),
\end{align}
which are linear and continuous. 
Moreover, the operator ${\mathfrak C}$ is compact due to the compact embedding of the space $H^1(\Omega _+)^{n}$ in $L_2(\Omega _+)^{n}$. In terms of these operators, the norm in \eqref{standard-weight-1} becomes
\begin{align}
\label{standard-weight-2}
\|{\mathbb E}({\bf w})\|_{L_2({\mathbb R}^n\setminus \partial \Omega )^{n\times n}}+\|{\bf w}\|_{L_2(\Omega _+)^{n}}=\|P({\bf w})\|_{L_2({\mathbb R}^n\setminus \partial \Omega )^{n\times n}}+\|{\mathfrak C}({\bf w})\|_{L_2(\Omega _+)^{n}}\,.
\end{align}
In addition, the operator ${\mathcal T}$ satisfies the condition ${\mathcal T}({\bf w})\neq 0$ whenever $P({\bf w})=0$ and ${\bf w}\neq {\bf 0}$.
Indeed, the condition
$P({\bf w})=0$
is equivalent to ${\bf w}\in \boldsymbol{\mathcal R}|_{\Omega _\pm }$ (cf., e.g., \cite[Lemma 3.1]{Med-AAM-11}).
Assume that ${\mathcal T}({\bf w})=0$ and $P({\bf w})=0$.
Then $\gamma {\bf w}\in\boldsymbol{\mathcal R}_{\partial\Omega}$ and
\begin{align}
\label{standard-weight-3}
\int_{\partial \Omega }[\gamma {\bf w}]\cdot {\gamma}{\bf r}_jd\sigma =0,\ j=1,\ldots ,n(n+1)/2\,.
\end{align}
Since $\boldsymbol{\mathcal R}_{\partial \Omega }={\rm{span}}\left\{{\gamma}{\bf r}_j:j=1,\ldots ,{n(n+1)}/{2}\right\}$, \eqref{standard-weight-3} yields that $[\gamma {\bf w}]=0$ on $\partial \Omega $, and accordingly that ${\bf w}\in {\mathcal H}^1({\mathbb R}^n)^n$ (cf. {Lemma \ref{extention}}).
On the other hand, the conditions ${\bf w}\in \boldsymbol{\mathcal R}|_{\Omega _\pm }$ and $[\gamma {\bf w}]={\bf 0}$ imply that there exist ${\bf a}\in {\mathbb R}^n$, ${\bf B}\in {\mathbb R}^{n\times n}$, ${\bf B}=-{\bf B}^\top $, such that ${\bf w}={\bf a}+{\bf B}{\bf x}$ in ${\mathbb R}^n$. However, the membership ${\bf w}\in {\mathcal H}^1({\mathbb R}^n)^n$ and the embedding ${\mathcal H}^1({\mathbb R}^n)^n \hookrightarrow L_{\frac{2n}{n-2}}({\mathbb R}^n)^n$ show that ${\bf w}={\bf 0}$, which contradicts the assumption ${\bf w}\neq {\bf 0}$. Thus, ${\mathcal T}({\bf w})\neq 0$ whenever $P({\bf w})=0$ and ${\bf w}\neq {\bf 0}$, as asserted.

Consequently, the conditions {of Lemma \ref{Tartar-lemma}
with $X:={\mathcal H}^1({\mathbb R}^n\setminus \partial \Omega )^n$, $Y=L_2({\mathbb R}^n\setminus \partial \Omega )^{n\times n}$, $Z=L_2(\Omega _+)^{n}$ and
{$\Upsilon :={\mathbb R}^{{n(n+1)}/{2}}$}}
are satisfied, and hence
\begin{align}
\label{coercive-dlc-apend-3}
\|P({\bf w})\|_{Y}+\|{\mathcal T}({\bf w})\|_{\Upsilon } =\|{\mathbb E}({\bf w})\|_{L_2({\mathbb R}^n\setminus \partial \Omega )^{n\times n}}+{ \sum_{j=1}^{n(n+1)/2}\left|\int_{\partial \Omega }[\gamma {\bf w}]\cdot {\gamma}{\bf r}_jd\sigma \right|}
\end{align}
is a norm on ${\mathcal H}^1({\mathbb R}^n\setminus \partial \Omega )^n$ equivalent to norm
\eqref{standard-weight}.
This result and the equivalence of the norms \eqref{coercive-dlc-apend} and \eqref{coercive-dlc-apend-3}
show that \eqref{coercive-dlc-apend} is also a norm in ${\mathcal H}^1({\mathbb R}^n\setminus \partial \Omega )^n$ equivalent to norm \eqref{standard-weight}.
\end{proof}

\section{Some norm estimates}
Let $n=3$ or $n=4$ and $\Omega$ be a bounded domain.
Then by the Sobolev embedding theorem (see, e.g., \cite[Section 2.2.4, Corollary 2]{Runst-Sickel}), the space $H^1(\Omega)^n$ is continuously embedded in $L_{\frac{2n}{n-2}}(\Omega)^n$
and hence in $L_n(\Omega)^n$.
Thus by the H\"older inequality there exists a constant $c_1>0$ such that
\begin{align*}
\|({\bf v}_1\cdot \nabla ){\bf v}_2\|_{L_{\frac{n}{n-1}}(\Omega)^n}&\le \|{\bf v}_1\|_{L_{\frac{2n}{n-2}}(\Omega)^n}\|\nabla {\bf v}_2\|_{L_{2}(\Omega)^{n\times n}}\leq c_1\|{\bf v}_1\|_{H^1(\Omega)^n}\|{\bf v}_2\|_{H^1(\Omega)^n} \quad \forall \, {\bf v}_1,{\bf v}_2\in H^1(\Omega)^n.
\end{align*}
Consequently,  $({\bf v}_1\cdot \nabla ){\bf v}_2\in L_{\frac{n}{n-1}}(\Omega)^n$ for any ${\bf v}_1,{\bf v}_2\in H^1(\Omega)^n$, and, thus,
$({\bf v}\cdot \nabla ){\bf v}\in L_{\frac{n}{n-1}}(\Omega)^n$ for any ${\bf v}\in {H}^1(\Omega)^n$.
Then, using again the H\"{o}lder inequality, we have for any ${\bf v}_1,{\bf v}_2, {\bf v}_3\in {H}^1(\Omega)^n$,
\begin{multline}
\label{P-0}
\left|\left\langle({\bf v}_1\cdot \nabla ){\bf v}_2,{\bf v}_3\right\rangle _{\Omega}\right|
=\left|\int_{\Omega}\left(({\bf v}_1\cdot \nabla ){\bf v}_2\right)\cdot {\bf v}_3d{\bf x}\right|
\leq \|({\bf v}_1\cdot \nabla ){\bf v}_2\|_{L_{\frac{n}{n-1}}(\Omega)^n}\|{\bf v}_3\|_{L_n(\Omega )^n}\\
\leq \|{\bf v}_1\|_{L_{\frac{2n}{n-2}}(\Omega)^n}\|\nabla {\bf v}_2\|_{L_{2}(\Omega)^{n\times n}}\|{\bf v}_3\|_{L_{n}(\Omega)^{n}}
\leq c_2\|{\bf v}_1\|_{L_{\frac{2n}{n-2}}(\Omega)^n}\|\nabla {\bf v}_2\|_{L_{2}(\Omega )^{n\times n}}\|{\bf v}_3\|_{H^1(\Omega)^{n}},
\end{multline}
with some constant $c_2>0$.
Taking ${\bf v}_3\in \mathring{H}^1(\Omega)^n$, \eqref{P-0} implies that term $({\bf v}_1\cdot \nabla ){\bf v}_2$ belongs to the dual of space $\mathring{H}^1(\Omega)^n$, i.e., to the space ${H}^{-1}(\Omega)^n$, and
for any ${\bf v}_1,{\bf v}_2\in {H}^1(\Omega)^n$,
\begin{align}
\label{P-01}
\left\|({\bf v}_1\cdot \nabla ){\bf v}_2\right\|_{{H}^{-1}(\Omega)^n}
\le c_2\|{\bf v}_1\|_{L_{\frac{2n}{n-2}}(\Omega)^n}\|\nabla {\bf v}_2\|_{L_{2}(\Omega )^{n\times n}}
\le c_3\|{\bf v}_1\|_{{H}^{1}(\Omega)^n}\|{\bf v}_2\|_{{H}^{1}(\Omega)^n}.
\end{align}
Similar to \eqref{P-0}, we have for any ${\bf v}_1,{\bf v}_2, {\bf v}_3\in {H}^1(\Omega)^n$,
\begin{multline}
\label{P-03}
\left|\left\langle({\bf v}_1\cdot \nabla ){\bf v}_2,{\bf v}_3\right\rangle _{\Omega}\right|
=\left|\int_{\Omega}{v}_{1,i} (\partial_i {v}_{2,j}){v}_{3,j}d{\bf x}\right|
\leq \|({v}_{3,j} \nabla {v}_{2,j}\|_{L_{\frac{n}{n-1}}(\Omega)^n}\|{\bf v}_1\|_{L_n(\Omega )^n}\\
\leq \|{\bf v}_3\|_{L_{\frac{2n}{n-2}}(\Omega)^n}\|\nabla {\bf v}_2\|_{L_{2}(\Omega)^{n\times n}}
\|{\bf v}_1\|_{L_{n}(\Omega)^{n}}
\leq c_6\|{\bf v}_1\|_{L_{n}(\Omega)^n}\|\nabla {\bf v}_2\|_{L_{2}(\Omega )^{n\times n}}\|{\bf v}_3\|_{H^1(\Omega)^{n}},
\end{multline}
Taking ${\bf v}_3\in \mathring{H}^1(\Omega)^n$, \eqref{P-03} implies that for any ${\bf v}_1,{\bf v}_2\in {H}^1(\Omega)^n$
\begin{align}
\label{P-04}
\left\|({\bf v}_1\cdot \nabla ){\bf v}_2\right\|_{{H}^{-1}(\Omega)^3}
\le c_6\|{\bf v}_1\|_{L_{n}(\Omega)^n}\|\nabla {\bf v}_2\|_{L_{2}(\Omega )^{n\times n}}
\le c_7\|{\bf v}_1\|_{L_{n}(\Omega)^n}\|{\bf v}_2\|_{H^1(\Omega )^{n}}.
\end{align}

Let now ${\bf v}_1,{\bf v}_2\in {H}^1(\Omega)^n$, ${\bf v}_3\in \mathring{H}^1(\Omega)^n$.
The density of $\mathcal D((\Omega)^n$ in $\mathring{H}^1(\Omega)^n$ along with the divergence theorem and estimate \eqref{P-01} lead to the following identity
\begin{align}
\label{P-051}
\left\langle({\bf v}_1\cdot \nabla ){\bf v}_2,{\bf v}_3\right\rangle _{\Omega}
&=\int_{\Omega}\nabla\cdot\left({\bf v}_1({\bf v}_2\cdot {\bf v}_3)\right)d{\bf x}
-\left\langle(\nabla \cdot{\bf v}_1){\bf v}_3
+({\bf v}_1\cdot \nabla ){\bf v}_3,{\bf v}_2\right\rangle _{\Omega}
\nonumber\\
&=-\left\langle(\nabla \cdot{\bf v}_1){\bf v}_3
+({\bf v}_1\cdot \nabla ){\bf v}_3,{\bf v}_2\right\rangle _{\Omega}
\quad \forall\ {\bf v}_1,{\bf v}_2\in {H}^1(\Omega)^n,\ {\bf v}_3\in \mathring{H}^1(\Omega)^n.
\end{align}
Then we obtain similar to \eqref{P-0},
\begin{multline}
\label{P-05}
\left|\left\langle({\bf v}_1\cdot \nabla ){\bf v}_2,{\bf v}_3\right\rangle _{\Omega}\right|
\leq \|(\nabla \cdot{\bf v}_1){\bf v}_3
+({\bf v}_1\cdot \nabla ){\bf v}_3\|_{L_{\frac{n}{n-1}}(\Omega)^n}\|{\bf v}_2\|_{L_n(\Omega )^n}\\
\leq \left(\|{\bf v}_3\|_{L_{\frac{2n}{n-2}}(\Omega)^n}\|\nabla\cdot {\bf v}_1\|_{L_{2}(\Omega)}
+\|{\bf v}_1\|_{L_{\frac{2n}{n-2}}(\Omega)^n}\|\nabla {\bf v}_3\|_{L_{2}(\Omega)^{n\times n}}\right)
\|{\bf v}_2\|_{L_{n}(\Omega)^{n}}\\
\leq c_8\|{\bf v}_1\|_{H^1(\Omega)^{n}}\|{\bf v}_2\|_{L_{n}(\Omega)^{n}}\|{\bf v}_3\|_{H^1(\Omega)^{n}},
\end{multline}
which implies that
\begin{align}
\label{P-06}
\left\|({\bf v}_1\cdot \nabla ){\bf v}_2\right\|_{{H}^{-1}(\Omega)^3}
\le c_8\|{\bf v}_1\|_{H^1(\Omega)^{n}}\|{\bf v}_2\|_{L_{n}(\Omega)^{n}}
\quad \forall\ {\bf v}_1,{\bf v}_2\in {H}^1(\Omega)^n.
\end{align}

From \eqref{P-051} we also have
\begin{align}
\label{antisym}
\left\langle({\bf v}_1\cdot \nabla ){\bf v}_2,{\bf v}_3\right\rangle _{\Omega}
&=-\left\langle({\bf v}_1\cdot \nabla ){\bf v}_3,{\bf v}_2\right\rangle _{\Omega}
\quad \forall\ {\bf v}_1\in {H}_{\rm{div}}^1(\Omega)^n,\
{\bf v}_2\in {H}^1(\Omega)^n,\ {\bf v}_3\in \mathring{H}^1(\Omega)^n,
\end{align}
implying the well known formula
\begin{equation}
\label{P-5a}
\left\langle ({\bf v}_1\cdot \nabla ){\bf v}_2,{\bf v}_2\right\rangle _{\Omega}=0 \quad \forall \
{\bf v}_1\in {H}_{\rm{div}}^1(\Omega)^n,\ {\bf v}_2\in \mathring{H}^1(\Omega)^n.
\end{equation}

Identity \eqref{P-051} also implies for ${\bf v}_1\in {H}_{\rm{div}}^1(\Omega)^n$ and ${\bf v}_2\in {H}^1(\Omega)^n$,
\begin{align*}
{|\!|\!|({\bf v}_1\cdot \nabla ){\bf v}_2|\!|\!|_{H^{-1}(\Omega)^n}}&={\sup_{{\bf v}_3\in \mathring{H}^1(\Omega )^n,\, \|\nabla {\bf v}_3\|_{L_2(\Omega )^{n\times n}}=1}}\Big|\left\langle ({\bf v}_1\cdot \nabla ){\bf v}_2,{\bf v}_3\right\rangle _{\Omega}\Big|\\
&={\sup_{{\bf v}_3\in \mathring{H}^1(\Omega )^n,\, \|\nabla {\bf v}_3\|_{L_2(\Omega )^{n\times n}}=1}}\Big|\left\langle ({\bf v}_1\cdot \nabla ){\bf v}_3,{\bf v}_2\right\rangle _{\Omega}\Big|\nonumber\\
&\leq {\sup_{{\bf v}_3\in \mathring{H}^1(\Omega )^n,\, \|\nabla {\bf v}_3\|_{L_2(\Omega )^{n\times n}}=1}}\|{\bf v}_1\otimes {\bf v}_2\|_{L_2(\Omega )^{n\times n}}\|\nabla {\bf v}_3\|_{L_2(\Omega )^{n\times n}},
\end{align*}
and hence
\begin{align}
\label{P-0512}
|\!|\!|({\bf v}_1\cdot \nabla ){\bf v}_2|\!|\!|_{H^{-1}(\Omega)^n}
\le \|{\bf v}_1\otimes{\bf v}_2\|_{L_{2}(\Omega)^{n\times n}}
\quad \forall\ {\bf v}_1\in {H}_{\rm{div}}^1(\Omega)^n,\
{\bf v}_2\in {H}^1(\Omega)^n.
\end{align}

By the H\"older inequality,
$$
\|{\bf v}_1\|^4_{L_{4}(\Omega)^{n}}\le \|{\bf v}_1\|^4_{L_{6}(\Omega)^{n}}|\Omega|^{1/3},\quad
|\Omega|:=\int_{\Omega}dx,
$$
and the Sobolev inequality (see, e.g., Eq. (II.3.7) in \cite{Galdi}) gives
\begin{align}\label{SoEn}
\|{v}\|_{L_{6}(\mathbb R^n)}
\le \frac{(n-1)}{\sqrt{n}(n-2)}\|\nabla {v}\|_{L^2(\mathbb R^n)} \quad \forall\ {v}\in \mathcal D(\mathbb R^n).
\end{align}
Then, applying the density argument in \eqref{SoEn}, relation \eqref{P-0512} implies {for $n=3$,}
\begin{multline}
\label{P-061}
|\!|\!|({\bf v}_1\cdot \nabla ){\bf v}_1|\!|\!|_{H^{-1}(\Omega)^3}
\le \|{\bf v}_1\otimes{\bf v}_1\|_{L_{2}(\Omega)^{3\times 3}}
\le \|{\bf v}_1\|^2_{L_{4}(\Omega)^3}
\le |\Omega|^{1/6}\|{\bf v}_1\|^2_{L_{6}(\Omega)^3}\\
\le \frac{4}{3}|\Omega|^{1/6}{\mn\|\nabla {\bf v}_1\|_{L_2(\Omega ^0)^{3\times 3}}^2}
\quad \forall\ {\bf v}_1\in \mathring{H}_{\rm{div}}^1(\Omega)^3,
\end{multline}
cf. Lemma IX.1.1 in \cite{Galdi}, where a similar estimate was obtained with the coefficient $\frac{2\sqrt{2}}{3}$ instead of $\frac{4}{3}$.

Similarly, for functions from $\mathcal{H}_{\rm{div}}^1(\mathbb R^3)^3$,
\begin{align}
\label{P-061cal}
|\!|\!|({\bf v}_1\cdot \nabla ){\bf v}_1|\!|\!|_{H^{-1}(\Omega)^3}
\le |\Omega|^{1/6}\|{\bf v}_1\|^2_{L_{6}(\Omega)^3}
&\le |\Omega|^{1/6}\|{\bf v}_1\|^2_{L_{6}(\mathbb R^3)^3}\nonumber\\
&\le \frac{4}{3}|\Omega|^{1/6}\|\nabla{\bf v}_1\|^2_{L_2(\mathbb R^3)^{3\times 3}}
\quad \forall\ {\bf v}_1\in \mathcal{H}_{\rm{div}}^1(\mathbb R^3)^3.
\end{align}
}

\section*{\bf Acknowledgements}
The research has been supported by the grant EP/M013545/1: "Mathematical Analysis of Boundary-Domain Integral Equations for Nonlinear PDEs" from the EPSRC, UK.
M. Kohr has been also partially supported by the Babe\c{s}-Bolyai University research grant AGC35124/31.10.2018. W.L. Wendland has been partially supported by "Deutsche Forschungsgemeinschaft (DFG, German Research Foundation) under Germany's Excellence Strategy--EXC 2075--390740016".
Part of this work was done in May 2019, when M. Kohr visited the Department of Mathematics of the University of Toronto, and in August 2019, during her research visit to the Department of Mathematics of Brunel University London.
She expresses her gratitude to the members of these departments
for their hospitality during those visits.

\end{document}